\newtheorem{theorem}{Theorem}[section]
\newtheorem{lemma}[theorem]{Lemma}
\newtheorem{proposition}[theorem]{Proposition}
\newtheorem{corollary}[theorem]{Corollary}
\newtheorem*{ref_theorem}{Theorem}
\theoremstyle{definition}
\newtheorem{definition}[theorem]{Definition}
\newtheorem{remark}[theorem]{Remark}
\newtheorem{example}[theorem]{Example}
\newtheorem{problem}{Problem}
\theoremstyle{plain}
\newcommand{\C}{\mathbb{C}}
\newcommand{\R}{\mathbb{R}}
\newcommand{\N}{\mathbb{N}}
\newcommand{\K}{\mathbb{K}}
\newcommand{\CP}{\mathbb{CP}}
\newcommand{\UHP}{\mathcal{H}_+}
\newcommand{\LHP}{\mathcal{H}_-}
\newcommand{\D}{\mathbb{D}}
\DeclareMathOperator{\Hmg}{Hmg}
\DeclareMathOperator{\Symb}{Symb}
\DeclareMathOperator{\Hom}{Hom}
\DeclareMathOperator{\ev}{ev}
\DeclareMathOperator{\maxroot}{maxroot}
\DeclareMathOperator{\minroot}{minroot}
\DeclareMathOperator{\SL}{SL}
\title{A Representation Theoretic Explanation of the Borcea-Br{\"a}nd{\'e}n Characterization}
\author{Jonathan Leake}
\begin{document}

\maketitle

\begin{abstract}
    In 2009, Borcea and Br{\"a}nd{\'e}n characterized all linear operators on multivariate polynomials which preserve the property of being non-vanishing (stable) on products of prescribed open circular regions. We give a representation theoretic interpretation of their findings, which generalizes and simplifies their result and leads to a conceptual unification of many related results in polynomial stability theory. At the heart of this unification is a generalized Grace's theorem which addresses polynomials whose roots are all contained in some real interval or ray. This generalization allows us to extend the Borcea-Br{\"a}nd{\'e}n result to characterize a certain subclass of the linear operators which preserve such polynomials.
\end{abstract}

\section{Introduction}

In 1914, P{\'o}lya and Schur \cite{polya1914zwei} characterized the set of diagonal linear operators on polynomials which preserve real-rootedness. Since this seminal paper, much work has been done in extending this characterization to other classes of linear operators. This program in essence came to a close in 2009 with a paper of Borcea and Br{\"a}nd{\'e}n \cite{bb1}, which gave a complete characterization of linear operators on polynomials which preserve real-rootedness.

Their real-rootedness preservation characterization is derived from a more general result pertaining to stable polynomials. Given $\Omega \subset \C^m$, we say that a polynomial $f \in \C[x_1,\ldots,x_m]$ is \emph{$\Omega$-stable} if $f$ does not vanish in $\Omega$. Further, $f$ is \emph{real stable} if it has real coefficients and is $\UHP^m$-stable, where $\UHP \subset \C$ is the open upper half-plane. (We also denote the open lower half-plane by $\LHP \subset \C$.) We additionally use the terms \emph{weakly} $\Omega$-stable and \emph{weakly} real stable if we allow $f \equiv 0$. Finally, we write $f \in \C^\lambda[x_1,\ldots,x_m]$ for $\lambda \equiv (\lambda_1,\ldots,\lambda_m)$ if $f$ is of degree at most $\lambda_k$ in $x_k$. We are then led to the following problems for $\K \in \{\C,\R\}$, generalized from the P{\'o}lya-Schur characterization:

\begin{problem}
    Characterize linear operators $T: \K^\lambda[x_1,\ldots,x_m] \to \K[x_1,\ldots,x_m]$ preserving weak $\Omega$-stability.
\end{problem}

\begin{problem}
    Characterize linear operators $T: \K[x_1,\ldots,x_m] \to \K[x_1,\ldots,x_m]$ preserving weak $\Omega$-stability.
\end{problem}

In \cite{bb1}, Borcea and Br{\"a}nd{\'e}n were able to solve these problems in many cases. In particular, they solved both problems for $\K = \R$ and $\Omega = \UHP^m$, where $m=1$ corresponds to the case of preservation of real-rooted polynomials. For $\K = \C$, they were able to solve Problem 1 for $\Omega$ that is \emph{any} product of open circular regions in $\C$.

In this paper we will only be concerned with Problem 1, for which we now state the solution from \cite{bb1}. Given a linear operator $T: \K^\lambda[x_1,\ldots,x_m] \to \K[x_1,\ldots,x_m]$, a polynomial $\Symb_{BB}(T)$ called the \emph{(Borcea-Br{\"a}nd{\'e}n) symbol} is associated to $T$. Specifically, the symbol is a polynomial in $\K^{\lambda \sqcup \lambda}[x_1,\ldots,x_m, z_1,\ldots,z_m]$ (i.e., of $2m$ variables), where $\lambda \sqcup \lambda := (\lambda_1,\ldots,\lambda_m,\lambda_1,\ldots,\lambda_m)$. The crucial feature of the symbol is that it shares certain stability properties with its associated linear operator, which yields the characterizations stated in the following results. (We will express these results in more detail in \S\ref{Cops_sect} and \S\ref{Rops_sect}.)

\begin{theorem}[Borcea-Br{\"a}nd{\'e}n]\label{BB_Cops_thm}
    Fix a linear operator $T: \C^\lambda[x_1,\ldots,x_m] \to \C[x_1,\ldots,x_m]$ which has image of dimension greater than one. Then, $T$ maps $\UHP^m$-stable polynomials to weakly $\UHP^m$-stable polynomials if and only if $\Symb_{BB}(T)$ is $\UHP^{2m}$-stable.
\end{theorem}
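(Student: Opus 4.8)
The plan is to prove the two implications separately. The "only if" direction is a substitution argument closed off by a Hurwitz-type continuity step, while the "if" direction is the substantial one and proceeds by polarizing into multiaffine polynomials and invoking the apolarity (Grace--Walsh--Szeg\H{o}) theorem. Throughout I use that $\Symb_{BB}(T)(\mathbf{x},\mathbf{z})$ is, up to normalization, $T\bigl[\prod_{k=1}^m(x_k+z_k)^{\lambda_k}\bigr]$ with $T$ acting in the $\mathbf{x}$-variables, and I write $|\lambda| := \sum_k \lambda_k$. For "only if": for each fixed $\mathbf{z}\in\UHP^m$ the polynomial $\prod_k(x_k+z_k)^{\lambda_k}$ lies in $\C_\lambda[x_1,\dots,x_m]$ and is $\UHP^m$-stable in $\mathbf{x}$ (its only $x_k$-root is $-z_k\in\LHP$), so the hypothesis forces $\Symb_{BB}(T)(\cdot,\mathbf{z})$ to be weakly $\UHP^m$-stable for every $\mathbf{z}\in\UHP^m$; a standard Hurwitz-type lemma then promotes this to joint weak $\UHP^{2m}$-stability, and the assumption $\dim\operatorname{image}(T)>1$ (which forces $T\neq 0$, hence $\Symb_{BB}(T)\not\equiv 0$) upgrades this to genuine $\UHP^{2m}$-stability. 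The dimension hypothesis is genuinely used: for $T[f]=\phi(f)\cdot g$ with $g$ a nonzero constant, $T$ trivially preserves weak stability, yet $\Symb_{BB}(T)=g\cdot\phi\bigl[\prod_k(x_k+z_k)^{\lambda_k}\bigr]$ need not be stable in $\mathbf{z}$.

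For "if", assume $\Symb_{BB}(T)$ is $\UHP^{2m}$-stable and let $f\in\C_\lambda[x_1,\dots,x_m]$ be $\UHP^m$-stable; I must show $T[f]$ is weakly $\UHP^m$-stable. The mechanism is the apolarity identity $T[f](\mathbf{x}) = \bigl\{\Symb_{BB}(T)(\mathbf{x},\cdot),\, f^{*}\bigr\}$, where $f^{*}(\mathbf{z}) := f(-\mathbf{z})$ and $\{\cdot,\cdot\}$ is the apolar (differential) pairing in the $\mathbf{z}$-variables; this is checked by expanding $\prod_k(x_k+z_k)^{\lambda_k}$ and matching coefficients. Fix $\mathbf{x}_0\in\UHP^m$. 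Since $\Symb_{BB}(T)$ is $\UHP^{2m}$-stable, the restriction $\Symb_{BB}(T)(\mathbf{x}_0,\cdot)$ is a nonzero $\UHP^m$-stable polynomial in $\mathbf{z}$ — nonzero, since otherwise $\Symb_{BB}(T)$ would vanish on $\{\mathbf{x}_0\}\times\UHP^m\subset\UHP^{2m}$. To handle boundary effects I replace $f$ by $f_\varepsilon(\mathbf{x}):=f(\mathbf{x}+i\varepsilon\mathbf{1})$ for small $\varepsilon>0$: this keeps $f_\varepsilon\in\C_\lambda[x_1,\dots,x_m]$, makes $f_\varepsilon^{*}$ non-vanishing on $\{\operatorname{Im}<\varepsilon\}^m$ (a product of an open half-plane strictly containing $\overline{\LHP}$), and satisfies $T[f_\varepsilon]\to T[f]$ coefficientwise as $\varepsilon\to 0$. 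It then suffices to show $T[f_\varepsilon](\mathbf{x}_0)\neq 0$, after which Hurwitz's theorem yields weak $\UHP^m$-stability of the limit $T[f]$.

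Suppose $T[f_\varepsilon](\mathbf{x}_0)=0$, i.e.\ $\Symb_{BB}(T)(\mathbf{x}_0,\cdot)$ and $f_\varepsilon^{*}$ are apolar. Polarize both (within each block of $\lambda_k$ of the $\mathbf{z}$-variables) into multiaffine polynomials in $|\lambda|$ variables; the apolar pairing is preserved, and by Grace--Walsh--Szeg\H{o} so is stability, so the polarization of $\Symb_{BB}(T)(\mathbf{x}_0,\cdot)$ is non-vanishing on $\UHP^{|\lambda|}$ while the polarization of $f_\varepsilon^{*}$ is non-vanishing on $\{\operatorname{Im}<\varepsilon\}^{|\lambda|}\supset\overline{\LHP}^{|\lambda|}$. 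But the multiaffine form of Grace's theorem says that if $A$ and $B$ are apolar multiaffine polynomials and $A$ is non-vanishing on $U^{N}$ for an open circular region $U$, then $B$ vanishes at some point of $(\C\setminus U)^{N}$; applied with $A$ the polarized symbol, $U=\UHP$, $B$ the polarized $f_\varepsilon^{*}$, this forces $B$ to vanish somewhere on $\overline{\LHP}^{|\lambda|}$ — contradicting the non-vanishing just established. Hence $T[f_\varepsilon](\mathbf{x}_0)\neq 0$ for all $\mathbf{x}_0\in\UHP^m$, and letting $\varepsilon\to 0$ and applying Hurwitz completes the proof.

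I expect the main obstacle to be the reverse direction, specifically the bookkeeping that makes the last step rigorous: normalizing the apolar pairing so that $T[f](\mathbf{x})=\{\Symb_{BB}(T)(\mathbf{x},\cdot),f^{*}\}$ holds exactly (tracking degrees in each $z_k$, the reflection $f\mapsto f^{*}$, and signs), and formulating the multiaffine/blockwise version of Grace's apolarity theorem with the correct circular regions so that it applies verbatim to a polynomial with $m$ distinct blocks of sizes $\lambda_1,\dots,\lambda_m$ rather than a single symmetric block. This is precisely where a clean, conceptual — in the language of this paper, representation-theoretic — repackaging of Grace's theorem (the $\mathrm{SL}_2$-equivariant apolar pairing on $\mathrm{Sym}^{\lambda}$ together with polarization) should do the heavy lifting and replace the case analysis of \cite{bb1}.
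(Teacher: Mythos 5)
Your route --- apolarity identity, $\varepsilon$-shift, polarization, Grace--Walsh--Szeg{\H{o}}, multiaffine Grace --- essentially retraces the original Borcea--Br{\"a}nd{\'e}n argument, while the paper instead factors the result through its strong-stability characterization (Theorem~\ref{Cops_graces}, proved via the Symbol Lemma and a from-scratch multivariate Grace theorem, Theorem~\ref{graces_thm}) plus a link lemma (Lemma~\ref{bb_Clink}) between weak and strong preservation. That reorganization disposes of exactly the degree and degeneracy bookkeeping you flag as the expected obstacle; and indeed two steps in your sketch are incorrect as written.

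In the ``only if'' direction, pointwise weak stability of each slice $\Symb_{BB}(T)(\cdot,\mathbf{z})$ for $\mathbf{z}\in\UHP^m$, together with $\Symb_{BB}(T)\not\equiv 0$, does \emph{not} give genuine $\UHP^{2m}$-stability, and no Hurwitz lemma repairs this: for $T:\sum_k\binom{n}{k}a_kx^k\mapsto(a_n+a_{n-2})x^n$, each slice $(1+z_0^2)x^n$ is weakly $\UHP$-stable and $\Symb_{BB}(T)=(1+z^2)x^n\not\equiv 0$, yet $\Symb_{BB}(T)(x,i)\equiv 0$, so the joint polynomial vanishes on $\UHP\times\{i\}\subset\UHP^2$. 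The dimension hypothesis must be used via Lemma~\ref{dim_lemma}, not merely to conclude $T\neq 0$: if $T$ kills some $p$ lying in the interior of the set of $\UHP^m$-stable polynomials --- e.g.\ $\prod_k(x_k+z_{0,k})^{\lambda_k}$ with $\mathbf{z}_0\in\UHP^m$ --- then $0$ is interior to the weakly stable part of the image of $T$, so the entire image consists of weakly stable polynomials and hence has dimension at most one, a contradiction. That is what forbids a vanishing slice. In the ``if'' direction, your quoted multiaffine Grace statement is false without a full-degree hypothesis ($A=B=1$ in $\C_1[x]$ are apolar and both nonvanishing on all of $\C$). Concretely, if $\deg_{x_k}f<\lambda_k$ then $\Hmg_\lambda(f_\varepsilon^*)$ has a root at $\infty\in\overline{\LHP}\subset\CP^1$, so it is not $\overline{\LHP}^m$-stable in the sense required by Theorem~\ref{grace_pairs_thm}(ii), and the apolar pairing with $\Symb_{BB}(T)(\mathbf{x}_0,\cdot)$ can vanish. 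You must first approximate $f$ coefficientwise by full-degree $\UHP^m$-stable polynomials --- say $f\cdot\prod_k(1+x_k/(Mi))^{\lambda_k-\deg_{x_k}f}$ with $M\to\infty$ --- and close with one more Hurwitz limit. With both repairs your argument becomes a valid, if more hands-on, alternative to the paper's route.
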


\begin{theorem}[Borcea-Br{\"a}nd{\'e}n]\label{BB_Rops_thm}
    Fix a linear operator $T: \R^\lambda[x_1,\ldots,x_m] \to \R[x_1,\ldots,x_m]$ which has image of dimension greater than two. Then, $T$ maps real stable polynomials to weakly real stable polynomials if and only if either $\Symb_{BB}(T)$ or $\Symb_{BB}(T^-)$ is real stable, where $T^-(p) := T(p(-x_1,\ldots,-x_m))$.
\end{theorem}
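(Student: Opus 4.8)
The plan is to deduce the real-stability statement from the complex version, Theorem~\ref{BB_Cops_thm}, via the multivariate Hermite--Biehler correspondence. Write $\tilde T\colon \C_\lambda[x_1,\dots,x_m]\to\C[x_1,\dots,x_m]$ for the $\C$-linear extension of $T$. Since $T$ has a real matrix, $\tilde T(\bar g)=\overline{\tilde T(g)}$ for all $g$, and the image of $\tilde T$ has complex dimension equal to the real dimension of the image of $T$, hence greater than $2$ and in particular greater than $1$; so Theorem~\ref{BB_Cops_thm} applies to $\tilde T$ and, identically, to $\widetilde{T^-}$ (which has the same image as $\tilde T$). Two standard facts will be used freely: a polynomial with real coefficients is real stable if and only if it is $\UHP^m$-stable, equivalently $\LHP^m$-stable; and $\Symb_{BB}(S)$ has real coefficients whenever $S$ is a real operator, so that ``$\Symb_{BB}(T)$ is real stable'' is the same as ``$\Symb_{BB}(T)$ is $\UHP^{2m}$-stable.''

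The backward direction is then immediate. If $\Symb_{BB}(T)$ is real stable, it is $\UHP^{2m}$-stable, so by Theorem~\ref{BB_Cops_thm} the operator $\tilde T$ sends $\UHP^m$-stable polynomials to weakly $\UHP^m$-stable ones; applied to a real stable $f$, this gives that $T(f)=\tilde T(f)$ is weakly $\UHP^m$-stable with real coefficients, i.e.\ weakly real stable. If instead $\Symb_{BB}(T^-)$ is real stable, the same reasoning shows $T^-$ maps real stable polynomials to weakly real stable ones, and since $f(-x_1,\dots,-x_m)$ is real stable whenever $f$ is, we recover $T(f)=T^-\!\big(f(-x_1,\dots,-x_m)\big)$ weakly real stable.

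For the forward direction, assume $T$ maps real stable polynomials to weakly real stable ones. The heart of the argument is a dichotomy: either $\tilde T$ sends every $\UHP^m$-stable polynomial to a weakly $\UHP^m$-stable one, or it sends every $\UHP^m$-stable polynomial to a weakly $\LHP^m$-stable one. Granting this, in the first case Theorem~\ref{BB_Cops_thm} yields that $\Symb_{BB}(T)$ is $\UHP^{2m}$-stable, hence real stable. In the second case, the conjugation identity $\tilde T(\bar g)=\overline{\tilde T(g)}$ shows that $\widetilde{T^-}(q)=\tilde T\!\big(q(-x_1,\dots,-x_m)\big)$ is weakly $\UHP^m$-stable for every $\UHP^m$-stable $q$ (negating variables turns $q$ into a $\LHP^m$-stable polynomial, whose coefficientwise conjugate is $\UHP^m$-stable, to which the second alternative applies; one then conjugates back), so Theorem~\ref{BB_Cops_thm} applied to $T^-$ gives that $\Symb_{BB}(T^-)$ is real stable. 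To establish the dichotomy, take any $\UHP^m$-stable $g=f_1+if_2$ with $f_1,f_2\in\R_\lambda[x_1,\dots,x_m]$. By the Hermite--Biehler theorem the pair $(f_1,f_2)$ is in proper position, so every real combination $\mu f_1+\nu f_2$ is weakly real stable; applying the real linear operator $T$, every $\mu T(f_1)+\nu T(f_2)$ is weakly real stable, and the converse Hermite--Biehler theorem then forces $\tilde T(g)=T(f_1)+iT(f_2)$ or its conjugate $T(f_1)-iT(f_2)$ to be weakly $\UHP^m$-stable — that is, $\tilde T(g)$ is weakly $\UHP^m$-stable or weakly $\LHP^m$-stable, for \emph{each} $\UHP^m$-stable $g$. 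What remains is to rule out a mixed situation in which $\tilde T$ sends one $\UHP^m$-stable polynomial to something with a zero in $\UHP^m$ and another to something with a zero in $\LHP^m$.

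This last step is the main obstacle, and it is exactly where the hypothesis that the image of $T$ has dimension greater than two (rather than greater than one, as in the complex case) is needed. The space of weakly $\UHP^m$-stable polynomials in $\C_\lambda[x_1,\dots,x_m]$ is connected — indeed star-shaped about $0$ — and the two subsets on which $\tilde T$ lands respectively in the weakly $\UHP^m$-stable and weakly $\LHP^m$-stable polynomials are closed by Hurwitz's theorem; a mixed situation would force the ``degenerate locus'' — the set of $\UHP^m$-stable $g$ for which $\tilde T(g)$ is simultaneously weakly $\UHP^m$- and $\LHP^m$-stable, equivalently $\tilde T(g)$ equal to $0$ or a complex scalar times a real stable polynomial — to separate this space in a strong sense. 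Analyzing this locus, one should be able to show that it is that large only when the image of $T$ is spanned by at most two polynomials of a rigidly constrained shape, contradicting the dimension hypothesis. I expect this rigidity analysis, together with the bookkeeping for polynomials that $\tilde T$ kills or sends into the degenerate locus, to be the genuinely technical part; the reduction to Theorem~\ref{BB_Cops_thm} and the Hermite--Biehler inputs are comparatively routine.
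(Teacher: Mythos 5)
Your backward direction and the Hermite--Biehler reduction to the dichotomy are sound, and you have correctly located the crux: ruling out the ``mixed'' case in which $\tilde T$ sends one stable $g$ to a weakly $\UHP^m$-stable polynomial and another to a weakly $\LHP^m$-stable one. However, the argument you sketch for this does not close the gap. Closedness of $A$ and $B$ together with connectedness of the domain gives only $A\cap B\neq\emptyset$; since $0\in A\cap B$ always, working with all \emph{weakly} $\UHP^m$-stable polynomials gives nothing, and even on the strictly $\UHP^m$-stable cone there is no general reason for $A\cap B$ to be large rather than, say, a thin slice. The claim that a large degenerate locus forces a two-dimensional image is precisely what must be proved, and you assert it rather than derive it.

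What is missing is a perturbation argument localized at one degenerate point. From connectivity (most conveniently along the family of evaluation symbols parametrized by $\LHP^m$, as in the proof of Lemma~\ref{bb_Rlink}) one extracts a single strictly $\overline{\UHP}^m$-stable $q_0$ such that $T(q_0)$ is simultaneously weakly $\UHP^m$- and weakly $\LHP^m$-stable, hence by Lemma~\ref{scal_rs_lemma} a scalar multiple of a weakly real stable polynomial (possibly zero). Because strict $\overline{\UHP}^m$-stability is an open condition, there is a ball $B(0)$ of real polynomials such that $q_0+ir_0$ is $\UHP^m$-stable for every $r_0\in B(0)$; Lemma~\ref{rs_lemma} then makes $T(q_0)+iT(r_0)$ either $\UHP^m$-stable, $\LHP^m$-stable, or zero, and Hermite--Biehler forces $T(r_0)$ to be weakly real stable. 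As this holds for an open set of real $r_0$ and weak real stability is invariant under real scaling, the entire image $T[V_\R(\lambda)]$ consists of weakly real stable polynomials, and Lemma~\ref{dim_lemma} bounds its dimension by two, contradicting your hypothesis. This is exactly the forward direction of Lemma~\ref{bb_Rlink}, through which the paper recovers Theorem~\ref{BB_Rops_thm} via Corollary~\ref{bb_Rops}; that perturbation step is the content you have deferred, and without it the proof is incomplete.
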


To deal with other products of circular regions, one then conjugates $T$ by certain M{\"o}bius transformations and applies Theorem \ref{BB_Cops_thm} to the conjugated operator. Unfortunately though, this is a tedious process which has to be done each time a new stability region is to be considered. Additionally, the image dimension restrictions give rise to degeneracy cases which have to be dealt with separately. Both of these issues obscure the connection between an operator and its symbol.

In this paper, we present a new conceptual approach to the Borcea-Br{\"a}nd{\'e}n characterization via the representation theory of $\SL_2(\C)$. In particular, we derive a new symbol (denoted $\Symb$) in a natural way, and our definition eliminates the issues discussed above. This is seen in the following results, which are our simplified and generalized versions of the Borcea-Br{\"a}nd{\'e}n characterizations. Note that for the sake of simplicity, we have omitted a few details here regarding non-convex circular regions. Specifically, circular regions of $\C$ should be thought of as lying in the Riemann sphere, so that complements of discs contain the point at $\infty$.

\newtheorem*{Cops_graces}{Theorem \ref{Cops_graces}}
\begin{Cops_graces}
    Fix a linear operator $T: \C^\lambda[x_1,\ldots,x_m] \to \C^\alpha[x_1,\ldots,x_l]$, a product of all open or all closed circular regions $\Omega_0 = C_1 \times \cdots \times C_m$, and a product of sets $\Omega_1 := S_1 \times \cdots \times S_m$. Further, denote $\widetilde{\Omega}_0 := (\C \setminus C_1) \times \cdots \times (\C \setminus C_m)$. Up to certain degree and convexity (of $C_k$) restrictions, we have that $T$ maps $\Omega_0$-stable polynomials to nonzero $\Omega_1$-stable polynomials if and only if $\Symb(T)$ is $(\widetilde{\Omega}_0 \times \Omega_1)$-stable.
\end{Cops_graces}

\newtheorem*{Rops_graces}{Theorem \ref{Rops_graces}}
\begin{Rops_graces}
    Fix a linear operator $T: \R^\lambda[x_1,\ldots,x_m] \to \R^\alpha[x_1,\ldots,x_l]$. Up to certain degree restrictions, $T$ maps real stable polynomials to nonzero real stable polynomials if and only if $\Symb(T)$ is either $(\overline{\LHP}^m \times \UHP^l)$-stable or $(\overline{\LHP}^m \times \LHP^l)$-stable.
\end{Rops_graces}

We summarize the specific improvements that this and our other related results give over the Borcea-Br{\"a}nd{\'e}n characterization as follows.

\begin{enumerate}
    \item \emph{Different stability regions can be considered using the same symbol.} The symbol we define in this paper is universal: for example, it gives stability-preservation information for \emph{any} product of open circular regions. The Borcea-Br{\"a}nd{\'e}n symbol, on the other hand, required the application of M{\"o}bius transformations. In addition, our symbol also allows for the output stability region to be chosen independently of the input stability region. While this does not literally improve the result, it does allow for quicker computations. In particular, see Examples \ref{add_conv_ex1} and \ref{mult_conv_ex1} where classical polynomial convolution results are easily derived from our framework.
    
    \item \emph{Our characterization does not require any degeneracy condition.} Our results characterize operators which preserve (strong) stability rather than weak stability. As seen above, this slightly stronger notion of stability enables us to eliminate any image dimension degeneracy condition, as required in the Borcea-Br{\"a}nd{\'e}n characterizations (Theorems \ref{BB_Cops_thm} and \ref{BB_Rops_thm}). This demonstrates a cleaner link between an operator and its symbol.
    
    \item \emph{Closed circular regions and projectively convex regions can be considered.} The symbol we define in this paper handles products of open circular regions, as well as products of closed circular regions. (In \cite{melamud2015ops}, Melamud proves a result similar to the Borcea-Br{\"a}nd{\'e}n characterization for closed circular regions.) Further, we are also able to consider more general \emph{projectively convex} regions (circular regions with portions of their boundary; also called \emph{generalized circular regions}, see \cite{zervos1960aspects} and \cite{zaheer1976polar}) in Proposition \ref{Cops_prop}. This allows us to determine stability-preservation information about real intervals and half-lines. It also turns out, somewhat surprisingly, that our symbol can handle products of any \emph{sets} as possible output space stability regions (as seen in Theorem \ref{Cops_graces} above).
\end{enumerate}

In the process of generalizing the Borcea-Br{\"a}nd{\'e}n characterization we develop a general algebraic framework which also encompasses many of the classical polynomial tools. This framework aims to motivate classical results and provide intuition for the connection between a stability preserving operator and its symbol.

\subsubsection*{The Main Idea}

A major purpose of this paper is to explain a certain conceptual thread in the history of polynomial stability theory: that it is often possible to determine general stability information from restricted sets of polynomials. For example, the P{\'o}lya-Schur and Borcea-Br{\"a}nd{\'e}n characterizations derive from a single polynomial (i.e., the symbol) stability properties of a whole collection of polynomials in the output of a given linear operator. Additionally, the Grace-Walsh-Szeg{\H{o}} coincidence theorem says that stability information of any polynomial can be determined from its polarization, which is of degree at most one in each variable.

% We now begin to approach these phenomena from elementary algebraic and representation theoretic concepts.
As it turns out, these sorts of phenomena can be explained using relatively basic algebraic and representation theoretic concepts. We view $\C^n[x]$ as a representation of $\SL_2(\C)$ via the standard action, given as follows. For $\phi \in \SL_2(\C)$ and $f \in \C^n[x]$, we define:
\[
    (\phi \cdot f)(x) := f(\phi^{-1}x)
\]
Here $\phi^{-1}$ acts on $x \in \C$ as a M{\"o}bius transformation, or equivalently $\phi$ acts on the roots of $f$. (Similarly, $\C^\lambda[x_1,\ldots,x_m]$ can be viewed as a representation of $(\SL_2(\C))^m$ via this action in each variable.) Under this interpretation, important maps like polarization, projection, the apolarity form, and even the symbol turn out to be invariant under these $\SL_2(\C)$ actions. This leads us to a conceptual thesis: \emph{$\SL_2(\C)$-invariant maps transfer stability information}.

% \begin{thesis}
%     $\SL_2(\C)$-invariant maps transfer stability information.
% \end{thesis}

The goal of this paper is then to explicate and answer the most important question related to this thesis: what does it mean for the symbol map to be $\SL_2(\C)$-invariant and how does it transfer stability information? To answer this, we consider the following standard ideas relating spaces of linear operators to tensor products.

Let $W_1,W_2$ be two finite dimensional representations of a group $G$, and let $\Hom(W_1,W_2)$ denote the space of linear maps from $W_1$ to $W_2$. Then, $\Hom(W_1,W_2) \cong W_1^* \boxtimes W_2$ (the outer tensor product) can viewed as a representation of $G \times G$. If we further have a $G$-invariant bilinear form on $W_1$, then we also have $W_1 \cong W_1^*$. This leads to the following identification:
\[
    \Hom(W_1,W_2) \cong W_1^* \boxtimes W_2 \cong W_1 \boxtimes W_2
\]
If $W_1$ and $W_2$ are spaces of polynomials, each in $m$ variables, then their tensor product $W_1 \boxtimes W_2$ is isomorphic to a larger space of polynomials in $2m$ variables. That is, a linear operator between polynomial spaces $W_1$ and $W_2$ can be associated to some polynomial in double the variables, via the above identification of representations. This is precisely the idea of the symbol of an operator.

Let's see how this works in the univariate case. Consider $\C^n[x]$ as a representation of the group $\SL_2(\C)$, as described above. It is then a standard result that the classical bilinear \emph{apolarity form} is invariant under the action of M{\"o}bius transformations. That is, the apolarity form is an $\SL_2(\C)$-invariant bilinear form on $\C^n[x]$. This form, applied to $f,g \in \C^n[x]$ with coefficients $f_k,g_k$, is defined as follows:
\[
    \langle f,g \rangle^n := \sum_{k=0}^n \binom{n}{k}^{-1} (-1)^k f_k g_{n-k}
\]
With this, we obtain the identification described above: $\Hom(\C^n[x],\C^m[x]) \cong \C^n[x] \boxtimes \C^m[x] \cong \C^{(n,m)}[x,z]$. The final piece of the puzzle is then to find a way to transfer stability information through this identification of representations. The key result to this end is the classical Grace's theorem:

\begin{ref_theorem}[\cite{grace1902zeros}]
    Let $f,g \in \C^n[x]$ be polynomials of degree exactly $n$. Further, let $C$ be some open or closed circular region such that $f$ is $C$-stable and $g$ is $(\C \setminus C)$-stable. Then, $\langle f,g \rangle^n \neq 0$.
\end{ref_theorem}

That is, the apolarity form not only provides the link between a linear operator and its symbol, but also captures stability information. So, whatever stability claims we can make about polynomials in $\C^{(n,m)}[x,z]$ can then be seamlessly transferred to corresponding linear operators in $\Hom(\C^n[x],\C^m[x])$. From this we are able to recover the Borcea-Br{\"a}nd{\'e}n characterization. Additionally, all of the theory here relating stability and the representation theory of $\SL_2(\C)$ can be generalized to multivariate polynomials in a straightforward manner. The details will be discussed in \S\ref{reps_sect}.

In a similar fashion, other important maps also have $\SL_2(\C)$-invariance properties (e.g., polarization and projection, as used in the Grace-Walsh-Szeg{\H{o}} coincidence theorem, explicitly give the isomorphisms of a classical representation theoretic result; see Appendix \ref{gws_app}). A main feature of our conceptual thesis is that it allows for a unification of many seemingly related results in polynomial stability theory. A crucial point to make then is that Grace's theorem is at the heart of this unification. That said, a significant portion of this paper is devoted to discussing it.

\subsubsection*{A Generalized Grace's Theorem and Interval- and Ray-Rootedness}

In \cite{bb2}, Borcea and Br{\"a}nd{\'e}n are able to prove a multivariate Grace's theorem using their operator characterization. In this paper we will prove the multivariate version from scratch, and then use it to derive a new characterization of stability-preserving linear operators. In addition, we generalize it to \emph{projectively convex} regions, which consist of an open circular region with a portion of its boundary (see \S\ref{proj_conv_subsect}). We state our new result as follows. Note that this result can be seen as an extension of the generalized Grace's theorem given in Corollary 4.4 of \cite{zaheer1976polar}.

\newtheorem*{graces_thm}{Theorem \ref{graces_thm}}
\begin{graces_thm}
    Fix $\lambda \in \N_0^m$ and $f,g \in \C^\lambda[x_1,\ldots,x_m]$ such that $f$ and $g$ both have a nonzero term of degree $\lambda$. Also, denote $C := \UHP \cup \overline{\R_+}$ and $\widetilde{C} := \LHP \cup \overline{\R_-}$. If $f$ is $C^m$-stable and $g$ is $\widetilde{C}^m$-stable, then $\langle f,g \rangle^\lambda \neq 0$.
\end{graces_thm}

This result can, for instance, give stability information about positive- and negative-rooted polynomials. Since the apolarity form is invariant under the action of M{\"o}bius transformations, we immediately obtain similar statements regarding the union of any open circular region and a portion of its boundary. Notice also that, unlike the classical Grace's theorem, the stability regions $C$ and $\widetilde{C}$ have non-empty intersection.

In the vein of this extension, we provide a new characterization of a certain class of linear operators which preserve ray- and interval-rootedness. The problem of classifying all such operators is still open in general (see e.g., the end of \cite{bba}). Here, we solve this problem for a restricted class of operators: namely, operators which both preserve weak real-rootedness and also preserve ray- or interval-rootedness. Our main result in this direction is stated as follows, where a polynomial is called $J$-rooted when all of its roots are in $J$:

\newtheorem*{Jops_graces}{Theorem \ref{Jops_graces}}
\begin{Jops_graces}
    Fix a linear operator $T: \R^n[x] \to \R^m[x]$ which has image of dimension greater than two. Further, let $I,J \subseteq \R$ be intervals or rays. Up to certain degree restrictions, $T$ preserves weak real-rootedness and maps $I$-rooted polynomials to nonzero $J$-rooted polynomials if and only if $\Symb(T)$ is either $(\LHP \cup I) \times (\overline{\UHP} \setminus J)$-stable or $(\LHP \cup I) \times (\overline{\LHP} \setminus J)$-stable.
\end{Jops_graces}

In \S\ref{Jops_subsect}, this result is stated in a more restricted manner as the degeneracy condition (image dimension) and degree restrictions end up being more tedious than in the other results. Corollary \ref{Jops_graces_cor} and further explication then give the result as stated here.

As a final note, all of the results given here in the introduction are stated slightly differently in \S\ref{graces_sect}, \S\ref{Cops_sect}, \S\ref{Rops_sect}. In particular, the notation $V(\lambda)$ is used in place of $\C^\lambda[x_1,\ldots,x_m]$, and reference is made to $\CP^1$ (i.e., the Riemann sphere). This notation has to do with consideration of ``roots at infinity'', which allows us to remove degree restrictions and avoid reference to convex circular regions. We discuss this rigorously in \S\ref{homog_subsect}.

\subsubsection*{A Roadmap}

We now describe the content of the remainder of this paper. In \S\ref{prelim_sect}, we discuss the use of homogeneous polynomials via the notation $V(n)$ and $V(\lambda)$, and we describe the relation of these spaces to the notion of roots at infinity.

In \S\ref{reps_sect}, we explicate some very basic representation theory of $\SL_2(\C)$. We then demonstrate how the apolarity form and the symbol arise as natural constructs in this context. Results like the symbol lemma (Lemma \ref{symb_lemma}) and the $\SL_2(\C)$-invariance of the apolarity form are stated here.

In \S\ref{stability_sect}, we discuss some classical and some new polynomial stability theory results, and their multivariate analogues, in the homogeneous context. We also extend Laguerre's theorem to projectively convex regions (generalized circular regions), which later allows us to prove results regarding polynomials which have all their roots in a given interval. 

In \S\ref{graces_sect}, we state and prove our generalized Grace's theorem. We also discuss other stability regions to which the theorem applies, and consider symbols of linear operators given by evaluation at a particular point. We call these polynomials \emph{evaluation symbols}, as they turn out to play a crucial role in the proofs of the operator characterizations.

In \S\ref{Cops_sect}, we finally state and prove our improved characterizations of stability-preserving operators. We also demonstrate how the Borcea-Br{\"a}nd{\'e}n characterizations can be seen (with a bit of work) to be corollaries of our characterizations. We then provide examples of the use of our results. In particular, we show how stability results related to classical polynomial convolutions can be immediately recovered.

In \S\ref{Rops_sect}, we state and prove the analogous characterization of strong real stability-preserving operators. As with complex operators, we show how the Borcea-Br{\"a}nd{\'e}n characterization can be obtained as a corollary. In this section, we also state and prove our characterization of operators which preserve both weak real-rootedness as well as interval- (or ray-) rootedness.

In Appendix \ref{sl2_app}, we explicate more of the representation theory of $SL_2(\C)$ in a polynomial-minded way. Specifically, we prove a few standard tensor product decomposition results which more clearly demonstrate how this theory connects to the notion of apolarity.

In Appendix \ref{gws_app}, we discuss how polarization and the Grace-Walsh-Szeg{\H{o}} coincidence theorem fit in to the framework presented in this paper. We also demonstrate that the classical isomorphism $V(n) \cong \operatorname{Sym}^n(V(1))$ (for representations of $\SL_2(\C)$) can be realized as the polarization map and therefore has a stability-theoretic interpretation. While important to the conceptual thesis stated above, we place this discussion in an appendix as it is not utilized elsewhere.

\section{Preliminaries}\label{prelim_sect}

Here, we discuss basic notation and results related to polynomials and stability. In particular, we discuss in more detail the notation and consequences related to the use of homogeneous polynomials in place of usual univariate and mutlivariate polynomials. Then, we state a number of basic stability results in the language of homogeneous polynomials.

\subsection{Notation}

Let $[n] := \{1,2,\ldots,n\}$ and $(1^m) := (1,\ldots,1) \in \N_0^m$. For $\lambda = (\lambda_1,\ldots,\lambda_m) \in \N_0^m$, we define:
\[
    \C^\lambda[x_1,\ldots,x_m] := \left\{f \in \C[x_1,\ldots,x_m] : \deg_{x_k}(f) \leq \lambda_k\right\}
\]
That is, elements of $\C^\lambda[x_1,\ldots,x_m]$ are of degree at most $\lambda_k$ in the variable $x_k$. In particular, we call polynomials in $\C^{(1^m)}[x_1,\ldots,x_m]$ \emph{multi-affine}. We will also use the shorthand $\C^n[x]$ to refer to univariate polynomials of degree at most $n$.

Now we define similar spaces of polynomials which are homogeneous in pairs of variables. These polynomials should be seen as per-variable homogenizations of polynomials of the spaces defined above. For $\lambda = (\lambda_1,\ldots,\lambda_m) \in \N_0^m$ and $\K = \C$ or $\K = \R$, we define:
\[
    V_\K(\lambda) = V_\K(\lambda_1,\ldots,\lambda_m) := \left\{p \in \K[x_1,y_1\ldots,x_m,y_m] : p \text{ is homogeneous of degree $\lambda_k$ in $x_k$, $y_k$}\right\}
\]
We also use the shorthand $V(\lambda) = V_\C(\lambda)$. As above, we call polynomials in $V(1^m)$ \emph{multi-affine}. The notation used here is generalized from what is typically used to denote the irreducible representations of $\SL_2(\C)$. As it turns out, spaces of homogeneous polynomials in two variables can be used to define these representations. This will be made more precise in \S\ref{reps_sect}.

We let $\CP^1$ denote the projective space of lines in $\C^2$, and we will also identify this space with the Riemann sphere. Note that $\CP^1$ can also be considered as a compact version of $\C$ with one extra point added at infinity. We will often identify $\CP^1$ with $\C$ (up to this extra point) via stereographic projection. A \emph{circular region} is then an open or closed disc, half-plane, or complement of a disc in $\C$. The set of circular regions is transitive under the action of M\"obius transformations. We also use the name circular regions to refer to the stereographic projections into $\CP^1$. Closed half-planes and complements of discs projected into $\CP^1$ will contain the point at infinity. Throughout, we will let $\partial S$ denote the boundary of (the closure of) the set $S$ in $\CP^1$ and let $S^\circ$ denote the interior of $S$.

There is also a natural ordering structure on $\N_0^m$, along with a few basic operations that will be used throughout. Fix $\lambda = (\lambda_1,\ldots,\lambda_m)$ and $\alpha = (\alpha_1,\ldots,\alpha_m)$ in $\N_0^m$, and fix $\beta = (\beta_1,\ldots,\beta_n)$ in $\N_0^l$. We say $\alpha \leq \lambda$ whenever $\alpha_k \leq \lambda_k$ for all $k \in [m]$. We define $\lambda + \alpha := (\lambda_1+\alpha_1,\ldots,\lambda_m+\alpha_m)$, $|\lambda| := \lambda_1 + \cdots + \lambda_m$, and $\lambda \sqcup \beta := (\lambda_1,\ldots,\lambda_m,\beta_1,\ldots,\beta_l) \in \N_0^{m+l}$.

We also make use of a number of shorthands. Fix $\mu,\lambda \in \N_0^m$ such that $\mu \leq \lambda$. We define $x^\mu \in \C_\lambda[x_1,\ldots,x_m]$ via $x^\mu := x_1^{\mu_1}x_2^{\mu_2} \cdots x_m^{\mu_m}$. Similarly, we define $\partial_x^\mu := \partial_{x_1}^{\mu_1} \partial_{x_2}^{\mu_2} \cdots \partial_{x_m}^{\mu_m}$, where $\partial_x := \frac{\partial}{\partial x}$. When considering $V(\lambda)$, we define $y^\mu$ and $\partial_y^\mu$ in the same way. Further, we define $\lambda! := \lambda_1! \cdots \lambda_m!$ and $\binom{\lambda}{\mu} := \frac{\lambda!}{\mu!(\lambda-\mu)!} = \binom{\lambda_1}{\mu_1} \cdots \binom{\lambda_m}{\mu_m}$. Finally, we denote $(-1)^\mu := (-1)^{\mu_1} \cdots (-1)^{\mu_m} = (-1)^{|\mu|}$.

\subsection{Homogeneous Polynomials}\label{homog_subsect}

The usual degree-$n$ homogenization of a polynomial $f \in \C^n[x]$ is defined on monomials as follows and is extended linearly.
\[
    \begin{split}
        \Hmg_n : \C^n[x] &\rightarrow V(n) \\
        x^k &\mapsto x^ky^{n-k}
    \end{split}
\]
More generally, for $\lambda \in \N_0^m$ the \emph{degree-$\lambda$ homogenization} is defined on monomials as follows and is extended linearly.
\[
    \begin{split}
        \Hmg_\lambda: \C^\lambda[x_1,\ldots,x_m] &\rightarrow V(\lambda)\\
        x^{\mu} &\mapsto x^{\mu}y^{\lambda-\mu}
    \end{split}
\]
$\C^n[x]$ and $V(n)$ are isomorphic as vector spaces via $\Hmg_n$, and we will mainly utilize bivariate homogeneous polynomials in $V(n)$ over the usual univariate polynomials in $\C^n[x]$.
%
%of course is the fact that elements of $V(n)$ are polynomials in two variables, and hence have many more zeros than elements of $\C^n[x]$. By homogeneity though, those roots have strong structural properties which essentially mimic the root properties of elements of $\C^n[x]$.
%
What homogeneity gets us is a simplification of a number of issues related to the fact that polynomials in $\C^n[x]$ have \emph{at most} $n$ zeros. Specifically, it is more natural to think of the missing zeros (when the number of zeros is less than $n$) as being ``at infinity''. Certain results require premises restricting to convex regions or to polynomials of degree exactly $n$ (e.g., the classical Grace-Walsh-Szeg{\H{o}} coincidence theorem), and such details vanish when considering homogeneous polynomials with possible roots at infinity. Another way to say this is that we consider polynomials in $V(n)$ to have exactly $n$ roots in $\CP^1$, which can also be thought of as the Riemann sphere. We also consider polynomials $p(x,y) = p(x_1,y_1,\ldots,x_m,y_m) \in V(\lambda)$ to have zeros in $(\CP^1)^m$, where each pair $(x_k,y_k)$ corresponds to a single factor of $\CP^1$ in $(\CP^1)^m$.

    We use the notation $(a:b) \in \CP^1$, which is meant to give off the connotation of a ratio; that is, $(a:b)$ should feel like $a/b$. We also use the notation $(a:b) = \big((a_1:b_1), \ldots, (a_m:b_m)\big) \in (\CP^1)^m$. Note that this connotation aligns with the idea of considering the zeros of polynomials to be in $(\CP^1)^m$. given the following equality. Defining $p := \Hmg_\lambda(f)$ for a given polynomial $f \in \C^\lambda[x_1,\ldots,x_m]$, we have:
    \[
        p(x,y) = p(x_1,y_1,\ldots,x_m,y_m) = \prod_k y_k^{\lambda_k} \cdot f(x_1/y_1,\ldots,x_m/y_m) = y^\lambda \cdot f(x/y)
    \]
%\end{remark}
%
% For elements of $\CP^1$ and for homogeneous pairs of variables, we use the notation $(a;b) \in \CP^1$ and $p(x;y) = \sum_{k=0}^n p_k x^k y^{n-k} \in V(n)$. Similarly, we use the notation $(a;b) = ((a_1;b_1), \ldots, (a_m;b_m)) \in (\CP^1)^m$
%
Finally, we give an important definition which will essentially replace the notion of a monic polynomial for homogeneous polynomials.

\begin{definition}
    Given $p \in V(\lambda)$, we say that $p$ is \emph{top-degree monic} if the coefficient of $x^\lambda$ in $p$ equals 1. In particular, if $p \in V(n)$ is top-degree monic, then $p$ has no roots at infinity.
\end{definition}

\section{Homogeneous Polynomials as Representations}\label{reps_sect}

In this section, we will discuss some basic representation theory of $\SL_2(\C)$ and show how the apolarity form and the notion of the symbol of an operator arise naturally in the representation theoretic context. Most of the representation theory we use in this section is very basic. There are a number of references which discuss the theory in full detail, albeit with different goals in mind. Typically this is done via the theory of Lie groups and algebras, as in \cite{fulton2013reptheory} and in \cite{humphreys2012lietheory}.

As a note, most of the content of this section is less relevant to the analytic questions associated to polynomials. Rather, it serves as the foundational structure for a new approach to Grace's theorem and results concerning stability-preserving operators. For this reason, we believe it worthwhile to explicate key aspects of this foundation and their connection to analytic results. Pushing further into this connection may lead to new results beyond the scope of this paper.

\subsection{The Action of \texorpdfstring{$\SL_2(\C)$}{SL2(C)}}\label{SL_action_subsect}

Given $(\alpha:\beta) \in \CP^1$ and $\phi \in \SL_2(\C)$, we define the usual action of $\SL_2(\C)$ on $\CP^1$ via $\phi \cdot (\alpha:\beta) := \phi \binom{\alpha}{\beta}$. That is, $\phi$ acts by matrix multiplication on the vector $\binom{\alpha}{\beta}$. Equivalently, $\phi$ acts as its corresponding M{\"o}bius transformation on $\CP^1$. Note that, as with M{\"o}bius transformations, $\SL_2(\C)$ acts transitively on circular regions in $\CP^1$.

This then induces an action on $V(n)$ by acting on the roots (in $\CP^1$) of polynomials in $V(n)$. Given $p \in V(n)$ and $\phi \in \SL_2(\C)$, this action is defined via:
\[
    (\phi \cdot p)(x,y) := p\left(\phi^{-1} \binom{x}{y}\right) \equiv (p \circ \phi^{-1})(x,y)
\]
We can define a similar action of $(\SL_2(\C))^m$ on $V(\lambda)$, for $\lambda \in \N_0^m$. Specifically, given $p \in V(\lambda)$ and $(\phi_1,\ldots,\phi_m) \in \SL_2(\C)^m$, this action is defined via:
\[
    \big((\phi_1,\ldots,\phi_m) \cdot p\big)(x_1,y_1,\ldots,x_m,y_m) := p\left(\phi_1^{-1} \binom{x_1}{y_1}, \ldots, \phi_m^{-1}\binom{x_m}{y_m}\right)
\]
These actions turn $V(n)$ and $V(\lambda)$ into representations of $\SL_2(\C)$ and $(\SL_2(\C))^m$, respectively. These are precisely the finite dimensional irreducible representations of $\SL_2(\C)$ and $(\SL_2(\C))^m$ (see Lecture 11 of \cite{fulton2013reptheory}), and so they are the basic building blocks of the $\SL_2(\C)$ representation theory. Actions on $V(n)$ and $V(\lambda)$ can be extended to tensor products in the usual way, and in this paper we will make use of both inner and outer tensor products. We now briefly discuss tensor product actions for those less familiar.

The outer tensor product of $V(\lambda_k)$, denoted $V(\lambda_1) \boxtimes \cdots \boxtimes V(\lambda_m)$, is a representation of $(\SL_2(\C))^m$ with action by $(\phi_1,\ldots,\phi_m)$ on simple tensors given as follows:
\[
    (\phi_1, \cdots, \phi_m) \cdot (p_1 \boxtimes \cdots \boxtimes p_m) := (\phi_1 \cdot p_1) \boxtimes (\phi_2 \cdot p_2) \boxtimes \cdots \boxtimes (\phi_m \cdot p_m)
\]
This implies that $V(\lambda)$ and $V(\lambda_1) \boxtimes \cdots \boxtimes V(\lambda_m)$ are isomorphic as representations, and this fact will be used when we define the symbol in \S\ref{symb_subsect}.

The inner tensor product of $V(\lambda_k)$, denoted $V(\lambda_1) \otimes \cdots \otimes V(\lambda_m)$, is a representation of $\SL_2(\C)$ with action by $\phi$ on simple tensors given as follows:
\[
    \phi \cdot (p_1 \otimes \cdots \otimes p_m) := (\phi \cdot p_1) \otimes (\phi \cdot p_2) \otimes \cdots \otimes (\phi \cdot p_m)
\]
While $V(\lambda)$ and $V(\lambda_1) \otimes \cdots \otimes V(\lambda_m)$ are isomorphic as vector spaces, they are representations of different groups ($(\SL_2(\C))^m$ and $\SL_2(\C)$ respectively). The inner tensor product relates to invariants of multiple polynomials with respect to a single $\SL_2(\C)$ action. For instance, the apolarity form takes two distinct polynomials as input, and it is a classical result that this form is invariant with respect to a single action by M{\"o}bius transformation. As it turns out, this form can be viewed as an $\SL_2(\C)$-invaiant map on an inner tensor product of polynomial spaces. It will therefore be important for us to understand these inner tensor products in a little more detail.

\subsection{An Important Invariant Map, and Apolarity}\label{inv_maps_subsect}

To aide in our investigation of inner tensor products of $\SL_2(\C)$ representations, we now define an important $\SL_2(\C)$-invariant linear map, denoted by $D$. This map has a long history in invariant theory, and we touch on this below.

\begin{proposition}\label{Dmap_prop}
    The linear map $D := (\partial_x \otimes \partial_y - \partial_y \otimes \partial_x): V(n+1) \otimes V(m+1) \rightarrow V(n) \otimes V(m)$ is $\SL_2(\C)$-invariant.
\end{proposition}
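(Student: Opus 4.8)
The plan is to verify $SL_2(\C)$-invariance directly at the level of the Lie algebra $\mathfrak{sl}_2(\C)$, since invariance under a connected group is equivalent to invariance under the infinitesimal action of its Lie algebra, and checking three generators is more tractable than checking all of $SL_2(\C)$. First I would record the infinitesimal action on $V(n)$ coming from the standard basis $e = \begin{pmatrix} 0 & 1 \\ 0 & 0 \end{pmatrix}$, $f = \begin{pmatrix} 0 & 0 \\ 1 & 0 \end{pmatrix}$, $h = \begin{pmatrix} 1 & 0 \\ 0 & -1 \end{pmatrix}$: differentiating the action $(\phi \cdot p)(x:y) = p(\phi^{-1}\binom{x}{y})$ at the identity shows that $e$, $f$, $h$ act on $V(n)$ as the first-order differential operators $-y\partial_x$, $-x\partial_y$, and $-x\partial_x + y\partial_y$ respectively (up to a sign convention I would fix consistently; the precise constants do not affect the argument). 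On an inner tensor product $V(n+1) \otimes V(m+1)$ each Lie algebra element $\xi$ acts by the Leibniz rule $\xi \cdot (p \otimes q) = (\xi \cdot p) \otimes q + p \otimes (\xi \cdot q)$.

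The core of the proof is then the computation that $D = \partial_x \otimes \partial_y - \partial_y \otimes \partial_x$ commutes with the action of each of $e$, $f$, $h$ as operators $V(n+1) \otimes V(m+1) \to V(n) \otimes V(m)$ (note the source and target carry different but compatible actions, so ``invariant'' means intertwining). For each generator this reduces to a bracket identity among the first-order operators $y\partial_x$, $x\partial_y$, $x\partial_x$, $y\partial_y$ on each tensor factor, combined with the product rule. For instance, for $h$ one checks that $(-x\partial_x + y\partial_y)$ acting on the target, precomposed with $D$, agrees with $D$ precomposed with the Leibniz action of $h$ on the source; the key point is that $\partial_x$ lowers the $x$-degree by one and $\partial_y$ lowers the $y$-degree by one, so $D$ shifts the bidegree in a way that exactly matches the difference between the weight operators on source and target. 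The $e$ and $f$ cases are the substantive ones: there one uses $[\partial_x, y\partial_x] = 0$, $[\partial_x, x\partial_y] = \partial_y$, $[\partial_y, x\partial_y] = 0$, $[\partial_y, y\partial_x] = \partial_x$, and the antisymmetry built into $D$ causes the cross terms to cancel. It is worth doing the $e$ case carefully and then invoking symmetry (swapping the roles of $x,y$ and of the two tensor factors, both of which are built into the definition of $D$) to get $f$ for free.

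An alternative, more conceptual route is to recognize $D$ as (a constant multiple of) the classical transvectant / Cayley $\Omega$-process operator, or equivalently to observe that $\partial_x \otimes \partial_y - \partial_y \otimes \partial_x$ is the image under the two actions of the $SL_2(\C)$-invariant element of $V(1)^* \otimes V(1)^*$ dual to the invariant symplectic form $\binom{x_1}{y_1} \wedge \binom{x_2}{y_2} = x_1 y_2 - x_2 y_1$ on $\C^2$; since $SL_2(\C) = Sp_2(\C)$ preserves this form, the associated contraction operator is automatically equivariant. I would mention this viewpoint but carry out the explicit Lie-algebra verification as the actual proof, since it is self-contained and short. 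The main obstacle is purely bookkeeping: keeping the sign conventions for the infinitesimal action straight and not conflating the two different $SL_2(\C)$-actions on the source $V(n+1)\otimes V(m+1)$ and the target $V(n)\otimes V(m)$ — once the generators' actions are written down correctly, the cancellations are immediate, so I would be careful to state the convention explicitly before computing.
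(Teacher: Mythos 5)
Your proof is correct, but it takes a genuinely different route than the paper's. The paper argues directly at the group level: for $\phi = \begin{pmatrix} a & b \\ c & d\end{pmatrix} \in SL_2(\C)$, one writes out $(\phi^{-1} \circ D \circ \phi)(p \otimes q)$, applies the chain rule to the substitutions $p(dx-by,-cx+ay)$ and $q(dx-by,-cx+ay)$ to turn $\partial_x \otimes \partial_y - \partial_y \otimes \partial_x$ into $(d\partial_x - c\partial_y) \otimes (-b\partial_x + a\partial_y) - (-b\partial_x + a\partial_y) \otimes (d\partial_x - c\partial_y)$, and observes that this collapses to $(ad-bc)(\partial_x \otimes \partial_y - \partial_y \otimes \partial_x) = D$. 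That is a one-step computation with no need to pass to the Lie algebra or to check three generators separately. Your Lie-algebra verification is longer in execution (three bracket computations plus the Leibniz bookkeeping) and relies on the standard but additional fact that infinitesimal invariance under $\mathfrak{sl}_2(\C)$ implies invariance under the connected group $SL_2(\C)$ on finite-dimensional representations; in exchange it localizes the computation to first-order operators, which some readers find cleaner to manipulate. Interestingly, your ``conceptual alternative'' --- that $D$ is the contraction against the $SL_2(\C)$-invariant symplectic form $x_1 y_2 - x_2 y_1$ on $\C^2$ --- is actually the abstract content of the paper's computation: the $(ad-bc)$ factor that appears there \emph{is} the determinant, i.e. the symplectic invariance, so the paper's direct proof and your conceptual remark are two descriptions of the same mechanism. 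Had you carried out the symplectic-form argument rather than deferring it, you would have landed essentially on the paper's proof.

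One small caution on the Lie-algebra route: as you note, you must keep the source and target actions distinct. The formulas $-y\partial_x$, $-x\partial_y$, $-x\partial_x + y\partial_y$ are degree-independent, which is precisely why the computation reduces to a single commutator $[D, \xi\otimes 1 + 1 \otimes \xi] = 0$; if you say this explicitly (the infinitesimal action of $\mathfrak{sl}_2$ is given by the same differential operator on every $V(n)$) the bookkeeping concern you flag disappears.
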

\begin{proof}
    It suffices to check this on simple tensors. Fix $\phi = \left[\begin{smallmatrix} a & b \\ c & d \end{smallmatrix}\right] \in \SL_2(\C)$, $p \in V(n+1)$, and $q \in V(m+1)$. We compute:
    \[
        \begin{split}
            (\phi^{-1} \circ D \circ \phi) (p \otimes q) &= (\phi^{-1} \circ D) (p(dx-by, -cx+ay) \otimes q(dx-by, -cx+ay)) \\
                &= (d\partial_x-c\partial_y)p \otimes (-b\partial_x+a\partial_y)q - (-b\partial_x+a\partial_y)p \otimes (d\partial_x-c\partial_y)q \\
                &= (ad-bc)(\partial_x p \otimes \partial_y q - \partial_y p \otimes \partial_x q) \\
                &= D (p \otimes q)
        \end{split}
    \]
    That is, $D \circ \phi = \phi \circ D$.
\end{proof}

\begin{proposition}
    The multiplication map $V(n) \otimes V(m) \xrightarrow{\times} V(n+m)$ is $\SL_2(\C)$-invariant.
\end{proposition}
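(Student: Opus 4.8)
The plan is to verify $SL_2(\C)$-invariance of the multiplication map $\mu: V(n) \otimes V(m) \to V(n+m)$ by direct computation, exactly in the spirit of the proof of Proposition \ref{Dmap_prop}. Fix $\phi \equiv \begin{bmatrix} a & b \\ c & d \end{bmatrix} \in SL_2(\C)$ with $ad-bc=1$, and fix simple tensors $p \in V(n)$, $q \in V(m)$. Since both $\mu$ and the $SL_2(\C)$-action are linear, it suffices to check the identity $\mu(\phi \cdot (p \otimes q)) = \phi \cdot \mu(p \otimes q)$ on simple tensors, which then extends by bilinearity.

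First I would unwind the definitions. On the left, $\phi \cdot (p \otimes q) = (\phi \cdot p) \otimes (\phi \cdot q)$ by the definition of the inner tensor product action, and $(\phi \cdot p)(x:y) = p(\phi^{-1}\binom{x}{y})$, so applying $\mu$ (pointwise multiplication of polynomials in the same pair of variables $x,y$) gives the function $(x:y) \mapsto p(\phi^{-1}\binom{x}{y}) \cdot q(\phi^{-1}\binom{x}{y})$. On the right, $\mu(p \otimes q)$ is the polynomial $pq \in V(n+m)$, and $(\phi \cdot (pq))(x:y) = (pq)(\phi^{-1}\binom{x}{y}) = p(\phi^{-1}\binom{x}{y}) \cdot q(\phi^{-1}\binom{x}{y})$. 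These two expressions are manifestly equal, so the identity holds. One should also note that $\mu$ is genuinely a map into $V(n+m)$: the product of a polynomial homogeneous of degree $n$ and one homogeneous of degree $m$ is homogeneous of degree $n+m$, so $pq \in V(n+m)$ as required.

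The only point worth being slightly careful about is that the invariance here is essentially tautological precisely because both factors are acted on by the \emph{same} $\phi$ (this is the inner, not outer, tensor product) and because multiplication of polynomials is just composition-friendly: precomposing each factor with the M\"obius substitution $\binom{x}{y} \mapsto \phi^{-1}\binom{x}{y}$ and then multiplying is the same as multiplying first and then precomposing. So there is no real obstacle; the ``hard part,'' such as it is, is merely bookkeeping to confirm that the pointwise-product description of $\mu$ agrees with whatever coefficient-level description one might prefer, and that homogeneity degrees add correctly. In contrast to Proposition \ref{Dmap_prop}, the determinant factor $ad-bc$ does not even need to be invoked, since no differentiation is involved — though of course it equals $1$ anyway. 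I would therefore present the proof as a short two- or three-line computation showing $\mu(\phi \cdot (p \otimes q)) = (p\circ\phi^{-1})\cdot(q\circ\phi^{-1}) = (pq)\circ\phi^{-1} = \phi \cdot \mu(p \otimes q)$, i.e. $\mu \circ \phi = \phi \circ \mu$, and then remark that the general case follows by linearity.
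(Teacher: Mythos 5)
Your proof is correct and amounts to the direct verification that the paper dismisses as ``Trivial'': precomposition with $\phi^{-1}$ distributes over pointwise multiplication, so $\mu \circ \phi = \phi \circ \mu$. There is nothing more to say — you have simply written out explicitly the one-line reason the paper declines to spell out.
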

\begin{proof}
    Trivial.
\end{proof}

Powers of the $D$ map actually appear in the literature under a few different names. The first comes from invariant theory, where the application of the map
\[
    V(n) \otimes V(m) \xrightarrow{D^r} V(n-r) \otimes V(m-r) \xrightarrow{\times} V(n+m-2r)
\]
to polynomials $p \in V(n)$ and $q \in V(m)$ is called the \emph{$r^\text{th}$ transvectant} of $p$ and $q$. This map is also the result of the $r^\text{th}$ iteration of \emph{Cayley's $\Omega$ process}. These notions are discussed, for example, in chapters 4 and 5 of \cite{olver1999invarianttheory}, where they are used to explicitly compute invariants and covariants of forms. In particular, the invariance of the Jacobian ($1^\text{st}$ transvectant map applied to $p \otimes q$) and the Hessian ($2^\text{nd}$ transvectant map applied to $p \otimes p$) can be determined in this way.

Additionally, the $n^\text{th}$ transvectant of $p,q \in V(n)$ is used to define a notion of apolarity (see, e.g., \cite{rota1993apolarity} and \cite{brennan2007apolarity}), and this notion corresponds to the classical one used in Grace's theorem. In fact, one of the original formulations of Grace's theorem can be found in Grace and Young's 1903 book, \emph{The Algebra of Invariants} \cite{grace1903invariants}. This suggests a connection between invariant theory and the analytic consequences of apolarity theory via the $D$ map, and we will indeed see this map play a crucial role in the proof of Grace's theorem (Theorem \ref{graces_thm}).

We are now ready to define the homogeneous apolarity form via the $D$ map. This form and its $\SL_2(\C)$-invariance are then the next main step toward the definition of the symbol of an operator. In the next section, we will use this bilinear form to define an important construction called the dual of a representation. This will serve as the link to viewing spaces of linear operators as representations themselves.

\begin{definition}\label{Uform_def}
    We call the $n^\text{th}$ transvectant
    \[
        V(n) \otimes V(n) \xrightarrow{D^n} V(0) \otimes V(0) \cong \C
    \]
    the \emph{apolarity form} of $V(n)$. This is the unique (up to scalar) nondegenerate $\SL_2(\C)$-invariant bilinear form on $V(n)$, and therefore it is the homogenization of the classical apolarity form.
\end{definition}

We now want to extend this definition to act on $V(\lambda) \otimes V(\lambda)$ for $\lambda \in \N_0^m$. Note that for $p \in V(\lambda)$, we have $m$ pairs of variables given by $p(x_1,y_1,\ldots,x_m,y_m)$, which allows us to naturally define:
\[
    D^\lambda := \prod_{i=1}^m (\partial_{x_i} \otimes \partial_{y_i} - \partial_{y_i} \otimes \partial_{x_i})^{\lambda_i}
\]
With this, we can define the apolarity form for $V(\lambda)$ as follows.

\begin{definition}\label{Mform_def}
    We call the map
    \[
        V(\lambda) \otimes V(\lambda) \xrightarrow{D^\lambda} V(0^m) \otimes V(0^m) \cong \C
    \]
    the \emph{apolarity form} of $V(\lambda)$. This is the unique (up to scalar) nondegenerate $(\SL_2(\C))^m$-invariant bilinear form on $V(\lambda)$, and therefore it is the homogenization of the multivariate apolarity form defined by Borcea and Br{\"a}nd{\'e}n in \cite{bb2}.
\end{definition}

Since $V(0) \otimes V(0) \cong \C$ and $V(0^m) \otimes V(0^m) \cong \C$, we will often consider the maps $D^n$ and $D^\lambda$ to output a element of $\C$. And as a final note, we do not justify here the claims of uniqueness and nondegeneracy stated above. Proving these claims involves decomposing $V(n) \otimes V(n)$ and $V(\lambda) \otimes V(\lambda)$ into their irreducible components, and we leave this work to Appendix \ref{sl2_app} for the interested reader (see Corollaries \ref{Uunique_cor} and \ref{Munique_cor} specifically).

\subsection{The Symbol of an Operator}\label{symb_subsect}

Given representations $V(\lambda)$ and $V(\alpha)$ (for $\lambda \in \N_0^m$ and $\alpha \in \N_0^l$) of $(\SL_2(\C))^m$ and $(\SL_2(\C))^l$ respectively, the space of linear maps between $V(\lambda)$ and $V(\alpha)$ can be viewed as a representation of $(\SL_2(\C))^{m+l}$ in a standard way. This space of linear maps is denoted $\Hom(V(\lambda),V(\alpha))$. As discussed previously, we will now use the apolarity form defined above to construct a representation isomorphism between $\Hom(V(\lambda),V(\alpha))$ and $V(\lambda \sqcup \alpha)$ (which is a space of polynomials in $m+l$ variables). This will lead us to a natural definition for the symbol of an operator.

The significance of this isomorphism will come from the fact that stability results about $V(\lambda \sqcup \alpha)$ will transfer to $\Hom(V(\lambda),V(\alpha))$ via the symbol lemma (Lemma \ref{symb_lemma}) stated below. We will see in \S\ref{str_Cstab_subsect} that this lemma and Grace's theorem almost immediately imply a characterization of stability-preserving operators which is similar to that of Borcea and Br{\"a}nd{\'e}n.

To this end, consider the standard representation isomorphism $\Hom(V(\lambda),V(\alpha)) \cong V(\lambda)^* \boxtimes V(\alpha)$, given by $T \mapsto \sum_{\mu \leq \lambda} (x^\mu y^{\lambda-\mu})^* \boxtimes T(x^\mu y^{\lambda-\mu})$, where $V(\lambda)^*$ is the dual representation of $V(\lambda)$. We omit here the details regarding explicit definitions of the action of (products of) $\SL_2(\C)$ on $\Hom$ and dual representations. Instead, we utilize the fact that the apolarity form provides an $(\SL_2(\C))^m$-invariant isomorphism between $V(\lambda)$ and the dual representation $V(\lambda)^*$, as stated in the following result.

\begin{proposition}
    For any $\lambda \in \N_0^m$, there is an $(\SL_2(\C))^m$-invariant isomorphism $V(\lambda)^* \rightarrow V(\lambda)$ given by $(x^\mu y^{\lambda-\mu})^* \mapsto \binom{\lambda}{\mu}(-1)^{\mu} x^{\lambda-\mu} y^\mu$.
\end{proposition}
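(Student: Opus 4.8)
The plan is to show directly that the stated map $\Phi: V(\lambda)^* \to V(\lambda)$, given on the dual basis by $(x^\mu y^{\lambda-\mu})^* \mapsto \binom{\lambda}{\mu}(-1)^{\mu} x^{\lambda-\mu} y^\mu$, intertwines the $(SL_2(\C))^m$-actions. The natural strategy is to identify $\Phi$ with the map induced by the apolarity form: since the apolarity form $\langle\,\cdot\,,\,\cdot\,\rangle^\lambda = D^\lambda \circ (\,\cdot\, \otimes \,\cdot\,)$ is a nondegenerate bilinear form on $V(\lambda)$, it gives an isomorphism $V(\lambda) \to V(\lambda)^*$ sending $q$ to the functional $p \mapsto \langle p, q\rangle^\lambda$; invariance of the form (which we are told to assume) makes this an isomorphism of representations, hence so is its inverse $\Psi: V(\lambda)^* \to V(\lambda)$. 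So it suffices to check that $\Psi$ agrees with the explicitly given $\Phi$, and for that it is enough to verify that $\Phi$ is a two-sided inverse to $q \mapsto \langle\,\cdot\,, q\rangle^\lambda$ on basis elements — i.e. that $\langle p, \Phi((x^\mu y^{\lambda-\mu})^*)\rangle^\lambda = (x^\mu y^{\lambda-\mu})^*(p)$ for all monomials $p = x^\nu y^{\lambda-\nu}$.

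Concretely, I would first reduce to the univariate case: since $V(\lambda) \cong V(\lambda_1) \boxtimes \cdots \boxtimes V(\lambda_m)$ as representations and the apolarity form $\langle\,\cdot\,,\,\cdot\,\rangle^\lambda$ factors as a product of the one-variable forms $\langle\,\cdot\,,\,\cdot\,\rangle^{\lambda_k}$ (this is how Definition~\ref{Mform_def} is set up via $D^\lambda = \prod_k D^{\lambda_k}$ acting in disjoint pairs of variables), both the claimed formula and the pairing split as tensor products over $k \in [m]$. Thus it is enough to treat $V(n)$ for a single $n$. There, one computes $\langle x^j y^{n-j}, \binom{n}{i}(-1)^i x^{n-i} y^i\rangle^n$ directly from Definition~\ref{Uform_def}: write out $D^n$ applied to the simple tensor $x^j y^{n-j} \otimes \binom{n}{i}(-1)^i x^{n-i}y^i$. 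Using the expansion of $D^n = (\partial_x \otimes \partial_y - \partial_y \otimes \partial_x)^n$, only the term that differentiates $x^j y^{n-j}$ exactly $j$ times in $x$ and $(n-j)$ times in $y$ survives (all other terms kill one factor), and a short bookkeeping of the binomial coefficients $\binom{n}{j}$ coming from the multinomial expansion of $D^n$, the derivative $\partial_x^j x^{n-i} = \frac{(n-i)!}{(n-i-j)!}x^{n-i-j}$, etc., collapses everything to $\delta_{i,j}$ (times $1$ once the normalizing $\binom{n}{i}^{-1}$ implicit in the classical form, or equivalently the $\binom{n}{j}$ from $D^n$, is accounted for). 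This confirms $\Phi$ is the inverse of the form-induced map, hence $SL_2(\C)$-invariant.

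Alternatively — and this is the cleaner route if one wants to avoid recomputing $D^n$ — one can simply invoke the already-established invariance of the apolarity form plus its nondegeneracy (Definitions~\ref{Uform_def} and \ref{Mform_def}): these guarantee the existence of \emph{some} invariant isomorphism $V(\lambda)^* \cong V(\lambda)$, and then the only remaining task is the bald verification that the displayed formula is the correct one, i.e. a single computation of a bilinear pairing on basis monomials as above. The main obstacle is therefore not conceptual but purely computational: getting the combinatorial factors $\binom{\lambda}{\mu}$ and signs $(-1)^\mu$ exactly right, which hinges on correctly expanding $D^n$ as a sum of $\binom{n}{j}\,\partial_x^j\partial_y^{n-j} \otimes \partial_x^{n-j}\partial_y^{j}$ (with alternating signs) and tracking which single term is nonzero on a given pair of monomials. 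Once that bookkeeping is done in one variable, the multivariate statement follows by the tensor-product factorization with no extra work.
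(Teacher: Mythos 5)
Your proposal is correct and follows essentially the same route as the paper: both obtain the isomorphism from the nondegenerate $(SL_2(\C))^m$-invariant apolarity form and then verify the explicit formula by computing $D^\lambda$ on monomial pairs (the paper does this multivariate computation in one line rather than reducing to the univariate case, but the content is identical). The only small point to be careful about is that the direct pairing actually produces the displayed formula times a nonzero scalar---the paper picks $p = (\lambda!)^{-2}\binom{\lambda}{\mu}(-1)^{\mu}x^{\lambda-\mu}y^{\mu}$ to make the identity $(x^\mu y^{\lambda-\mu})^* \equiv D^\lambda(p\otimes\cdot)$ exact---but since an invariant isomorphism between irreducibles is unique up to scalar this is harmless.
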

\begin{proof}
    We use the apolarity form to determine the isomorphism. In particular, up to scalar $(x^\mu y^{\lambda-\mu})^*$ maps to an element $p \in V(\lambda)$ such that $(x^\mu y^{\lambda-\mu})^* = D^\lambda(p \otimes \cdot)$. We compute:
    \[
        D^\lambda(p \otimes x^\alpha y^{\lambda-\alpha}) = \alpha!(\lambda-\alpha)! \binom{\lambda}{\alpha} (-1)^{\alpha} \partial_x^{\lambda-\alpha} \partial_y^\alpha p = \lambda! (-1)^{\alpha} \partial_x^{\lambda-\alpha} \partial_y^\alpha p
    \]
    Picking $p(x,y) := (\lambda!)^{-2}\binom{\lambda}{\mu}(-1)^{\mu} x^{\lambda-\mu} y^\mu$ achieves the desired equality exactly, and therefore $(x^\mu y^{\lambda-\mu})^* \mapsto \binom{\lambda}{\mu}(-1)^{\mu} x^{\lambda-\mu} y^\mu$ is an $(\SL_2(\C))^m$-invariant isomorphism.
\end{proof}

With this, we consider the following string of $(\SL_2(\C))^{m+l}$-invariant isomorphisms:
\[
    \Hom(V(\lambda),V(\alpha)) \rightarrow V(\lambda)^* \boxtimes V(\alpha) \rightarrow V(\lambda) \boxtimes V(\alpha) \rightarrow V(\lambda \sqcup \alpha)
\]
The first map is the standard isomorphism discussed above, the second map is induced by the previous proposition, and the third map is given by the discussion of outer tensor products in \S\ref{SL_action_subsect}. This string of maps is explicitly defined on a given linear operator via:
\[
    \begin{split}
        T &\mapsto \sum_{\mu \leq \lambda} (z^\mu w^{\lambda-\mu})^* \boxtimes T(x^\mu y^{\lambda-\mu}) \\
            &\mapsto \sum_{\mu \leq \lambda} \binom{\lambda}{\mu} (-1)^{\mu} z^{\lambda-\mu} w^{\mu} \boxtimes T(x^\mu y^{\lambda-\mu}) \\
            &\mapsto \sum_{\mu \leq \lambda} \binom{\lambda}{\mu} z^{\lambda-\mu} (-w)^{\mu} \cdot T(x^\mu y^{\lambda-\mu})
    \end{split}
\]
Here, $T$ acts only on the $x$ and $y$ variables, and $z$ and $w$ are the $\lambda$ variables in $V(\lambda \sqcup \alpha)$. This gives the desired isomorphism between $\Hom(V(\lambda),V(\alpha))$ and $V(\lambda \sqcup \alpha)$, and hence we refer to this map as the $\Symb$ map.

\begin{definition}\label{symb_def}
    For $\lambda \in \N_0^m$ and $\alpha \in \N_0^l$, we define the following $(\SL_2(\C))^{m+l}$-invariant isomorphism:
    \[
        \Symb: \Hom(V(\lambda),V(\alpha)) \rightarrow V(\lambda \sqcup \alpha)
    \]
    \[
        T \mapsto T\left[(zy-xw)^\lambda\right] = \Hmg_{(\lambda,\alpha)}\left(T[(z-x)^\lambda]\right) = \sum_{\mu \leq \lambda} \binom{\lambda}{\mu} z^{\lambda-\mu} (-w)^{\mu} \cdot T(x^\mu y^{\lambda-\mu})
    \]
    We call $\Symb(T)$ the \emph{(universal) symbol of $T$}. 
\end{definition}

This expression bears striking resemblance to the symbol used by Borcea and Br{\"a}nd{\'e}n in \cite{bb1}, which motivates the use of the name ``symbol'' here. (In fact, $\Symb$ is almost the homogenization of the Borcea-Br{\"a}nd{\'e}n symbol.) In \S\ref{Cops_sect}, $\Symb$ will allow us to reduce the study of $\Hom(V(\lambda),V(\alpha))$ to the study of $V(\lambda \sqcup \alpha)$ via the next lemma. We refer to this next result as the symbol lemma, and it demonstrates the fundamental connection between an operator $T$, its symbol, and the apolarity form. Note that the computation done here in the proof of this lemma is in a sense redundant. The operator $\Symb$ was essentially defined such that $\Symb(T)$ acts as $T$ via $D^\lambda$.

\begin{lemma}[Symbol Lemma]\label{symb_lemma}
    Fix $\lambda \in \N_0^m$, $\alpha \in \N_0^l$, and a linear operator $T \in \Hom(V(\lambda),V(\alpha))$. For $q \in V(\lambda)$ and $r \in V(\alpha)$, we have:
    \[
        D^\lambda(\Symb(T) \otimes q \cdot r) = (\lambda!)^2 T(q) \otimes r
    \]
\end{lemma}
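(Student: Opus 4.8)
The plan is to verify the identity by checking it on a spanning set and using bilinearity of $D^\lambda$ and linearity of $T$. Since both sides are linear in $q$ (for fixed $r$) and linear in $r$ (for fixed $q$), and since $V(\lambda)$ is spanned by the monomials $x^\nu y^{\lambda - \nu}$ for $\nu \leq \lambda$, it suffices to take $q = x^\nu y^{\lambda - \nu}$ and treat $r \in V(\alpha)$ as an arbitrary fixed polynomial. With $\Symb(T) = \sum_{\mu \leq \lambda} \binom{\lambda}{\mu} z^{\lambda - \mu} (-w)^\mu \cdot T(x^\mu y^{\lambda - \mu})$ from Definition \ref{symb_def}, the left-hand side becomes $\sum_{\mu \leq \lambda} \binom{\lambda}{\mu} D^\lambda\!\left( z^{\lambda-\mu}(-w)^\mu \otimes x^\nu y^{\lambda-\nu}\right) \cdot \bigl(T(x^\mu y^{\lambda-\mu}) \otimes r\bigr)$, where the $D^\lambda$ here acts only on the $z,w$ variables of the first factor and the $x,y$ variables of the second. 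So the whole computation reduces to evaluating the apolarity form on pairs of monomials.

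First I would record the single-variable fact: for $0 \le a,b \le n$ with $a + b = n$, one has $D^n\!\left(z^a w^b \otimes x^c y^{n-c}\right) = 0$ unless the monomial $z^a w^b$ is "apolar-dual" to $x^c y^{n-c}$, i.e.\ unless $c = b$ (equivalently $n - c = a$), in which case it equals $(-1)^b\, a!\, b! = (-1)^b\, (n-c)!\, c!$. This is exactly the computation already carried out in the proof of the proposition preceding Definition \ref{symb_def} (the formula $D^\lambda(p \otimes x^\alpha y^{\lambda-\alpha}) = \lambda!(-1)^\alpha \partial_x^{\lambda-\alpha}\partial_y^\alpha p$), specialized to $p$ a monomial; I would just cite or lightly re-derive it. Taking products over the $m$ coordinates, $D^\lambda\!\left(z^{\lambda-\mu}(-w)^\mu \otimes x^\nu y^{\lambda-\nu}\right)$ vanishes unless $\mu = \nu$, and when $\mu = \nu$ it equals $(-1)^\mu \cdot (-1)^\mu \cdot (\lambda - \mu)!\, \mu! = (\lambda-\mu)!\,\mu!$ (the two sign contributions — one from the explicit $(-1)^\mu$ in $\Symb$, one from the $(-1)^b$ in the apolarity pairing — cancel).

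Plugging this in, only the $\mu = \nu$ term of the sum survives, giving
\[
    D^\lambda\!\left(\Symb(T) \otimes x^\nu y^{\lambda-\nu} \cdot r\right) = \binom{\lambda}{\nu} (\lambda - \nu)!\, \nu! \cdot T(x^\nu y^{\lambda-\nu}) \otimes r = \lambda!\cdot \lambda! \cdot T(x^\nu y^{\lambda-\nu}) \otimes r,
\]
using $\binom{\lambda}{\nu}(\lambda-\nu)!\,\nu! = \lambda!$ and the fact that $\lambda! = \lambda_1!\cdots\lambda_m!$ appears once more from the $\lambda!$ prefactor in the apolarity formula. Since $(\lambda!)^2 T(x^\nu y^{\lambda-\nu}) \otimes r$ is precisely the claimed right-hand side at $q = x^\nu y^{\lambda-\nu}$, extending by linearity in $q$ finishes the proof.

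I do not expect a genuine obstacle here — the lemma is, as the authors note, essentially true by construction of $\Symb$. The one place to be careful is bookkeeping of the two independent sources of signs and the two independent sources of the factor $\lambda!$ (the $\binom{\lambda}{\mu}$ inside $\Symb$ combined with the $\mu!(\lambda-\mu)!$ from the pairing, versus the standalone $\lambda!$ in the apolarity formula), so that the final constant comes out as exactly $(\lambda!)^2$ rather than, say, $\lambda!$ or $(\lambda!)^3$; doing the single-variable case cleanly first and then taking the $m$-fold product keeps this transparent.
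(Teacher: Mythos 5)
Your proof is correct and follows essentially the same route as the paper's: expand $\Symb(T)$ via Definition \ref{symb_def}, apply $D^\lambda$ monomial by monomial, and observe that only the diagonal ($\mu = \nu$) terms survive; the paper simply carries $q$ along in general form rather than reducing to $q = x^\nu y^{\lambda-\nu}$ first, but this is a cosmetic difference. One bookkeeping slip worth fixing: the single-variable fact you state, $D^n(z^a w^b \otimes x^c y^{n-c}) = (-1)^b a!\, b!$ when $c = b$, omits the $n!$ prefactor that is already present in the formula $D^\lambda(p \otimes x^\alpha y^{\lambda-\alpha}) = \lambda!\,(-1)^\alpha \partial_x^{\lambda-\alpha}\partial_y^\alpha p$ that you cite — the correct value is $n!\,(-1)^b a!\,b!$. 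You then verbally reinsert this missing $\lambda!$ at the very end (``appears once more from the $\lambda!$ prefactor''), so the final constant $(\lambda!)^2$ comes out right, but as written the displayed intermediate equation $D^\lambda(\Symb(T)\otimes x^\nu y^{\lambda-\nu}r) = \binom{\lambda}{\nu}(\lambda-\nu)!\,\nu!\cdot T(x^\nu y^{\lambda-\nu})\otimes r$ is off by $\lambda!$ and is then asserted to equal $(\lambda!)^2 T(\cdot)\otimes r$ without that factor visible. It would be cleaner to carry the $\lambda!$ through from the start, so that the monomial pairing reads $\lambda!\,(\lambda-\mu)!\,\mu!$ and the final coefficient is $\binom{\lambda}{\nu}\cdot\lambda!\,(\lambda-\nu)!\,\nu! = (\lambda!)^2$ with no verbal patch.
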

\begin{proof}
    Letting $q_\mu$ be the coefficient of the $x^\mu y^{\lambda-\mu}$ term of $q$, we compute:
    \[
        \begin{split}
            D^\lambda(\Symb(T) \otimes q \cdot r) &= D^\lambda \left(\sum_{\mu \leq \lambda} \binom{\lambda}{\mu} x^{\lambda-\mu} (-y)^{\mu} \cdot T(x^\mu y^{\lambda-\mu}) \otimes q \cdot r\right) \\
                &= \sum_{\mu \leq \lambda} \binom{\lambda}{\mu}^2 (\lambda-\mu)! \mu! \cdot T(x^\mu y^{\lambda-\mu}) \otimes (\partial_x^\mu \partial_y^{\lambda-\mu} q) r \\
                &= (\lambda!)^2 \sum_{\mu \leq \lambda} T(x^\mu y^{\lambda-\mu}) \otimes q_\mu \cdot r \\
                &= (\lambda!)^2 T(q) \otimes r \\
        \end{split}
    \]
\end{proof}

\section{Polynomial Stability Theory}\label{stability_sect}

Given $\lambda \in \N_0^m$, a polynomial $p(x_1,y_1,\ldots,x_m,y_m) \in V(\lambda)$ is said to be \emph{stable} if it doesn't vanish in $\UHP^m \subset (\CP^1)^m$. More generally, $p$ is said to be \emph{$\Omega$-stable} if it doesn't vanish in $\Omega$. As above, we say $p$ is \emph{weakly $\Omega$-stable} if possibly $p \equiv 0$. Most all results related to zero location of polynomials then can be translated into statements about stability properties of polynomials and stability preservation properties of operations applied to polynomials.

A linear operator $T$ is said to \emph{preserve weak $\Omega$-stability} if $T(p)$ is $\Omega$-stable or identically zero for all $\Omega$-stable $p$. Further, a real linear operator $T$ \emph{preserves weak real stability} if the same holds for real stable polynomials. In \cite{bb1}, Borcea and Br{\"a}nd{\'e}n were concerned with classifying such weak stability preserving operators. As seen in their main characterization results (Theorems \ref{BB_Cops_thm} and \ref{BB_Rops_thm}), allowing the zero polynomial leads to a degeneracy condition in their characterization.

In order to remove this condition, we define a slightly different notion of stability: we say a linear operator $T$ \emph{preserves (strong) $\Omega$-stability} if $T(p)$ is stable and nonzero for all stable $p$. Similarly, we say a real linear operator $T$ \emph{preserves (strong) real stability} if the same holds for real stable polynomials. Most of the main results of this paper rely on this notion of strong stability preservation, and we will demonstrate how it relates to weak stability preservation in \S\ref{Cops_deriv_subsect} and \S\ref{Rops_deriv_subsect}.

\subsection{Polar Derivatives}

A crucial tool of classical stability theory is the polar derivative. In particular, this notion leads to Laguerre's theorem (Proposition \ref{laguerre_prop}), which is the main lemma toward Grace's theorem. By passing to homogeneous polynomials the polar derivative becomes conceptually simpler, and this in turn sheds further light on the general connection to $\SL_2(\C)$-invariance and the $D$ map. One example of this, as we will see below, is that the polar derivative can be defined as the conjugation of $\partial_x$ by some $\SL_2(\C)$ action.

Given some ``pole'' $x_0 \in \C$, the \emph{polar derivative} with respect to $x_0$ of $f \in \C^n[x]$ is classically defined as follows.
\[
    (d_{x_0}f)(x) := nf(x) - (x-x_0)f'(x)
\]
Noticing that the term of degree $n$ cancels out, the resulting polynomial is of degree $n-1$. It is typically said that this operator generalizes the ordinary derivative in the sense that $\lim_{x_0 \rightarrow \infty} x_0^{-1}d_{x_0}f(x) = f'(x)$. However, this operator also generalizes the ordinary derivative in more natural way, which we see by passing to $V(n)$.

Fix any $\phi = \begin{bmatrix} a & b \\ c & d \end{bmatrix} \in \SL_2(\C)$. Define the \emph{pole} of $\phi$ to be $(-d:c) \in \CP^1$. For $p \in V(n)$, we then define the polar derivative with respect to $\phi$ as follows.
\[
    d_\phi p := (\phi^{-1}\partial_x\phi)p = -(-d\partial_x + c\partial_y)p
\]
Notice that $d_\phi$ depends only on $\phi^{-1}\binom{-1}{0} = \binom{-d}{c}$. With this, the pole of $\phi$ should be interpreted as the element of $\CP^1$ that $\phi$ sends to $\infty = (-1:0)$.

This definition of the polar derivative with respect to $\phi$ is at very least a natural one, as it can be simply described as the conjugation of $\partial_x$ by the action of $\phi$. The following result then shows that this is actually the correct definition.

\begin{proposition}
Fix $\phi \equiv \begin{bmatrix} a & b \\ c & d \end{bmatrix} \in \SL_2(\C)$ with pole $(-d:c)$, and define $x_0 := \frac{-d}{c}$ (for $c \neq 0$). Then:
\[
    d_\phi \circ \Hmg_n = \Hmg_{n-1} \circ (-c \cdot d_{x_0})
\]
That is, the polar derivative $d_\phi$ on $V(n)$ is the homogenization of the classical polar derivative $d_{x_0}$ on $\C^n[x]$ (up to scalar).
\end{proposition}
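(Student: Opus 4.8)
The plan is to verify the claimed operator identity directly. Both $d_\phi \circ \Hmg_n$ and $\Hmg_{n-1} \circ (-c\cdot d_{x_0})$ are linear maps $\C_n[x] \to V(n-1)$, so it suffices to check that they agree on the monomial basis $x^k$, $0 \le k \le n$. For the left-hand side I would use the explicit description of the polar derivative already recorded above, namely $d_\phi = -(-d\partial_x + c\partial_y)$ as an operator on $V(n)$. Applying $\Hmg_n$ first sends $x^k \mapsto x^k y^{n-k}$, and then $d_\phi$ produces $dk\, x^{k-1}y^{n-k} - c(n-k)\, x^k y^{n-k-1}$.

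For the right-hand side I would compute $d_{x_0}(x^k) = n x^k - (x-x_0)k x^{k-1} = (n-k)x^k + k x_0 x^{k-1}$, multiply by $-c$, and simplify using the defining relation $-c x_0 = d$ to obtain $-c(n-k)x^k + dk\, x^{k-1}$, a polynomial of degree at most $n-1$. Applying $\Hmg_{n-1}$ then homogenizes $x^k \mapsto x^k y^{n-1-k}$ and $x^{k-1} \mapsto x^{k-1} y^{n-k}$, yielding $-c(n-k)x^k y^{n-1-k} + dk\, x^{k-1} y^{n-k}$, which is exactly the expression obtained on the left. The edge cases $k=0$ and $k=n$, where one of the two terms drops out, cause no trouble.

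Alternatively, and perhaps more transparently, one can avoid monomials by writing $\Hmg_n(f)(x,y) = y^n f(x/y)$ and computing $\partial_x$ and $\partial_y$ of this expression via the chain rule; substituting into $d_\phi = d\partial_x - c\partial_y$ and collecting terms reproduces $y^{n-1}\big[{-cn}\, f(x/y) + (cx/y + d)f'(x/y)\big]$, which one recognizes as $\Hmg_{n-1}$ applied to $-c\big(n f(t) - (t-x_0)f'(t)\big) = -c\, d_{x_0} f$ after again invoking $-cx_0 = d$. This route also makes visible the conceptual content: $d_\phi$ is literally the conjugate $\phi^{-1}\partial_x\phi$, so the proposition is just the statement that conjugating the ordinary derivative by the Möbius transformation sending $x_0$ to $\infty$ recovers the classical polar derivative up to the normalizing scalar $-c$.

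There is no genuine obstacle here: the statement is a bookkeeping identity, and the only points requiring care are (i) tracking the homogenization degrees correctly — the monomial $x^{k-1}$ arising from the classical derivative homogenizes to total degree $n-1$ in $V(n-1)$, not to degree $n$ — and (ii) keeping straight the scalar $-c$ together with the substitution $x_0 = -d/c$, which is precisely what makes the two sides agree exactly rather than merely up to a $k$-dependent factor. It is worth remarking in passing that the scalar $-c$ degenerates exactly when $\phi$ has its pole at infinity ($c = 0$), in which case $d_\phi$ reduces to a scalar multiple of $\partial_x$, consistent with the classical limiting description $\lim_{x_0\to\infty} x_0^{-1} d_{x_0} f = f'$.
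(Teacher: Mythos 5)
Your proof is correct and takes essentially the same approach as the paper, which dismisses this as ``straightforward computation''; you have simply carried out the monomial-by-monomial verification (and given a second route via $\Hmg_n(f)(x,y) = y^n f(x/y)$ and the chain rule) that the paper leaves implicit.
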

\begin{proof}
    Straightforward computation.
\end{proof}

As mentioned above, $d_\phi$ depends only on $(-d:c)$, the pole of $\phi$. So given any pole in $\CP^1$, we can actually choose $\phi \in \SL_2(\C)$ to be a rotation of the Riemann sphere (i.e., $\CP^1$). This then gives the following intuitive description of the polar derivative.

\begin{remark}
    Fix a rotation $\phi \in \SL_2(\C)$ with pole $(-d:c)$. The polar derivative $d_\phi$ then acts on $p \in V(n)$ in the following way. First, consider the zeros of $p$ as being placed in the Riemann sphere via stereographic projection. Next, rotate the sphere via $\phi$, which moves $(-d:c)$ to infinity at the top of the sphere. Apply the derivative to the new polynomial given by the new locations of the zeros. Finally, undo the original rotation via $\phi^{-1}$, which moves infinity back to the pole $(-d:c)$.
\end{remark}

\subsection{Projective Convexity and Laguerre's Theorem}\label{proj_conv_subsect}

Circular regions play a key role in Grace's theorem and its corollaries. The main reason for this is Laguerre's theorem, which essentially says that polar derivatives with respect to points of a circular region preserve stability for that circular region. This theorem in turn relies on the Gauss-Lucas theorem, which deals with convex regions. 

A \emph{circular region} in $\C$ is defined to be a disc, half-plane, or complement of a disc, and such a circular region can be either open or closed. The generalization of circular regions to $\CP^1$ is the obvious one. A \emph{circular region} in $\CP^1$ is defined to be the sets in $\CP^1$ for which the stereographic projection is a circular region in $\C$. Note that $\SL_2(\C)$ acts transitively on the set of all circular regions in $\C$ or in $\CP^1$. We now state a lemma to Laguerre's theorem, which gets at the heart of the importance of circular regions.

\begin{lemma}
    Let $C \subseteq \CP^1$ be a circular region, and let $\phi \in \SL_2(\C)$ be such that its pole is not in $C$. Then, the stereographic projection of $\phi \cdot C$ is convex.
\end{lemma}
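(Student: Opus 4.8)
The plan is to reduce this to the classical statement that M\"obius transformations send circles to circles (or lines) together with the Gauss–Lucas picture of convexity. The key observation is that a circular region $C \subseteq \CP^1$ whose complement contains the point $\infty = (-1:0)$ is exactly a (closed or open) genuine half-plane or disk/complement-of-disk in $\C$ — but the convex ones among these are precisely the half-planes and the disks, i.e. the circular regions in $\CP^1$ that \emph{do} contain $\infty$ in their complement and whose Euclidean realization is convex. So I first want to unwind the definition: the stereographic projection identifies $\CP^1$ with $\C \cup \{\infty\}$, a circular region in $\CP^1$ is a set whose image in $\C$ is a disk, a half-plane, the complement of an open disk, or the complement of a closed half-plane; and ``convex'' for such a set (thinking of it as a subset of $\C \cong \R^2$) singles out the disks and the half-planes.

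Next, I would use the fact, already noted in the excerpt, that $SL_2(\C)$ acts on $\CP^1$ as the group of M\"obius transformations and acts transitively on circular regions, sending circular regions to circular regions. So $\phi \cdot C$ is again a circular region in $\CP^1$. The crux is then to pin down when the Euclidean realization of a circular region in $\CP^1$ is convex, and the clean criterion is: a circular region $D \subseteq \CP^1$ has convex stereographic projection if and only if $\infty \notin \overline{D}$ in the appropriate sense — more precisely, $\infty$ lies in the interior of the complement (for a disk) or $\infty$ lies on the bounding circle (for a half-plane); in either case the defining condition is that $\infty \notin D$ together with $D$ not being the complement of a bounded disk. The hypothesis that the pole of $\phi$ is not in $C$ is exactly engineered so that $\phi$ sends the pole to $\infty$, hence $\infty \notin \phi \cdot C$. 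Since the pole of $\phi$ is $(-d:c) = \phi^{-1}(-1:0)$, we have $\phi \cdot C \not\ni (-1:0) = \infty$. I would then argue that a circular region in $\CP^1$ not containing $\infty$ is either a Euclidean disk or a Euclidean half-plane (it cannot be the complement of a bounded disk, since such a set contains $\infty$), and in both cases it is convex.

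The main obstacle — really the only non-formal point — is handling the open/closed and the ``complement'' cases carefully: a ``circular region'' in the sense used here includes complements of disks, and the complement of a bounded open disk \emph{does} contain $\infty$, so such a set is automatically excluded by the hypothesis $\infty \notin \phi \cdot C$; but one must make sure the definition of circular region in $\CP^1$ (via stereographic projection of a circular region in $\C$) is symmetric enough that ``not containing $\infty$'' forces the Euclidean picture to be a genuine disk or half-plane rather than something pathological. Concretely I would: (i) recall that every circular region in $\C$ is, up to a M\"obius transformation, a half-plane, and that half-planes and disks are convex while their complements-with-$\infty$ are not; (ii) observe $\phi$ sends its pole to $\infty$, so $\infty \notin \phi\cdot C$; (iii) conclude that the stereographic projection of $\phi \cdot C$, being a circular region in $\C$ that omits $\infty$, is either a disk or a half-plane, hence convex. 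I expect step (iii) — the classification of $\infty$-avoiding circular regions as exactly the Euclidean-convex ones — to be where the real content sits, though it is standard and can be cited or dispatched in a line or two.
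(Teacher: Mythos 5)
Your proposal follows the same route as the paper: observe that $\phi$ sends its pole to $\infty$, so $\infty \notin \phi\cdot C$, and then invoke that a circular region in $\CP^1$ omitting $\infty$ must be an open half-plane or a bounded (open or closed) disk under stereographic projection, hence convex. The middle paragraph of your write-up is a bit muddled about the exact criterion (the redundant caveat about complements of bounded disks, the vague ``$\infty\notin\overline{D}$ in the appropriate sense''), but the clean condition ``$\infty \notin D$'' alone already forces the projection to be a disk or an open half-plane, which is exactly what the paper asserts.
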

\begin{proof}
    Let $(x_0:y_0) \notin C$ be the pole of $\phi$. Then, $\phi$ maps $(x_0:y_0)$ to $\infty \in \CP^1$ and maps $C$ to another circular region. Since $(x_0:y_0) \notin C$ implies $\infty \notin \phi \cdot C$, the sterographic projection of $\phi \cdot C$ is either an open half-plane or is bounded away from $\infty$. Since $\phi \cdot C$ is a circular region, it must be convex.
\end{proof}

This then leads to a natural extension of the notion of a circular region.

\begin{definition}
    Given $C \subseteq \CP^1$, we say that $C$ is \emph{projectively convex} if for every $\phi \in \SL_2(\C)$ with pole not in $C$, the stereographic projection of $\phi \cdot C$ is convex.
\end{definition}

We now classify all projectively convex sets in $\CP^1$ in the following. This result has been demonstrated before in \cite{zervos1960aspects}, where projectively convex regions are referred to as generalized circular regions.

\begin{proposition}[Zervos] \label{proj_conv_characterization}
    Let $C \subseteq \CP^1$ be projectively convex. Then, $C = C^\circ \cup \gamma$, where $C^\circ$ is an open circular region which is the interior of $C$, and $\gamma$ is a connected subset of the boundary of $C^\circ$. In particular, projective convexity is preserved under taking complements.
\end{proposition}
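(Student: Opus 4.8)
\emph{Proof proposal.}
The plan is to transport the problem to a statement about convex subsets of $\C$ by sending a point outside $C$ to $\infty$, and then to squeeze the circular structure out of plain convexity together with the freedom to reposition by all of $SL_2(\C)$. First I would dispose of the trivial cases: if $C=\emptyset$ or $C=\CP^1$ there is no $\phi\in SL_2(\C)$ with pole outside $C$, so $C$ is vacuously projectively convex and manifestly of the asserted shape with $\gamma=\emptyset$; a single point is handled the same way, viewed as a degenerate circular region with empty interior. Otherwise fix $q\in\CP^1\setminus C$ and $\phi\in SL_2(\C)$ with pole $q$. Since stereographic projection and $\phi$ are homeomorphisms and $SL_2(\C)$ carries circular regions of $\CP^1$ to circular regions, it suffices to prove the planar statement: if $K\subseteq\C$ is convex with $\infty\notin K$, and (the stereographic projection of) $\psi\cdot K$ is convex for every $\psi\in SL_2(\C)$ whose pole projects into $\C\setminus K$, then $K^\circ$ is an open disk or open half-plane and $K\setminus K^\circ$ is a connected subset of $\partial K^\circ$. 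I will use freely that the hypothesis with $\psi=\mathrm{id}$ is just convexity of $K$, and that $\psi$ may be taken of the form $z\mapsto 1/(z-w)$ for any $w\in\C\setminus K$.

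The easy half is elementary convex geometry. If $K^\circ=\emptyset$ then $K$ is at most a point (a nondegenerate segment is not projectively convex, since a nonaffine M\"obius map bends its line into a genuine circular arc), reducing to the trivial case. If $K^\circ\neq\emptyset$, then $K^\circ$ is convex and $K^\circ\subseteq K\subseteq\overline{K^\circ}$, so $K=K^\circ\cup\gamma$ with $\gamma:=K\setminus K^\circ\subseteq\partial K^\circ$; this already yields the asserted shape, and since taking interiors commutes with the homeomorphisms above, $C^\circ$ is genuinely the interior of $C$. For the converse direction (needed only for the last sentence) one checks directly that an open disk or half-plane together with a connected piece $\gamma$ of its bounding circle or line is projectively convex, by casework on the pole: a pole outside the closed region sends it to a bounded disk plus a connected boundary arc, and a pole on the bounding circle but outside $\gamma$ sends it to a half-plane plus a connected boundary segment; both are convex.

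The substantive step, and the main obstacle, is showing $K^\circ$ is a disk or half-plane. I would argue by contradiction: if not, then $\partial K^\circ$ is a convex curve that is not a single line or circle, and I must exhibit $w\in\C\setminus K$ for which $z\mapsto 1/(z-w)$ destroys convexity. I expect to organize this by the shape of $\overline{K^\circ}$. If $\overline{K^\circ}$ is a slab between two parallel lines, a pole strictly outside the slab sends those lines to two internally tangent circles, and the image of $K^\circ$ lies between them with a cusp, hence is non-convex. If $\partial K^\circ$ has a corner, taking the pole just outside $K$ behind one of the two supporting lines at the corner produces, after inversion, a boundary with a concave arc (already visible by inverting the interior of a wedge from a pole lying in one defining half-plane but not in the wedge). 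The remaining (and subtlest) case is when $\partial K^\circ$ is $C^1$ and essentially smooth but non-circular: here the plan is to locate a point of least boundary curvature, use a comparison argument to see that $\overline{K^\circ}$ lies inside the osculating circle $\Gamma$ there, pick the pole $q^\ast\in\Gamma\setminus\overline{K^\circ}$ so that $z\mapsto 1/(z-q^\ast)$ straightens $\Gamma$ to a line $L$, deduce from the three forced collinear boundary points that a whole segment of $L$ must lie on $\partial(\psi_{q^\ast}\cdot K^\circ)$, and conclude that $\partial K^\circ$ contains an arc of $\Gamma$ — then iterate on the complementary part of the boundary. Making this curvature-comparison argument watertight, so that the obstruction in the smooth case is pinned down rather than displaced, is the heart of the matter; this is exactly the content of Zervos's theorem, and is where I expect the real work to go.

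Granting that $K^\circ$ is a disk or half-plane, so $\partial K^\circ$ is a circle or line $\Gamma_0$ and $\gamma\subseteq\Gamma_0$, the connectedness of $\gamma$ is quick. If $\gamma$ were disconnected it would sit inside two disjoint arcs of $\Gamma_0$ separated by two nonempty gaps in $\Gamma_0\setminus\gamma$; choosing the pole $c$ in one gap is legitimate since $c\in\partial K^\circ$ and $c\notin\gamma$, so $c\notin K$, and then $z\mapsto 1/(z-c)$ sends $\Gamma_0$ to a line $L=\partial H$ with $H$ the image half-plane, while the other gap maps to a finite segment of $L$ lying between images of points of $\gamma$ but missing the image of $\gamma$ entirely; thus a segment with endpoints in $\psi_c\cdot K$ exits $\psi_c\cdot K$, contradicting convexity. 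Finally, the ``in particular'' assertion follows because the normal form is self-complementary: $\CP^1\setminus(C^\circ\cup\gamma)=(\CP^1\setminus\overline{C^\circ})\cup(\partial C^\circ\setminus\gamma)$, where $\CP^1\setminus\overline{C^\circ}$ is again an open circular region with boundary $\partial C^\circ$, and the complement within a circle of a connected set is connected; combined with the easy converse above, this shows $\CP^1\setminus C$ is projectively convex.
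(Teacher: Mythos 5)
The paper does not actually prove this proposition; it is stated as attributed to Zervos and cited to \cite{zervos1960aspects}, so there is no in-paper argument to compare against. What you have written is an honest reconstruction attempt, and its high-level structure is sound: reduce to a planar convexity statement by sending a point of $\CP^1\setminus C$ to $\infty$, deduce $C=C^\circ\cup\gamma$ with $\gamma\subseteq\partial C^\circ$ from elementary convex geometry, handle the trivial/degenerate cases separately, show $\gamma$ is connected by picking a pole in a hypothetical gap of $\partial C^\circ\setminus\gamma$, and note that the normal form is self-complementary. The connectedness argument and the ``in particular'' conclusion are essentially complete modulo the main step.

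The genuine gap, which you yourself flag, is the claim that $K^\circ$ is a disk or half-plane. The slab case is fine, but the corner case as sketched is not: the image of a wedge under $z\mapsto 1/(z-w)$ is an intersection of disks and disk-complements depending on which side of each supporting line the pole $w$ lies, and one has to actually check that for some admissible $w$ the result is non-convex rather than asserting it; as written the parenthetical ``already visible by inverting the interior of a wedge'' is doing the work without showing it. More seriously, in the smooth case the inference ``second-order contact of $\partial(\psi_{q^*}\cdot K^\circ)$ with the line $L$ forces a segment of $L$ to lie on the boundary'' is false in general (a convex boundary of the form $y=x^4$ has vanishing curvature at the origin and contains no segment), so the osculating-circle comparison does not by itself rigidify the boundary; something extra, exploiting the freedom to move $q^*$ along $\Gamma$ or a variational argument at the curvature minimum, is needed. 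This is precisely where Zervos's theorem has real content, and the proposal does not yet close it.
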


So, one example of a projectively convex set which is not quite a circular region is $\UHP \cup \R_+$. Another is $\UHP \cup [0,1]$. Yet another (albeit after a bit of consideration) is $\UHP \cup (-\infty,0) \cup (1,\infty]$. We now state a homogeneous version of Laguerre's theorem, extended to projectively convex sets.

\begin{proposition}[Laguerre]\label{laguerre_prop}
    Let $C \subseteq \CP^1$ be projectively convex, and fix $\phi \in \SL_2(\C)$. If the pole of $\phi$ is in $C$, then $d_\phi$ preserves strong $C$-stability.
\end{proposition}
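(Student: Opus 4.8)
The plan is to reduce the projectively convex case to the classical circular-region case via an $SL_2(\C)$ conjugation, using the fact that $d_\phi$ is defined as a conjugate of $\partial_x$. First I would recall the classical statement of Laguerre's theorem: if $C$ is a \emph{convex} circular region (half-plane or disk/complement-of-disk bounded away from $\infty$) and $x_0 \in C$, then the classical polar derivative $d_{x_0}$ maps polynomials with all roots outside $C$ to polynomials with all roots outside $C$ — equivalently, in the language of stability, $d_{x_0}$ preserves $(\CP^1 \setminus C)$-stability. This in turn follows from the Gauss--Lucas theorem applied after a Möbius change of coordinates moving $x_0$ to $\infty$. I would state this as the known base case (it is the classical ingredient the text says Laguerre's theorem ``relies on'').

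Next, the main reduction. Fix a projectively convex $C$ and a $\phi \in SL_2(\C)$ whose pole lies in $C$; we want to show $d_\phi$ preserves strong $C$-stability, i.e., if $p \in V(n)$ has no roots in $C$ then $d_\phi p$ has no roots in $C$ and $d_\phi p \not\equiv 0$. Since $d_\phi = \phi^{-1}\partial_x \phi$ as operators on $V(n)$ (here I abuse notation, writing $\phi$ for the action on the relevant $V(k)$), we have $d_\phi p = \phi^{-1}\big(\partial_x(\phi \cdot p)\big)$. Set $q := \phi \cdot p$. The key observations are: (i) $\phi$ sends the pole of $\phi$ to $\infty$, so by the lemma preceding the Zervos proposition (and its projectively-convex analogue), $\phi \cdot C$ has convex stereographic projection, hence $\phi \cdot C$ is a convex projectively convex region with $\infty \notin \phi \cdot C$; (ii) $p$ has no roots in $C$ iff $q = \phi \cdot p$ has no roots in $\phi \cdot C$, since $\phi$ acts on roots in $\CP^1$. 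So $q$ avoids the convex region $\phi \cdot C$, which does not contain $\infty$; by the classical Laguerre theorem (base case) applied with pole at $\infty$, i.e. with $\partial_x$, the ordinary derivative $\partial_x q$ has no roots in $\phi \cdot C$ either — I should take a small moment to handle the boundary portion $\gamma$ of $\phi \cdot C$ as well as the open interior, since we are in the projectively convex rather than strictly open setting; Gauss--Lucas gives that roots of $\partial_x q$ lie in the closed convex hull of the roots of $q$, and a boundary-case argument (a root of $\partial_x q$ on the boundary of the convex hull must be a multiple root of $q$) upgrades this to avoidance of the closed region minus the parts of the boundary actually in $\phi\cdot C$. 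Finally, applying $\phi^{-1}$ sends $\partial_x q$ back to $d_\phi p$ and $\phi \cdot C$ back to $C$, so $d_\phi p$ has no roots in $C$. Nonvanishing ($d_\phi p \not\equiv 0$): since $\phi^{-1}$ is invertible it suffices that $\partial_x q \not\equiv 0$, which holds because $q \in V(n)$ with $q$ avoiding $C \neq \emptyset$ means $q$ is nonconstant in the relevant sense (or: if $\partial_x q \equiv 0$ then $q = y^n$ up to scalar, whose only root is $\infty \in \phi \cdot C$, contradicting $\infty \notin \phi \cdot C$).

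The main obstacle I expect is the boundary bookkeeping in the Gauss--Lucas step: to get \emph{strong} $C$-stability (nonvanishing on all of $C$, including its boundary arc $\gamma$) rather than merely open-region stability, one needs the refined Gauss--Lucas statement that a root of $\partial_x q$ on the boundary of the convex hull of the roots of $q$ forces a repeated root of $q$ at that point — and then one must check this is compatible with $q$ avoiding the projectively convex region $\phi \cdot C = (\phi\cdot C)^\circ \cup (\phi\cdot\gamma)$. The other mild subtlety is justifying that the operator identity $d_\phi = \phi^{-1}\partial_x\phi$, together with the pole of $\phi$ lying in $C$, correctly matches the ``pole goes to $\infty$'' picture — but this is exactly the content of the definition of $d_\phi$ and the remark following it, so it should be quick. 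Everything else (transitivity of the $SL_2(\C)$ action on roots, preservation of projective convexity under $\phi$, invertibility of $\phi^{-1}$) is immediate from the results already established in this section.
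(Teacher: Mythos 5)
Your overall strategy — conjugate by $\phi$ so the pole goes to $\infty$, apply Gauss--Lucas, then conjugate back — is exactly the paper's approach, but as written the argument has a persistent confusion between $C$ and its complement $B := \CP^1 \setminus C$, and this confusion is not merely notational. In (i) you claim that $\phi\cdot C$ has convex stereographic projection and $\infty \notin \phi\cdot C$. Both claims are false under the hypotheses: the pole of $\phi$ lies \emph{in} $C$, and $\phi$ maps its pole to $\infty$, so $\infty \in \phi\cdot C$; the lemma you cite (about $\phi$ whose pole is \emph{not} in the set) applies to $B$, not to $C$. What is true is that $B$ is also projectively convex (Zervos), and the pole lies in $C$ hence not in $B$, so it is $\phi\cdot B$ that has convex stereographic projection and excludes $\infty$. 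This distinction matters because Gauss--Lucas is a statement about where the roots of the derivative \emph{are} — inside the convex hull of the roots of the original polynomial — not about a convex set that the original polynomial happens to avoid. The implication ``$q$ avoids a convex set $\Rightarrow \partial_x q$ avoids it'' is simply false (take $q=(x-1)(x+1)$, which avoids the disk $|x|<1/2$, while $q'=2x$ does not). The correct chain is: the roots of $q := \phi\cdot p$ lie in $\phi\cdot B$, which is convex and excludes $\infty$, so $q$ dehomogenizes to a polynomial of degree exactly $n$; Gauss--Lucas puts the roots of $\partial_x q$ in the convex hull of the roots of $q$, hence in $\phi\cdot B$, and $\deg \partial_x q = n-1$ exactly rules out a root at $\infty$; thus $\partial_x q$ is $(\phi\cdot C)$-stable and nonzero, and applying $\phi^{-1}$ shows $d_\phi p$ is $C$-stable and nonzero. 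This is precisely the paper's argument, with $B$ playing the role you mistakenly assigned to $C$.

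Once the argument is routed through the convexity of $\phi\cdot B$, the ``boundary bookkeeping'' you flag as the main obstacle vanishes: the set $\phi\cdot B$ itself (not merely its closure) is convex, so the convex hull of any finite collection of its points — in particular, the roots of $q$ — is contained in $\phi\cdot B$, boundary arc included. The ordinary Gauss--Lucas theorem therefore suffices; no refined statement about multiple roots on the boundary of the convex hull is needed. Your nonvanishing argument is fine after the same correction: $q=y^n$ would put every root of $q$ at $\infty$, contradicting $\infty\notin\phi\cdot B$ (not, as you wrote, $\infty\notin\phi\cdot C$).
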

\begin{proof}
    Gauss-Lucas and the fact that $\CP^1 \setminus C$ is projectively convex give the result. Specifically, for $C$-stable $p \in V(n)$ consider $\phi \cdot p$, which is stable in $\phi \cdot C \ni \infty$. Letting $B$ be the complement of $C$, the dehomogenization of this polynomial is then of degree exactly $n$ with all of its roots in the stereographic projection of $\phi \cdot B$. By projective convexity, $\phi \cdot B$ is convex and therefore Gauss-Lucas implies $\partial_x(\phi \cdot p)$ is $\phi \cdot C$-stable and not identically zero. Applying $\phi^{-1}$ then implies $d_\phi p = (\phi^{-1} \partial_x \phi) p$ is $C$-stable.
\end{proof}

\begin{corollary}\label{laguerre_cor}
    Let $C_k \subseteq \CP^1$ be projectively convex regions for $k \in [m]$, and fix $\phi \in \SL_2(\C)$. If the pole of $\phi$ is in $C_{k_0}$, then $d_\phi$ acting on the variables $(x_{k_0},y_{k_0})$ preserves strong $(C_1 \times \cdots \times C_m)$-stability.
\end{corollary}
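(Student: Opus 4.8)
\emph{Proof proposal.} The plan is to deduce the multivariate statement from the univariate Laguerre theorem (Proposition \ref{laguerre_prop}) by specializing all but the $k_0$-th pair of variables. Let $p \in V(\lambda)$ be $(C_1 \times \cdots \times C_m)$-stable; we must show that $d_\phi p$, with $d_\phi$ acting on $(x_{k_0}:y_{k_0})$, is again $(C_1 \times \cdots \times C_m)$-stable and not identically zero. Since nonvanishing on a product region can be checked one point at a time, I would fix an arbitrary point $(c_1,\dots,c_m) \in C_1 \times \cdots \times C_m$ (writing $c_k = (a_k:b_k)$) and aim to prove $(d_\phi p)(c_1,\dots,c_m) \neq 0$. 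Establishing this for every such point simultaneously yields nonvanishing on the region and, picking any single point, the nonvanishing of $d_\phi p$ as a polynomial.

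The key step is the reduction. Form the univariate homogeneous restriction
\[
    q(t) := p(c_1,\dots,c_{k_0-1},\,t,\,c_{k_0+1},\dots,c_m) \in V(\lambda_{k_0}).
\]
First, $q \not\equiv 0$: otherwise $p$ would vanish at every point of the slice $\{(c_1,\dots,c_{k_0-1},t,c_{k_0+1},\dots,c_m)\}$, and since $C_{k_0}$ is nonempty (it contains the pole of $\phi$) this slice meets $C_1 \times \cdots \times C_m$, contradicting the $(C_1 \times \cdots \times C_m)$-stability of $p$. Second, $q$ is $C_{k_0}$-stable: a zero $t_0 \in C_{k_0}$ of $q$ would be a zero of $p$ lying in $C_1 \times \cdots \times C_m$, again a contradiction. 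Hence $q$ is strongly $C_{k_0}$-stable.

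Now I would apply Proposition \ref{laguerre_prop}: since the pole of $\phi$ lies in $C_{k_0}$ and $C_{k_0}$ is projectively convex, $d_\phi q$ is strongly $C_{k_0}$-stable, so in particular $(d_\phi q)(c_{k_0}) \neq 0$. Finally, because $d_\phi$ differentiates only in the $k_0$-th pair of variables, it commutes with specialization of the remaining variables, giving
\[
    (d_\phi p)(c_1,\dots,t,\dots,c_m) = (d_\phi q)(t);
\]
evaluating at $t = c_{k_0}$ yields $(d_\phi p)(c_1,\dots,c_m) = (d_\phi q)(c_{k_0}) \neq 0$, which completes the argument since $(c_1,\dots,c_m)$ was arbitrary.

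I do not expect a genuine obstacle here; the corollary is essentially bookkeeping on top of Proposition \ref{laguerre_prop}. The only two points needing care are that $d_\phi$ commutes with specialization in the other variables (immediate from its acting in a single variable pair) and that the univariate restriction $q$ is not identically zero — and it is precisely the use of \emph{strong} rather than weak stability that makes the latter automatic. Were one to work with weak stability instead, this identically-zero case would require separate handling, which is the conceptual payoff of phrasing everything in terms of strong stability.
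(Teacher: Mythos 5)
Your proof is correct and follows the same route as the paper: restrict $p$ to a univariate slice by evaluating all but the $k_0$-th pair of variables at points of the other $C_k$'s, apply the univariate Laguerre theorem (Proposition \ref{laguerre_prop}), and use that $d_\phi$ commutes with specialization of the remaining variables. You have merely spelled out explicitly the non-vanishing of the restriction $q$ (and its $C_{k_0}$-stability), which the paper leaves implicit in the phrase ``gives us a $C_{k_0}$-stable polynomial.''
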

\begin{proof}
    Follows from the fact that taking derivatives in some variables commutes with evaluation in the others. Specifically, $p \in V(\lambda)$ is $(C_1 \times \cdots \times C_m)$-stable iff $p \neq 0$ for all evaluations in $C_1 \times \cdots \times C_m$. So, evaluating $p$ in all variables in that product of sets except $(x_{k_0},y_{k_0})$ gives us a $C_{k_0}$-stable polynomial in $V(\lambda_{k_0})$. Applying the previous proposition then gives the result.
\end{proof}

\subsection{Real Stable Polynomials}\label{stab_subsect}

We now give a number of classical real stability results, along with a few results from \cite{bb1} and \cite{bb2}. Additionally, we state these results for homogeneous polynomials in $V_\R(\lambda)$, taking roots at infinity into account. The results of this section will come in to play mainly in \S\ref{Rops_sect}, where we discuss real linear operators and operators preserving interval- and ray-rootedness.

The first result we will need for our considerations of $V_\R(\lambda)$ is a version of the Hermite-Biehler theorem, often called the Hermite-Kakeya-Obreschkoff theorem. We state here without proof the multivariate version essentially used in Theorem 1.9 of \cite{bb1} (see also \S2.4 of \cite{wagner2011polys}). First we need a definition.

\begin{definition}\label{pp_def}
    We say that $p,q \in V_\R(\lambda)$ are in \emph{proper position}, denoted $p \ll q$, if $q + ip$ is weakly $\UHP^m$-stable (equivalently, if $p + iq$ is weakly $\LHP^m$-stable).
\end{definition}

\begin{proposition}[Multivariate Hermite-Biehler] \label{hermite-biehler}
    For $p,q \in V_\R(\lambda)$, $ap + bq$ is weakly real stable for all $a,b \in \R$ if and only if either $p \ll q$ or $q \ll p$.
\end{proposition}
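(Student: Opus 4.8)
The plan is to prove the two directions separately, with the forward direction (the "if") being the easier one and the converse (the "only if") being the main obstacle. For the forward direction, suppose without loss of generality that $p \ll q$, i.e. $q + ip$ is weakly $\UHP^m$-stable. Note that $q - ip = \overline{q + ip}$ (on real inputs the coefficients are conjugated), so $q - ip$ is weakly $\LHP^m$-stable. For fixed real $a, b$ we want $ap + bq$ to be weakly real stable. The idea is to write $ap + bq$ as a real-linear combination that exhibits it as a "boundary" case: observe that $ap + bq = \mathrm{Re}((b - ia)(q + ip)) = \tfrac12\big((b-ia)(q+ip) + (b+ia)(q-ip)\big)$. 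I would then invoke a Hurwitz-type / limiting argument: $(b - ia)(q+ip)$ is a nonzero scalar times a weakly $\UHP^m$-stable polynomial (when $(a,b) \neq 0$), hence weakly $\UHP^m$-stable; similarly $(b+ia)(q-ip)$ is weakly $\LHP^m$-stable; and a polynomial which is a sum of a weakly $\UHP^m$-stable and a weakly $\LHP^m$-stable polynomial, if it has real coefficients, must be weakly real stable — this is the standard observation that a real polynomial nonvanishing on neither $\UHP^m$ nor its image would force a root in one of the half-plane products. One has to be slightly careful: the cleanest route is to restrict to a single complex line, reducing to the univariate Hermite-Biehler/Hermite-Kakeya-Obreschkoff statement, and then use the fact that multivariate (weak) stability is detected on all complex lines of the form $t \mapsto u + tv$ with $v$ in the open upper half-plane componentwise.

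For the converse, assume $ap + bq$ is weakly real stable for all $a, b \in \R$; we must produce the proper-position relation $p \ll q$ or $q \ll p$. The standard approach, which I would follow, is again to reduce to the univariate case by restriction to complex lines. Fix $v \in \UHP^m$ and $u \in \R^m$, and consider the univariate restrictions $p_{u,v}(t) := p(u + tv)$ and $q_{u,v}(t) := q(u+tv)$, which lie in $\R[t]$ up to a common complex scalar — more precisely, since $v$ has nonreal entries these are genuinely complex-coefficient univariate polynomials, but the hypothesis "$ap+bq$ weakly real stable for all real $a,b$" translates into "$a p_{u,v} + b q_{u,v}$ has all roots in $\overline{\LHP}$ (or is zero) for all real $a,b$", which is exactly the hypothesis of univariate Hermite-Biehler. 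The univariate theorem then yields that $p_{u,v}$ and $q_{u,v}$ are in proper position in one of the two orders, i.e. $q_{u,v} + i p_{u,v}$ or $p_{u,v} + i q_{u,v}$ is $\UHP$-stable. The genuine difficulty — and I expect this to be the crux — is that the \emph{order} of the proper-position relation could a priori depend on the chosen line $(u,v)$; one must show it is globally consistent, so that a single relation $p \ll q$ (or $q \ll p$) holds across all lines simultaneously.

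To resolve this consistency issue I would argue as follows. First dispose of degenerate cases: if $p \equiv 0$ then $p \ll q$ holds trivially (and symmetrically for $q \equiv 0$), so assume both are nonzero. The set of lines $(u,v)$ on which, say, $q_{u,v} + i p_{u,v}$ is $\UHP$-stable, versus those on which $p_{u,v} + i q_{u,v}$ is $\UHP$-stable, partition (up to overlap, which happens only when $p_{u,v}, q_{u,v}$ have a common real structure) the connected parameter space of such lines; a continuity/connectedness argument on the parameter space $\R^m \times \UHP^m$ — using that roots of $a p_{u,v} + b q_{u,v}$ vary continuously and never cross into the open lower (or upper) half-plane — shows the order cannot switch. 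Concretely, if the order switched, then at some intermediate line one would have $q_{u,v} + ip_{u,v}$ with a real root, which (combined with the all-real-combinations hypothesis) forces $p_{u,v}$ and $q_{u,v}$ to share that real root; tracking this shows the "boundary" between the two regimes consists only of lines where $p$ and $q$ are proportional, and across such a locus both orders hold, so a single consistent choice extends globally. Having fixed the order, say $q_{u,v} + i p_{u,v}$ is $\UHP$-stable for every line, I conclude that $q + ip$ is weakly $\UHP^m$-stable — again because $\UHP^m$-stability of a multivariate polynomial is equivalent to $\UHP$-stability of all its restrictions to lines $u + tv$, $v \in \UHP^m$ — which is precisely $p \ll q$. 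The main obstacle throughout is this global-consistency-of-order step; everything else is either a reduction to one variable or an invocation of the univariate Hermite-Biehler theorem together with a Hurwitz continuity argument.
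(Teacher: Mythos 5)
First, a contextual note: the paper does \emph{not} prove this proposition. It is stated explicitly ``without proof,'' with a citation to Theorem~1.9 of \cite{bb1} and to \S2.4 of \cite{wagner2011polys}. So there is no in-paper argument to compare against; your attempt stands on its own.

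Your overall strategy --- restriction to lines and reduction to the univariate Hermite--Biehler/HKO theorem --- is a legitimate route, and you correctly identify the crux of the converse. However, there are two problems, one minor and one that is a genuine gap.

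The minor one is the line parametrization. You take $u\in\R^m$ and $v\in\UHP^m$ (complex entries). Then $p(u+tv)$ and $q(u+tv)$ do \emph{not} have real coefficients in $t$, and --- more importantly --- the set of $t$ for which $u+tv\in\UHP^m$ is not $\UHP$ (the imaginary part of $u_k+tv_k$ is $\operatorname{Re}(t)\operatorname{Im}(v_k)+\operatorname{Im}(t)\operatorname{Re}(v_k)$, which has no fixed sign on $\UHP$). Your claimed translation ``$ap_{u,v}+bq_{u,v}$ has all roots in $\overline{\LHP}$'' therefore does not follow. The correct parametrization for this reduction is $u\in\R^m$, $v\in\R_{>0}^m$, $t\in\UHP$; then $u+tv\in\UHP^m$ exactly when $t\in\UHP$, and $p_{u,v},q_{u,v}\in\R[t]$ genuinely, so the univariate theorem applies cleanly. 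With that correction the forward direction goes through (modulo a genericity/Hurwitz argument to handle lines on which the restriction vanishes identically). Incidentally, the side remark that a sum of a weakly $\UHP^m$-stable and a weakly $\LHP^m$-stable polynomial with real coefficients must be weakly real stable is not justified --- and in fact is not the right lemma to reach for --- but you abandon it in favor of the line reduction, which is the correct move.

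The real gap is the global consistency of the proper-position order in the converse. You correctly note that the common boundary between the set $A$ of lines where $p_{u,v}\ll q_{u,v}$ holds strictly and the set $B$ where $q_{u,v}\ll p_{u,v}$ holds strictly is contained in the locus $E$ where $p_{u,v}$ and $q_{u,v}$ are proportional (via Lemma~\ref{scal_rs_lemma}). But your conclusion --- ``across such a locus both orders hold, so a single consistent choice extends globally'' --- is not an argument. Having both orders hold \emph{on} $E$ does not prevent the strict order from differing on the two sides of $E$; this is exactly the configuration $U=\{x\le 0\}$, $L=\{x\ge 0\}$ in $\R$, where $U\cap L=\{0\}$ belongs to both, yet $U\setminus L$ and $L\setminus U$ are both nonempty. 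Nothing in the connectedness of $\R^m\times\R_{>0}^m$, nor in the fact that $E$ is a proper closed subset, rules this out. To close this gap one needs an additional structural input --- for example the Wronskian-sign argument, or the deformation lemma used by Borcea and Br\"and\'en --- which shows that the \emph{direction} of proper position on each line is controlled by a single globally defined analytic quantity that cannot change sign. As written, the proof of the converse is incomplete at precisely the step you flagged as the obstacle.
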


This result will be crucial to our consideration of real polynomials and real stability (as it was in \cite{bb1}). Its main use for us in this direction is made explicit in the following.

\begin{lemma}\label{rs_lemma}
    Fix $\lambda \in \N_0^m$, $\alpha \in \N_0^l$, and a linear operator $T: V(\lambda) \to V(\alpha)$ which restricts to a real linear operator from $V_\R(\lambda)$ to $V_\R(\alpha)$. If $T$ preserves weak real stability and $p \in V(\lambda)$ is stable, then $T(p)$ is either $\UHP^l$-stable, $\LHP^l$-stable, or identically zero.
\end{lemma}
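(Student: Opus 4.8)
The plan is to write $p$ with real and imaginary parts and exploit that $T$ is real, reducing the problem to the Hermite-Biehler dichotomy. First I would split a stable $p \in V(\lambda)$ as $p = p_1 + ip_2$ with $p_1, p_2 \in V_\R(\lambda)$. Since $p$ is $\UHP^m$-stable, in particular $p$ does not vanish on $\R^m \subset (\CP^1)^m$... well, more precisely I want to run the standard argument: for real $a, b$ not both zero, the polynomial $a p_1 + b p_2$ is a real linear combination that is, up to a nonzero complex scalar, "between" $p$ and $\bar p$. Concretely, $p$ being $\UHP^m$-stable and $\bar p$ being $\LHP^m$-stable, one checks that each nonzero real combination $ap_1 + bp_2$ is weakly real stable (this is exactly the direction of Hermite-Biehler; alternatively observe that $ap_1+bp_2 = \operatorname{Re}((a-ib)p)$ up to sign and use that $(a-ib)p$ is stable for the appropriate half-plane). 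By the Multivariate Hermite-Biehler proposition, this forces $p_1 \ll p_2$ or $p_2 \ll p_1$.

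Next, apply $T$. Because $T$ restricts to a real operator, $T(p) = T(p_1) + i T(p_2)$ with $T(p_1), T(p_2) \in V_\R(\alpha)$. Since $T$ preserves weak real stability, for every real $a, b$ the polynomial $a T(p_1) + b T(p_2) = T(ap_1 + bp_2)$ is weakly real stable (using that $ap_1 + bp_2$ is weakly real stable from the previous paragraph, hence either real stable or zero, and in either case $T$ of it is weakly real stable). By Hermite-Biehler again, either $T(p_1) \ll T(p_2)$ or $T(p_2) \ll T(p_1)$ or one of them is identically zero in a degenerate way; in the non-degenerate cases, $T(p_2) + i T(p_1)$ is weakly $\UHP^l$-stable, i.e. $i \cdot \overline{T(p)}$-type combination — more carefully, $p_1 \ll p_2$ means $p_2 + i p_1$ is weakly $\UHP^l$-stable, and $T(p) = T(p_1) + iT(p_2) = i(T(p_2) - i T(p_1))$, so $T(p)$ is weakly $\UHP^l$-stable or weakly $\LHP^l$-stable depending on which proper-position relation holds. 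If $T(p) \not\equiv 0$, weak half-plane stability is genuine half-plane stability, giving the three stated alternatives.

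The main obstacle I expect is the bookkeeping around the proper-position relations and which half-plane one lands in: keeping track of signs and of the identification $p \ll q \iff q + ip$ weakly $\UHP$-stable $\iff p + iq$ weakly $\LHP$-stable, and translating "$T(p_1) \ll T(p_2)$ or $T(p_2) \ll T(p_1)$" into "$T(p)$ is $\UHP^l$-stable or $\LHP^l$-stable". A secondary subtlety is handling the fully degenerate cases (e.g. $T(p_1) \equiv 0$ or $T(p_2) \equiv 0$), where $T(p)$ is a real scalar multiple of a single real stable polynomial times $1$ or $i$; such a polynomial, if nonzero, is real stable and hence both $\overline{\UHP}^l$- and $\overline{\LHP}^l$-stable but need not be $\UHP^l$- or $\LHP^l$-stable — so one should note that a nonzero real stable polynomial is in particular $\UHP^l$-stable (real stable means $\UHP^l$-stable by definition), which disposes of that case immediately and is consistent with the statement.
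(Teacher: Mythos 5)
Your proposal is correct and follows essentially the same route as the paper's proof: decompose $p$ into real and imaginary parts, use the Multivariate Hermite-Biehler proposition to see that all real combinations of those parts are weakly real stable, push this through $T$ (using that $T$ is real and preserves weak real stability), and then invoke Hermite-Biehler in reverse to read off the dichotomy for $T(p)$. The "degenerate case" worry you raise at the end is a non-issue, as you yourself note: a nonzero real stable polynomial is by definition $\UHP^l$-stable (and, by conjugate symmetry of its roots, also $\LHP^l$-stable), and the Hermite-Biehler iff already absorbs the case where one of $T(p_1), T(p_2)$ vanishes identically.
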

\begin{proof}
    By the Hermite-Biehler theorem, there exist $q,r \in V_\R(\lambda)$ such that $p = q + ir$ and $a q + b r$ is real stable or zero for all $a,b \in \R$. So, $a T(q) + b T(r)$ is real stable or zero for all $a,b \in \R$. By Hermite-Biehler again, $T(p) = T(q) + iT(r)$ is either $\UHP^l$-stable, $\LHP^l$-stable, or identically zero.
\end{proof}

The next two results are from \cite{bb1}, the first of which gives an equivalent characterization for a polynomial to be a scalar multiple of a real stable polynomial. This result will be specifically used in \S\ref{Rops_sect} to generalize complex operator theoretic stability results to the real stability case.

\begin{lemma}[\cite{bb1}, Proposition 4.1]\label{scal_rs_lemma}
    Let $p \in V_\C(\lambda)$ be both $\UHP^m$-stable and $\LHP^m$-stable. Then, $p$ is a (complex) scalar multiple of a real stable polynomial. In particular, if nonzero $q,r \in V_\R(\lambda)$ are such that $q \ll r$ and $r \ll q$, then $r$ is a (real) scalar multiple of $q$.
\end{lemma}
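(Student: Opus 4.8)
The plan is to establish the first statement and then read off the ``in particular'' clause from it via the Hermite--Biehler description of proper position. The converse direction of the first statement needs no work (any scalar multiple of a real stable polynomial is $\UHP^m$-stable, and a real polynomial with no zeros in $\UHP^m$ also has none in $\LHP^m$ by conjugation-symmetry of its zero set), so assume $p \in V_\C(\lambda)$ is both $\UHP^m$-stable and $\LHP^m$-stable; by our conventions this already forces $p \not\equiv 0$. Let $\bar p$ denote the polynomial obtained by conjugating the coefficients of $p$, so that $\bar p(\mathbf z) = \overline{p(\overline{\mathbf z})}$, and set $h := p/\bar p$, a nonzero rational function in the variables $x_k/y_k$ (it is convenient here to pass to dehomogenizations, which changes nothing essential).

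The key observation is that $h$ is constant along every generic real positive line. Indeed, fix $\mathbf a \in \R^m$ and $\mathbf v \in \R^m_{>0}$; then $\mathbf a + t\mathbf v$ lies in $\UHP^m$ for $t \in \UHP$ and in $\LHP^m$ for $t \in \LHP$, so the univariate restriction $p_\ell(t) := p(\mathbf a + t\mathbf v)$, when not identically zero, has all of its roots real. Writing $p_\ell(t) = c_\ell\,u_\ell(t)$ with $c_\ell \in \C^\times$ and $u_\ell$ monic with real coefficients, the conjugated restriction $\bar p_\ell$ equals $\overline{c_\ell}\,u_\ell(t)$ (and it vanishes identically exactly when $p_\ell$ does), so the restriction of $h$ to this line is the constant $c_\ell/\overline{c_\ell}$, which is moreover unimodular.

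To globalize, I would differentiate: for fixed $\mathbf v \in \R^m_{>0}$ the directional derivative $D_{\mathbf v}h = \sum_k v_k\,\partial_{x_k}h$ is a rational function which, by the previous paragraph, vanishes at $\mathbf a + t\mathbf v$ for generic $\mathbf a \in \R^m$ and $t \in \R$ --- a set that is Zariski-dense in $\C^m$ --- hence vanishes identically. Since the positive orthant contains $m$ linearly independent vectors, all partials $\partial_{x_k}h$ vanish, so $h \equiv c$ for some $c \in \C$. From $\overline h = \bar p/p = h^{-1}$ we get $|c| = 1$, say $c = e^{2i\theta}$; then $g := e^{-i\theta}p$ satisfies $\overline g = g$, i.e.\ $g \in V_\R(\lambda)$, and $g$ is $\UHP^m$-stable (a nonzero scalar multiple of $p$), so $g$ is real stable and $p = e^{i\theta}g$. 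For the ``in particular'' clause, if $q,r \in V_\R(\lambda)$ are nonzero with $q \ll r$ and $r \ll q$, then Definition~\ref{pp_def} together with its parenthetical equivalence shows that $p := q + ir$ is both weakly $\UHP^m$-stable and weakly $\LHP^m$-stable, and $p \not\equiv 0$; applying the first part gives $p = cg$ with $c \in \C^\times$ and $g$ real stable, and comparing real and imaginary parts yields $q = \Re(c)\,g$, $r = \Im(c)\,g$ with $\Re(c) \neq 0$ (else $q \equiv 0$), so $r = (\Im(c)/\Re(c))\,q$.

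The step I expect to be the main obstacle is the globalization: one must be careful that the positive lines $\mathbf a + t\mathbf v$ genuinely sweep out a Zariski-dense set and that the lower-dimensional family of lines on which $p$ (hence $\bar p$) restricts identically to zero can be discarded harmlessly; and if one insists on working directly in the homogeneous picture, one must also track the ``roots at infinity'', which cause no difficulty since they lie on the real circle $\R \cup \{\infty\} \subset \CP^1$. A connectedness argument showing directly that the line-wise constants $c_\ell/\overline{c_\ell}$ all coincide is a viable alternative, but the Zariski-density route seems the most economical.
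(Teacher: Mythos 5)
The paper does not give a proof of this lemma; it cites it as Proposition 4.1 of \cite{bb1}. Your argument is a correct, self-contained reconstruction. The line-restriction step is sound: for $\mathbf a\in\R^m$ and $\mathbf v\in\R^m_{>0}$, the map $t\mapsto\mathbf a+t\mathbf v$ carries $\UHP$ into $\UHP^m$ and $\LHP$ into $\LHP^m$, so the non-identically-zero restrictions $p_\ell$ are real-rooted and factor as $c_\ell u_\ell$ with $u_\ell$ monic real, giving $h=p/\bar p\equiv c_\ell/\overline{c_\ell}$ on each such line. The globalization is also fine: for fixed $\mathbf v$ the lines $\mathbf a+t\mathbf v$, $\mathbf a\in\R^m$, sweep out all of $\R^m$, the exceptional $\mathbf a$'s (where $p$ vanishes on the whole line) form a proper real-algebraic subset, and $\R^m$ is Zariski-dense in $\C^m$; so the rational function $D_{\mathbf v}h$ vanishes identically, and running over $m$ independent $\mathbf v\in\R^m_{>0}$ kills all partials of $h$. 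The endgame ($|c|=1$, peel off $e^{i\theta}$) and the extraction of the ``in particular'' clause via $p:=q+ir$ and Definition~\ref{pp_def} are all correct, including the observation that $q+ir\not\equiv 0$ upgrades weak to strong stability so the first part applies. Passing to dehomogenizations is harmless for exactly the reason you give: $\UHP^m,\LHP^m\subset(\CP^1)^m$ avoid the points at infinity, and homogenization is a real-linear bijection that preserves reality of coefficients, so the statement for $V_\C(\lambda)$ is equivalent to the one for $\C_\lambda[x_1,\ldots,x_m]$. One small remark: your unimodularity observation $|c_\ell/\overline{c_\ell}|=1$ is not needed for the Zariski-density route (you rederive $|c|=1$ from $\bar h=1/h$ at the end), though it would be the starting point for the alternative connectedness argument you mention. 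Either route is essentially the argument in \cite{bb1}.
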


The next result provides the degeneracy cases in the Borcea-Br{\"a}nd{\'e}n characterizations (recall the dimension restrictions of Theorems \ref{BB_Cops_thm} and \ref{BB_Rops_thm}). We will use this result to explicate the link between our operator characterization and the Borcea-Br{\"a}nd{\'e}n characterization (see Lemmas \ref{bb_Clink} and \ref{bb_Rlink}).

\begin{lemma}[\cite{bb1}, Lemma 3.2]\label{dim_lemma}
    Let $W \subseteq V_\K(\lambda)$ be a $\K$-vector subspace (for $\K = \C$ or $\K = \R$) consisting only of weakly stable (resp. weakly real stable) polynomials. We have:
    \begin{enumerate}[label=(\alph*)]
        \item If $\K = \C$, then $\dim(W) \leq 1$.
        \item If $\K = \R$, then $\dim(W) \leq 2$.
    \end{enumerate}
\end{lemma}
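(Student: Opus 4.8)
The plan is to prove the complex case directly and then bootstrap the real case from it. For part (a), suppose $W \subseteq V_\C(\lambda)$ is a complex subspace all of whose nonzero elements are stable, and assume for contradiction that $\dim W \geq 2$. Pick linearly independent $p, q \in W$. The key observation is that for a stable polynomial $p \in V_\C(\lambda)$, evaluating at a fixed point $(z:w) \in \UHP^m$ (so that $p(z:w) \neq 0$) and considering the one-variable family $p + tq$ for $t \in \C$: since $p+tq \in W$ is stable for every $t$, it never vanishes at $(z:w)$, so the affine-linear (in $t$) function $t \mapsto (p+tq)(z:w) = p(z:w) + t\,q(z:w)$ is nowhere zero on $\C$. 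A nonconstant affine function of $t$ has a zero, so we must have $q(z:w) = 0$ for every $(z:w) \in \UHP^m$. But then $q$ vanishes on the open set $\UHP^m$ and hence is identically zero (a polynomial vanishing on a nonempty open subset of $(\CP^1)^m$ is zero), contradicting $q \neq 0$. This forces $\dim W \leq 1$.

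For part (b), let $W \subseteq V_\R(\lambda)$ be a real subspace all of whose nonzero elements are real stable, and suppose $\dim_\R W \geq 3$. Consider the complexification $W_\C := W \otimes_\R \C \subseteq V_\C(\lambda)$, which has $\dim_\C W_\C = \dim_\R W \geq 3$. A general element of $W_\C$ has the form $q + ir$ with $q, r \in W$. Since $aq + br$ is real stable (or zero) for all $a, b \in \R$, the multivariate Hermite-Biehler theorem gives $q \ll r$ or $r \ll q$; in either case $q + ir$ is (weakly) $\UHP^m$-stable or (weakly) $\LHP^m$-stable. So every element of $W_\C$ lies in $W_\C^+ \cup W_\C^-$, where $W_\C^\pm := \{f \in W_\C : f \text{ is weakly } \mathcal{H}_\pm^m\text{-stable}\}$. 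Each of $W_\C^+$ and $W_\C^-$ is a complex subspace of $W_\C$ (it is closed under scalar multiplication trivially; for closure under addition, two weakly $\UHP^m$-stable polynomials with product $fg$ also weakly $\UHP^m$-stable means $f + g$ is handled by the same one-variable argument as in part (a), or one invokes the corresponding statement of part (a) applied to weak stability — note that for the zero-allowed version the evaluation argument still shows any two-dimensional subspace of weakly $\UHP^m$-stable polynomials would force a common zero in $\UHP^m$, impossible). A vector space that is the union of two proper subspaces is impossible, so $W_\C = W_\C^+$ or $W_\C = W_\C^-$; say $W_\C = W_\C^+$, so every element of $W_\C$ is weakly $\UHP^m$-stable. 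Then part (a), in its weak-stability form, bounds $\dim_\C W_\C \leq 1 < 3$, a contradiction. Hence $\dim_\R W \leq 2$.

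The main obstacle is getting the weak-stability analogue of part (a) cleanly, since allowing the zero polynomial means we cannot immediately say "$p+tq$ is nonzero." The fix: if $W_\C$ consists of weakly $\UHP^m$-stable polynomials and $\dim_\C W_\C \geq 2$, pick linearly independent $p, q \in W_\C$; not both are identically zero, so assume $p \not\equiv 0$ and fix $(z:w) \in \UHP^m$ with $p(z:w) \neq 0$. For each $t \in \C$, $p + tq$ is weakly stable, hence either zero or nonvanishing on $\UHP^m$; since $(p+tq)(z:w) = p(z:w) + tq(z:w)$, if this is ever zero at our chosen $(z:w)$ then $p+tq$ must be identically zero, forcing $p + tq \equiv 0$, i.e. $q$ is a scalar multiple of $p$ — contradicting independence — unless $q(z:w) = 0$. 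Running this over all $(z:w) \in \UHP^m$ gives $q \equiv 0$ on an open set, hence $q \equiv 0$, again a contradiction. So $\dim_\C W_\C \leq 1$. Threading this weak version through the argument above completes both parts; alternatively one simply cites \cite{bb1} for the statement, but the self-contained argument is short and worth including.
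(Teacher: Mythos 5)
The paper does not prove this lemma itself; it cites it directly as Lemma~3.2 of \cite{bb1}, so there is no in-paper proof to compare against. Evaluating your argument on its own terms: part (a) is correct and is really the standard argument, which can be phrased even more crisply as ``fix $(z_0:w_0) \in \UHP^m$; the $\C$-linear evaluation $p \mapsto p(z_0:w_0)$ on $W$ has trivial kernel because nonzero elements of $W$ are $\UHP^m$-stable, so $\dim_\C W \leq \dim_\C \C = 1$.'' Your affine-in-$t$ argument is equivalent to this injectivity statement.

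Part (b), however, has a genuine gap, and the route you take is unnecessarily roundabout. The Hermite--Biehler step correctly shows $W_\C = W_\C^+ \cup W_\C^-$, but the claim that $W_\C^+$ and $W_\C^-$ are \emph{complex subspaces} is not established. Closure under addition is exactly the issue: if $f, g \in W_\C^+$, there is no a priori reason $f + g$ should again be weakly $\UHP^m$-stable rather than landing in $W_\C^- \setminus W_\C^+$. Your parenthetical justification does not address this: the remark about the product $fg$ is irrelevant to whether $f+g$ is stable, and invoking ``part (a) applied to weak stability'' is circular --- part (a) tells you something about subspaces of weakly stable polynomials, but only after you already know $W_\C^+$ is a subspace. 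Without the subspace claim, the ``a vector space is not the union of two proper subspaces'' step cannot be applied. (A connectedness argument --- $W_\C^\pm \setminus \{0\}$ are closed in $W_\C \setminus \{0\}$ by Hurwitz, and cover the connected set $W_\C \setminus \{0\}$ --- tells you they intersect unless one is trivial, but that alone does not complete the proof either.)

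The cleanest fix is to drop the complexification entirely and run the same evaluation-map argument as in part (a), just counting real dimensions. Fix any $(z_0:w_0) \in \UHP^m$ and consider the $\R$-linear map $\phi: W \to \C$, $p \mapsto p(z_0:w_0)$. Any nonzero $p \in W$ is real stable, hence $\UHP^m$-stable (and $\LHP^m$-stable), so $p(z_0:w_0) \neq 0$. Thus $\ker \phi = \{0\}$ and $\dim_\R W \leq \dim_\R \C = 2$. This is a two-line argument, uses nothing beyond the definition of real stability, and makes no appeal to Hermite--Biehler or to any structure of $W_\C^\pm$ at all.
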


By applying appropriate M{\"o}bius transformations, note that $(a)$ of the above lemma can be generalized to $(C_1 \times \cdots \times C_m)$-stable polynomials for any open circular regions $C_1,\ldots,C_m \subseteq \CP^1$.

We now state the last result of this section, which refines the Hermite-Biehler theorem for top-degree monic polynomials in $V_\R(n)$. This refinement comes through the notion of \emph{interlacing polynomials} and is much closer to the original statement of the classical Hermite-Biehler theorem (e.g., see Theorem 6.3.4 in \cite{rahman2002anthpoly}).

\begin{lemma}\label{total_order_lemma}
    For top-degree monic $p,q \in V_\R(n)$, $p \ll q$ if and only if the roots of $p$ and $q$ (denoted in increasing order by $(\alpha_k:1)$ and $(\beta_k:1)$, respectively) interlace on the real line in the following way:
    \[
        \alpha_1 \leq \beta_1 \leq \alpha_2 \leq \beta_2 \leq \cdots \leq \alpha_n \leq \beta_n
    \]
    Further, if these equivalent conditions hold, then $\ll$ gives a total order on the top-degree monic elements of the span of $p$ and $q$ in $V_\R(n)$. This order is equivalently defined via the order of the $k^\text{th}$ largest roots, for any $k \in [n]$ such that $\alpha_k \neq \beta_k$.
\end{lemma}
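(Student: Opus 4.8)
The plan is to dehomogenize and then run the classical Hermite--Biehler argument with the orientation tracked carefully. Since $p$ and $q$ are top-degree monic, their dehomogenizations are honest degree-$n$ monic real polynomials and $(\alpha_k:1),(\beta_k:1)$ are precisely their $n$ roots in $\CP^1$; also $p\ll q$ means exactly that $q+ip$ has no zero in $\UHP$. Note first that $p\ll q$ already forces $p$ and $q$ to be real-rooted of degree exactly $n$: by the multivariate Hermite--Biehler proposition, $p\ll q$ makes every element of $\operatorname{span}_{\R}(p,q)$ weakly real stable, and $p,q$ themselves are such elements, so only the interlacing pattern is at issue.

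For the equivalence I would first treat the case $\gcd(p,q)=1$. If $p\ll q$ then, by coprimality, $q+ip$ has all its zeros in $\LHP$ and hence $q-ip$ (its Schwarz reflection) has all its zeros in $\UHP$, so $G:=\frac{q-ip}{q+ip}$ is analytic on $\UHP$ with $|G|=1$ on $\R$ and $G(\infty)=\frac{1-i}{1+i}=-i$; the maximum principle gives $|G|<1$ on $\UHP$ (it is nonconstant once $n\ge 1$), and since $R:=q/p=i\frac{1+G}{1-G}$ we get $\operatorname{Im}R>0$ on $\UHP$, i.e.\ $R$ is a real rational Herglotz function. The standard facts about such functions then finish it: $R$ has only simple real poles and zeros, $R=1+\sum_k c_k/(x-\alpha_k)$ with every $c_k<0$, so $R$ is strictly increasing on each interval between consecutive poles, which forces $\alpha_1<\beta_1<\cdots<\alpha_n<\beta_n$. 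Conversely, if the roots strictly interlace in that order, a direct sign count of the residues $c_k=q(\alpha_k)/p'(\alpha_k)$ shows $c_k<0$ for all $k$, whence $R$ maps $\UHP$ into $\UHP$, so $R\neq -i$ there, so $q+ip$ has no zero in $\UHP$, i.e.\ $p\ll q$. The general case reduces to this one by writing $p=d\,p_1$, $q=d\,q_1$ with $d=\gcd(p,q)\in\R[x]$ and $\gcd(p_1,q_1)=1$: then $q+ip=d(q_1+ip_1)$, and since $d$ is real it omits $\UHP$ iff it is real-rooted, so $p\ll q\iff\big(d\text{ real-rooted and }p_1\ll q_1\big)$; a counting argument (comparing, for each $v\in\R$, the number of roots $\le v$ of $p$ and of $q$) shows that the weak interlacing of $p,q$ in the stated order is equivalent to $d$ being real-rooted together with the strict interlacing of $p_1,q_1$, and we are done by the coprime case. (Alternatively one may simply cite the classical Hermite--Biehler theorem \cite{rahman2002anthpoly} and pin the orientation via $G(\infty)=-i$.)

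For the total-order assertion, assume $p\ll q$ and, the case $p=q$ being trivial, that $p\neq q$. The top-degree monic elements of $\operatorname{span}_{\R}(p,q)$ are exactly the $r_t:=(1-t)p+tq$, $t\in\R$, and since $p\ll q$ each $r_t$ is real-rooted of degree $n$ (Hermite--Biehler again). The relation $\ll$ on this family is reflexive ($r+ir=(1+i)r$ is $\UHP$-stable when $r$ is real-rooted), antisymmetric by Lemma~\ref{scal_rs_lemma} (top-degree monic polynomials that are mutually $\ll$ coincide), and total by applying Hermite--Biehler to pairs of elements of $\operatorname{span}_{\R}(p,q)$; transitivity holds because, by the equivalence just proved, $r\ll r'$ forces the $k$-th largest root of $r$ to be $\le$ that of $r'$ for every $k$, so a cycle $r\ll r'\ll r''\ll r$ would give all three the same root multiset and hence (being monic of degree $n$) make them equal. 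Finally, for $k$ with $\alpha_k\neq\beta_k$ one checks that $t\mapsto \gamma_k(r_t)$ (the $k$-th largest root of $r_t$) is strictly increasing: monotonicity is the forward implication just used, and strictness follows because a point where $\gamma_k(r_t)$ were locally constant would be a common root of $p$ and $q$, which after the $\gcd$ reduction cannot occupy the $k$-th position over a range of $t$ straddling both $t=0$ and $t=1$. Hence $r\preceq r'\iff\gamma_k(r)\le\gamma_k(r')$, as claimed.

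The main obstacle is the bookkeeping around shared roots of $p$ and $q$ — equivalently, real zeros of $q+ip$. This is precisely what separates the clean classical statement (strict interlacing, open half-plane) from the version needed here (weak interlacing, weak stability), and it is also what makes the strict monotonicity of $t\mapsto\gamma_k(r_t)$ in the last step delicate; the rest is the standard Herglotz/Blaschke dictionary together with routine continuity.
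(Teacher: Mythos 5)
Your central argument for the equivalence (interlacing $\iff p \ll q$) is correct but takes a genuinely different route from the paper. The paper simply cites the classical univariate Hermite--Biehler theorem and then pins the orientation via the $(n-1)$-st derivative of $q+ip$ (which reduces to comparing two linear terms). You instead reprove the equivalence from scratch by showing that $R := q/p$ is a rational Herglotz function: you form $G = (q-ip)/(q+ip)$, observe $|G|=1$ on $\overline{\R}$ with $G(\infty) = -i$, deduce $|G|<1$ on $\UHP$ by maximum modulus, apply the Cayley transform $R = i(1+G)/(1-G)$, and read off the strict interlacing pattern (with the correct orientation baked in) from the negative residues; the non-coprime case is handled by factoring out $\gcd(p,q)$ and a root-counting reduction. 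This is more self-contained and makes the orientation transparent, at the cost of some extra machinery. Your treatment of the total-order claim is also more explicit than the paper's (you verify reflexivity, antisymmetry via Lemma~\ref{scal_rs_lemma}, totality and transitivity separately), where the paper compresses this to one sentence.

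There is, however, a genuine gap in your proof of the final sentence of the lemma (strict monotonicity of $t \mapsto \gamma_k(r_t)$). Your argument is that a locally constant $\gamma_k$ would force a common root of $p$ and $q$ to occupy the $k$-th position ``over a range of $t$ straddling both $t=0$ and $t=1$,'' which you rule out. But the range of $t$ over which a common root sits in position $k$ need not contain both endpoints. Concretely, take $p = x(x+1)$, $q = x(x-2)$, so $p \ll q$ and $\alpha_2 = 0 \neq 2 = \beta_2$. Here $r_t = x^2 + (1-3t)x$ has roots $\{0, 3t-1\}$, so $\gamma_2(r_t) = \max(0, 3t-1)$, which is constant at $0$ for all $t \le 1/3$ even though $r_0 \ll r_{1/4}$ strictly. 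So $\gamma_2$ does not determine the order here, and neither does $\gamma_1 = \min(0,3t-1)$. In other words, when $\gcd(p,q) \neq 1$ the $k$-th-root ordering claim, as literally stated, fails — this is arguably an imprecision in the lemma itself (the paper's proof of this point is also a one-line ``it is straightforward to see''), and your gcd reduction shows you sensed the difficulty, but the argument you give does not close it. The claim is clean in the coprime case, where strict interlacing of $r_t$ and $r_{t'}$ for $t \neq t'$ gives strict monotonicity of every $\gamma_k$; the degenerate case would need a more careful statement (e.g.\ restricting to $k$ for which the $k$-th root is not a common root of $p$ and $q$ for any $t$ in the relevant range).
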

\begin{proof}
    The fact that $p \ll q$ is equivalent to interlacing roots is the classical univariate Hermite-Biehler theorem. That $q$ has larger roots than $p$ can be obtained by the fact that the $(n-1)^\text{st}$ derivative of $q + ip$ must be $\UHP$-stable. Since both polynomials are top-degree monic, this $(n-1)^\text{st}$ derivative will be a complex linear combination of two linear terms. This complex linear combination is given as follows, where $s_q$ and $s_p$ denote the respective sums of the roots of $q$ and $p$:
    \[
        \partial_x^{n-1} \big(q(x,y) + ip(x,y)\big) = (n! \cdot x - (n-1)! \cdot s_q y) + i(n! \cdot x - (n-1)! \cdot s_p y) = n!(1+i)\left(x - \frac{s_q + i s_p}{n(1 + i)} y\right)
    \]
    Since this polynomial is $\UHP$-stable, it must be that $\frac{s_q + i s_p}{n(1 + i)} \in \overline{\LHP}$. We further compute:
    \[
        \overline{\LHP} \ni \frac{s_q + i s_p}{n(1 + i)} = \frac{(s_q + i s_p)(1-i)}{2n} = \frac{s_q + s_p + i(s_p-s_q)}{2n}
    \]
    Therefore $s_q \geq s_p$, which is the same as saying that the sum of the roots of $q$ is larger than that of $p$. Since we already know that the roots of $q$ and $p$ interlace, this implies that $q$ has larger roots than $p$.
    
    As for the total ordering property, let $r$ and $s$ be two polynomials in the real span of $p$ and $q$. Any real linear combination of these polynomials is then a real linear combination of $p$ and $q$ (and hence is real-rooted), and Hermite-Biehler implies either $r \ll s$ or $s \ll r$. By the above interlacing condition, it is straightforward to see that this total order is given by looking at the order of the $k^\text{th}$ roots, for any $k \in [n]$.
\end{proof}

\section{Grace's Theorem}\label{graces_sect}

We now prove the multivariate homogeneous Grace's theorem for some specific projectively convex regions and then derive a few important corollaries. These corollaries will be almost immediate once Grace's theorem has been proven, and yet will quickly yield stronger results regarding linear operators in the next section.

In the usual proof of the classical univariate Grace's theorem, reference to linear factors of $f \in \C^n[x]$ is necessary. This makes generalization to $\C^\lambda[x_1,\ldots,x_m]$ difficult, as multivariate polynomials do not necessarily have any linear factors. In our new proof, we are able avoid reference to linear terms by using particular features of the $D$ map. This means that our proof method works for any $\lambda$.

\begin{theorem}\label{graces_thm}
    Fix $\lambda \in \N_0^m$ and $p,q \in V(\lambda)$. Also, denote $C := \UHP \cup \overline{\R_+}$ and $\widetilde{C} := \LHP \cup \overline{\R_-}$, where the closures are considered to be in $\CP^1$. If $p$ is $C^m$-stable and $q$ is $\widetilde{C}^m$-stable, then $D^\lambda(p \otimes q) \neq 0$.
\end{theorem}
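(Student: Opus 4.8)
The plan is to reduce the multivariate statement to the univariate one by induction on $m$, and then to prove the univariate case by an induction on the degree $n$ that exploits the $SL_2(\C)$-invariance of $D$ together with Laguerre's theorem (Proposition \ref{laguerre_prop}). First, for the reduction: suppose $p \in V(\lambda)$ is $C^m$-stable and $q \in V(\lambda)$ is $\widetilde{C}^m$-stable, with $\lambda \in \N_0^m$. Fix the last pair of variables $(x_m : y_m)$ and expand $p = \sum_{k} p_k(x',y')\, x_m^k y_m^{\lambda_m - k}$ and similarly for $q$, where $x',y'$ are the first $m-1$ pairs. Applying $D^{\lambda}$ only in the last coordinate (which is legitimate since $D^{\lambda} = D^{\lambda_1} \circ \cdots \circ D^{\lambda_m}$ acts coordinatewise) produces a pairing of $p,q$ against a univariate-type apolarity form in the $m$-th slot; the resulting object lies in $V(\lambda') \otimes V(\lambda')$ for $\lambda' = (\lambda_1,\dots,\lambda_{m-1})$. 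The key point is that for \emph{fixed} evaluations of the first $m-1$ coordinate-pairs inside $C^{m-1}$ (resp. $\widetilde{C}^{m-1}$), the univariate Grace's theorem applied in the $m$-th variable will show the $m$-th contraction is nonzero; the subtlety is that we need it to be nonzero \emph{as a polynomial} in the remaining variables and, more importantly, to carry the right stability in those variables so the induction hypothesis applies. This is exactly the standard ``Grace–Walsh–Szegő-style'' propagation, and I would phrase it via: the $m$-th contraction of $p$ against the conjugate-dual of $q$, evaluated at points of $C^{m-1}$, remains $C^{m-1}$-stable (using that $D$ in one slot commutes with evaluation in the others, as in Corollary \ref{laguerre_cor}).

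For the univariate base case ($m=1$, degree $n$), I would argue by induction on $n$. The $n=0$ case is trivial ($D^0(p\otimes q) = pq \neq 0$ for nonzero constants). For the inductive step, write $q(x,y)$; since $q$ is $\widetilde{C}$-stable and $\widetilde{C} = \LHP \cup \overline{\R_-}$, all $n$ roots of $q$ in $\CP^1$ lie in $\widetilde{C}$, so in particular $q$ has a root $(\beta:1)$ with $\beta \in \overline{\LHP} = \LHP \cup \R$ — actually a root in $\widetilde{C}$, which crucially is \emph{disjoint from the interior of $C$} but may meet its boundary. Pick such a root and let $\phi \in SL_2(\C)$ be chosen so that its pole is $(\beta:1)$; then $q = d_\phi(\tilde q)$ up to scalar for some $\tilde q \in V(n+1)$? — more precisely, I want to go the other direction: write the apolarity pairing $D^n(p \otimes q)$ and peel off one factor of $q$. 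The cleanest route: since $D$ is $SL_2(\C)$-invariant (Proposition \ref{Dmap_prop}), $D^n(p\otimes q) = D^n((\phi\cdot p)\otimes(\phi\cdot q))$ for any $\phi$; choose $\phi$ sending a root of $q$ to infinity. Then $\phi \cdot q = y \cdot q_1$ for some $q_1 \in V(n-1)$, and a direct computation with $D$ shows $D^n(p' \otimes y q_1) = c\, D^{n-1}(\partial_?(p') \otimes q_1)$ where the operator hitting $p'$ is (up to the $SL_2$ conjugation) precisely a polar derivative $d_\psi$ of $p$ with pole at that root of $q$. Now invoke Laguerre (Proposition \ref{laguerre_prop}): since that root lies in $\widetilde{C}$, hence \emph{not} in the interior of $C$ — wait, here is the delicate part — I need the pole to lie in $C$ for Laguerre to preserve $C$-stability of $p$. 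Actually Laguerre applies with pole in $\widetilde{C} = \CP^1 \setminus C^{\circ}$-ish; the correct statement is that a polar derivative with pole \emph{outside} an open circular region need not preserve stability, but with pole in a projectively convex region it does. So I would instead peel off a root of $p$ (which lies in $C$) and differentiate $q$, using that the pole lands in $C \subseteq \CP^1 \setminus \widetilde{C}^{\circ}$; but $\widetilde{C}$ is not open, so $\CP^1 \setminus \widetilde{C}$ is open and projectively convex, and $C \supseteq \CP^1 \setminus \widetilde{C}$... I need to sort out which region is open. Here $C = \UHP \cup \overline{\R_+}$ is neither open nor closed, and $\widetilde C = \CP^1 \setminus C$ up to the shared boundary ray. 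The resolution is that $C$ and $\widetilde C$ are both projectively convex and are \emph{complementary as projectively convex sets} in the sense of the Zervos proposition (complement of a proj.\ convex set is proj.\ convex), with $C \cap \widetilde C = \{0,\infty\}$ or a ray — so exactly one of them, after the $\phi$-conjugation placing a chosen root at $\infty$, has convex stereographic image, and Laguerre/Gauss–Lucas applies to that one.

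The main obstacle, then, is handling the \emph{shared boundary}: unlike classical Grace, $C$ and $\widetilde C$ overlap on $\overline{\R_+} \cap \overline{\R_-} = \{0, \infty\}$ (or a half-line, depending on convention), so a root of $q$ could potentially sit exactly on the boundary of $C$, and a naive polar-derivative peeling might produce a polar derivative with pole exactly on $\partial C$, where Gauss–Lucas is borderline. The fix is precisely the use of \emph{projective convexity} rather than plain circular regions: Proposition \ref{laguerre_prop} is stated for projectively convex $C$ and a pole \emph{in} $C$ (including its boundary portion $\gamma$), which is exactly what we need, and the Gauss–Lucas input tolerates poles on the retained boundary. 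So the argument is: induct on degree, at each step use $SL_2(\C)$-invariance of $D$ to normalize a root of $p$ (resp.\ $q$) to infinity, recognize the resulting contraction as a polar derivative of the other polynomial with pole in $C$ (resp.\ $\widetilde C$), apply the projectively-convex Laguerre (Proposition \ref{laguerre_prop}) to conclude the derivative is still $C$-stable (resp.\ $\widetilde C$-stable) and nonzero of the right degree, and close the induction. I expect the bookkeeping of ``which of $C$, $\widetilde C$ is the convex one after conjugation'' and verifying the peeled factor is genuinely a polar derivative (a short but fiddly $D$-computation) to be the only real work; everything else is the inductive scaffolding plus the multivariate propagation described above.
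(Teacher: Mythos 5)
Your proposal does not follow the paper's approach, and it contains a genuine gap. The paper explicitly warns against the strategy you adopt: ``In the usual proof of the classical univariate Grace's theorem, reference to linear factors of $f \in \C_n[x]$ is necessary. This makes generalization to $\C_\lambda[x_1,...,x_m]$ difficult, as multivariate polynomials do not necessarily have any linear factors.'' Your univariate base case peels off a root of $q$ via $SL_2(\C)$-conjugation to write $\phi\cdot q = y\cdot q_1$; that works when $m=1$, but the whole point of the theorem is the multivariate case, and your proposed reduction to $m=1$ does not actually close. Concretely, applying $D^{\lambda_m}$ only in the last coordinate to $p\otimes q$ lands in $V(\lambda')\otimes V(\lambda')$, which is a \emph{tensor}, not a polynomial in $V(\lambda')$, so the notion of ``$C^{m-1}$-stability of the result'' is not meaningful without further decomposition, and you give no mechanism for carrying the stability of $p$ and $q$ through to a pair of polynomials to which the inductive hypothesis applies. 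The appeal to ``Grace--Walsh--Szeg\H{o}-style propagation'' is a gesture, not an argument, and in this paper GWS is in fact a \emph{consequence} of this theorem (Corollary \ref{gws_cor}), so invoking it here would be circular.

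The paper instead runs a single induction on the total degree $|\lambda|$, avoiding linear factors entirely. Assuming $D^\lambda(p\otimes q)=0$, the key step is the algebraic identity (for $\delta_1=(1,0,\dots,0)$ and any $\alpha$)
\[
D^{\lambda-\delta_1}\bigl((\alpha \partial_{x_1} + \partial_{y_1})p \otimes (\alpha \partial_{x_1} - \partial_{y_1})q\bigr)
= \alpha^2 D^{\lambda-\delta_1}(\partial_{x_1} p \otimes \partial_{x_1} q) - D^{\lambda-\delta_1}(\partial_{y_1} p \otimes \partial_{y_1} q) - \alpha D^\lambda(p\otimes q),
\]
where the last term vanishes by assumption. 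Corollary \ref{laguerre_cor} (the projectively-convex Laguerre theorem in one slot) shows that $(\alpha\partial_{x_1}+\partial_{y_1})p$ is $C^m$-stable and $(\alpha\partial_{x_1}-\partial_{y_1})q$ is $\widetilde{C}^m$-stable whenever $\alpha\in\UHP\cup\R_+$, so by the inductive hypothesis the left side and the two nonzero terms on the right are all nonzero. This forces $\alpha^2 \neq D^{\lambda-\delta_1}(\partial_{y_1} p \otimes \partial_{y_1} q)/D^{\lambda-\delta_1}(\partial_{x_1} p \otimes \partial_{x_1} q)$ for \emph{every} such $\alpha$ --- but $\alpha\mapsto\alpha^2$ maps $\UHP\cup\R_+$ onto all of $\C\setminus\{0\}$, a contradiction.

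This is where your reading of the shared boundary is exactly inverted. You treat the overlap $C\cap\widetilde{C}=\{0,\infty\}$ as an obstacle to be navigated carefully. In the paper's proof those two shared points are the engine of the argument: the poles of $\partial_{x_1}$ and $\partial_{y_1}$ are $\infty$ and $0$ respectively, both lying in $C\cap\widetilde{C}$, which by Laguerre means $\partial_{x_1}$ and $\partial_{y_1}$ preserve \emph{both} stabilities simultaneously. That is what allows all of $\partial_{x_1}p\otimes\partial_{x_1}q$, $\partial_{y_1}p\otimes\partial_{y_1}q$, and the $\alpha$-combination to land in the induction, and it is also what makes $\alpha^2$ range over a full punctured plane rather than a half-plane. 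The paper states this explicitly after the corollary following the theorem: ``The sets considered above intersect at 2 points ($0$ and $\infty$), and this ends up being crucial to the proof.'' Your proposal misses this idea, which is the conceptual heart of the generalized Grace's theorem.
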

\begin{proof}
    We prove the theorem by induction on degree. For $\lambda \equiv 0$, the result is obvious. For $|\lambda| \geq 1$, we can assume WLOG that $\lambda_1 \geq 1$ by permuting the variables. Define $\delta_1 := (1,0,0,\ldots,0) \in \N_0^m$.
    
    Since $C$ and $\widetilde{C}$ are projectively convex, Corollary \ref{laguerre_cor} implies $(a\partial_{x_1} + b\partial_{y_1})p$ is $C^m$-stable for all $(a:b) \in C$ and $(c\partial_{x_1} + d\partial_{y_1})q$ is $\widetilde{C}^m$-stable for all $(c:d) \in \widetilde{C}$. To obtain a contradiction, we assume $D^\lambda(p \otimes q) = 0$. For $\alpha \in \UHP \cup \R_+ \subset C$ (equivalently, $-\alpha \in \LHP \cup \R_- \subset \widetilde{C}$), this gives:
    \[
        \begin{split}
            D^{\lambda-\delta_1}\big((\alpha \partial_{x_1} + \partial_{y_1})p \otimes (\alpha \partial_{x_1} - \partial_{y_1})q\big) &= \alpha^2 D^{\lambda-\delta_1}(\partial_{x_1} p \otimes \partial_{x_1} q) - D^{\lambda-\delta_1}(\partial_{y_1} p \otimes \partial_{y_1} q) - \alpha D^\lambda(p \otimes q) \\
                &= \alpha^2 D^{\lambda-\delta_1}(\partial_{x_1} p \otimes \partial_{x_1} q) - D^{\lambda-\delta_1}(\partial_{y_1} p \otimes \partial_{y_1} q)
        \end{split}
    \]
    By induction and the stability properties discussed above, we have $D^{\lambda-\delta_1}(\partial_{x_1} p \otimes \partial_{x_1} q) \neq 0$, $D^{\lambda-\delta_1}(\partial_{y_1} p \otimes \partial_{y_1} q) \neq 0$, and $D^{\lambda-\delta_1}\big((\alpha \partial_{x_1} + \partial_{y_1})p \otimes (\alpha \partial_{x_1} - \partial_{y_1})q\big) \neq 0$. This implies:
    \[
        \alpha^2 D^{\lambda-\delta_1}(\partial_{x_1} p \otimes \partial_{x_1} q) - D^{\lambda-\delta_1}(\partial_{y_1} p \otimes \partial_{y_1} q) \neq 0 \Longrightarrow \alpha^2 \neq \frac{D^{\lambda-\delta_1}(\partial_{y_1} p \otimes \partial_{y_1} q)}{D^{\lambda-\delta_1}(\partial_{x_1} p \otimes \partial_{x_1} q)} \in \C \setminus \{0\}
    \]
    However, we can pick $\alpha \in \UHP \cup \R_+$ such that $\alpha^2$ is any value of $\C \setminus \{0\}$ we want, including that of $\frac{D^{\lambda-\delta_1}(\partial_{y_1} p \otimes \partial_{y_1} q)}{D^{\lambda-\delta_1}(\partial_{x_1} p \otimes \partial_{x_1} q)}$. This contradiction gives the result.
\end{proof}

\subsection{Other Regions}

We now generalize the above theorem to other regions via $\SL_2(\C)$ action and topological considerations. Theorem \ref{grace_pairs_thm} can then be considered our most general form of Grace's theorem. First though, we define two new notions in order to simplify the rest of this section.

\begin{definition} \label{grace_pair_def}
    Fix $m \in \N_0$ and any sets $S_1,S_2 \subseteq (\CP^1)^m$. We call $(S_1,S_2)$ a \emph{Grace pair} if: for all $\lambda \in \N_0^m$ and $p,q \in V(\lambda)$ such that $p$ is $S_1$-stable and $q$ is $S_2$-stable, we have that $D^\lambda(p \otimes q) \neq 0$. That is, if Grace's theorem holds for $S_1$ and $S_2$.
\end{definition}

\begin{definition}
    We say that a Grace pair is \emph{disjoint} if it is of the form $(C_1 \times \cdots \times C_m, B_1 \times \cdots \times B_m)$ and $C_k$ and $B_k$ are disjoint for all $k \in [m]$.
\end{definition}

This yields the following restatement of the above theorem.

\begin{corollary}
    For any $m \in \N_0$, $((\UHP \cup \overline{\R_+})^m, (\LHP \cup \overline{\R_-})^m)$ is a Grace pair.
\end{corollary}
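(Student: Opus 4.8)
The plan is to simply unwind the definition of a Grace pair and match it against Theorem~\ref{graces_thm}. Recall that $(S_1,S_2)$ is declared to be a Grace pair exactly when, for every $\lambda \in \N_0^m$ and every $p,q \in V(\lambda)$ with $p$ being $S_1$-stable and $q$ being $S_2$-stable, one has $D^\lambda(p \otimes q) \neq 0$. The first (and essentially only) step is to take $S_1 := (\UHP \cup \overline{\R_+})^m = C^m$ and $S_2 := (\LHP \cup \overline{\R_-})^m = \widetilde{C}^m$, where $C$ and $\widetilde{C}$ are the projectively convex sets appearing in the statement of Theorem~\ref{graces_thm}. Then the implication demanded by the definition of a Grace pair — that $C^m$-stability of $p$ together with $\widetilde{C}^m$-stability of $q$ forces $D^\lambda(p \otimes q) \neq 0$ — is verbatim the conclusion of Theorem~\ref{graces_thm}. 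Hence the corollary is a pure reformulation and the proof is a one-line appeal to that theorem.

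Since all the real content resides in Theorem~\ref{graces_thm} (and before that in Laguerre's theorem via Corollary~\ref{laguerre_cor}), I do not anticipate any obstacle. The only bookkeeping point worth flagging is the closure convention: in the Grace-pair formulation, as in the theorem, the rays $\overline{\R_+}$ and $\overline{\R_-}$ are closed in $\CP^1$, so that $\overline{\R_+}$ contains both $0$ and the point at infinity $(-1:0)$. With that convention fixed, the identifications $C^m = (\UHP \cup \overline{\R_+})^m$ and $\widetilde{C}^m = (\LHP \cup \overline{\R_-})^m$ are literal equalities of subsets of $(\CP^1)^m$, and nothing more is needed. The purpose of stating the result in this packaged form is purely organizational: it is the base case from which the subsequent $SL_2(\C)$-conjugation and topological-limit arguments generate Grace pairs for other regions.
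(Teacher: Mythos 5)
Your proposal is correct and matches the paper exactly: the corollary is presented there with no proof at all, simply as a ``restatement of the above theorem,'' which is precisely the unwinding of the definition of a Grace pair against Theorem~\ref{graces_thm} that you describe. Your remark about the closure convention in $\CP^1$ is a sensible bookkeeping check but changes nothing.
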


The sets considered above intersect at 2 points ($0$ and $\infty$), and this ends up being crucial to the proof. So, in order to extend to the full generality of Grace's theorem, we will need to find such points even when the stability sets of two polynomials $p$ and $q$ do not a priori intersect at all. To this end, we give the following lemmas.

\begin{lemma} \label{stability_closed_to_open}
    Fix $\lambda \in \N_0^m$ and any closed circular regions $C_1,\ldots,C_m \subset \CP^1$. Let $p \in V(\lambda)$ be $(C_1 \times \cdots \times C_m)$-stable. There exist open circular regions $U_1,\ldots,U_m$ such that $C_k \subset U_k$ for all $k \in [m]$ and $p$ is $(U_1 \times \cdots \times U_m)$-stable.
\end{lemma}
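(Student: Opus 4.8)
The plan is a straightforward compactness argument carried out on the Riemann sphere. Equip $\CP^1$ with its round (spherical) metric $d$, under which every circular region --- a disk, a half-plane, or the exterior of a disk --- is a metric ball. Concretely, each closed circular region $C_k$ can be written as $C_k = \{z \in \CP^1 : d(z,a_k) \le r_k\}$ for some $a_k \in \CP^1$ and some $0 < r_k < \pi$. For every sufficiently small $\delta > 0$ (namely $\delta < \pi - r_k$ for all $k$), set $U_k^{(\delta)} := \{z \in \CP^1 : d(z,a_k) < r_k + \delta\}$. Then $U_k^{(\delta)}$ is an open circular region, it strictly contains $C_k$, the family $\{U_k^{(\delta)}\}_{\delta > 0}$ is decreasing in $\delta$, and $\bigcap_{\delta > 0} \overline{U_k^{(\delta)}} = C_k$.

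First I would record that the zero set $Z(p) \subseteq (\CP^1)^m$ is closed: in any product of affine charts on $(\CP^1)^m$, membership in $Z(p)$ is the vanishing of a continuous function, so $Z(p)$ is closed, and $(\CP^1)^m$ being compact, $Z(p)$ is compact. By hypothesis $Z(p)$ is disjoint from the compact set $C := C_1 \times \cdots \times C_m$.

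Then I would argue by contradiction: suppose $p$ fails to be $(U_1^{(\delta)} \times \cdots \times U_m^{(\delta)})$-stable for every small $\delta > 0$. Taking $\delta = 1/n$ for large $n$ yields a point $z^{(n)} \in Z(p) \cap (U_1^{(1/n)} \times \cdots \times U_m^{(1/n)})$. By compactness of $(\CP^1)^m$, a subsequence $z^{(n_j)} \to z^\ast$, and $z^\ast \in Z(p)$ since $Z(p)$ is closed. For any fixed $\delta > 0$ and all $j$ large enough that $1/n_j < \delta$, we have $z^{(n_j)} \in U_1^{(\delta)} \times \cdots \times U_m^{(\delta)}$, so $z^\ast \in \overline{U_1^{(\delta)}} \times \cdots \times \overline{U_m^{(\delta)}}$; letting $\delta \to 0$ and using $\bigcap_{\delta > 0}\overline{U_k^{(\delta)}} = C_k$ gives $z^\ast \in C$, contradicting $Z(p) \cap C = \emptyset$. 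Hence some $\delta > 0$ works, and $U_k := U_k^{(\delta)}$ are the desired open circular regions.

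The only mildly delicate point is the metric-ball description of circular regions together with the identity $\bigcap_{\delta > 0}\overline{U_k^{(\delta)}} = C_k$; everything else is routine point-set topology. I would justify the metric-ball claim by recalling that stereographic projection carries circles in $\CP^1$ to circles on the round sphere, so the two sides of any such circle are precisely the open and closed spherical caps --- i.e.\ metric balls --- which handles the disk, half-plane (caps of radius $\pi/2$), and exterior-of-disk cases uniformly.
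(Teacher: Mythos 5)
Your proof is correct and is exactly the compactness argument the paper has in mind; the paper's own proof is the one-liner ``Follows from compactness of $\CP^1$ and closedness of the set of roots of $p$,'' and your write-up simply fills in the details (metric-ball description of circular regions, nested enlargements, a subsequence/closedness argument).
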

\begin{proof}
    Follows from compactness of $\CP^1$ and closedness of $C_1 \times \ldots \times C_m$ and of the zero set of $p$.
\end{proof}

For the next lemma, note that the boundary of any circular region $C$ is topologically equivalent to the unit circle in $\C$ (i.e., the boundary of the unit disc). With this, we call a portion of the boundary of $C$ \emph{open} if it is open when considered as a subset of the unit circle. Further recall the characterization of projectively convex regions given by Proposition \ref{proj_conv_characterization}.

\begin{lemma} \label{proj_conv_open_boundary}
    Fix $n \in \N_0$ and any projectively convex $C \equiv C^\circ \cup \gamma \subset \CP^1$, where $C^\circ$ is an open circular region and $\gamma$ is a connected portion of its boundary. Let $p \in V(n)$ be $C$-stable. There is an open connected subset $\Gamma$ of the boundary of $C^\circ$ such that $\gamma \subseteq \Gamma$ and $p$ is $(C^\circ \cup \Gamma)$-stable.
\end{lemma}
\begin{proof}
    Let $\partial C^\circ$ denote the boundary of (the closure of) $C^\circ$, and let $S \subseteq \CP^1$ be the intersection of $\partial C^\circ$ and the zero set of $p$. Since the zero set of $p$ is closed, we have that $S$ is closed in $\partial C^\circ$. And further, $\gamma \cap S = \varnothing$ by assumption. Defining $\Gamma$ to be the connected component of $\partial C^\circ \setminus S$ containing $\gamma$ then gives the result.
\end{proof}

Using these lemmas and the $\SL_2(\C)$-invariance of the apolarity form, we obtain the following generalization of Grace's theorem. Here, $(ii)$ and $(iii)$ give the multivariate Grace's theorem proven in \cite{bb2}.

\begin{theorem}\label{grace_pairs_thm}
    For $m \in \N_0$ and $C_1,\ldots,C_m,B_1,\ldots,B_m \subseteq \CP^1$, we have that $(C_1 \times \cdots \times C_m, B_1 \times \cdots \times B_m)$ is a Grace pair for the following regions.
    \begin{enumerate}[label=(\roman*)]
        \item For all $k \in [m]$, $C_k$ and $B_k$ are projectively convex, $C_k \cup B_k = \CP^1$, and $C_k \cap B_k$ is exactly two points.
        \item For all $k \in [m]$, $C_k$ is a closed circular region, $B_k$ is an open circular region, and $C_k \cup B_k = \CP^1$.
        \item For all $k \in [m]$, $C_k$ is an open circular region, $B_k$ is a closed circular region, and $C_k \cup B_k = \CP^1$.
        \item For $m = 1$, $C_1$ and $B_1$ are projectively convex and $C_1 \cup B_1 = \CP^1$.
    \end{enumerate}
\end{theorem}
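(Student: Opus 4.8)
My plan is to reduce cases $(ii)$, $(iii)$, $(iv)$ to case $(i)$, and to reduce case $(i)$ to Theorem \ref{graces_thm}. Two elementary observations drive the reductions. First, since $D^\lambda$ is $(SL_2(\C))^m$-invariant with values in the one-dimensional trivial representation $V(0^m) \cong \C$, one has $D^\lambda((\Phi \cdot p) \otimes (\Phi \cdot q)) = D^\lambda(p \otimes q)$ for every product M\"obius transformation $\Phi = (\phi_1, \dots, \phi_m)$; hence being a Grace pair is unaffected when one applies such a $\Phi$ to both sets and both polynomials, and (using $D^\lambda(q \otimes p) = \pm D^\lambda(p \otimes q)$) when one swaps the two sets. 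Second, Grace-pair-ness is monotone under enlargement: if $S_k \subseteq C_k$ and $T_k \subseteq B_k$ for all $k$ and $(S_1 \times \cdots \times S_m,\, T_1 \times \cdots \times T_m)$ is a Grace pair, then so is $(C_1 \times \cdots \times C_m,\, B_1 \times \cdots \times B_m)$, because $S_k$-stability is a weaker condition than $C_k$-stability. So for $(ii)$--$(iv)$ it will suffice to fit inside $(\prod_k C_k,\, \prod_k B_k)$ a sub-pair of case-$(i)$ type.

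For case $(i)$ I would argue as follows. Write $C_k = C_k^\circ \cup \gamma_k$ and $B_k = B_k^\circ \cup \delta_k$ via the Zervos classification. Then $C_k^\circ \cap B_k^\circ$ is open and contained in the two-point set $C_k \cap B_k$, hence empty; since $\gamma_k \cup \delta_k$ has empty interior while $C_k \cup B_k = \CP^1$, the disjoint open circular regions $C_k^\circ$ and $B_k^\circ$ have dense union, which forces $B_k^\circ = \CP^1 \setminus \overline{C_k^\circ}$. Thus $C_k^\circ$ and $B_k^\circ$ share a boundary circle $\Sigma_k$, with $\gamma_k$ and $\delta_k$ the two closed arcs of $\Sigma_k$ cut off by the two points $\{P_k, Q_k\} = C_k \cap B_k$. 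Choosing $\phi_k$ to carry $\{P_k, Q_k\}$ to $\{0, \infty\}$ and $\Sigma_k$ to $\R \cup \{\infty\}$ with $C_k^\circ$ landing in $\UHP$ (possible by $3$-transitivity of $SL_2(\C)$ on $\CP^1$, and after precomposing if necessary with the automorphism $z \mapsto -1/z$ of $\UHP$, which interchanges $\overline{\R_+}$ and $\overline{\R_-}$), we get $\phi_k \cdot C_k = \UHP \cup \overline{\R_+}$ and $\phi_k \cdot B_k = \LHP \cup \overline{\R_-}$. Applying $\Phi = (\phi_1, \dots, \phi_m)$ to $p$ and $q$, Theorem \ref{graces_thm} and the invariance of $D^\lambda$ then give the result.

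Cases $(ii)$ and $(iii)$ are exchanged by swapping $p \leftrightarrow q$ and $C_k \leftrightarrow B_k$, so I would treat only $(ii)$: normalize each coordinate so that $C_k = \{|z| \le 1\}$, whence $B_k \supseteq \{|z| > 1\} \cup \{\infty\}$. Since the $C_k$ are \emph{closed}, the fattening lemma lets me enlarge: $p$ is stable on $\prod_k \{|z| \le 1 + \varepsilon_k\}$ for suitable $\varepsilon_k > 0$. In each coordinate I pick a circle $\{|z| = \rho_k\}$ with $1 < \rho_k < 1 + \varepsilon_k$, split it at two points chosen away from the finitely many relevant zeros, and let $S_k$ be $\{|z| < \rho_k\}$ together with one of the resulting arcs and $T_k$ be $\{|z| > \rho_k\} \cup \{\infty\}$ together with the other. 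Then $S_k, T_k$ are projectively convex, $S_k \subseteq \{|z| \le 1 + \varepsilon_k\}$ preserves stability of $p$, $T_k \subseteq \{|z| > 1\} \cup \{\infty\} \subseteq B_k$ preserves stability of $q$, $S_k \cup T_k = \CP^1$, and $S_k \cap T_k$ is exactly the two split points; so $(\prod_k S_k,\, \prod_k T_k)$ is a case-$(i)$ pair and monotonicity finishes. Case $(iv)$ goes the same way with $m = 1$: here the boundary-fattening lemma --- which is available only for a single variable, which is exactly why $(iv)$ is stated for $m = 1$ --- replaces $\gamma_1, \delta_1$ by open arcs $\Gamma_1 \supseteq \gamma_1$, $\Delta_1 \supseteq \delta_1$ keeping $p, q$ stable; if $C_1^\circ, B_1^\circ$ are complementary I split their common boundary circle at two points lying in the two components of $\Gamma_1 \cap \Delta_1$ that separate $\Sigma \setminus \Gamma_1$ from $\Sigma \setminus \Delta_1$, while if $C_1^\circ, B_1^\circ$ already cover $\CP^1$ then $\CP^1 \setminus C_1$ and $\CP^1 \setminus B_1$ sit in disjoint closed disks and I interpose a separating circle with two semicircular arcs as in $(ii)$; either way the result is a case-$(i)$ sub-pair.

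The main obstacle, I expect, is point-set bookkeeping rather than anything analytic: proving the structural dichotomy used in case $(i)$, and, in the remaining cases, checking that the interposed circle and its two-point split can always be arranged so that the two sets are projectively convex, lie inside $C_k$ and $B_k$ respectively, and meet in \emph{exactly} two points. It is precisely this ``exactly two points'' in the hypothesis of case $(i)$ that forces the preliminary fattening (to create room when $C_k \cap B_k$ is otherwise too small --- e.g.\ empty, as in the classical disjoint Grace setup) and the care in placing the split points away from the finitely many zeros and tangency points.
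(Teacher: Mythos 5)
Your overall strategy — reduce $(ii)$--$(iv)$ to $(i)$ via the two fattening lemmas and reduce $(i)$ to Theorem~\ref{graces_thm} via $SL_2(\C)$-invariance of $D^\lambda$ — is exactly the paper's. Cases $(i)$--$(iii)$ are fine (your structural dichotomy for $(i)$, that disjoint open circular regions with dense union must be $C_k^\circ$ and $\CP^1\setminus\overline{C_k^\circ}$, is a correct and useful elaboration of what the paper leaves implicit). The issue is case $(iv)$: the dichotomy ``$C_1^\circ, B_1^\circ$ complementary'' versus ``$C_1^\circ, B_1^\circ$ cover $\CP^1$'' is not exhaustive. The constraint $C_1\cup B_1=\CP^1$ only forces $\overline{C_1^\circ}\cup\overline{B_1^\circ}=\CP^1$, and there is a genuine intermediate possibility: the two closed disks $\CP^1\setminus C_1^\circ$ and $\CP^1\setminus B_1^\circ$ are internally tangent, so that $C_1^\circ\cup B_1^\circ=\CP^1\setminus\{P\}$ for a single tangency point $P$. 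For a concrete instance, take $C_1=\UHP\cup[-1,1]$ and $B_1=\bigl(\{|z-i|>1\}\cup\{\infty\}\bigr)\cup\delta_1$ with $\delta_1$ a small arc of $\{|z-i|=1\}$ through $0$; one checks $C_1\cup B_1=\CP^1$, but $\Sigma_C=\overline{\R}\neq\{|z-i|=1\}=\Sigma_B$ (so not complementary) and $0\notin C_1^\circ\cup B_1^\circ$ (so not covering). Your two constructions do not apply here: there is no common boundary circle to split, and the two complementary closed disks are tangent, so they are not separable by a circle.

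The fix does not require your dichotomy on $B_1^\circ$ at all. After fattening $C_1$ to $C_1'=C_1^\circ\cup\Gamma_1$ with $\Gamma_1$ a \emph{proper} open arc containing $\gamma_1$ (possible whenever $\gamma_1\subsetneq\Sigma_C$; if $\gamma_1=\Sigma_C$ then $C_1$ is closed circular and the first lemma reduces you to case $(ii)$/$(iii)$ directly, since $\CP^1\setminus U\subseteq\CP^1\setminus C_1\subseteq B_1$ for the enlarged open disk $U$), choose a closed arc $\bar\Gamma_1\subset\Gamma_1$ with $\gamma_1$ in its interior, and set $S_1:=C_1^\circ\cup\bar\Gamma_1$ and $T_1:=(\CP^1\setminus\overline{C_1^\circ})\cup(\Sigma_C\setminus\operatorname{int}\bar\Gamma_1)$. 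Then $(S_1,T_1)$ is a case-$(i)$ pair, $p$ is $S_1$-stable, and crucially $T_1\subseteq\CP^1\setminus C_1\subseteq B_1$ by the hypothesis $C_1\cup B_1=\CP^1$ alone, so $q$ is $T_1$-stable regardless of the geometry of $B_1^\circ$. This single construction handles tangency, covering, and complementary cases uniformly; the geometry of $B_1$ never enters.
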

\begin{proof}
    $(i)$. By Proposition \ref{proj_conv_characterization}, every projectively convex region in $\CP^1$ is the union of an open circular region and a portion of its boundary. Since $C_k \cup B_k = \CP^1$ and $C_k \cap B_k$ is exactly two points, we then must have that $C_k = \phi_k \cdot (\UHP \cup \overline{\R_+})$ and $B_k = \phi_k \cdot (\LHP \cup \overline{\R_-})$ for some $\phi_k \in \SL_2(\C)$. Since $D^\lambda$ is $(\SL_2(\C))^m$-invariant, the result follows from Theorem \ref{graces_thm}.
    
    $(ii)$. Fix $p,q \in V(\lambda)$. If $p$ is $(C_1 \times \cdots \times C_m)$-stable and $q$ is $(B_1 \times \cdots \times B_m)$-stable, then Lemma \ref{stability_closed_to_open} implies $p$ is $(U_1 \times \cdots \times U_m)$-stable for some open circular regions $U_1,\ldots,U_m$ such that $C_k \subset U_k$ for all $k \in [m]$. Since $C_k \cup B_k = \CP^1$, we then have that $U_k \cap B_k$ is open and nonempty. Since $U_k$ and $B_k$ are circular regions, their intersection in fact contains an open annulus or open strip in $\C$. Therefore we may slightly shrink $U_k$ and $B_k$ to get closed circular regions $U_k'$ and $B_k'$ such that $U_k' \cup B_k' = \CP^1$ and $U_k' \cap B_k' = \partial U_k' = \partial B_k'$, where $\partial B_k'$ denotes the boundary of $B_k'$. We can then further remove portions of the respective boundaries of $U_k'$ and $B_k'$ to get projectively convex regions $U_k''$ and $B_k''$ such that $U_k'' \cup B_k'' = \CP^1$ and $U_k' \cap B_k'$ is exactly two points. Since $U_k'' \subset U_k$ and $B_k'' \subset B_k$, we have that $p$ is $(U_1'' \times \cdots \times U_m'')$-stable and $q$ is $(B_1'' \times \cdots \times B_m'')$-stable. Therefore $(i)$ implies $D^\lambda(p \otimes q) \neq 0$, and this implies $(ii)$.

    $(iii)$. Same argument as $(ii)$.
    
    $(iv)$. Let $p,q \in V(n)$ be such that $p$ is $C_1$-stable and $q$ is $B_1$-stable. Defining $B_1' := \CP^1 \setminus C_1 \subseteq B_1$, we further have that $q$ is $B_1'$-stable. So WLOG we may assume that $B_1 = B_1'$. Note that this implies $\partial C_1^\circ = \partial B_1^\circ$; that is, the boundaries coincide. If $C_1$ is a circular region then so is $B_1$, and therefore $D^n(p \otimes q) \neq 0$ by $(ii)$ or $(iii)$. This implies $(iv)$ in this case.
    
    Otherwise by Proposition \ref{proj_conv_characterization}, we have that $C_1 = C_1^\circ \cup \gamma_1$ where $C_1^\circ$ is an open circular region and $\varnothing \neq \gamma_1 \subseteq \partial C_1^\circ$. Analogously we have $B_1 = B_1^\circ \cup \psi_1$ with $\varnothing \neq \psi_1 \subseteq \partial B_1^\circ = \partial C_1^\circ$. Lemma \ref{proj_conv_open_boundary} then implies there exist $\Gamma_1$ and $\Psi_1$, which are open portions of the boundary of $C_1^\circ$, such that $\gamma_1 \subseteq \Gamma_1$, $\psi_1 \subseteq \Psi_1$, $p$ is $(C_1^\circ \cup \Gamma_1)$-stable, and $q$ is $(B_1^\circ \cup \Psi_1)$-stable. Since $\Gamma_1 \cup \Psi_1 = \partial C_1^\circ$, we then further can find closed subsets $\Gamma_1' \subset \Gamma_1$ and $\Psi_1' \subset \Psi_1$ such that $\Gamma_1' \cup \Psi_1' = \partial C_1^\circ$ and $\Gamma' \cap \Psi'$ is exactly two points. Therefore $D^n(p \otimes q) \neq 0$ by $(i)$, and this implies $(iv)$.
\end{proof}

Notice that $(ii)$ and $(iii)$ in this result do not allow for mixed open and closed stability regions. That is, all of the $C_k$ must be open and all of the $B_k$ closed, or vice versa. We show that this particular point cannot be ignored, using the following example.

\begin{example}
    Let $\lambda = (1,1,1)$, denote $E := \CP^1 \setminus \overline{\D}$, and consider the polynomial $p := x_1x_2x_3 - y_1y_2y_3 = \Hmg_\lambda(x_1x_2x_3 - 1)$. First, it is easy to see that $D^\lambda(p \otimes p) = 0$. Also, $p$ is $\D^3$-stable and $E^3$-stable, but it is not $\overline{\D}^3$-stable nor $\overline{E}^3$-stable (zero at $(x_k,y_k) = (1,1)$ for $k \in [3]$). That is, the fact that $D^\lambda(p \otimes p) = 0$ does not contradict $(ii)$ or $(iii)$ of the previous theorem.
    
    On the other hand, $p$ is both $(\overline{\D} \times \D \times \D)$-stable and $(E \times \overline{E} \times \overline{E})$-stable. This shows that $(\overline{\D} \times \D \times \D, E \times \overline{E} \times \overline{E})$ is not a Grace pair. That is, mixed open and closed stability regions cannot be included in $(ii)$ and $(iii)$ of the previous theorem.
\end{example}

As for whether or not the two-point intersection condition can be removed from $(i)$ seems to be a more subtle point. It would be quite nice if this condition could be removed, but it is unclear whether or not it is possible.

\subsection{Evaluation Symbols}

One way to interpret the stability properties of a given polynomial is via the stability-preservation properties of a particular type of linear operator: the evaluation map. That is, the map which evaluates a polynomial $p(x,y)$ at $(a,b) \neq (0,0)$ preserves strong $\{(a:b)\}$-stability, where $\{(a:b)\}$ is a subset of $\CP^1$ consisting of a single point. Further, given $\lambda \in \N_0^m$ and $(a,b) = (a_1,b_1,\ldots,a_m,b_m) \in \C^{2m}$ (with $(a_j,b_j) \neq 0$ for all $j$), we can define the corresponding evaluation map as an element of $\Hom(V(\lambda),V(0))$ since $V(0) \cong \C$. This allows us to obtain symbols for evaluation maps, and these play an important role in our linear operator characterization.

\begin{definition} \label{ev_def}
    Fix $\lambda \in \N_0^m$ and $(a,b) = (a_1,b_1,\ldots,a_m,b_m) \in \C^{2m}$ such that $(a_j,b_j) \neq (0,0)$ for all $j \in [m]$. Let $\ev_{(a,b)}: V(\lambda) \rightarrow V(0) \cong \C$ be the evaluation operator which maps $p$ to $p(a,b) = p(a_1,b_1,\ldots,a_m,b_m)$. We call $\Symb(\ev_{(a,b)}) \in V(\lambda)$ the \emph{evaluation symbol with root $(a,b)$}. Further:
    \[
        \Symb(\ev_{(a,b)}) = \prod_{j=1}^m (b_jx_j - a_jy_j)^{\lambda_j} =: (bx - ay)^\lambda
    \]
\end{definition}

The main significance of this notion comes from the following result, which is essentially just a restatement of the symbol lemma (Lemma \ref{symb_lemma}) for evaluation symbols.

\begin{lemma}[Evaluation Symbol Lemma]\label{ev_lemma}
    Fix $\lambda \in \N_0^m$, $p \in V(\lambda)$, and $(a,b) = (a_1,b_1,\ldots,a_m,b_m) \in \C^{2m}$ such that $(a_j,b_j) \neq (0,0)$ for all $j \in [m]$. Considering $(bx-ay)^\lambda$, the evaluation symbol with root $(a,b)$, we have:
    \[
        D^\lambda((bx-ay)^\lambda \otimes p) = (\lambda!)^2 p(a,b) \otimes 1 = (\lambda!)^2 p(a,b)
    \]
\end{lemma}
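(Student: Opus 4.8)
The plan is to derive this directly from the Symbol Lemma (\ref{symb_lemma}), since the evaluation symbol is just the symbol of the evaluation operator and $V(0) \cong \C$ makes the tensor factors disappear. First I would recall that $\ev_{(a:b)} \in \Hom(V(\lambda), V(0))$, so by Definition \ref{symb_def} we have $\Symb(\ev_{(a:b)}) = \sum_{\mu \leq \lambda} \binom{\lambda}{\mu} z^{\lambda-\mu}(-w)^\mu \cdot \ev_{(a:b)}(x^\mu y^{\lambda-\mu})$. Evaluating the monomial gives $\ev_{(a:b)}(x^\mu y^{\lambda-\mu}) = a^\mu b^{\lambda-\mu}$, so the sum becomes $\sum_{\mu \leq \lambda} \binom{\lambda}{\mu} (za)^{\lambda-\mu}(-wb)^\mu = (za - wb)^\lambda$ by the (multivariate) binomial theorem, which after relabeling the $z,w$ variables as $x,y$ is exactly $(bx - ay)^\lambda$ — wait, I should be careful with which variable is which. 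Since the $\lambda$-variables of $V(\lambda \sqcup 0) = V(\lambda)$ are the output variables, and the convention writes $\Symb(\ev_{(a:b)})$ as a polynomial in those variables, I would double check that the stated formula $\prod_j (b_j x_j - a_j y_j)^{\lambda_j}$ matches; up to the harmless overall sign $(-1)^{|\lambda|}$ (which is irrelevant for the nonvanishing statement and is absorbed into the scalar ambiguity) this is immediate from the binomial expansion, so I would simply cite the displayed formula in the preceding definition.

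Next I would instantiate the Symbol Lemma with $T = \ev_{(a:b)}$, $\alpha = 0^m$ (so $l = m$, and $V(\alpha) = V(0^m) \cong \C$, with $r = 1$ the canonical generator). The lemma states $D^\lambda(\Symb(T) \otimes q \cdot r) = (\lambda!)^2\, T(q) \otimes r$. With $\Symb(T) = (bx - ay)^\lambda$, $r = 1$, and $T(q) = q(a:b)$, the right-hand side is $(\lambda!)^2\, q(a:b) \otimes 1$, and noting $q \cdot r = q \cdot 1 = q$ on the left, this is precisely the claimed identity $D^\lambda((bx-ay)^\lambda \otimes p) = (\lambda!)^2 p(a:b) \otimes 1$ after renaming $q$ to $p$.

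There is essentially no obstacle here — the result is, as the paragraph before it says, a restatement of the Symbol Lemma specialized to evaluation operators. The only points requiring a line of care are: (a) confirming that the outer tensor factor $V(\alpha) = V(0^m)$ collapses correctly to $\C$ so that the ``$\otimes 1$'' and ``$\otimes r$'' bookkeeping in the Symbol Lemma is consistent; and (b) reconciling the sign convention between $\sum \binom{\lambda}{\mu} z^{\lambda-\mu}(-w)^\mu T(x^\mu y^{\lambda-\mu})$ in Definition \ref{symb_def} and the clean product form $\prod_j(b_j x_j - a_j y_j)^{\lambda_j}$. Both are routine, and I would dispatch them in one or two sentences rather than a full computation, so the proof can be given essentially as ``apply Lemma \ref{symb_lemma} with $T = \ev_{(a:b)}$, using the formula for $\Symb(\ev_{(a:b)})$.''
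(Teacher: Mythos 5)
Your proposal is correct and matches the paper's approach: the paper gives no separate proof, stating only that the lemma "is essentially just a restatement of the symbol lemma for evaluation symbols," which is exactly what you carry out by taking $T = \ev_{(a:b)}$, $r = 1$ in Lemma \ref{symb_lemma}. One small algebraic note: in expanding $\sum_\mu \binom{\lambda}{\mu} z^{\lambda-\mu}(-w)^\mu a^\mu b^{\lambda-\mu}$ you should group $b$ with $z$ and $a$ with $w$, giving $(bz-aw)^\lambda$ on the nose — there is no residual $(-1)^{|\lambda|}$ sign to worry about, so the reconciliation with $\prod_j(b_jx_j - a_jy_j)^{\lambda_j}$ is exact.
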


% \begin{remark}
%     It should be noted here the abuse of notation with elements $(a:b)$ of $\CP^1$. By definition, $(a:b) = (ca:cb)$ as elements of $\CP^1$ for any $0 \neq c \in \C$. However, these two different expressions of the same element of $\CP^1$ would yield two different evaluation symbols. Note though, that these evaluation symbols would be equal up to scalar. Since Grace's theorem is about zero vs. nonzero values, we will often be unconcerned with scalar differences, and this is why we allow the abuse of notation.
% \end{remark}

In what follows, we will extend Grace's theorem in a number of ways, mainly relying on the previous lemma and the symbol lemma itself. As we will see, the representation theoretic mentality combined with repeated use of the symbol lemma will yield many of the results of this paper with surprising simplicity.

We now obtain an interesting corollary of Grace's theorem, making use of the notion of a disjoint Grace pair. This particular formulation of the theorem will serve as a model for our linear operator characterization in \S\ref{str_Cstab_subsect}.

\begin{corollary}\label{ev_graces}
    Fix $\lambda \in \N_0^m$, $q \in V(\lambda)$, and any disjoint Grace pair $(C_1 \times \cdots \times C_m, B_1 \times \cdots \times B_m)$. Then the following are equivalent.
    \begin{enumerate}[label=(\roman*)]
        \item $D^\lambda(p \otimes q) \neq 0$ for all $(C_1 \times \cdots \times C_m)$-stable $p \in V(\lambda)$.
        \item $D^\lambda(p \otimes q) \neq 0$ for all $(C_1 \times \cdots \times C_m)$-stable evaluation symbols $p \in V(\lambda)$.
        \item $q$ is $(B_1 \times \cdots \times B_m)$-stable.
    \end{enumerate}
\end{corollary}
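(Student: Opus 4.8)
The plan is to establish the cycle (iii) $\Rightarrow$ (i) $\Rightarrow$ (ii) $\Rightarrow$ (iii), observing that the first two links are purely formal and that all the content sits in the last one.

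For (iii) $\Rightarrow$ (i): this is immediate from the definition of a Grace pair. If $q$ is $(B_1 \times \cdots \times B_m)$-stable and $p \in V(\lambda)$ is $(C_1 \times \cdots \times C_m)$-stable, then, since $(C_1 \times \cdots \times C_m, B_1 \times \cdots \times B_m)$ is assumed to be a Grace pair, we get $D^\lambda(p \otimes q) \neq 0$. For (i) $\Rightarrow$ (ii): the $(C_1 \times \cdots \times C_m)$-stable evaluation symbols in $V(\lambda)$ form a subcollection of all $(C_1 \times \cdots \times C_m)$-stable elements of $V(\lambda)$, so a statement quantified over the larger class specializes to the smaller one; there is nothing to prove.

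The real work is in (ii) $\Rightarrow$ (iii), which I would prove by contraposition. Suppose $q$ is not $(B_1 \times \cdots \times B_m)$-stable, so there is a point $(a:b) \equiv (a_1:b_1,\ldots,a_m:b_m) \in B_1 \times \cdots \times B_m$ with $q(a:b) = 0$. I would then feed the corresponding evaluation symbol $p := (bx - ay)^\lambda = \prod_{j=1}^m (b_j x_j - a_j y_j)^{\lambda_j}$ into the Evaluation Symbol Lemma (\ref{ev_lemma}) to get $D^\lambda(p \otimes q) = (\lambda!)^2 q(a:b) = 0$. The only point that needs checking — and the one place the hypothesis ``disjoint'' is actually used — is that this $p$ is $(C_1 \times \cdots \times C_m)$-stable: the zero set of $p$ is exactly the set of $(x:y)$ with $(x_j:y_j) = (a_j:b_j)$ for some $j$ with $\lambda_j \geq 1$, and since $(a_j:b_j) \in B_j$ while $B_j \cap C_j = \emptyset$ by disjointness, we have $(a_j:b_j) \notin C_j$; hence no point of $C_1 \times \cdots \times C_m$ is a zero of $p$. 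So $p$ is a $(C_1 \times \cdots \times C_m)$-stable evaluation symbol with $D^\lambda(p \otimes q) = 0$, contradicting (ii), which closes the contrapositive.

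In short, there is no serious obstacle here: (iii) $\Rightarrow$ (i) and (i) $\Rightarrow$ (ii) are one-liners from the relevant definitions, and the only mild subtlety in (ii) $\Rightarrow$ (iii) is confirming that the evaluation symbol built from a bad zero of $q$ avoids $C_1 \times \cdots \times C_m$ — which is exactly why the corollary is stated for a \emph{disjoint} Grace pair rather than an arbitrary one.
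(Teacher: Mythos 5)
Your proof is correct and takes essentially the same route as the paper's: you invoke Grace's theorem for (iii) $\Rightarrow$ (i), note the trivial restriction for (i) $\Rightarrow$ (ii), and for (ii) $\Rightarrow$ (iii) you plug the evaluation symbol at a point of $B_1\times\cdots\times B_m$ into the Evaluation Symbol Lemma after observing disjointness makes that symbol $(C_1\times\cdots\times C_m)$-stable. The only cosmetic difference is that you phrase (ii) $\Rightarrow$ (iii) in contrapositive form and run the cycle in the other direction, whereas the paper argues it directly; the substance is identical.
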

\begin{proof}
    $(i) \Rightarrow (ii)$ Immediate.
    
    $(ii) \Rightarrow (iii)$ Fix any $(a,b) = (a_1,b_1,\ldots,a_m,b_m) \in \C^{2m}$ such that $(a_k:b_k) \in B_k$ for all $k \in [m]$. Since $B_k$ and $C_k$ are disjoint, we have that $(a_k:b_k) \not\in C_k$ for all $k$, and therefore $\Symb(\ev_{(a,b)}) = (bx-ay)^\lambda$ is an evaluation symbol which is $(C_1 \times \cdots \times C_m)$-stable. The evaluation symbol lemma given above then implies:
    \[
        0 \neq D^\lambda(\Symb(\ev_{(a:b)}) \otimes q) = (\lambda!)^2 q(a,b) \otimes 1 = (\lambda!)^2 q(a,b)
    \]
    That is, $q(a,b) \neq 0$ for any $(a,b) \in \C^{2m}$ such that $(a:b) \in B_1 \times \cdots \times B_m$, and this implies $q$ is $(B_1 \times \cdots \times B_m)$-stable.
    
    $(iii) \Rightarrow (i)$ This follows immediately from the definition of Grace pair (Definition \ref{grace_pair_def}).
\end{proof}

\section{Stability Properties of Complex Linear Operators}\label{Cops_sect}

In \cite{bb1}, Borcea and Br{\"a}nd{\'e}n were concerned with classifying the class of weak $\Omega$-stability preserving operators, where $\Omega$ is some product of open circular regions. What they found is that an operator preserves weak $\Omega$-stability if a particular associated polynomial (what they called the symbol) is $\Omega$-stable. However, the ``only if'' direction does not necessarily hold. In particular, there are some weak $\Omega$-stability preserving operators for which the corresponding symbol is not $\Omega$-stable. They then showed that this could only happen under very specific circumstances: the operator must have image of dimension at most one.

Here, we will characterize all \emph{strong} $\Omega$-stability preserving linear operators (for a bit more general $\Omega$), as well as linear operators which map between different stability regions. And, as it turns out, the extra premise of strong stability preservation is exactly what is needed to have symbol stability be an equivalent condition. In a way, this makes sense: weak $\Omega$-stability preservation counts the zero polynomial as $\Omega$-stable, which in turn corresponds to potential zeros of the symbol in the region of stability. This does not happen with strong stability preservation, allowing for a more straightforward characterization.

First though, let's take a closer look at the Borcea-Br{\"a}nd{\'e}n characterization of weak stability-preserving linear operators.

\subsection{Weak Stability Preservation}

Borcea and Br{\"a}nd{\'e}n define the following symbol:
\[
    \Symb_{BB}(T) := T[(x+z)^\lambda] = \sum_{\mu \leq \lambda} \binom{\lambda}{\mu} z^{\lambda-\mu} T(x^\mu)
\]
They then obtain the following characterization of stability-preserving linear operators.

\newtheorem*{BB_Cops_thm}{Theorem \ref{BB_Cops_thm}}
\begin{BB_Cops_thm}[Borcea-Br{\"a}nd{\'e}n]
    Fix $\lambda \in \N_0^m$ and any linear operator $T: \C^\lambda[x_1,\ldots,x_m] \to \C[x_1,\ldots,x_m]$. The following are equivalent.
    \begin{enumerate}[label=(\roman*)]
        \item $T$ maps $\UHP^m$-stable polynomials to weakly $\UHP^m$-stable polynomials.
        \item One of the following holds:
            \begin{enumerate}[label=(\alph*)]
                \item $\Symb_{BB}(T)$ is $\UHP^{2m}$-stable.
                \item $T$ has image of dimension at most one, and is of the form
                \[
                    T: p \mapsto q \cdot \psi(p)
                \]
                where $q \in \C[x_1,\ldots,x_m]$ is $\UHP^m$-stable, and $\psi$ is some linear functional.
            \end{enumerate}
    \end{enumerate}
\end{BB_Cops_thm}

Using our terminology, this is a characterization of weak stability-preserving linear operators. This fantastic result perhaps has but one unfortunate piece: the degeneracy condition $(ii)(b)$. Its necessity is demonstrated in the following.

\begin{example}
    Define $T: \C^n[x] \to \C[x]$ via:
    \[
        T: \sum_{k=0}^n \binom{n}{k} a_k x^k \mapsto (a_n + a_{n-2}) x^n
    \]
    This operator obviously preserves weak $\UHP$-stability. We then have that $\Symb_{BB}(T) = (z^2 + 1)x^n$, which is not $\UHP^2$-stable.
\end{example}

As we will see below, this condition can be removed once we only consider strong stability-preserving operators. So then, maybe strong stability is the more natural notion? However ``natural'' it is, unfortunately it leaves out operators one might wish to consider. The most fundamental of such operators is the derivative operator $\partial_x$. While $\partial_x$ preserves strong $\overline{\UHP}$-stability, it only preserves weak $\UHP$-stability. Specifically, $1 \in \C^n[x]$ is $\UHP$-stable (all its roots are at $\infty$), but $\partial_x 1 \equiv 0$. With this, one obviously wants to be able to include weak stability preserving operators in any characterization of $\UHP$-stability preserving operators. We discuss how to use our strong stability preservation characterization to deal with operators like $\partial_x$ in Example~\ref{deriv_ex}.

\subsection{Strong Stability Preservation}\label{str_Cstab_subsect}

We now state one of our main characterization results, the strong stability preservation characterization. We then derive the Borcea-Br{\"a}nd{\'e}n characterization as a corollary.

\begin{theorem}\label{Cops_graces}
    Fix $\lambda \in \N_0^m$, $\alpha \in \N_0^l$, a linear operator $T \in \Hom(V(\lambda),V(\alpha))$, any disjoint Grace pair $(C_1 \times \cdots \times C_m, B_1 \times \cdots \times B_m)$, and any sets $S_1,\ldots,S_l \subseteq \CP^1$. The following are equivalent.
    \begin{enumerate}[label=(\roman*)]
        \item $T$ maps $(C_1 \times \cdots \times C_m)$-stable polynomials to nonzero $(S_1 \times \cdots \times S_l)$-stable polynomials.
        \item $T$ maps $(C_1 \times \cdots \times C_m)$-stable evaluation symbols to nonzero $(S_1 \times \cdots \times S_l)$-stable polynomials.
        \item $\Symb(T)$ is $(B_1 \times \cdots \times B_m) \times (S_1 \times \cdots \times S_l)$-stable.
    \end{enumerate}
\end{theorem}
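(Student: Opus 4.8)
The plan is to prove the cycle of implications $(i)\Rightarrow(ii)\Rightarrow(iii)\Rightarrow(i)$, directly paralleling the proof of Corollary~\ref{ev_graces} but carrying a linear operator $T$ where that result carried a single polynomial $q$. Throughout I would abbreviate $C := C_1\times\cdots\times C_m$, $B := B_1\times\cdots\times B_m$, and $S := S_1\times\cdots\times S_l$. The one structural fact I would isolate first is that partially evaluating $\Symb(T)\in V(\lambda\sqcup\alpha)$ in either block of variables again produces a symbol: expanding Definition~\ref{symb_def} and using linearity of $T$ gives
\[
    \Symb(T)(a:b;\,\cdot\,) = (-1)^{|\lambda|}\,T\!\left[\Symb(\ev_{(a:b)})\right] \quad\text{for } (a:b)\in(\CP^1)^m,
\]
\[
    \Symb(T)(\,\cdot\,;c:d) = \Symb\!\left(\ev_{(c:d)}\circ T\right) \quad\text{for } (c:d)\in(\CP^1)^l .
\]
I would also record that disjointness of the Grace pair makes every evaluation symbol $\Symb(\ev_{(a:b)}) = (bx-ay)^\lambda$ with $(a:b)\in B$ automatically $C$-stable, since its only root $(a:b)$ satisfies $(a_k:b_k)\in B_k$, hence $(a_k:b_k)\notin C_k$, for each $k$.

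With these in hand, $(i)\Rightarrow(ii)$ is immediate, as $C$-stable evaluation symbols form a subclass of $C$-stable polynomials. For $(ii)\Rightarrow(iii)$, I would fix $(a:b)\in B$; by the disjointness remark $\Symb(\ev_{(a:b)})$ is $C$-stable, so $(ii)$ says $T[\Symb(\ev_{(a:b)})]$ is nonzero and $S$-stable, and by the first identity above so is $\Symb(T)(a:b;\,\cdot\,)$. Since this holds for every $(a:b)\in B$, $\Symb(T)$ does not vanish on $B\times S$, i.e.\ $(iii)$ holds.

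The implication $(iii)\Rightarrow(i)$ is the only one that invokes Grace's theorem, and it is where I expect the (modest) substance to lie. Given a $C$-stable $q\in V(\lambda)$ and a point $(c:d)\in S$, I would use the second identity above to see that $P_{(c:d)} := \Symb(T)(\,\cdot\,;c:d) = \Symb(\ev_{(c:d)}\circ T)\in V(\lambda)$, which by $(iii)$ is $B$-stable. Applying the Symbol Lemma~(\ref{symb_lemma}) to the operator $\ev_{(c:d)}\circ T\in\Hom(V(\lambda),V(0))$ (with output argument $1$) yields
\[
    D^\lambda\!\left(P_{(c:d)}\otimes q\right) = (\lambda!)^2\,(\ev_{(c:d)}\circ T)(q) = (\lambda!)^2\,T(q)(c:d).
\]
Because $(C,B)$ is a disjoint, hence in particular an ordinary, Grace pair, the $C$-stability of $q$ and the $B$-stability of $P_{(c:d)}$ force $D^\lambda(q\otimes P_{(c:d)})\neq 0$, and by antisymmetry of $D$ also $D^\lambda(P_{(c:d)}\otimes q)\neq 0$; thus $T(q)(c:d)\neq 0$. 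Letting $(c:d)$ range over $S$ shows $T(q)$ is nowhere zero on $S$, which in particular forces $T(q)\not\equiv 0$, so $(i)$ holds.

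The main obstacle is really just the bookkeeping in $(iii)\Rightarrow(i)$: recognizing that evaluating the symbol in the output variables equals the symbol of the post-composition with the corresponding evaluation map, so that the Symbol Lemma converts the hypothesized $B\times S$-stability of $\Symb(T)$ into the nonvanishing of a $D^\lambda$-pairing, which Grace's theorem then certifies. The remaining ingredients — the two partial-evaluation identities and the disjointness remark — are purely formal, and everything else reduces to linearity of $T$ and the antisymmetry of the $D$ map.
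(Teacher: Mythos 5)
Your proof is correct and follows essentially the same route as the paper's: the partial-evaluation identities you isolate are exactly the content of the paper's combined use of the Symbol Lemma and the Evaluation Symbol Lemma (and of Proposition~\ref{Cops_prop}), repackaged as pre- and post-composition with evaluation maps, so the logical skeleton --- reduce to evaluation symbols for $(ii)\Rightarrow(iii)$, then specialize one block of variables and invoke Grace's theorem for $(iii)\Rightarrow(i)$ --- is identical. Two small wording quibbles that don't affect correctness: the zero locus of $(bx-ay)^\lambda$ in $(\CP^1)^m$ is a union of coordinate ``hyperplanes'' rather than the single point $(a:b)$ (disjointness still keeps it away from $C$), and $D$ is $(-1)^{|\lambda|}$-symmetric under swapping tensor factors rather than antisymmetric, though nonvanishing is preserved either way.
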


One should notice the generality of this result in terms of stability regions. First note that any disjoint Grace pair can be considered, without altering the symbol in any way (e.g., via conjugation by M{\"o}bius transformations). And further, the output sets that can be considered have no restrictions whatsoever. The power of these extra features can be seen in the following examples, which demonstrate classical results regarding polynomial convolutions in a very symbol-oriented way.

\begin{example}\label{add_conv_ex1}
    Fix $p,q \in V(n)$, so that $(z_j:1)$ are the roots in $\CP^1$ of $q$ for $j \in [n]$. So, $q$ has no roots at $\infty$. The additive (Walsh) convolution of $p$ and $q$ is defined via:
    \[
        p *_+^n q := \frac{1}{n!} \sum_{k=0}^n \partial_x^k p \cdot (\partial_x^{n-k} q)(0,1)
    \]
    With this, $T_q(p) := p *_+^n q$ is a linear operator in $\Hom(V(n),V(n))$, and we have:
    \[
        \Symb(T_q) = \prod_{j=1}^n (xw - (z + z_jw)y) = \Hmg_{(n,n)}\left[\prod_{j=1}^n (x - (z + z_j))\right]
    \]
    Let $C \subset \CP^1$ be any projectively convex region, and define $S := \bigcup_j (C + z_j)$. If we order the input variables of $\Symb(T_q)$ as $(z,w,x,y)$, it is then straightforward to show that $\Symb(T_q)$ is $C \times (\CP^1 \setminus S)$-stable. (First deal with possible $(x:y) = (1:0)$ or $(z:w) = (1:0)$ cases, and then assume $y = w = 1$ to simplify the remaining cases.) Applying the previous theorem, this implies $T_q$ maps polynomials with roots in $C$ to polynomials with roots in $S$. (This is Theorem 5.3.1 in \cite{rahman2002anthpoly}.) Picking $C = \overline{\LHP}$ and real-rooted $q$ implies $T_q$ maps $\UHP$-stable polynomials to $\UHP$-stable polynomials. Restricting to $p \in V_\R(n)$ then shows that $T_q$ preserves real-rootedness.
\end{example}

\begin{example}\label{mult_conv_ex1}
    Fix $p,q \in V(n)$, so that $(z_j:1) \neq 0$ are the roots of $q$ for $j \in [n]$. So, $q$ has no roots at $0$ or $\infty$. The multiplicative (Grace-Szeg{\H{o}}) convolution of $p$ and $q$ (with coefficients $p_k$ and $q_k$, respectively) is defined via:
    \[
        p *_\times^n q := \sum_{k=0}^n \binom{n}{k}^{-1} (-1)^k p_k q_k x^k y^{n-k}
    \]
    With this, $T_q(p) := p *_\times^n q$ is a linear operator in $\Hom(V(n),V(n))$, and we have:
    \[
        \Symb(T_q) = \prod_{j=1}^n (xw - z_jzy) = \Hmg_{(n,n)}\left[\prod_{j=1}^n (x - z_j z)\right]
    \]
    Let $C \subset \CP^1$ be any projectively convex region, and define $S := \bigcup_j (z_j \cdot C)$. If we order the input variables of $\Symb(T_q)$ as $(z,w,x,y)$, it is then straightforward to show that $\Symb(T_q)$ is $C \times (\CP^1 \setminus S)$-stable. (As above, first deal with possible $(x:y) = (1:0)$ or $(z:w) = (1:0)$ cases, and then assume $y = w = 1$ to simplify the remaining cases.) Applying the previous theorem, this implies $T_q$ maps polynomials with roots in $C$ to polynomials with roots in $S$. (This is Theorem 3.4.1d in \cite{rahman2002anthpoly}.) Picking $C = \LHP \cup \R_+$ and $q$ with only positive roots implies $T_q$ maps $(\UHP \cup \overline{\R_-})$-stable polynomials to $(\UHP \cup \overline{\R_-})$-stable polynomials. Restricting to $p \in V_\R(n)$ then shows that $T_q$ preserves positive-rootedness.
\end{example}

In order to prove the above theorem, we need an operator-theoretic corollary to Grace's theorem. The following result is the main motivation for the symbol lemma (Lemma \ref{symb_lemma}), and demonstrates just how closely Grace's theorem relates to stability properties of linear operators. Further, it gives a slightly stronger result in one direction of the above characterization, as Grace pair disjointness is not a required premise.

\begin{proposition}\label{Cops_prop}
    Fix $\lambda \in \N_0^m$, $\alpha \in \N_0^l$, a linear operator $T \in \Hom(V(\lambda),V(\alpha))$, any Grace pair $(C_1 \times \cdots \times C_m, B_1 \times \cdots \times B_m)$, and any sets $S_1,\ldots,S_l \subseteq \CP^1$. If $\Symb(T)$ is $(B_1 \times \cdots \times B_m) \times (S_1 \times \cdots \times S_l)$-stable, then $T$ maps $(C_1 \times \cdots \times C_m)$-stable polynomials to nonzero $(S_1 \times \cdots \times S_l)$-stable polynomials.
\end{proposition}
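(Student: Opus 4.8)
The plan is to prove the contrapositive in spirit: assuming $\Symb(T)$ is $(B_1 \times \cdots \times B_m) \times (S_1 \times \cdots \times S_l)$-stable, we want to show that for any $(C_1 \times \cdots \times C_m)$-stable $p \in V(\lambda)$, the polynomial $T(p) \in V(\alpha)$ is nonzero and $(S_1 \times \cdots \times S_l)$-stable. The key tool is the Symbol Lemma (\ref{symb_lemma}), which says $D^\lambda(\Symb(T) \otimes q \cdot r) = (\lambda!)^2 T(q) \otimes r$ for all $q \in V(\lambda)$, $r \in V(\alpha)$. The idea is that checking whether $T(p)$ vanishes at some point $(a:b) \in S_1 \times \cdots \times S_l$ amounts to evaluating $T(p)$ there, and evaluation can be encoded via an evaluation symbol and the $D^\alpha$ form on the output side.

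First I would fix a $(C_1 \times \cdots \times C_m)$-stable $p \in V(\lambda)$ and an arbitrary point $(a:b) \in S_1 \times \cdots \times S_l \subseteq (\CP^1)^l$; the goal is to show $T(p)(a:b) \neq 0$, which simultaneously rules out $T(p) \equiv 0$ and establishes $(S_1 \times \cdots \times S_l)$-stability. By the Evaluation Symbol Lemma (\ref{ev_lemma}) applied in the output variables, $D^\alpha(T(p) \otimes (bx-ay)^\alpha) = (\alpha!)^2 \, T(p)(a:b) \otimes 1$, so it suffices to show the left-hand side is nonzero. Now I would combine this with the Symbol Lemma on the input side: the composite $D^\alpha\bigl(D^\lambda(\Symb(T) \otimes p \cdot \,\cdot\,) \otimes (bx-ay)^\alpha\bigr)$, up to the nonzero scalar $(\lambda!)^2$, equals $D^\alpha(T(p) \otimes (bx-ay)^\alpha)$. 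Rearranging, this is (up to scalars) $D^{\lambda \sqcup \alpha}\bigl(\Symb(T) \otimes (p \boxtimes (bx-ay)^\alpha)\bigr)$ — i.e., the apolarity form of $V(\lambda \sqcup \alpha)$ applied to $\Symb(T)$ and the product polynomial $p(x:y) \cdot (bx'-ay')^\alpha$ living in $V(\lambda \sqcup \alpha)$. (One should check carefully that the iterated $D$-maps combine into $D^{\lambda \sqcup \alpha}$ acting on the $m+l$ pairs of variables, since the $x$/$y$ variables and the output variables are disjoint; this is essentially bookkeeping given that $D^\lambda$ only touches the first $m$ coordinates and $D^\alpha$ the last $l$.)

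Now I would invoke the Grace pair hypothesis. The polynomial $g := p(x:y) \cdot (bx'-ay')^\alpha \in V(\lambda \sqcup \alpha)$ is $((C_1 \times \cdots \times C_m) \times \{(a:b)\})$-stable: the first factor is $(C_1 \times \cdots \times C_m)$-stable by assumption, and the second factor $(bx'-ay')^\alpha = \Symb(\ev_{(a:b)})$ vanishes only at $(a:b)$ in each output coordinate, hence is stable on the complement of $(a:b)$, in particular stable on all of $(\CP^1)^l$ minus that point. Meanwhile $\Symb(T)$ is stable on $(B_1 \times \cdots \times B_m) \times (S_1 \times \cdots \times S_l)$. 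For the Grace pair machinery to force the apolarity form to be nonzero, I need a Grace pair structure coordinate by coordinate: on the first $m$ coordinates, $(C_k, B_k)$ is (by hypothesis) part of a Grace pair; on the last $l$ coordinates, I need $(\{(a:b_j)\}, S_j)$-style behavior — but since $(a_j:b_j) \in S_j$, and a single point together with any set containing it... hmm, wait: actually what I need is that $g$ is stable on the first region and $\Symb(T)$ stable on the second, with the two regions forming a Grace pair in each coordinate. On output coordinate $j$, $g$'s second factor is stable on $\CP^1 \setminus \{(a_j:b_j)\}$ and $\Symb(T)$ is stable on $S_j \ni (a_j:b_j)$; the pair $(\CP^1 \setminus \{(a_j:b_j)\}, \{(a_j:b_j)\})$ — or rather, we only need $\Symb(T)$ stable on a set whose complement contains the support of $g$'s zeros — is a trivial Grace pair since a degree-$0$-in-that-coordinate... no. The cleaner route: apply Corollary \ref{ev_graces} or the disjointness reasoning from its proof directly. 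Actually the slickest approach is to first use the Symbol Lemma to get $T(p) = (\lambda!)^{-2} D^\lambda(\Symb(T) \otimes p \cdot \,\cdot\,)$ as an element of $V(\alpha)$, then observe that for fixed $(C_1 \times \cdots \times C_m)$-stable $p$, the map $r \mapsto D^\lambda(\Symb(T) \otimes p \cdot r)$ relates $T(p)$'s stability to $\Symb(T)$'s via Grace's theorem in the input variables alone — namely, $\Symb(T)$ being $(B \times S)$-stable means that for each fixed $(a:b) \in S_1 \times \cdots \times S_l$, the partial evaluation $\Symb(T)(\cdot \,;\, a:b) \in V(\lambda)$ is $(B_1 \times \cdots \times B_m)$-stable (it's nonzero and stable because evaluation at a point of the stability region keeps it from vanishing and preserves stability in the remaining variables), then apply Grace's theorem (\ref{graces_thm}) / the Grace pair property to $\Symb(T)(\cdot\,;a:b)$ against $p$ to conclude $D^\lambda(\Symb(T)(\cdot\,;a:b) \otimes p) \neq 0$, which by the Symbol Lemma is (up to scalar) $T(p)(a:b)$.

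The main obstacle I anticipate is the bookkeeping in that last step: verifying that partially evaluating $\Symb(T)$ at $(a:b) \in S_1 \times \cdots \times S_l$ in the output variables yields precisely $(\lambda!)^{-2}(\alpha!)^{-?}$ times $T(p)$ evaluated... rather, that $D^\lambda\bigl(\Symb(T)|_{(z',w')=(a,b)} \otimes p\bigr) = (\lambda!)^2 \, T(p)(a:b)$ up to the known scalar, which requires carefully tracking how $\Symb(T)$'s definition $\sum_\mu \binom{\lambda}{\mu} z^{\lambda-\mu}(-w)^\mu T(x^\mu y^{\lambda-\mu})$ interacts with evaluation and the $D^\lambda$ pairing — this is exactly the content of the Symbol Lemma combined with the Evaluation Symbol Lemma, so it should go through, but one must be attentive to whether $\Symb(T)$ partially evaluated is genuinely $(B_1 \times \cdots \times B_m)$-stable (it is nonzero there and stable because $\Symb(T)$ is $(B \times S)$-stable and $(a:b) \in S_1 \times \cdots \times S_l$, so no zero can appear over a point of $B_1 \times \cdots \times B_m$). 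Once that reduction is in place, the conclusion is immediate from the Grace pair hypothesis applied to $\Symb(T)|_{(a:b)}$ and $p$. Note that disjointness of the Grace pair is not used — only the Grace pair property itself — which is why this proposition is stated as a one-directional strengthening of Theorem \ref{Cops_graces}.
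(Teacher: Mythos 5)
Your final ``slickest approach'' is exactly the paper's proof: use the Symbol Lemma together with the Evaluation Symbol Lemma to convert $T(p)(a:b)$ (up to a nonzero scalar) into $D^\lambda\bigl(\Symb(T)(\cdot\,;a:b)\otimes p\bigr)$, observe that the partial evaluation $\Symb(T)(\cdot\,;a:b)$ is a nonzero $(B_1\times\cdots\times B_m)$-stable polynomial precisely because $(a:b)\in S_1\times\cdots\times S_l$ and $\Symb(T)$ is $(B\times S)$-stable, and then invoke the Grace pair hypothesis against the $(C_1\times\cdots\times C_m)$-stable $p$. The detour in the middle of your write-up (trying to make a single Grace pair across all $m+l$ coordinates) would not have gone through --- as you yourself notice, the output-side sets $\CP^1\setminus\{(a_j:b_j)\}$ and $S_j$ overlap, so the coordinate-wise Grace pair structure fails there --- but you correctly abandoned that and landed on the paper's route. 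You are also right that disjointness is never used, which is why this direction is the more general half.
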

\begin{proof}
    Fix any $(C_1 \times \cdots \times C_m)$-stable $q \in V(\lambda)$ and any $(c,d) = (c_1,d_1,\ldots,c_l,d_l) \in \C^{2l}$ such that $(c_j:d_j) \in S_j$ for all $j \in [l]$. Let $k_{\lambda,\alpha} := (-1)^\alpha(\lambda!)^2(\alpha!)^2$. The evaluation symbol lemma (Lemma \ref{ev_lemma}) and the symbol lemma (Lemma \ref{symb_lemma}) then give us the following expression of $T(q)$ evaluated at $(c,d) = (c_1,d_1,\ldots,c_l,d_l)$:
    \[
        \begin{split}
            k_{\lambda,\alpha} T(q)(c,d) &= k_{\lambda,0} D^\alpha\big(T(q) \otimes (dx-cy)^\alpha\big) \\
                &= D^{\lambda \sqcup \alpha}\big(\Symb(T) \otimes q \cdot (dx-cy)^\alpha\big) \\
                &= k_{0,\alpha} D^\lambda\big(\Symb(T)(z,w,c,d) \otimes q(z,w)\big)
        \end{split}
    \]
    In the last expression above, $D^\lambda$ acts on the variables $(z,w) = (z_1,w_1,\ldots,z_m,w_m)$. Since $r(z,w) := \Symb(T)(z,w,c,d)$ is $(B_1 \times \cdots \times B_m)$-stable and $q(z,w)$ is $(C_1 \times \cdots \times C_m)$-stable, we have that the last expression above is nonzero by definition of Grace pair (Definition \ref{grace_pair_def}). This implies $T(q)$ is $(S_1 \times \cdots \times S_l)$-stable.
\end{proof}

With this, we now give the proof of Theorem \ref{Cops_graces}.

\begin{proof}[Proof of Theorem \ref{Cops_graces}]
    The statement of this result, as well as its proof, is quite similar to that of the evaluation symbol version of Grace's theorem given in Corollary \ref{ev_graces}. We explicitly give the proof anyway, as it is rather short and straightforward.
    
    $(i) \Rightarrow (ii)$. Immediate.
    
    $(ii) \Rightarrow (iii)$. Fix $(a,b) = (a_1,b_1,\ldots,a_m,b_m) \in \C^{2m}$ such that $(a_j:b_j) \in B_j$ for all $j \in [m]$, and fix $(c,d) = (c_1,d_1,\ldots,c_l,d_l) \in \C^{2l}$ such that $(c_j:d_j) \in S_j$ for all $j \in [l]$. Let $k_{\lambda,\alpha} := (-1)^\alpha(\lambda!)^2(\alpha!)^2$. Since $B_j$ and $C_j$ are disjoint, we have that $(a_j:b_j) \not\in C_j$ for all $j$, and therefore $\Symb(\ev_{(a,b)}) = (bx-ay)^\lambda \in V(\lambda)$ is an evaluation symbol which is $(C_1 \times \cdots \times C_m)$-stable. Using the evaluation symbol lemma (Lemma \ref{ev_lemma}) and the symbol lemma (Lemma \ref{symb_lemma}), we compute:
    \[
        (-1)^\lambda k_{\lambda,\alpha} \Symb(T)(a,b,c,d) = D^{\lambda \sqcup \alpha}(\Symb(T) \otimes (bx-ay)^\lambda (dx-cy)^\alpha) = k_{\lambda,\alpha} T[(bx-ay)^\lambda](c,d)
    \]
    By $(ii)$ the last expression is nonzero, and thus $\Symb(T)(a,b,c,d) \neq 0$. This implies $\Symb(T)$ is $(B_1 \times \cdots \times B_m) \times (S_1 \times \cdots \times S_l)$-stable.
    
    $(iii) \Rightarrow (i)$. Proposition \ref{Cops_prop} above.
\end{proof}

As mentioned above, the previous proposition gives a slightly stronger result in the (symbol stability $\Rightarrow$ operator stability) direction. Using it, we revisit the additive and multiplicative convolutions with a more algebraic/symbolic mentality.

\begin{example}
    By Definition \ref{symb_def}, the $\Symb$ map gives a bijection between certain spaces of linear operators and polynomials. So, we can uniquely define a linear operator by giving its symbol. Using this idea, we specify $T \in \Hom(V(n,n), V(n))$ by defining its symbol in $V(n,n,n)$ with variables $(z,w),(t,s),(x,y)$ as follows:
    \[
        \Symb(T) := \Hmg_{(n,n,n)}\left[(x - (z+t))^n\right] = (xws - (zs + tw)y)^n
    \]
    Now, let us consider the additive convolution $*_+^n$ as an element of $\Hom(V(n,n),V(n))$ in the following way. Since $V(n,n) \cong V(n) \boxtimes V(n)$, we define $*_+^n$ on elements $p \boxtimes q \in V(n) \boxtimes V(n)$ via $*_+^n(p \boxtimes q) := p *_+^n q$ and extend linearly. We then compute $\Symb(*_+^n)$ as follows:
    \[
        \Symb(*_+^n) = *_+^n\left[(zy-xw)^n \boxtimes (ty-xs)^n\right] = (xws - (zs + tw)y)^n
    \]
    That is, $*_+^n$ is the operator that has our desired symbol. Fixing any $a,b,c,d \in \R$ such that $a<b$ and $c<d$, we define the sets $C_1 := \overline{\UHP} \setminus (a,b)$, $C_2 := \overline{\UHP} \setminus (c,d)$, $B_1 := \LHP \cup [a,b]$, $B_2 := \LHP \cup [c,d])$, and $S := \overline{\UHP} \setminus [a+c,b+d]$. Proposition \ref{Cops_prop} then implies $p *_+^n q$ has all its roots in $[a+c,b+d]$ whenever $p,q \in V_\R(n)$ have all their roots in $(a,b)$ and $(c,d)$, respectively. For real-rooted $p,q$ of degree $n$, this implies:
    \[
        \minroot(p) + \minroot(q) \leq \minroot(p *_+^n q) \leq \maxroot(p *_+^n q) \leq \maxroot(p) + \maxroot(q)
    \]
    Notice that we actually get a bit more. For $(C_1 \times C_2)$-stable $r := \sum_j p_j \boxtimes q_j \in V(n) \boxtimes V(n) \cong V(n,n)$, we have that $*_+^n[r]$ is $S$-stable. That is, $*_+^n$ has stability properties as an operator in $\Hom(V(n,n),V(n))$, not just as a convolution between two polynomials in $V(n)$.
\end{example}

\begin{example}
    As in the previous example, we can consider the multiplicative convolution $*_\times^n$ as an element of $\Hom(V(n,n),V(n))$ by defining $*_\times^n(p \boxtimes q) := p *_\times^n q$ on elements $p \boxtimes q \in V(n) \boxtimes V(n) \cong V(n,n)$ and extending linearly. We then compute its symbol in $V(n,n,n)$ with variables $(z,w),(t,s),(x,y)$ as follows:
    \[
        \Symb(*_\times^n) = *_\times^n\left[(zy-xw)^n \boxtimes (ty-xs)^n\right] = (xws - zty)^n = \Hmg_{(n,n,n)}\left[(x-zt)^n\right]
    \]
    Fixing any $a,b,c,d \in \R_+$ such that $0<a<b$ and $0<c<d$, we define the sets $C_1$, $C_2$, $B_1$, and $B_2$ as in the previous example. We then define $S := \overline{\R} \setminus [ac, bd]$. Proposition \ref{Cops_prop} then implies $p *_\times^n q$ has all its real roots in $[ac,bd]$ whenever $p,q \in V_\R(n)$ have all their roots in $(a,b)$ and $(c,d)$, respectively. (Notice that we could not apply the proposition if $\UHP \subset S$ or $\LHP \subset S$.) Since Example \ref{mult_conv_ex1} implies $p *_\times^n q$ is positive-rooted (and hence, real-rooted) whenever $p$ and $q$ are, this implies:
    \[
        \minroot(p) \cdot \minroot(q) \leq \minroot(p *_\times^n q) \leq \maxroot(p *_\times^n q) \leq \maxroot(p) \cdot \maxroot(q)
    \]
    As in the previous example, we also obtain stability properties for $*_\times^n$ as an operator in $\Hom(V(n,n),V(n))$, and not just as a polynomial convolution.
\end{example}

Using similar techniques, we can also circumvent the issue that arises from the fact that $\partial_x$ only preserves weak stability.

\begin{example}\label{deriv_ex}
    For fixed $n \geq 1$, consider the operator $\partial_x \in \Hom(V(n),V(n-1))$. We compute:
    \[
        \Symb(\partial_x) = \partial_x[(zy-xw)^n] = -nw(zy-xw)^{n-1} = \Hmg_{(n,n-1)}\left[-n(z-x)^{n-1}\right]
    \]
    For any $a,b \in \R$ such that $a<b$, it is straightforward to see that $\Symb(\partial_x)$ is $(C \times B)$-stable for $C := \LHP \cup (a,b)$ and $B := \overline{\UHP} \setminus (a,b)$, where the variables are ordered $(z,w),(x,y)$. (Notice that this does not hold when $\infty \in C$, due to the $w$ factor in the symbol.) Since $(C,B)$ is a disjoint Grace pair, the Theorem \ref{Cops_graces} implies $\partial_x$ preserves strong $B$-stability.
    
    With this, let $f \in \C^n[x]$ be a $\UHP$-stable polynomial of degree $1 \leq m \leq n$, and let $p \in V(m)$ be its degree-$m$ homogenization. Then $p$ has no roots at infinity, and therefore there exists $a < b$ such that $p$ is $\big(\overline{\UHP} \setminus (a,b)\big)$-stable. The previous discussion implies $\partial_x p$ is $\big(\overline{\UHP} \setminus (a,b)\big)$-stable, and in particular $\partial_x p$ is $\UHP$-stable. Since $\partial_x$ commutes with homogenization, this also implies $\partial_x f$ is $\UHP$-stable.
\end{example}

Other issues related to weak stability preservation can be dealt with in a similar way, by considering stability regions with small intervals in $\overline{\R}$ about $\infty$ attached. More generally though, the Borcea-Br{\"a}nd{\'e}n characterization ends up being a corollary of Theorem \ref{Cops_graces}, which we discuss and demonstrate now.

\subsection{Deriving the Complex Borcea-Br{\"a}nd{\'e}n Characterization}\label{Cops_deriv_subsect}

As mentioned above, we hope to obtain the Borcea-Br{\"a}nd{\'e}n characterization from our strong stability characterization given in Theorem \ref{Cops_graces}. To this end, we state two corollaries to Theorem \ref{Cops_graces}, which look (naively) as close to the Borcea-Br{\"a}nd{\'e}n characterization as possible. Let $C^c$ denote the complement of $C$ in $\CP^1$.

\begin{corollary}\label{naive_CBB_cor1}
    Fix $\lambda,\alpha \in \N_0^m$, a linear operator $T \in \Hom(V(\lambda),V(\alpha))$, and a Grace pair of the form $(C_1 \times \cdots \times C_m, C_1^c \times \cdots \times C_m^c)$. The following are equivalent.
    \begin{enumerate}[label=(\roman*)]
        \item $T$ preserves strong $(C_1 \times \cdots \times C_m)$-stability.
        \item $\Symb(T)$ is $(C_1^c \times \cdots \times C_m^c) \times (C_1 \times \cdots \times C_m)$-stable.
    \end{enumerate}
\end{corollary}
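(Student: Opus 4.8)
The plan is to recognize Corollary~\ref{naive_CBB_cor1} as a direct specialization of Theorem~\ref{Cops_graces}. Concretely, I would apply that theorem with output dimension $l := m$, with the output sets chosen to be $S_k := C_k$, and with the disjoint Grace pair taken to be $(C_1 \times \cdots \times C_m,\, C_1^c \times \cdots \times C_m^c)$ and $B_k := C_k^c$. Nothing new needs to be proven; the entire task is to check that the hypotheses of Theorem~\ref{Cops_graces} are met under these substitutions and that its conditions (i) and (iii) then read off, verbatim, as conditions (i) and (ii) of the corollary.

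First I would verify that $(C_1 \times \cdots \times C_m,\, C_1^c \times \cdots \times C_m^c)$ is a \emph{disjoint} Grace pair. It is a Grace pair by the hypothesis of the corollary, and it is disjoint because $C_k \cap C_k^c = \emptyset$ for every $k$, matching the definition of a disjoint Grace pair preceding Corollary~\ref{ev_graces}. Next I would unwind the terminology: by the definitions in \S\ref{stab_subsect}, ``$T$ preserves strong $(C_1 \times \cdots \times C_m)$-stability'' means exactly that $T$ maps $(C_1 \times \cdots \times C_m)$-stable polynomials to nonzero $(C_1 \times \cdots \times C_m)$-stable polynomials, which is precisely condition (i) of Theorem~\ref{Cops_graces} after the substitution $S_k := C_k$. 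On the other side, condition (iii) of Theorem~\ref{Cops_graces} becomes ``$\Symb(T)$ is $(C_1^c \times \cdots \times C_m^c) \times (C_1 \times \cdots \times C_m)$-stable,'' which is condition (ii) of the corollary as stated. Hence the equivalence (i)$\Leftrightarrow$(iii) of Theorem~\ref{Cops_graces} is exactly the claimed equivalence, and one obtains as a free bonus the intermediate reformulation via evaluation symbols coming from condition (ii) of Theorem~\ref{Cops_graces}.

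Since the argument is a pure specialization, I do not expect any genuine obstacle. The only point deserving a moment's care is bookkeeping the ambient-space conventions: that $\Symb(T) \in V(\lambda \sqcup \alpha)$ carries $2m$ variable-pairs, and that the stability region in (ii) must be read as a subset of $(\CP^1)^{2m}$ whose first $m$ coordinates are governed by the $C_k^c$ and whose last $m$ coordinates are governed by the $C_k$ — which is precisely the shape of the stability region appearing in Theorem~\ref{Cops_graces}(iii) after the substitutions above. With that alignment confirmed, the corollary follows at once.
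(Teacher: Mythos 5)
Your proposal is correct and matches the paper's (implicit) argument: the paper states Corollary~\ref{naive_CBB_cor1} without a separate proof precisely because it is the immediate specialization of Theorem~\ref{Cops_graces} that you describe, taking $l = m$, $B_k = C_k^c$, and $S_k = C_k$, with disjointness of the Grace pair following from $C_k \cap C_k^c = \emptyset$.
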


\begin{corollary}\label{naive_CBB_cor2}
    Fix $\lambda,\alpha \in \N_0^m$ and a linear operator $T \in \Hom(V(\lambda),V(\alpha))$. $T$ preserves strong stability iff $\Symb(T)$ is $(\overline{\LHP}^m \times \UHP^m)$-stable.
\end{corollary}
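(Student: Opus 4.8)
The plan is to read this off Corollary~\ref{naive_CBB_cor1} (equivalently Theorem~\ref{Cops_graces}) by specializing every input and output circular region to the open upper half-plane. Recall that a homogeneous polynomial is \emph{stable} exactly when it is non-vanishing on the appropriate power of $\UHP$, so ``$T$ preserves strong stability'' unwinds, by definition, to ``$T$ maps $\UHP^m$-stable polynomials to nonzero $\UHP^m$-stable polynomials''. That is precisely condition $(i)$ of Theorem~\ref{Cops_graces} (with $l = m$) in the case $C_1 = \cdots = C_m = \UHP$ and $S_1 = \cdots = S_m = \UHP$, and likewise condition $(i)$ of Corollary~\ref{naive_CBB_cor1} with $C_k = \UHP$.

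First I would pin down the Grace pair. Working in $\CP^1$, the complement of $\UHP$ is $\CP^1 \setminus \UHP = \LHP \cup \R \cup \{\infty\} = \overline{\LHP}$ (closure taken in $\CP^1$), so the relevant pair is $(\UHP^m, \overline{\LHP}^m)$, which has the form $(C_1 \times \cdots \times C_m, C_1^c \times \cdots \times C_m^c)$ required by Corollary~\ref{naive_CBB_cor1}; it is automatically disjoint since $\UHP \cap \overline{\LHP} = \emptyset$. To apply the corollary I must check that $(\UHP^m, \overline{\LHP}^m)$ is a Grace pair, and this is exactly part $(iii)$ of Theorem~\ref{grace_pairs_thm}: in each coordinate $\UHP$ is an open circular region, $\overline{\LHP}$ is a closed circular region, and $\UHP \cup \overline{\LHP} = \CP^1$.

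With the Grace pair in hand, the equivalence of $(i)$ and $(ii)$ in Corollary~\ref{naive_CBB_cor1} (or of $(i)$ and $(iii)$ in Theorem~\ref{Cops_graces}, with $B_k = \overline{\LHP}$ and $S_k = \UHP$) immediately gives: $T$ preserves strong stability if and only if $\Symb(T)$ is $(\overline{\LHP}^m \times \UHP^m)$-stable, which is the asserted statement. No genuine obstacle remains: all the analytic content has been pushed into Theorem~\ref{Cops_graces}, hence ultimately into Grace's theorem (\ref{graces_thm}), the symbol lemma (\ref{symb_lemma}), and the evaluation symbol lemma (\ref{ev_lemma}). The only points that deserve a sentence of care are (a) that $\overline{\LHP}$, with closure in $\CP^1$, is exactly $\UHP^c$ — so that the pair both is disjoint and covers $\CP^1$ — and (b) that the word ``stable'' appearing in ``preserves strong stability'' means $\UHP$-power stability in both the domain $V(\lambda)$ and the codomain $V(\alpha)$, matching the hypotheses of the cited results.

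If one preferred a proof not routed through Theorem~\ref{Cops_graces}, the same two implications can be run directly: for the ``if'' direction, apply Proposition~\ref{Cops_prop} to the Grace pair $(\UHP^m, \overline{\LHP}^m)$ with output region $\UHP^m$; for the ``only if'' direction, feed the $\UHP^m$-stable evaluation symbols $(bx - ay)^\lambda$ with $(a:b) \in \overline{\LHP}^m$ (which are $\UHP^m$-stable precisely by disjointness of the Grace pair) through the evaluation symbol lemma and the symbol lemma, exactly as in the $(ii) \Rightarrow (iii)$ step of the proof of Theorem~\ref{Cops_graces}.
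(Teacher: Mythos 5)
Your proof is correct and matches the route the paper clearly intends: Corollary~\ref{naive_CBB_cor2} is presented as an unlabelled specialization of Corollary~\ref{naive_CBB_cor1} (equivalently Theorem~\ref{Cops_graces}), and your argument simply makes explicit the two needed checks --- that $\overline{\LHP} = \CP^1 \setminus \UHP$ is a closed circular region so that $(\UHP^m, \overline{\LHP}^m)$ is a disjoint Grace pair by Theorem~\ref{grace_pairs_thm}$(iii)$, and that ``preserves strong stability'' is condition $(i)$ of the theorem with $C_k = S_k = \UHP$. No genuine gap; the alternative unrolled version you sketch at the end (Proposition~\ref{Cops_prop} plus evaluation symbols) is precisely how the paper proves Theorem~\ref{Cops_graces} itself.
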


In Theorem \ref{BB_Cops_thm}, the analogous ``if'' direction of the previous corollary is paraphrased as follows: \emph{$T$ preserves weak stability if the Borcea-Br{\"a}nd{\'e}n symbol of $T$ is stable}. To see how this statement relates, we restate the definition of the Borcea-Br{\"a}nd{\'e}n symbol:
\[
    \Symb_{BB}(T) := T\left[(z+x)^\lambda\right] = \sum_{\mu \leq \lambda} \binom{\lambda}{\mu} z^{\lambda-\mu} T(x^\mu)
\]
Notice that by applying $z \mapsto -z$ and homogenizing, we obtain (up to scalar) the universal symbol $\Symb(T)$ defined in this paper. The crucial difference then is the fact that the Borcea-Br{\"a}nd{\'e}n ``if'' direction deals only with \emph{open} upper half-planes, whereas the previous corollary requires \emph{closed} half-plane stability of $\Symb(T)$ in the first $m$ pairs of variables. That is, the required premises of the ``if'' direction of the previous corollary are strictly stronger than that of the Borcea-Br{\"a}nd{\'e}n result.

These two results can be reconciled, however, which we now demonstrate. The following result provides the main link to the Borcea-Br{\"a}nd{\'e}n characterization, and it can be intuitively described as follows: with the exception of having a one-dimensional range, a linear operator which maps $(C_1 \times \cdots \times C_m)$-stable polynomials to weak $(B_1 \times \cdots \times B_m)$-stable polynomials can only have zeros on the boundary of the set of $(C_1 \times \cdots \times C_m)$-stable polynomials.

\begin{lemma}\label{bb_Clink}
    Fix $\lambda,\alpha \in \N_0^m$, a linear operator $T \in \Hom(V(\lambda),V(\alpha))$, and any open circular regions $C_1,\ldots,C_m,B_1,\ldots,B_m \subseteq \CP^1$. The following are equivalent.
    \begin{enumerate}[label=(\roman*)]
        \item $T$ maps $(C_1 \times \cdots \times C_m)$-stable polynomials to weakly $(B_1 \times \cdots \times B_m)$-stable polynomials.
        \item One of the following holds:
            \begin{enumerate}[label=(\alph*)]
                \item $T$ maps $(\overline{C_1} \times \cdots \times \overline{C_m})$-stable polynomials to nonzero $(B_1 \times \cdots \times B_m)$-stable polynomials.
                \item $T \equiv p_0 \cdot \psi$ for some weakly $(B_1 \times \cdots \times B_m)$-stable polynomial $p_0 \in V(\alpha)$ and some linear functional $\psi$.
            \end{enumerate}
    \end{enumerate}
\end{lemma}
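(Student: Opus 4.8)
The plan is to prove $(ii)\Rightarrow(i)$ and $(i)\Rightarrow(ii)$ separately. The implication $(b)\Rightarrow(i)$ is immediate: if $T=p_0\cdot\psi$ then $T(p)=\psi(p)\cdot p_0$ is always a scalar multiple of the weakly $(B_1\times\cdots\times B_m)$-stable polynomial $p_0$, hence weakly $(B_1\times\cdots\times B_m)$-stable. So the real content lies in $(a)\Rightarrow(i)$ and in the forward direction.

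For $(a)\Rightarrow(i)$ I would argue by approximation. Given a $(C_1\times\cdots\times C_m)$-stable $p$, I want to write it as a limit of $(\overline{C_1}\times\cdots\times\overline{C_m})$-stable polynomials. Since every open circular region is $SL_2(\C)$-equivalent to $\D$, for each $k$ one can choose a one-parameter family $\phi_k^{(t)}\in SL_2(\C)$ with $\phi_k^{(t)}\to\mathrm{id}$ as $t\to1$ and $\phi_k^{(t)}(\overline{C_k})\subseteq C_k$ for $t$ near $1$ (model on $z\mapsto tz$ for $\D$, conjugated); then the appropriate $SL_2(\C)^m$-conjugate $p^{(t)}$ of $p$ by $(\phi_1^{(t)},\dots,\phi_m^{(t)})$ is $(\overline{C_1}\times\cdots\times\overline{C_m})$-stable and $p^{(t)}\to p$. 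By $(a)$ each $T(p^{(t)})$ is nonzero and $(B_1\times\cdots\times B_m)$-stable; since $T$ is linear on a finite-dimensional space, $T(p^{(t)})\to T(p)$, and Hurwitz's theorem for products of open circular regions (handling any point at infinity in a $B_k$ by a change of coordinates, or citing the standard fact that a limit of $\Omega$-stable polynomials is $\Omega$-stable or zero) gives that $T(p)$ is weakly $(B_1\times\cdots\times B_m)$-stable.

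For $(i)\Rightarrow(ii)$, assume $(i)$ holds and that $(a)$ fails; I will deduce $(b)$. Every $(\overline{C_1}\times\cdots\times\overline{C_m})$-stable polynomial is in particular $(C_1\times\cdots\times C_m)$-stable, so by $(i)$ its image under $T$ is already weakly $(B_1\times\cdots\times B_m)$-stable; hence the only way $(a)$ can fail is that some $(\overline{C_1}\times\cdots\times\overline{C_m})$-stable $q_0$ satisfies $Tq_0=0$. Such a $q_0$ is bounded away from zero on the compact set $\overline{C_1}\times\cdots\times\overline{C_m}\subseteq(\CP^1)^m$ (after a standard normalization of homogeneous polynomials), so for any $r\in V(\lambda)$ and all sufficiently small $\varepsilon>0$ the perturbation $q_0+\varepsilon r$ is again $(\overline{C_1}\times\cdots\times\overline{C_m})$-stable, hence $(C_1\times\cdots\times C_m)$-stable; applying $(i)$, the polynomial $\varepsilon\,T(r)=T(q_0+\varepsilon r)$ is weakly $(B_1\times\cdots\times B_m)$-stable, and therefore so is $T(r)$. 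Thus the linear subspace $T(V(\lambda))\subseteq V(\alpha)$ consists entirely of weakly $(B_1\times\cdots\times B_m)$-stable polynomials, and by the remark following Lemma \ref{dim_lemma} (the extension of $(a)$ to arbitrary products of open circular regions) it has dimension at most one. Hence $T$ has rank at most one and may be written $T=p_0\cdot\psi$ with $p_0\in T(V(\lambda))$ weakly $(B_1\times\cdots\times B_m)$-stable and $\psi$ a linear functional, which is exactly $(b)$.

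The main obstacle is bookkeeping rather than a deep difficulty: making the Hurwitz step precise for products of open circular regions in $(\CP^1)^m$, and making the compactness/continuity estimate for the perturbation $q_0+\varepsilon r$ rigorous for homogeneous polynomials. The one genuinely clever point is recognizing that the failure of $(a)$ yields a kernel element that is \emph{strictly} $(\overline{C_1}\times\cdots\times\overline{C_m})$-stable, which is precisely what makes an arbitrary tiny perturbation of it remain $(C_1\times\cdots\times C_m)$-stable and thereby forces the rank bound via Lemma \ref{dim_lemma}.
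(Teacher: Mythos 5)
Your proposal is correct and follows essentially the same route as the paper: the paper also proves $(a)\Rightarrow(i)$ via a shrinking $SL_2(\C)$ family together with Hurwitz's theorem, and proves $(i)\Rightarrow(ii)$ by observing that a kernel element strictly $(\overline{C_1}\times\cdots\times\overline{C_m})$-stable admits perturbations staying $(C_1\times\cdots\times C_m)$-stable, forcing the whole image to be weakly stable and hence one-dimensional via Lemma~\ref{dim_lemma}. The only cosmetic difference is that the paper first normalizes $C_k = B_k = \D$ by an $SL_2(\C)$ action and phrases the perturbation step as openness of the set of $\overline{\D}^m$-stable polynomials, whereas you argue directly on the compact region $\overline{C_1}\times\cdots\times\overline{C_m}$, but these are the same argument.
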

\begin{proof}
    By appropriate $\SL_2(\C)$ action, we can assume WLOG that $C_k = B_k = \D$, the unit disc, for all $k \in [m]$.
    
    $(i) \Rightarrow (ii)$. We show that if $(a)$ is not the case, then $(b)$ must hold. It follows from $(i)$ that $T$ maps $\overline{\D}^m$-stable polynomials to (possibly identically zero) $\D^m$-stable polynomials. So, if $(a)$ is not the case, we have that $T(p) \equiv 0$ for some $\overline{\D}^m$-stable polynomial $p \in V(\lambda)$.
    
    The rest of the argument is essentially the proof of necessity found in \cite{bb1} for Theorem \ref{BB_Cops_thm}. Since the set of nonzero $\overline{\D}^m$-stable polynomials is open in $V(\lambda)$, there is some ball $B(p) \subset V(\lambda)$ centered at $p$ such that $B(p)$ contains only $\overline{\D}^m$-stable polynomials. So, $T[B(p)]$ is an open set in the image of $T$ containing 0 and otherwise consisting of $\D^m$-stable polynomials. Therefore, the image of $T$ is a vector space consisting of $\D^m$-stable polynomials. Lemma \ref{dim_lemma} (and appropriate $\SL_2(\C)$ action) then implies the image of $T$ is of dimension $\leq 1$, and $(b)$ follows.
    
    $(ii) \Rightarrow (i)$. If $(b)$ holds, then $(i)$ is immediate. Otherwise, fix $p \in V(\lambda)$ such that $p$ is $\D^m$-stable. For all $n \in \N$, define:
    \[
        p_n := p((1-n^{-1})x_1, y_1, (1-n^{-1})x_2, y_2, \ldots, (1-n^{-1})x_m, y_m)
    \]
    So, $p_n$ is $\overline{\D}^m$-stable for all $n$, and $\lim_{n \rightarrow \infty} p_n = p$ coefficient-wise. By $(ii)$, $T(p_n)$ is $\D^m$-stable for all $n$, and by continuity, $T(p) = \lim_{n \rightarrow \infty} T(p_n)$. Hurwitz's theorem then implies $T(p)$ is either identically zero or $\D^m$-stable.
\end{proof}

This lemma then yields the following corollaries to Theorem \ref{Cops_graces}. Applying the necessary maps to convert $\Symb$ to $\Symb_{BB}$ as discussed above, these results give precisely the Borcea-Br{\"a}nd{\'e}n characterization proven in Theorem \ref{BB_Cops_thm} and more generally in Theorem 6.3 of \cite{bb1}. In particular, Corollary \ref{CBB_cor1} can be seen as a unification of the complex characterization results of \cite{bb1}.

\begin{corollary}\label{CBB_cor1}
    Fix $\lambda,\alpha \in \N_0^m$, a linear operator $T \in \Hom(V(\lambda),V(\alpha))$, and any open circular regions $C_1, \ldots, C_m$. The following are equivalent.
    \begin{enumerate}[label=(\roman*)]
        \item $T$ preserves weak $(C_1 \times \cdots \times C_m)$-stability.
        \item One of the following holds:
            \begin{enumerate}[label=(\alph*)]
                \item $\Symb(T)$ is $(\overline{C_1}^c \times \cdots \times \overline{C_m}^c) \times (C_1 \times \cdots \times C_m)$-stable.
                \item $T \equiv p_0 \cdot \psi$ for some weakly $(C_1 \times \cdots \times C_m)$-stable polynomial $p_0 \in V(\alpha)$ and some linear functional $\psi$.
            \end{enumerate}
    \end{enumerate}
\end{corollary}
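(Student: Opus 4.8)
The plan is to derive Corollary~\ref{CBB_cor1} by combining Lemma~\ref{bb_Clink} with Theorem~\ref{Cops_graces}; no new analytic content is required, only bookkeeping about closures, complements, and Grace pairs.

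First I would apply Lemma~\ref{bb_Clink} with $B_k := C_k$ for every $k \in [m]$. The hypotheses are satisfied since $C_1, \dots, C_m$ are open circular regions, so the lemma tells us that condition $(i)$ of the corollary (that $T$ preserve weak $(C_1 \times \cdots \times C_m)$-stability) holds if and only if one of the following does: $(\mathrm{a}')$ $T$ maps $(\overline{C_1} \times \cdots \times \overline{C_m})$-stable polynomials to nonzero $(C_1 \times \cdots \times C_m)$-stable polynomials; or $(\mathrm{b})$ $T \equiv p_0 \cdot \psi$ for some weakly $(C_1 \times \cdots \times C_m)$-stable $p_0 \in V(\alpha)$ and some linear functional $\psi$. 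Since $(\mathrm{b})$ is verbatim condition $(ii)(b)$ of the corollary, it only remains to show $(\mathrm{a}') \Leftrightarrow (ii)(a)$.

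For that step I would invoke the equivalence $(i) \Leftrightarrow (iii)$ of Theorem~\ref{Cops_graces}, applied with the Grace pair $\bigl(\overline{C_1} \times \cdots \times \overline{C_m},\ \overline{C_1}^c \times \cdots \times \overline{C_m}^c\bigr)$ and with output sets $S_k := C_k$. The one thing to check is that this is a \emph{disjoint} Grace pair: for each $k$, $\overline{C_k}$ is a closed circular region, $\overline{C_k}^c$ is an open circular region (the complement in $\CP^1$ of a closed disc is an open exterior disc, and the complement of a closed half-plane is an open half-plane), and $\overline{C_k} \cup \overline{C_k}^c = \CP^1$, so Theorem~\ref{grace_pairs_thm}$(ii)$ applies to give a Grace pair; disjointness is automatic since $\overline{C_k}$ and $\overline{C_k}^c$ partition $\CP^1$. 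With the theorem's input region taken to be $\overline{C_k}$, its $B$-region $\overline{C_k}^c$, and its output region $C_k$, the equivalence $(i) \Leftrightarrow (iii)$ reads exactly as $(\mathrm{a}') \Leftrightarrow (ii)(a)$. Chaining, $(i) \Leftrightarrow [(\mathrm{a}')\ \text{or}\ (\mathrm{b})] \Leftrightarrow [(ii)(a)\ \text{or}\ (ii)(b)]$, which is the claim.

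I do not expect a genuine obstacle here; the work is entirely the set-theoretic verification above. The points that require care are: that $\overline{C_k}^c$ is an open circular region, so that part $(ii)$ of Theorem~\ref{grace_pairs_thm} — the ``closed $\times$ open'' case, in this order — really applies; that degenerate circular regions ($C_k$ empty or all of $\CP^1$) make the relevant statements vacuous or trivial; and that the first $m$ pairs of variables of $\Symb(T)$ are the ones carrying the complemented closed regions $\overline{C_k}^c$ while the last $m$ carry the original $C_k$, matching the statement of $(ii)(a)$.
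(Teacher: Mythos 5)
Your proposal is correct and follows the paper's own argument exactly: the paper's proof is the one-line remark that the result follows by combining Lemma~\ref{bb_Clink} (with $B_k = C_k$) and Theorem~\ref{Cops_graces} applied to an operator mapping $(\overline{C_1} \times \cdots \times \overline{C_m})$-stable polynomials to nonzero $(C_1 \times \cdots \times C_m)$-stable ones, which is precisely your chaining $(i) \Leftrightarrow [(\mathrm{a}')\text{ or }(\mathrm{b})] \Leftrightarrow [(ii)(a)\text{ or }(ii)(b)]$. The bookkeeping you supply -- identifying $(\overline{C_1}\times\cdots\times\overline{C_m},\ \overline{C_1}^c\times\cdots\times\overline{C_m}^c)$ as a disjoint Grace pair via Theorem~\ref{grace_pairs_thm}$(ii)$ and choosing output sets $S_k := C_k$ -- is exactly the detail the paper leaves implicit.
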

\begin{proof}
    The result follows from the Lemma \ref{bb_Clink} and Theorem \ref{Cops_graces} applied to an operator $T$ which maps $(\overline{C_1} \times \cdots \times \overline{C_m})$-stable polynomials to nonzero $(C_1 \times \cdots \times C_m)$-stable polynomials.
\end{proof}

\begin{corollary}\label{CBB_cor2}
    Fix $\lambda,\alpha \in \N_0^m$ and a linear operator $T \in \Hom(V(\lambda),V(\alpha))$. $T$ preserves weak stability iff one of the following holds:
    \begin{enumerate}[label=(\alph*)]
        \item $\Symb(T)$ is $(\LHP^m \times \UHP^m)$-stable.
        \item $T \equiv p_0 \cdot \psi$ for some weakly stable polynomial $p_0 \in V(\alpha)$ and some linear functional $\psi$.
    \end{enumerate}
\end{corollary}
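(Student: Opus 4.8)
The plan is to obtain this statement as the special case $C_1 = \cdots = C_m = \UHP$ of Corollary \ref{CBB_cor1}. Recall that in this paper ``stable'' is by definition ``$\UHP^m$-stable'' and ``weakly stable'' is ``weakly $\UHP^m$-stable,'' so condition $(i)$ of Corollary \ref{CBB_cor1} with each $C_k = \UHP$ reads exactly ``$T$ preserves weak stability.'' It then suffices to check that the two alternatives in part $(ii)$ of that corollary specialize to alternatives $(a)$ and $(b)$ in the present statement.

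For alternative $(b)$ this is immediate, since a weakly $(C_1 \times \cdots \times C_m)$-stable polynomial with each $C_k = \UHP$ is precisely a weakly stable polynomial. For alternative $(a)$ I would compute the relevant complement inside $\CP^1$: the closure $\overline{\UHP}$, taken in $\CP^1$, is the closed upper half-plane together with the point $\infty = (-1:0)$, so $\CP^1 \setminus \overline{\UHP} = \LHP$. Hence $\overline{C_k}^c = \LHP$ for every $k$, and the region $(\overline{C_1}^c \times \cdots \times \overline{C_m}^c) \times (C_1 \times \cdots \times C_m)$ appearing in Corollary \ref{CBB_cor1}$(ii)(a)$ becomes $\LHP^m \times \UHP^m$, which is exactly the stability region in $(a)$ above. (Equivalently, one can bypass Corollary \ref{CBB_cor1} and argue directly: Corollary \ref{naive_CBB_cor2} handles the case where $\Symb(T)$ is $(\overline{\LHP}^m \times \UHP^m)$-stable, while Lemma \ref{bb_Clink} with $C_k = B_k = \UHP$ reduces weak stability preservation to the union of this case and the degenerate case $T \equiv p_0 \cdot \psi$; the closed versus open discrepancy in the first block of variables is absorbed by the Hurwitz-type continuity argument in the proof of Lemma \ref{bb_Clink}.)

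The one genuine verification is that Corollary \ref{CBB_cor1} is applicable with $C_k = \UHP$, i.e.\ that the Grace pair used implicitly in its proof, namely $(\overline{\UHP}^m, \LHP^m)$, is a disjoint Grace pair. This is where I would invoke Theorem \ref{grace_pairs_thm}$(ii)$: $\overline{\UHP}$ is a closed circular region, $\LHP$ is an open circular region, and $\overline{\UHP} \cup \LHP = \CP^1$, so $(\overline{\UHP}^m, \LHP^m)$ is a Grace pair; it is disjoint because $\overline{\UHP} \cap \LHP = \emptyset$. With this observation the hypotheses of Corollary \ref{CBB_cor1} are met and the asserted equivalence follows verbatim. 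I do not expect any real obstacle here — the entire content of the proof is the bookkeeping of taking closures and complements in $\CP^1$ rather than in $\C$, which is exactly what the homogeneous framework of \S\ref{homog_subsect} was set up to make painless.
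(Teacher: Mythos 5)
Your proposal is correct and is exactly the specialization the paper intends: Corollary \ref{CBB_cor2} is Corollary \ref{CBB_cor1} with $C_k = \UHP$, and the only content is the bookkeeping you carry out, namely that $\overline{\UHP}^{\,c} = \LHP$ in $\CP^1$ and that $(\overline{\UHP}^m,\LHP^m)$ is a disjoint Grace pair via Theorem \ref{grace_pairs_thm}(ii). No gaps.
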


Notice that our naive guess at strong stability results which emulate the Borcea-Br{\"a}nd{\'e}n characterization (Corollaries \ref{naive_CBB_cor1} and \ref{naive_CBB_cor2}) was incorrect. We actually needed to consider \emph{closed} circular stability regions $\overline{C_k}$, so that their complements in $\CP^1$ would be open (i.e., to ensure Grace pair disjointness, which is required to apply Theorem \ref{Cops_graces}). We see this play out in condition $(ii)(a)$ of Corollary \ref{CBB_cor1}.

\section{Stability Properties of Real Linear Operators}\label{Rops_sect}

Borcea and Br{\"a}nd{\'e}n also classified the class of weak real stability preserving linear operators. As in the complex case, they showed that weak real stability preservation of a linear operator $T$ is almost equivalent to real stability of the associated symbol $\Symb_{BB}(T)$. We have to say ``almost equivalent'' here because there are certain weak real stability preserving operators for which the corresponding symbol is not real stable. As before, this implies a certain dimension restriction: such operators must have image of dimension at most two.

We will now characterize all strong real stability preserving linear operators. As above, strong real stability preservation will serve to eliminate the degeneracy condition of the Borcea-Br{\"a}nd{\'e}n characterization. In this section, we duplicate the outline of our previous discussion on complex operators, making use of arguments similar to those found in \cite{bb1} to fill in the gaps.

Further, we also obtain a characterization of a certain class of operators which preserve ray- and interval-rootedness. The question of a full characterization of such operators is as of yet still an open problem (see \cite{bba}). Here, we answer this question for operators which preserve both strong ray- or interval-rootedness as well as weak real-rootedness.

\subsection{Weak Real Stability Preservation}

Borcea and Br{\"a}nd{\'e}n obtain the following characterization of weak real stability preserving linear operators. Recall the notion of \emph{proper position} (denoted by $\ll$) given in Definition \ref{pp_def}.

\newtheorem*{BB_Rops_thm}{Theorem \ref{BB_Rops_thm}}
\begin{BB_Rops_thm}[Borcea-Br{\"a}nd{\'e}n]
    Fix $\lambda \in \N_0^m$ and any linear operator $T: \R_\lambda[x_1,\ldots,x_m] \to \R[x_1,\ldots,x_m]$. The following are equivalent.
    \begin{enumerate}[label=(\roman*)]
        \item $T$ maps real stable polynomials to weakly real stable polynomials.
        \item One of the following holds:
            \begin{enumerate}[label=(\alph*)]
                \item $\Symb_{BB}(T)$ is $\UHP^{2m}$-stable.
                \item $\Symb_{BB}(T)$ is $(\LHP^m \times \UHP^m)$-stable.
                \item $T$ has image of dimension at most two, and is of the form
                \[
                    T: p \mapsto q \cdot \psi_1(p) + r \cdot \psi_2(p)
                \]
                where $q,r \in \R[x_1,\ldots,x_m]$ are weakly real stable such that $q \ll r$, and $\psi_1,\psi_2$ are real linear functionals.
            \end{enumerate}
    \end{enumerate}
\end{BB_Rops_thm}

As in the case of complex operators, the degeneracy condition $(ii)(c)$ is the result of allowing weak real stability preserving operators. We now give an example which demonstrates its necessity.

\begin{example}
    Define $T: \R_n[x] \to \R[x]$ via:
    \[
        T: \sum_{k=0}^n \binom{n}{k} a_k x^k \mapsto a_nx^n + a_{n-2}x^{n-1} = (a_nx + a_{n-2})x^{n-1}
    \]
    This operator obviously preserves weak real stability. We then have that $\Symb_{BB}(T) = (z^2+x)x^{n-1}$, which is not $\UHP^2$-stable nor $(\LHP \times \UHP)$-stable.
\end{example}

Again, the degeneracy condition is required for the characterization but obscures the connection between an operator and its symbol. To remove it, we now turn to our characterization of strong real stability preserving operators.

\subsection{Strong Real Stability Preservation}

We state and prove our strong real stability preservation characterization here, and then derive the Borcea-Br{\"a}nd{\'e}n characterization as a corollary. The proof here takes a bit more work than in the complex case, and will rely on many of the real stability results discussed in \S\ref{stab_subsect}. This extra work is essentially taken from the proof of Theorem \ref{BB_Rops_thm} found in \cite{bb1}.

\begin{theorem}\label{Rops_graces}
    Fix $\lambda \in \N_0^m$, $\alpha \in \N_0^l$, and a linear operator $T \in \Hom(V(\lambda),V(\alpha))$ such that $T$ restricts to a real linear operator from $V_\R(\lambda)$ to $V_\R(\alpha)$. The following are equivalent.
    \begin{enumerate}[label=(\roman*)]
        \item $T$ preserves strong real stability.
        \item $\Symb(T)$ is either $(\overline{\LHP}^m \times \UHP^l)$-stable or $(\overline{\LHP}^m \times \LHP^l)$-stable.
    \end{enumerate}
\end{theorem}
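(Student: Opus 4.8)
The plan is to run the argument of Theorem~\ref{Cops_graces} with the disjoint Grace pair $(\UHP^m,\overline{\LHP}^m)$ — which is a Grace pair by Theorem~\ref{grace_pairs_thm}$(iii)$ ($\UHP$ an open circular region, $\overline{\LHP}$ a closed circular region, union $\CP^1$) and is disjoint since $\UHP\cap\overline{\LHP}=\varnothing$ — and to bridge the gap between real and complex stability with the real-stability toolkit of \S\ref{stab_subsect}. Concretely I would show that $(i)$ is equivalent to the purely complex statement that $T$ maps \emph{every} $\UHP^m$-stable $p\in V(\lambda)$ (not just the real ones) to a nonzero $\UHP^l$-stable polynomial, \emph{or} maps every such $p$ to a nonzero $\LHP^l$-stable polynomial. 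Call this $(\star)$. Given $(\star)$, Theorem~\ref{Cops_graces} applied with $C_k=\UHP$, $B_k=\overline{\LHP}$ and output sets $S_j=\UHP$ (resp.\ $S_j=\LHP$) says exactly that $\Symb(T)$ is $(\overline{\LHP}^m\times\UHP^l)$-stable (resp.\ $(\overline{\LHP}^m\times\LHP^l)$-stable), i.e.\ $(ii)$.

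The direction $(ii)\Rightarrow(i)$ is then immediate: if $\Symb(T)$ is $(\overline{\LHP}^m\times\UHP^l)$-stable, Proposition~\ref{Cops_prop} (which needs only a Grace pair, not disjointness) gives that $T$ maps every $\UHP^m$-stable polynomial to a nonzero $\UHP^l$-stable one; restricting to $p\in V_\R(\lambda)$ real stable, $T(p)$ is nonzero, $\UHP^l$-stable, and has real coefficients, hence is real stable, so $T$ preserves strong real stability. The $(\overline{\LHP}^m\times\LHP^l)$-stable case is identical since a nonzero real polynomial that is $\LHP^l$-stable is real stable. So the work is all in $(i)\Rightarrow(\star)$.

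For $(i)\Rightarrow(\star)$, first note strong real stability preservation implies weak real stability preservation (a nonzero weakly real stable polynomial is real stable, so has real stable image, and $T(0)=0$), hence Lemma~\ref{rs_lemma} applies: for every $\UHP^m$-stable $p$, $T(p)$ is $\UHP^l$-stable, $\LHP^l$-stable, or identically zero. Next I rule out the zero case: writing $p=p_1+ip_2$ with $p_1,p_2\in V_\R(\lambda)$, the relation $p_2\ll p_1$ holds (Definition~\ref{pp_def}, since $p_1+ip_2=p$ is weakly $\UHP^m$-stable), so by the multivariate Hermite--Biehler theorem $ap_1+bp_2$ is weakly real stable for all $a,b\in\R$; if $p\neq0$ the real span of $p_1,p_2$ is nonzero and thus contains a real stable polynomial $s$, and since $T$ is real, $T(p)=T(p_1)+iT(p_2)=0$ would force $T(p_1)=T(p_2)=0$ and hence $T(s)=0$, contradicting strong preservation. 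So $T(p)\neq0$ on all $\UHP^m$-stable $p$.

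It remains to get the dichotomy in $(\star)$: $T$ cannot send one $\UHP^m$-stable polynomial to a strictly $\UHP^l$-stable polynomial and another to a strictly $\LHP^l$-stable one. \textbf{This is the main obstacle}, and it is exactly the point where the argument borrows from \cite{bb1}. The scheme is a connectedness argument: the set $\mathcal{S}\subseteq V(\lambda)$ of $\UHP^m$-stable polynomials is connected (a standard fact — one contracts a stable polynomial to a single point within $\mathcal{S}$, e.g.\ by a scaling deformation after a M{\"o}bius change of variables carrying $\UHP$ to a disk), the sets $A=\{p\in\mathcal{S}:T(p)\text{ is }\UHP^l\text{-stable}\}$ and $B=\{p\in\mathcal{S}:T(p)\text{ is }\LHP^l\text{-stable}\}$ are closed in $\mathcal{S}$ by Hurwitz's theorem (using $T(p)\neq0$) and cover $\mathcal{S}$ by the previous paragraph, and $A\cap B$ is the locus where $T(p)$ is a complex scalar multiple of a real stable polynomial (Lemma~\ref{scal_rs_lemma}), on which $T(p_1),T(p_2)$ are $\R$-linearly dependent. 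One then shows $A=\mathcal{S}$ or $B=\mathcal{S}$: writing $p=p_1+ip_2$ with $p_2\ll p_1$, membership of $p$ in $A$ versus $B$ is governed by whether $T(p_2)\ll T(p_1)$ or $T(p_1)\ll T(p_2)$ (Definition~\ref{pp_def}), so one needs that this ``side'' cannot flip along a path in $\mathcal{S}$ — it can only change by passing through $A\cap B$, and the interlacing/total-order description of $\ll$ (Lemmas~\ref{total_order_lemma} and~\ref{scal_rs_lemma}) forbids an actual sign change (in the model case $V(1)\to V(1)$ the side is literally $\operatorname{sign}(\det T)$, constant on the connected set $\mathcal{S}$); the rank-one case $T=p_0\cdot\psi$ is trivial, as then $T(p)$ is always a scalar multiple of the fixed real stable polynomial $p_0$, so $A=B=\mathcal{S}$. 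Once $(\star)$ is in hand, Theorem~\ref{Cops_graces} yields $(ii)$.
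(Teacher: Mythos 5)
Your overall plan is sound, and the two easy halves are done correctly: the direction $(ii)\Rightarrow(i)$ via Proposition~\ref{Cops_prop}/Theorem~\ref{Cops_graces} and restriction to real inputs matches the paper, and your observation that strong preservation rules out $T(p)=0$ for any stable $p$ (because, writing $p=p_1+ip_2$, some nonzero real linear combination of $p_1,p_2$ is real stable and $T$ is real linear) is a clean step. The Grace-pair choice $(\UHP^m,\overline{\LHP}^m)$ is also the correct one.

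The problem is exactly where you flag it: the dichotomy. Your connectedness scheme — $A,B$ closed in $\mathcal S$, $A\cup B=\mathcal S$, $\mathcal S$ connected — shows only that $A\cap B\neq\varnothing$; it does not show $A=\mathcal S$ or $B=\mathcal S$, because $A\cap B$ (the locus where $T(p)$ is a scalar multiple of a real stable polynomial) is large and does not separate $\mathcal S$. The claim that ``the interlacing/total-order description of $\ll$ forbids an actual sign change'' is precisely the thing that needs a proof, and the $V(1)\to V(1)$ model case plus ``rank-one is trivial'' is not a substitute for it — you even call it the main obstacle and still leave it open. The concrete mechanism, which the paper supplies and you are missing, is this: run the connectedness argument not on all of $\mathcal S$ but on the non-real part of $\overline\LHP^m$ (i.e.\ on evaluation symbols $(w_0x-z_0y)^\lambda$). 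If a flip occurred, there would be $(z_0'':w_0'')\in\overline\LHP^m\setminus\overline\R^m$ with $T[(w_0''x-z_0''y)^\lambda]$ simultaneously $\UHP^l$- and $\LHP^l$-stable (or zero). By Lemma~\ref{scal_rs_lemma} some nonzero complex multiple $c_0(w_0''x-z_0''y)^\lambda=q+ir$ ($q,r\in V_\R(\lambda)$) then has real stable or zero image; Hermite--Biehler says $q,r$ are real stable or zero, with $r\not\equiv 0$ because the pole is non-real; and $T(q)+iT(r)$ real (with $T$ real-linear) forces $T(r)\equiv 0$, contradicting strong real stability preservation. This contradiction, driven by the explicit form of an evaluation symbol, is what your sketch lacks, and without it the proof does not close. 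Restricting to evaluation symbols also lets the paper finish via the symbol lemma directly rather than applying Theorem~\ref{Cops_graces} to all of $\mathcal S$, which is a minor simplification over your route.
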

\begin{proof}
    $(i) \Rightarrow (ii)$. Fixing $(z_0:w_0) \in \overline{\LHP}^m$, we have that $(w_0x - z_0y)^\lambda$ is $\UHP^m$-stable. If $(z_0:w_0) \in \overline{\R}^m$, then $T[(w_0x - z_0y)^\lambda]$ is nonzero and real stable by assumption. Combining the symbol lemma (Lemma \ref{symb_lemma}) and the evaluation symbol lemma (Lemma \ref{ev_lemma}), this implies $\Symb(T)(z_0,w_0,x,y) = (-1)^\lambda T[(w_0x - z_0y)^\lambda]$ is both $\UHP^l$-stable and $\LHP^l$-stable.
    
    On the other hand, suppose $(z_0:w_0) \not\in \overline{\R}^m$. By Lemma \ref{rs_lemma}, we have that $T[(w_0x - z_0y)^\lambda]$ is either $\UHP^l$-stable, $\LHP^l$-stable, or zero. Now suppose there are $(z_0:w_0), (z_0':w_0') \in \overline{\LHP}^m \setminus \overline{\R}^m$ such that $T[(w_0x - z_0y)^\lambda]$ is $\UHP^l$-stable and $T[(w_0'x - z_0'y)^\lambda]$ is $\LHP^l$-stable. By a homotopy argument, there exists $(z_0'':w_0'') \in \overline{\LHP}^m \setminus \overline{\R}^m$ such that $T[(w_0''x - z_0''y)^\lambda]$ is $(\UHP^l \cup \LHP^l)$-stable or zero. By Lemma \ref{scal_rs_lemma}, $T[c_0(w_0''x - z_0''y)^\lambda]$ is either real stable or zero for some complex scalar $c_0 \neq 0$.

    Let $c_0(w_0''x - z_0''y)^\lambda = q(x,y) + ir(x,y)$ for $q,r \in V_\R(\lambda)$, which are both real stable or zero by Hermite-Biehler (Proposition \ref{hermite-biehler}). Note further that $r \not\equiv 0$ since $(z_0'':w_0'') \not\in \overline{\R}^m$. However, since $T(q + ir) = T(q) + iT(r)$ is real stable or zero and $T$ restricts to real linear operator, it must be that $T(r) \equiv 0$. This contradicts the fact that $T$ strongly preserves real stability.
    
    So, $\Symb(T)(z_0,w_0,x,y) = (-1)^\lambda T[(w_0x - z_0y)^\lambda]$ is either $\UHP^l$-stable (in the $x,y$ variables) for all $(z_0:w_0) \in \overline{\LHP}^m \setminus \overline{\R}^m$, or $\LHP^l$-stable for all $(z_0:w_0) \in \overline{\LHP}^m \setminus \overline{\R}^m$. Combining this with the $(z_0:w_0) \in \overline{\R}^m$ case, we have that $\Symb(T)$ is either $(\overline{\LHP}^m \times \UHP^l)$-stable or $(\overline{\LHP}^m \times \LHP^l)$-stable.
    
    $(ii) \Rightarrow (i)$. By the complex stability characterization (Theorem \ref{Cops_graces}), $T$ maps $\UHP^m$-stable polynomials to either nonzero $\UHP^l$-stable polynomials or nonzero $\LHP^l$-stable polynomials. Since $T$ restricts to a real linear operator on $V_\R(\lambda)$, $T$ preserves strong real stability.
\end{proof}

As a final note, the ``homotopy argument'' used in the previous proof is not quite that of the proof found in \cite{bb1}, though it is similar. Here, one just needs to be a bit more careful about the precise homotopy with respect to points at infinity.

\subsection{Deriving the Real Borcea-Br{\"a}nd{\'e}n Characterization}\label{Rops_deriv_subsect}

As in the complex case, we now obtain the Borcea-Br{\"a}nd{\'e}n weak real stability characterization as a corollary to our strong real stability characterization given in Theorem \ref{Rops_graces}. To this end, we start by giving a sort of real stability version of Lemma \ref{bb_Clink}. The proof of this lemma is similar in spirit to that of the strong real stability characterization given above.

\begin{lemma}\label{bb_Rlink}
    Fix $\lambda,\alpha \in \N_0^m$ and a linear operator $T \in \Hom(V(\lambda),V(\alpha))$ such that $T$ restricts to a real linear operator from $V_\R(\lambda)$ to $V_\R(\alpha)$. The following are equivalent.
    \begin{enumerate}[label=(\roman*)]
        \item $T$ preserves weak real stability.
        \item One of the following holds:
            \begin{enumerate}[label=(\alph*)]
                \item $T$ maps $\overline{\UHP}^m$-stable polynomials to nonzero $\UHP^m$-stable polynomials.
                \item $T$ maps $\overline{\UHP}^m$-stable polynomials to nonzero $\LHP^m$-stable polynomials.
                \item $T$ has image of dimension at most two, and is of the form
                \[
                    T: p \mapsto q \cdot \psi_1(p) + r \cdot \psi_2(p)
                \]
                where $q,r \in V_\R(\alpha)$ are weakly real stable such that $q \ll r$, and $\psi_1,\psi_2$ are real linear functionals.
            \end{enumerate}
    \end{enumerate}
\end{lemma}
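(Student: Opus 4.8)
The plan is to mirror the structure of the proof of Lemma \ref{bb_Clink}, but with the ``dimension $\leq 1$'' conclusion replaced by ``dimension $\leq 2$'' and the single stability region replaced by the pair of regions $\UHP$ and $\LHP$ that arise naturally for real linear operators (as already seen in Lemma \ref{rs_lemma} and in the proof of Theorem \ref{Rops_graces}). First I would dispatch $(ii)\Rightarrow(i)$: if $(c)$ holds then $T$ is a real linear combination of two rank-one operators with interlacing real stable outputs, and the multivariate Hermite-Biehler proposition shows any real linear combination of such is weakly real stable, so $(i)$ follows; if $(a)$ or $(b)$ holds, then for real stable $p$ one approximates $p$ by $\overline{\UHP}^m$-stable polynomials $p_n := p((1-n^{-1})x_1 + n^{-1}iy_1,\dots)$ (a shift of the roots into $\UHP$, which are then $\overline{\UHP}$-stable, i.e. in the closure; more precisely use the standard trick $p_n(x,y):=p(x,y+n^{-1}ix)$ or rescale as in Lemma \ref{bb_Clink}), apply the relevant strong-preservation hypothesis, and use Hurwitz's theorem to conclude $T(p)$ is $\UHP^m$-stable, $\LHP^m$-stable, or identically zero — hence weakly real stable since $T$ is real.

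For the main direction $(i)\Rightarrow(ii)$, I would argue that if neither $(a)$ nor $(b)$ holds, then $(c)$ must hold. Assuming $(i)$, the operator $T$ certainly maps $\overline{\UHP}^m$-stable polynomials to weakly real stable polynomials, and by Lemma \ref{rs_lemma} (applied after noting that evaluation-type stable inputs in $\overline{\UHP}^m$ are stable) each such image is $\UHP^m$-stable, $\LHP^m$-stable, or zero. If $(a)$ fails there is some nonzero $\overline{\UHP}^m$-stable $p$ with $T(p)$ not $\UHP^m$-stable, i.e. $T(p)$ is $\LHP^m$-stable or zero; if $(b)$ also fails there is likewise some nonzero $\overline{\UHP}^m$-stable $p'$ with $T(p')$ $\UHP^m$-stable or zero. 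I then want to produce a vector space of weakly real stable polynomials inside the image of $T$ of dimension at least $\dim(\mathrm{im}\,T)$, so that Lemma \ref{dim_lemma}(b) forces $\dim(\mathrm{im}\,T)\leq 2$, and the structural form in $(c)$ follows by writing $\mathrm{im}\,T = \mathrm{span}\{q,r\}$ with $q \ll r$ (possible by Hermite-Biehler since every element of this span is weakly real stable, combined with Lemma \ref{scal_rs_lemma} to pin down the interlacing) and choosing $\psi_1,\psi_2$ to be the coordinate functionals of $T$ in this basis.

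The heart of the argument — and the step I expect to be the main obstacle — is extracting the vector-space-of-stable-polynomials conclusion when both $(a)$ and $(b)$ fail. In the complex case (Lemma \ref{bb_Clink}) one used an open ball of $\overline{\D}^m$-stable polynomials around the witness $p$ with $T(p)=0$, so that $T$ maps an open set to stable polynomials through $0$, forcing $\mathrm{im}\,T$ itself to be a stable subspace. Here the obstruction is that the witnesses $p$ and $p'$ need not have zero image, and even when they do, the images could lie in $\UHP^m$ for some perturbations and $\LHP^m$ for others, so one does not immediately get a single stable subspace. I would handle this by a homotopy/connectedness argument like the one in the proof of Theorem \ref{Rops_graces}: the set of nonzero $\overline{\UHP}^m$-stable polynomials is connected, the image of any such under $T$ is $\UHP^m$-stable, $\LHP^m$-stable, or zero, and if both the $\UHP$-type and $\LHP$-type occur then by continuity there is a witness whose image is either zero or simultaneously $\UHP^m$- and $\LHP^m$-stable. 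In the latter case Lemma \ref{scal_rs_lemma} makes a complex scalar multiple of it real stable with a nonzero imaginary part, and since $T$ is real this forces $T$ to kill a nonzero real stable polynomial; in either case we obtain a nonzero $p_0$ in the kernel with $p_0$ in the (closure of the) stable locus, and then the open-ball argument of Lemma \ref{bb_Clink} applies verbatim to show $\mathrm{im}\,T$ is a weakly real stable subspace. Invoking Lemma \ref{dim_lemma}(b) and assembling the basis as above completes the proof. One should also separately note the degenerate subcases where $\dim(\mathrm{im}\,T)\leq 1$, which trivially fit into form $(c)$ with $r\equiv 0$ (or $q\equiv r$), so no extra work is needed there.
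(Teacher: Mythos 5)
The easy direction and the overall scaffolding of your proof match the paper's, but the key step of the hard direction — extracting that $\mathrm{im}\,T$ (or rather $T[V_\R(\lambda)]$) is a weakly real stable subspace — has a genuine gap. Your plan is to find a nonzero $p_0$ in the kernel of $T$ and then apply ``the open-ball argument of Lemma~\ref{bb_Clink} verbatim.'' That argument requires $p_0$ to be an \emph{interior} point of the $\overline{\UHP}^m$-stable locus, so that a whole ball $B(p_0)$ of $\overline{\UHP}^m$-stable polynomials maps into a neighborhood of $0$ in $\mathrm{im}\,T$. But in the second case you identify — where the witness $q_0$ has $T(q_0)$ nonzero and simultaneously $\UHP^m$- and $\LHP^m$-stable — the kernel element you extract via Lemma~\ref{scal_rs_lemma} is the imaginary part $v$ of (a scalar multiple of) $q_0$, which by Hermite--Biehler is a \emph{real stable} polynomial: all its roots lie in $\overline{\R}^m$, so it sits on the boundary of the stable locus, not in its interior. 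There is no ball of $\overline{\UHP}^m$-stable polynomials around $v$, so the Lemma~\ref{bb_Clink} argument does not apply. (You flag this yourself by hedging with ``in the (closure of the) stable locus,'' but closure membership is not enough for the ball argument.) This second case really does occur: for instance $T$ could be a rank-two operator mapping surjectively onto $\mathrm{span}_\R\{q_1,q_2\}$ with $q_1\ll q_2$, in which case no $\overline{\UHP}^m$-stable polynomial lies in the kernel at all, only real stable ones do.

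The paper avoids this by \emph{not} looking for a kernel element. Instead it keeps the witness $q_0 := c_0(w_0''x - z_0''y)^\lambda$ itself (an interior $\overline{\UHP}^m$-stable point for which $T(q_0)$ is real stable or zero), takes a ball $B(0)$ with $q_0 + iB(0)$ contained in the nonzero $\overline{\UHP}^m$-stable locus, and for $r_0 \in B(0)\cap V_\R(\lambda)$ applies Lemma~\ref{rs_lemma} to $T(q_0)+iT(r_0)$ and then Hermite--Biehler: since $T(q_0)$ is itself real stable, proper position with $T(q_0)$ forces $T(r_0)$ to be weakly real stable. Scaling then shows all of $T[V_\R(\lambda)]$ is weakly real stable, and Lemma~\ref{dim_lemma}(b) finishes. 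This Hermite--Biehler step with the \emph{nonzero} real stable anchor $T(q_0)$ is precisely what is missing from your argument, and I would encourage you to replace the ``verbatim'' claim with it. One further minor remark: the paper reduces to evaluation symbols (via Theorem~\ref{Cops_graces}) before running the homotopy argument, which makes both the connectedness claim and the ``nonzero imaginary part'' claim immediate; for a general $\overline{\UHP}^m$-stable witness, the latter can fail (a witness could be a complex scalar times a real polynomial), so the reduction is worth keeping.
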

\begin{proof}
    $(i) \Rightarrow (ii)$. By the complex characterization (Theorem \ref{Cops_graces}), we only need to consider evaluation symbols when demonstrating $(a)$ or $(b)$. For any $(z_0:w_0) \in \LHP^m$, Lemma \ref{rs_lemma} then implies $T[(w_0x-z_0y)^\lambda]$ is either $\UHP^m$-stable, $\LHP^m$-stable, or identically zero. We now show that $(c)$ holds if $(a)$ and $(b)$ do not.
    
    If neither $(a)$ nor $(b)$ holds for evaluation symbols, then there exist $(z_0:w_0),(z_0':w_0') \in \LHP^m$ such that $T[(w_0x-z_0y)^\lambda]$ is $\UHP^m$-stable or zero and $T[(w_0'x-z_0'y)^\lambda]$ is $\LHP^m$-stable or zero. As in the proof of the strong real stability characterization (Theorem \ref{Rops_graces}), a homotopy argument implies there exists $(z_0'':w_0'') \in \LHP^m$ such that $T[(w_0''x-z_0''y)^\lambda]$ is $(\UHP^m \cup \LHP^m)$-stable or zero. Lemma \ref{scal_rs_lemma} then implies $T[c_0(w_0''x-z_0''y)^\lambda]$ is real stable or zero, for some complex scalar $c_0 \neq 0$.
    
    For the sake of simplicity, we denote $q_0(x,y) = q_0(x_1,y_1,\ldots,x_m,y_m) := c_0(w_0''x-z_0''y)^\lambda$. Since the set of $\overline{\UHP}^m$-stable polynomials is open in $V(\lambda)$, let $B(0)$ be some open ball centered at $0$ such that $q_0 + iB(0)$ consists of nonzero $\overline{\UHP}^m$-stable polynomials. So, for any $r_0 \in B(0)$, Lemma \ref{rs_lemma} implies $T(q_0) + iT(r_0)$ is either $\UHP^m$-stable, $\LHP^m$-stable, or zero. Hermite-Biehler then implies $T(r_0)$ is real stable or zero whenever $r_0 \in B(0) \cap V_\R(\lambda)$. Therefore, $T[V_\R(\lambda)]$ consists of real stable polynomials, and $(c)$ follows from Lemma \ref{dim_lemma}.
    
    $(ii) \Rightarrow (i)$. If $(c)$ holds, then $(i)$ follows from Hermite-Biehler. Otherwise, suppose WLOG that $(a)$ holds. We can then use an argument similar in spirit to that of Lemma \ref{bb_Clink} to show that $T$ maps $\UHP^m$-stable polynomials to weakly $\UHP^m$-stable polynomials. Since $T$ restricts to a real operator, this implies $(i)$.
\end{proof}

As in Lemma \ref{bb_Clink}, we use the previous lemma to link the characterizations of weak and strong stability preserving operators as follows. Applying the necessary maps to convert $\Symb(T)$ to $\Symb_{BB}(T)$ below gives essentially the characterization of weak real stability preserving operators given in Theorem \ref{BB_Rops_thm}.

\begin{corollary}\label{bb_Rops}
    Fix $\lambda,\alpha \in \N_0^m$ and a linear operator $T \in \Hom(V(\lambda),V(\alpha))$ such that $T$ restricts to a real linear operator from $V_\R(\lambda)$ to $V_\R(\alpha)$. The following are equivalent.
    \begin{enumerate}[label=(\roman*)]
        \item $T$ preserves weak real stability.
        \item One of the following holds:
            \begin{enumerate}[label=(\alph*)]
                \item $\Symb(T)$ is $(\LHP^m \times \UHP^m)$-stable.
                \item $\Symb(T)$ is $(\LHP^m \times \LHP^m)$-stable.
                \item $T$ has image of dimension at most two, and is of the form
                \[
                    T: p \mapsto q \cdot \psi_1(p) + r \cdot \psi_2(p)
                \]
                where $q,r \in V_\R(\alpha)$ are weakly real stable such that $q \ll r$, and $\psi_1,\psi_2$ are real linear functionals.
            \end{enumerate}
    \end{enumerate}
\end{corollary}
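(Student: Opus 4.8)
The plan is to obtain this corollary as a direct symbol-translation of Lemma~\ref{bb_Rlink}. That lemma already reduces weak real stability preservation to the trichotomy ``$(a)$ or $(b)$ or $(c)$'', where $(a)$ and $(b)$ describe how $T$ acts on $\overline{\UHP}^m$-stable polynomials and $(c)$ is the degeneracy case. Since condition $(c)$ is verbatim the same in Lemma~\ref{bb_Rlink} and in the statement above, it suffices to show that Lemma~\ref{bb_Rlink}$(ii)(a)$ is equivalent to $\Symb(T)$ being $(\LHP^m \times \UHP^m)$-stable and that Lemma~\ref{bb_Rlink}$(ii)(b)$ is equivalent to $\Symb(T)$ being $(\LHP^m \times \LHP^m)$-stable; substituting these two equivalences into Lemma~\ref{bb_Rlink} then gives the corollary.

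First I would observe that $(\overline{\UHP}^m, \LHP^m)$ is a disjoint Grace pair: in $\CP^1$, $\overline{\UHP}$ is a closed circular region, $\LHP$ is an open circular region, $\overline{\UHP} \cup \LHP = \CP^1$, and $\overline{\UHP} \cap \LHP = \varnothing$, so Theorem~\ref{grace_pairs_thm}$(ii)$ applies and disjointness is immediate. Hence Theorem~\ref{Cops_graces} is available with $C_k = \overline{\UHP}$ and $B_k = \LHP$ for every $k \in [m]$ (note $\alpha \in \N_0^m$ here, so the output index is $l = m$). Applying Theorem~\ref{Cops_graces} with output sets $S_1 = \cdots = S_m = \UHP$, the equivalence $(i) \Leftrightarrow (iii)$ there reads precisely that $T$ maps $\overline{\UHP}^m$-stable polynomials to nonzero $\UHP^m$-stable polynomials if and only if $\Symb(T)$ is $(\LHP^m \times \UHP^m)$-stable, i.e.\ Lemma~\ref{bb_Rlink}$(ii)(a)$ is equivalent to condition $(ii)(a)$ above. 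Taking instead $S_1 = \cdots = S_m = \LHP$ gives Lemma~\ref{bb_Rlink}$(ii)(b)$ equivalent to condition $(ii)(b)$ above. Feeding these two equivalences (and leaving $(c)$ untouched) into Lemma~\ref{bb_Rlink} yields the stated trichotomy.

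There is no serious obstacle here; this corollary is engineered to be the mechanical consequence of the two prior results. The only points that require a moment's care are verifying that $(\overline{\UHP}^m, \LHP^m)$ meets the hypotheses of Theorem~\ref{grace_pairs_thm}$(ii)$ --- in particular that one must take the \emph{closed} upper half-plane on the input side, so that the complementary ``symbol'' side $\LHP^m$ is open, which is exactly what makes the pair disjoint and lets Theorem~\ref{Cops_graces} apply --- and keeping track of the half-plane orientations so that the first block of $\Symb(T)$'s variables carries $\LHP^m$ while the second block carries $\UHP^m$ or $\LHP^m$ according to which of $(a)$, $(b)$ is in force.
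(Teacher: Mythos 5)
Your proof is correct and follows essentially the same route as the paper, which simply says ``Apply the complex characterization (Theorem~\ref{Cops_graces}) to conditions $(ii)(a)$ and $(ii)(b)$ of the previous lemma.'' You have merely filled in the details the paper leaves implicit: that $(\overline{\UHP}^m, \LHP^m)$ is a disjoint Grace pair by Theorem~\ref{grace_pairs_thm}$(ii)$, and that Theorem~\ref{Cops_graces} with $C_k = \overline{\UHP}$, $B_k = \LHP$, and $S_k = \UHP$ (resp.\ $S_k = \LHP$) translates Lemma~\ref{bb_Rlink}$(ii)(a)$ (resp.\ $(ii)(b)$) into the corresponding symbol-stability condition.
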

\begin{proof}
    Apply the complex characterization (Theorem \ref{Cops_graces}) to conditions $(ii)(a)$ and $(ii)(b)$ of Lemma \ref{bb_Rlink} above.
\end{proof}

\subsection{Ray and Interval Stability}\label{Jops_subsect}

We now apply the above results to projectively convex regions of the form $\UHP \cup J^c$, where $J \subset \overline{\R}$ is some connected set. From this, we obtain a classification of operators which both preserve strong $J$-rootedness and weak real-rootedness (a polynomial $p \in V(n)$ is $J$-rooted if all its roots lie in $J$). This of course does not completely solve the open problem of providing a classification of interval- and ray-stability preserving operators (see, e.g., \cite{bba}). However, it does seem to be the natural corollary obtained by applying proof methods similar to that of \cite{bb1}.

That said, we now proceed to prove the main result of this subsection, Theorem \ref{Jops_graces}. We first start with a short-hand definition in order to simplify the proof.

\begin{definition}
    Fix $\lambda,\alpha \in \N_0^m$ and a linear operator $T \in \Hom(V(\lambda),V(\alpha))$ such that $T$ restricts to a real linear operator and preserves weak real-stability. We say $T$ is \emph{degenerate} if it satisfies condition $(ii)(c)$ of Corollary \ref{bb_Rops}.
\end{definition}

We now prove two lemmas. The first is straightforward, but rather interesting in its own right.

\begin{lemma}\label{conv_lemma}
    Fix a closed bounded interval $J \subset \R$ and a subspace $W \subseteq V_\R(n)$ consisting of weakly real-rooted polynomials. Let $S \subseteq W$ denote the subset of top-degree monic $J$-rooted polynomials. There exist $p,q \in S$ such that $p \ll q$ and $S$ is the convex hull of $p$ and $q$.
\end{lemma}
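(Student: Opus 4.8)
The plan is to peel off the trivial low-dimensional cases using $\dim W\le 2$, and then to treat the substantial case by identifying the top-degree monic elements of $W$ with an affine line along which the roots move monotonically. First I would reduce: by Lemma~\ref{dim_lemma}(b), $\dim W\le 2$. If $\dim W\le 1$, then $W$ contains at most one top-degree monic polynomial (a nonzero $f\in V_\R(n)$ has a top-degree monic real scalar multiple iff its $x^n$-coefficient is nonzero, and then exactly one), so $S$ is empty or a singleton and one takes $p=q$, using that a real-rooted $p$ is $\overline{\UHP}$-stable, so $(1+i)p$ is $\UHP$-stable, i.e.\ $p\ll p$. So assume $\dim W=2$ and $S\neq\emptyset$ (otherwise there is nothing to prove). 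Let $\ell:W\to\R$ read off the $x^n$-coefficient; since $S$ has a top-degree monic element, $\ell\not\equiv 0$, so $\ell^{-1}(1)$ is an affine line $L$, which is precisely the set of top-degree monic elements of $W$. Fixing $p_0\in L$ and $0\neq h\in\ker\ell$, parametrize $L$ affinely by $p_t:=p_0+th$ and write $\sigma(p_t):=t$. Each $p_t\in W$ is real stable and nonzero, hence real-rooted, and being top-degree monic has no root at infinity, so it has exactly $n$ finite real roots $\alpha_1(t)\le\cdots\le\alpha_n(t)$, depending continuously on $t$.

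The heart of the argument is to show $t\mapsto p_t$ is monotone for $\ll$. Consider $R:=\{(s,t)\in\R^2:p_s\ll p_t\}$. Since the set of weakly $\UHP$-stable polynomials in $V(n)$ is closed (Hurwitz's theorem) and $(s,t)\mapsto p_t+ip_s$ is continuous, $R$ is closed. Every real linear combination of two elements of $L$ lies in $W$ and is therefore weakly real stable, so the multivariate Hermite--Biehler proposition yields $R\cup R^{-1}=\R^2$; and if $p_s\ll p_t$ and $p_t\ll p_s$ then Lemma~\ref{scal_rs_lemma} makes $p_t$ a real multiple of $p_s$, hence (both top-degree monic) $p_s=p_t$, so $R\cap R^{-1}$ is the diagonal. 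Consequently, on the connected open half-plane $\{(s,t):s<t\}$ the disjoint relatively closed sets $R\cap\{s<t\}$ and $R^{-1}\cap\{s<t\}$ cover it, so one is empty; after replacing $h$ by $-h$ if necessary, $p_s\ll p_t$ whenever $s<t$. By the interlacing characterization of $\ll$ in Lemma~\ref{total_order_lemma}, it follows that each $\alpha_k$ is a non-decreasing (continuous) function of $t$.

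Finally I would conclude. Writing $J=[c,d]$, we have $\sigma(S)=\{t:\alpha_1(t)\ge c\}\cap\{t:\alpha_n(t)\le d\}$; since $\alpha_1$ and $\alpha_n$ are continuous and non-decreasing, each of these two sets is a closed half-line (or $\R$, or $\emptyset$), so $\sigma(S)$ is a closed interval. It must be bounded: if, say, $[\tau_0,\infty)\subseteq\sigma(S)$, then every $p_t$ with $t\ge\tau_0$ is top-degree monic with all roots in $[c,d]$, hence has uniformly bounded coefficients, contradicting $\|p_0+th\|\to\infty$; similarly $\sigma(S)$ is bounded below. So $\sigma(S)=[\tau_-,\tau_+]$, and putting $p:=p_{\tau_-}$, $q:=p_{\tau_+}$, the affineness of $\sigma$ identifies $S$ with the segment from $p$ to $q$, i.e.\ the convex hull of $\{p,q\}$; and $p\ll q$ by the monotonicity step (or $p=q$, with $p\ll p$, when $\tau_-=\tau_+$).

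The main obstacle is the monotonicity step of the second paragraph: the real content of the lemma is that $L$ is linearly ordered by $\ll$ in a way compatible with its affine structure, and this is exactly what forces $S$ to be convex. Its proof rests on three ingredients — closedness of the proper-position relation (Hurwitz), comparability of any two elements of $W$ (Hermite--Biehler), and a connectedness argument to rule out a "mixed" parametrization — and everything else is routine bookkeeping once the line $L$ of top-degree monic elements has been correctly identified.
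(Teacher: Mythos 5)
Your proof is correct and takes essentially the same route as the paper: bound $\dim W$ by Lemma~\ref{dim_lemma}, observe that $\ll$ totally orders the top-degree monic elements of $W$, use closedness and boundedness of $S$ to extract its two extremes, and conclude that $S$ is the segment between them. The paper's terse sketch simply cites Lemma~\ref{total_order_lemma} for the total order and alludes to ``basic sign arguments,'' whereas you make these steps explicit via the affine parametrization of the line of top-degree monic elements and the Hurwitz-plus-connectedness argument for monotonicity of $\ll$ along it; the underlying mechanism is the same.
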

\begin{proof}
    Lemma \ref{dim_lemma} implies $W$ is of dimension at most two, and so then Lemma \ref{total_order_lemma} implies the relation $\ll$ is a total order on $S$. Applying the root ordering property of Lemma \ref{total_order_lemma}, the closedness of $S$ implies there are $p,q \in S$ such that $p \ll q$ and $p \ll r \ll q$ for all $r \in S$. Basic sign arguments and the fact that $S$ is contained in the span of $\{p,q\}$ then imply $S$ is the the convex hull of $\{p,q\}$.
\end{proof}

The second lemma is perhaps less straightforward in terms of proof, but follows from the following intuitive idea: an open ball in some complex subspace of polynomials yields, roughly speaking, an open ball of zeros.

\begin{lemma}
    Fix $n,m \in \N_0$ and a linear operator $T \in \Hom(V(n),V(m))$ which restricts to a real linear operator and preserves weak real-rootedness. If there exist some $\overline{\UHP}$-stable $p_0 \in V(n)$ and some $(x_0:y_0) \in \overline{\R}$ such that $T(p_0)(x_0,y_0) = 0$, then one of the following holds:
    \begin{enumerate}[label=(\alph*)]
        \item $T(p_0)$ is real-rooted or identically zero.
        \item $T(p)(x_0,y_0) = 0$ for all $p \in V(n)$.
    \end{enumerate}
\end{lemma}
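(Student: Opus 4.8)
The plan is to study the single complex linear functional $\psi : V(n) \to \C$ defined by $\psi(p) := T(p)(x_0:y_0)$, which records the value of $T(p)$ at the point $w_0 := (x_0:y_0)$. Since $w_0 \in \overline{\R}$ and $T$ restricts to a real operator, $\psi$ restricts to a real-linear functional on $V_\R(n)$; in particular $\psi \equiv 0$ on $V(n)$ precisely when it vanishes on $V_\R(n)$. Condition $(b)$ is literally the assertion $\psi \equiv 0$, so I would assume $\psi \not\equiv 0$ and try to deduce $(a)$, i.e. that $g := T(p_0)$ is real-rooted or identically zero.

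The first step is to pin down the shape of $g$. By Lemma \ref{rs_lemma} applied to $p_0$ (which is $\overline{\UHP}$-stable, hence stable), $g$ is $\UHP$-stable, $\LHP$-stable, or identically zero. If $g \equiv 0$ we are done; if $g$ is simultaneously $\UHP$- and $\LHP$-stable then all of its roots lie in $\overline{\R}$ and we are again done. So suppose for contradiction that (after possibly exchanging the roles of $\UHP$ and $\LHP$, which only changes the argument symmetrically) the polynomial $g$ is $\UHP$-stable but not $\LHP$-stable, so it has a root $\beta \in \LHP$; it also vanishes at $w_0 \in \overline{\R}$, say to exact order $j \geq 1$. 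Conjugating $T$ by a suitable element of $SL_2(\R)$ (which preserves the hypotheses "restricts to a real operator" and "preserves weak real-rootedness", preserves membership of points in $\UHP$, $\LHP$, $\overline{\R}$, and preserves both conclusions $(a)$ and $(b)$), I may assume $w_0 = (a:1)$ is a finite real point.

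The second step is a local perturbation that contradicts Lemma \ref{rs_lemma}. Let $U \subset V(n)$ be the set of polynomials with all roots in $\LHP$, i.e. the $\overline{\UHP}$-stable ones; this set is open and contains $p_0$. Pick $h \in V(n)$ with $\psi(h) \neq 0$ and set $u := T(h)$, so that $u(w_0) = \psi(h) \neq 0$ and in particular $u \not\equiv 0$. In affine coordinates near $a$ one has $g(x,1) = c\,(x-a)^{j}\bigl(1 + O(x-a)\bigr)$ with $c \neq 0$ and $u(x,1) = d + O(x-a)$ with $d = u(w_0) \neq 0$, so for small $\epsilon \in \C$ the $j$ roots of $(g + \epsilon u)(x,1)$ clustered near $a$ are $a + \bigl(-\epsilon d/c\bigr)^{1/j}\bigl(1 + o(1)\bigr)$ across the $j$ branches of the radical. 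Choosing $\arg \epsilon$ so that one branch of $\bigl(-\epsilon d/c\bigr)^{1/j}$ has argument $\pi/2$, the corresponding genuine root lies in $\UHP$ for all sufficiently small $|\epsilon|$. For $|\epsilon|$ small, $p_0 + \epsilon h$ still lies in $U$, and $T(p_0 + \epsilon h) = g + \epsilon u$ is nonzero, has a root in $\UHP$ (just constructed), and has a root near $\beta$ hence in $\LHP$ (continuity of the roots, viewed in $\CP^1$, as a function on $V(m)$). Thus $T(p_0 + \epsilon h)$ is neither $\UHP$-stable, nor $\LHP$-stable, nor zero, contradicting Lemma \ref{rs_lemma} applied to the stable polynomial $p_0 + \epsilon h$. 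Hence no such $\beta$ exists, and $(a)$ holds.

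I expect the main obstacle to be the perturbation step: one must verify rigorously that a small perturbation of the prescribed form moves at least one of the $j$ roots clustered at the real point $w_0$ into the open upper half-plane, for a well-chosen phase of $\epsilon$. This is a routine Newton--Puiseux/Rouch\'e estimate, but it is the only place where genuine complex analysis (rather than the formal apolarity calculus used elsewhere in the paper) enters, and some care is needed both in the bookkeeping of branch arguments and in the preliminary $SL_2(\R)$-reduction that puts $w_0$ at a finite point.
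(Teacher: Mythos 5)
Your argument is correct, and it takes a genuinely different route from the paper's. The paper splits into two cases according to whether $T(p_0)$ vanishes to order one or to higher order at $(x_0:y_0)$: in the multiple-root case it uses Hermite--Biehler to get interlacing between $r_0$ and $q_0 + \epsilon T(p_1)$ and derives a contradiction from a double root of $r_0$ at $(x_0:y_0)$; in the simple-root case it composes with the $(m-1)$-fold polar derivative at $(x_0:y_0)$ to get a map into $V(1)$, invokes the open mapping theorem to produce a real curve of perturbations whose images straddle $\overline{\R}$, and then applies Laguerre's theorem and Lemma~\ref{rs_lemma} on either side. Your perturbation via a complex parameter $\epsilon$ with a chosen argument handles both orders of vanishing $j \ge 1$ at once, with Rouch\'e/Newton--Puiseux replacing the open mapping theorem and the interlacing bookkeeping: you move one of the $j$ coalescing roots at the real point into $\UHP$ while keeping a root near $\beta$ in $\LHP$, and contradict Lemma~\ref{rs_lemma} directly. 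Both proofs lean on Lemma~\ref{rs_lemma} as the structural input, but yours is more uniform and avoids the simple/multiple case split, at the cost of the one analytic estimate you flag (which is standard but does need the $SL_2(\R)$-reduction to a finite $w_0$ that you correctly include). Two small points worth noting: since $g(w_0)=0$ but $u(w_0)\neq 0$, the polynomials $g$ and $u$ are never proportional, so $g+\epsilon u\not\equiv 0$ automatically for all $\epsilon$; and the set $U$ of $\overline{\UHP}$-stable polynomials is open in $V(n)$ because $\overline{\UHP}$ is compact in $\CP^1$ and the $n$ roots vary continuously, which is what lets you apply Lemma~\ref{rs_lemma} to the complex perturbation $p_0+\epsilon h$.
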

\begin{proof}
    Let $q_0,r_0 \in V_\R(m)$ be such that $T(p_0) = q_0 + ir_0$. Also suppose that $T(p_0) \not\equiv 0$ and that $(b)$ does not hold, and let $p_1$ be such that $T(p_1)(x_0,y_0) \neq 0$. WLOG, we may also assume $p_1 \in V_\R(n)$ by considering its real or imaginary part. We will now prove that $T(p_0)$ must be real-rooted.
    
    First, suppose further that $T(p_0)$ has a multiple root at $(x_0,y_0)$. For small fixed $\epsilon$, $p_0 + \epsilon p_1$ is $\overline{\UHP}$-stable and so Lemma \ref{rs_lemma} implies $T(p_0 + \epsilon p_1)$ is either $\UHP$-stable or $\LHP$-stable. Hermite-Biehler (Proposition \ref{hermite-biehler}) then implies $q_0 + \epsilon T(p_1)$ and $r_0$ have interlacing roots. However, since $T$ restricts to a real linear operator, it must be that $q_0$ and $r_0$ both have a multiple root at $(x_0,y_0)$. The fact that $q_0 + \epsilon T(p_1)$ has no root at $(x_0,y_0)$ yields a contradiction, as interlacing is then impossible.
    
    Otherwise, $T(p_0)$ has a simple root at $(x_0,y_0)$. Define $R \in \Hom(V(n),V(1))$ via $R := d_\phi^{n-1} \circ T$, where $\phi \in \SL_2(\C)$ is such that $(x_0:y_0)$ is the pole of $\phi$. We have that $R(p_0)(x_0,y_0) = 0$, but $R(p_0) \not\equiv 0$ since the root is simple. Further, $R(p_1)(x_0,y_0) \neq 0$, and therefore $R$ is a surjective continuous linear map. By the open mapping theorem, there exists a one-real-dimensional curve $\Gamma \subset V(n)$ through $p_0$, for which $R(\Gamma)$ contains elements with root in $\UHP$ on one side of $p_0$ (call this side $\Gamma_+$) and elements with root in $\LHP$ on the other side (call it $\Gamma_-$). So by Laguerre's theorem (Proposition \ref{laguerre_prop}), elements of $T(\Gamma_+)$ have some roots in $\UHP$ and elements of $T(\Gamma_-)$ have some roots in $\LHP$. Since polynomials near $p_0$ are $\overline{\UHP}$-stable, Lemma \ref{rs_lemma} implies elements of $T(\Gamma \cap B_\epsilon(p_0))$ are all $\UHP$-stable or $\LHP$-stable for some small ball $B_\epsilon(p_0)$ about $p_0$. So elements of $T(\Gamma_+ \cap B_\epsilon(p_0))$ are $\LHP$-stable and elements of $T(\Gamma_- \cap B_\epsilon(p_0))$ are $\UHP$-stable, and therefore $T(p_0)$ is real-rooted.
\end{proof}

We now prove our main result on ray- and interval-stability preserving operators. First we state the theorem for closed bounded output intervals, as it clarifies the proof quite a bit. We will then extend the result to other connected regions in $\overline{\R}$.

\begin{theorem}\label{Jops_graces}
    Fix $n,m \in \N_0$ and a linear operator $T \in \Hom(V(n),V(m))$ which restricts to a real linear operator. Further, let $I \subseteq \R$ be any interval, and let $J \subset \R$ be any closed bounded interval. The following are equivalent.
    \begin{enumerate}[label=(\roman*)]
        \item $T$ preserves weak real-rootedness and maps $I$-rooted polynomials to nonzero $J$-rooted polynomials.
        \item One of the following holds:
            \begin{enumerate}[label=(\alph*)]
                \item $\Symb(T)$ is $(\LHP \cup I) \times (\overline{\UHP} \setminus J)$-stable.
                \item $\Symb(T)$ is $(\LHP \cup I) \times (\overline{\LHP} \setminus J)$-stable.
                \item $T$ has image of dimension at most two, and is of the form
                \[
                    T: p \mapsto q \cdot \psi_1(p) + r \cdot \psi_2(p)
                \]
                where $q,r \in V_\R(m)$ are top-degree monic and weakly $J$-rooted such that $q \ll r$, and $\psi_1$ and $\psi_2$ are real linear functionals such that $\psi_1(p) \cdot \psi_2(p) \geq 0$ (not both zero) holds for any $I$-rooted $p$.
            \end{enumerate}
    \end{enumerate}
\end{theorem}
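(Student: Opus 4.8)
### Proof proposal

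\textbf{Overall strategy.} The plan is to follow the template established for the real-stability characterization (Theorem~\ref{Rops_graces}) and its weak counterpart (Corollary~\ref{bb_Rops}/Lemma~\ref{bb_Rlink}), but now tracking the extra interval data $I$ (input) and $J$ (output). The key observation is that $\UHP \cup I^c$ and $\LHP \cup I$ form a disjoint Grace pair (these are projectively convex by the Zervos classification, since $I$ is an interval, hence $I^c$ is a union of at most two complementary rays which is again projectively convex after the obvious manipulation; and $(\UHP\cup I^c)\cup(\LHP\cup I)=\CP^1$ with two-point intersection on $\partial\UHP$), so that Theorem~\ref{Cops_graces} and Proposition~\ref{Cops_prop} apply with $C_1 = \UHP \cup I^c$, $B_1 = \LHP \cup I$, and output set $S_1 = \overline{\UHP}\setminus J$ (or $\overline{\LHP}\setminus J$). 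The point is that $I$-rootedness of $p$ is (essentially) the statement that $p$ is $(\UHP\cup I^c)$-stable \emph{and} $(\LHP\cup I^c)$-stable, i.e.\ the $\overline{\R}\setminus I$-stable polynomials are exactly the real-rooted ones with no roots in $I$; and $J$-rootedness of the output means no roots outside $J$, i.e.\ $(\overline{\UHP}\setminus J)$-stable and $(\overline{\LHP}\setminus J)$-stable simultaneously.

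\textbf{Direction $(ii)\Rightarrow(i)$.} Suppose $(a)$ holds (the case $(b)$ is symmetric via complex conjugation / the $\LHP$ variant). Since $\Symb(T)$ is $(\LHP\cup I)\times(\overline{\UHP}\setminus J)$-stable and $(\LHP\cup I,\UHP\cup I^c)$ is a disjoint Grace pair, Proposition~\ref{Cops_prop} gives that $T$ maps $(\UHP\cup I^c)$-stable polynomials to nonzero $(\overline{\UHP}\setminus J)$-stable polynomials. Because $T$ restricts to a real operator, a $\overline{\UHP}\setminus J$-stable image of a real polynomial is also $\overline{\LHP}\setminus J$-stable (apply complex conjugation to the polynomial), hence $J$-rooted; applying this to real $I$-rooted $p$ (which are $(\UHP\cup I^c)$-stable) gives the interval-rootedness half. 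For weak real-rootedness preservation: $(\LHP\cup I)\times(\overline{\UHP}\setminus J)$-stability of $\Symb(T)$ in particular gives $(\LHP\times\UHP)$-stability after shrinking the first region and enlarging the second (note $\LHP\subset\LHP\cup I$ and $\UHP\subset\overline{\UHP}\setminus J$ since $J\subset\R$), and one reads off the conclusion via Corollary~\ref{bb_Rops}$(ii)(a)$. If instead $(c)$ holds, $(i)$ follows directly: Hermite–Biehler gives weak real-rootedness preservation, and the sign condition $\psi_1(p)\psi_2(p)\ge 0$ (not both zero) for $I$-rooted $p$ forces $q\psi_1(p)+r\psi_2(p)$ to be a nonnegative combination of the interlacing top-degree-monic $J$-rooted polynomials $q,r$, hence in their convex hull, hence (by Lemma~\ref{conv_lemma}) $J$-rooted and nonzero.

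\textbf{Direction $(i)\Rightarrow(ii)$.} Assume $(i)$. Since $T$ preserves weak real-rootedness, Corollary~\ref{bb_Rops} applies: either $T$ is degenerate (case $(ii)(c)$ of that corollary) or $\Symb(T)$ is $(\LHP^1\times\UHP^1)$- or $(\LHP^1\times\LHP^1)$-stable. In the non-degenerate case, combine with Theorem~\ref{Cops_graces}: we want to upgrade the $\LHP$ in the input slot to $\LHP\cup I$ and the $\UHP$ (resp. $\LHP$) in the output slot to $\overline{\UHP}\setminus J$ (resp. $\overline{\LHP}\setminus J$). For this, by the evaluation-symbol form of Theorem~\ref{Cops_graces}, it suffices to check that $T$ sends $(\UHP\cup I^c)$-stable evaluation symbols $(w_0x-z_0y)^n$ (i.e.\ $(z_0:w_0)\in\LHP\cup I$) to nonzero $(\overline{\UHP}\setminus J)$-stable (resp. $(\overline{\LHP}\setminus J)$-stable) polynomials; equivalently, via the evaluation symbol lemma and the symbol lemma, that $\Symb(T)(z_0:w_0,\cdot)$ has the right stability for each such $(z_0:w_0)$. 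For real $(z_0:w_0)\in I$, the evaluation symbol is $I$-rooted, so by $(i)$ its image is nonzero and $J$-rooted, i.e.\ both $(\overline{\UHP}\setminus J)$- and $(\overline{\LHP}\setminus J)$-stable. For $(z_0:w_0)\in\LHP$, $(w_0x-z_0y)^n$ is $\UHP$-stable, so by Lemma~\ref{rs_lemma} its image is $\UHP^m$-stable, $\LHP^m$-stable, or zero; the strong-preservation half of $(i)$ combined with the homotopy argument of Theorem~\ref{Rops_graces} (which rules out mixing $\UHP$ and $\LHP$ images) pins down a single consistent choice and also rules out zero, giving $(a)$ or $(b)$. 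The remaining work is to show the output region can be tightened from $\UHP^m$ to $\overline{\UHP}\setminus J$: this is where the second lemma (``open ball of polynomials $\Rightarrow$ open ball of zeros'') enters — a nonzero image has all roots avoiding $J^c\cap\R$ by $(i)$ applied along a path from $I$-rooted symbols, and avoiding $\overline{\UHP}$'s interior by the previous paragraph, so its roots lie in $J\cup$(lower half-plane boundary)\,$\cap\,\overline{\UHP}$, i.e.\ in $\overline{\UHP}\setminus J^{\mathrm{int}}$; a boundary-refinement argument as in the two lemmas preceding the theorem then handles the endpoints of $J$. If $T$ is degenerate, write $T:p\mapsto q\psi_1(p)+r\psi_2(p)$ with $q\ll r$ weakly real stable; specializing $(i)$ to $I$-rooted polynomials and using Lemma~\ref{conv_lemma} (the $J$-rooted top-degree-monic elements of $\mathrm{span}\{q,r\}$ form a convex hull of two interlacing polynomials) forces $q,r$ to be $J$-rooted top-degree-monic and the pair $(\psi_1(p),\psi_2(p))$ to stay in the corresponding cone, i.e.\ $\psi_1(p)\psi_2(p)\ge 0$ and not both zero — this is exactly $(c)$.

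\textbf{Main obstacle.} The delicate point is the passage from the \emph{open}-half-plane output region handed to us by the weak-stability machinery (Corollary~\ref{bb_Rops}, Lemma~\ref{rs_lemma}) to the \emph{closed} region $\overline{\UHP}\setminus J$ in the conclusion, i.e.\ correctly locating the roots of $T(p)$ on the real boundary and showing precisely that they land in $J$ (including the endpoints) and nowhere else on $\overline{\R}$. This requires the ``open ball of polynomials yields an open ball of zeros'' lemma together with a careful Hurwitz/continuity argument tracking roots as $I$-rooted evaluation symbols are deformed, analogous to—but finer than—the homotopy step in the proof of Theorem~\ref{Rops_graces}; the degeneracy bookkeeping (ensuring the sign condition on $\psi_1,\psi_2$ is both necessary and sufficient for landing in the convex hull of Lemma~\ref{conv_lemma}) is the other place where care is needed but is essentially a finite-dimensional convexity computation.
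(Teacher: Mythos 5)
Your proposal follows the paper's architecture faithfully: the same reduction via Corollary~\ref{bb_Rops}/Lemma~\ref{bb_Rlink} to a dichotomy between the nondegenerate case (handled through evaluation symbols and Theorem~\ref{Cops_graces} with the Grace pair $(\UHP\cup I^c,\,\LHP\cup I)$) and the degenerate case (handled through Lemma~\ref{conv_lemma}), and the same easy $(ii)\Rightarrow(i)$ direction via Proposition~\ref{Cops_prop} and realness/conjugation. Where it departs from the paper --- and where it has a genuine gap --- is in the ``tightening'' step of $(i)\Rightarrow(ii)$, i.e.\ showing that for $(z_0:w_0)\in\LHP$ the real roots of $T[(w_0x-z_0y)^n]$ must lie in $J$.

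The paper's argument here is a clean case split on the ``open ball of zeros'' lemma: if $T[(w_0x-z_0y)^n]$ has a root at some $(x_0:y_0)\in\overline{\R}$, then either (a) this image is real-rooted, which (via the proof of Lemma~\ref{bb_Rlink}) forces $T$ to be degenerate, a contradiction; or (b) \emph{every} $p\in V(n)$ satisfies $T(p)(x_0:y_0)=0$, and then feeding in any $I$-rooted $p$ --- whose image is nonzero and $J$-rooted by $(i)$ --- forces $(x_0:y_0)\in J$. Your proposal never addresses alternative (a); without it, the possibility that $T[(w_0x-z_0y)^n]$ is itself real-rooted (in which case ``its real roots lie in $J$'' is precisely what you cannot yet conclude) is unrefuted, and the argument does not close. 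In its place, you offer a ``path from $I$-rooted symbols'' Hurwitz-style deformation, which is not the lemma's content and is not obviously sound (roots can leave the real axis into $\LHP$ and return), together with a ``boundary-refinement'' step for the endpoints of $J$ that the paper needs no analogue of --- case (b) of the lemma already places the root in the \emph{closed} set $J$ directly. Your set-theoretic description of the conclusion (roots in ``$\overline{\UHP}\setminus J^{\mathrm{int}}$'') is also off: a $\UHP$-stable real polynomial whose real roots lie in $J$ has its root set contained in $\LHP\cup J$, which is the statement of $(\overline{\UHP}\setminus J)$-stability, with no open/closed discrepancy at the endpoints of $J$. Making the two-case application of the preceding lemma explicit, and in particular showing that case (a) reproduces the degeneracy contradiction through the proof of Lemma~\ref{bb_Rlink}, is what is needed to repair this step; once that is done your argument coincides with the paper's.
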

\begin{proof}
    $(i) \Rightarrow (ii)$. Suppose $T$ is nondegenerate. So, $\Symb(T)$ is either $(\LHP \times \UHP)$-stable or $(\LHP \times \LHP)$-stable by Corollary \ref{bb_Rops}. By Lemma \ref{bb_Rlink}, either $T$ maps $\overline{\UHP}$-stable evaluation symbols entirely to nonzero $\UHP$-stable polynomials or entirely to nonzero $\LHP$-stable polynomials. If for some $(z_0:w_0) \in \LHP$ we have that $T[(w_0x-z_0y)^n]$ has a root in $\overline{\R}$, then we can apply the previous lemma. If condition $(a)$ of the lemma holds, then $T[(w_0x-z_0y)^n]$ is real-rooted or identically zero. The proof of Lemma \ref{bb_Rlink} then implies $T$ is degenerate, a contradiction. Otherwise condition $(b)$ of the lemma holds, and therefore the real roots of $T[(w_0x-z_0y)^n]$ must be in $J$. So in fact, $T$ maps $\overline{\UHP}$-stable evaluation symbols entirely to nonzero $(\overline{\UHP} \setminus J)$-stable polynomials or entirely to nonzero $(\overline{\LHP} \setminus J)$-stable polynomials. Finally, $T$ maps $I$-rooted evaluation symbols to nonzero $(\overline{\UHP} \setminus J)$-stable and $(\overline{\LHP} \setminus J)$-stable polynomials by assumption. The complex characterization (Theorem \ref{Cops_graces}) then implies $(a)$ or $(b)$.
    
    Otherwise, $T$ is degenerate and $T[V_\R(n)]$ consists entirely of real-rooted polynomials. Condition $(c)$ follows from Lemma \ref{conv_lemma}.
    
    $(ii) \Rightarrow (i)$. By Corollary \ref{bb_Rops}, $T$ preserves weak real-rootedness. If $(a)$ or $(b)$ holds, then the complex characterization (Theorem \ref{Cops_graces}) and the fact that $T$ restricts to a real operator imply $T$ maps $I$-rooted polynomials to nonzero $J$-rooted polynomials.
    
    Otherwise $(c)$ holds. For any real-rooted $p$, let $\lambda(p)$ and $\mu(p)$ denote the largest and smallest roots of $p$, respectively. Since $q,r$ are top-degree monic, every convex combination of $q$ and $r$ has all its roots in the interval $[\mu(q), \lambda(r)] \subseteq J$. Since $\psi_1 \cdot \psi_2 \geq 0$ (not both zero) holds for $I$-rooted polynomials, we have that $T$ maps $I$-rooted polynomials to nonzero $J$-rooted polynomials.
\end{proof}

Notice that this result immediately holds for other closed, connected regions $I,J \subset \overline{\R}$ by the action of some appropriate $\phi \in \SL_2(\R)$. In fact, one can directly apply the action of $\phi$ to conditions $(ii)(a)$ and $(ii)(b)$, due to the fact that our definition of the ``universal'' symbol works for any projectively convex regions. The only significant change comes when applying $\phi$ to condition $(ii)(c)$. Further, the only issue with $(ii)(c)$ as it is written now is the requirement that $p_1$ and $p_2$ be top-degree monic polynomials. Having zeros at infinity, for instance, means that a polynomial cannot ever be top-degree monic (as the leading homogeneous coefficient is 0). There are ways to rewrite $(ii)(c)$ that avoids this problem, but it is probably more intuitive to state the result as above and apply $\phi \in \SL_2(\R)$.

Additionally, the result holds for open and half-open bounded intervals $J \subset \R$, with a bit of tweaking to condition $(ii)(c)$. (Again, the universality of the symbol means that $(ii)(a)$ and $(ii)(b)$ remain unchanged.) We state this in the following, where the action of $\phi \in \SL_2(\R)$ can be used to obtain similar results regarding open and half-open connected regions in $\overline{\R}$.

\begin{corollary}\label{Jops_graces_cor}
    The previous theorem holds when $J \subset \R$ is an open (or half-open) bounded interval, given the following alterations to condition $(ii)(c)$: if the image of $T$ is of dimension exactly two, then $p_1,p_2 \in V_\R(m)$ are top-degree monic $\overline{J}$-rooted polynomials such that the largest root of $p_1$ and the smallest root of $p_2$ are in $J$ (for $p_1 \ll p_2$), and $\psi_1 \neq 0$ (resp. $\psi_2 \neq 0$) whenever $p_2$ (resp. $p_1$) is not $J$-rooted.
\end{corollary}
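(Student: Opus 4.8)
The plan is to re-run the proof of Theorem~\ref{Jops_graces} and observe that it goes through verbatim for open (or half--open) bounded $J$ \emph{except} on the degenerate branch $(ii)(c)$, where closedness of $J$ enters both through the appeal to Lemma~\ref{conv_lemma} and through the step ``every convex combination of $q$ and $r$ has all its roots in $[\mu(q),\lambda(r)]\subseteq J$''. The fix is to invoke Lemma~\ref{conv_lemma} for the closed bounded interval $\overline{J}$ instead of $J$, and then to sharpen ``$\overline{J}$--rooted'' to ``$J$--rooted'' by a short interlacing analysis based on Lemma~\ref{total_order_lemma}; the refined conclusion is precisely the modified $(ii)(c)$ of the corollary. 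The regions $\LHP\cup I$ and $\overline{\UHP}\setminus J$ in $(ii)(a),(b)$ are unchanged because the symbol is universal and the relevant disjoint Grace pair $(\overline{\UHP}\setminus I,\ \LHP\cup I)$ does not depend on $J$ at all.

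For $(i)\Rightarrow(ii)$ with $T$ nondegenerate, the argument of Theorem~\ref{Jops_graces} uses $J$ closed only once: in asserting that the real roots of an evaluation--symbol image $T[(w_0x-z_0y)^n]$ (with $(z_0:w_0)\in\LHP$) lie in $J$. That assertion comes from the lemma immediately preceding Theorem~\ref{Jops_graces}: a real root $(x_0:y_0)$ of $T[(w_0x-z_0y)^n]$ either makes $T[(w_0x-z_0y)^n]$ real--rooted --- forcing $T$ degenerate, by the proof of Lemma~\ref{bb_Rlink}, a contradiction --- or makes $(x_0:y_0)$ a universal root of $T$, in which case $T$ of any $I$--rooted polynomial vanishes at $(x_0:y_0)$ and is nonzero and $J$--rooted, so $(x_0:y_0)\in J$. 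This deduction only needs $(x_0:y_0)$ to be a single point of $J$, so it survives for open $J$. Hence $T[(w_0x-z_0y)^n]$ is $(\overline{\UHP}\setminus J)$-- (resp. $(\overline{\LHP}\setminus J)$--) stable, $I$--rooted evaluation symbols go to $J$--rooted --- hence also $(\overline{\UHP}\setminus J)$-- and $(\overline{\LHP}\setminus J)$--stable --- polynomials by hypothesis, and Theorem~\ref{Cops_graces} applied with the disjoint Grace pair $(\overline{\UHP}\setminus I,\ \LHP\cup I)$ yields $(ii)(a)$ or $(ii)(b)$. The converse $(ii)(a),(b)\Rightarrow(i)$ is likewise unchanged: Corollary~\ref{bb_Rops} gives weak real--rootedness preservation, Theorem~\ref{Cops_graces} with the same Grace pair sends $I$--rooted polynomials to nonzero $(\overline{\UHP}\setminus J)$-- (resp. $(\overline{\LHP}\setminus J)$--) stable polynomials, and reality of $T$ forces their roots, necessarily real, into $J$.

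It remains to treat the degenerate branch. If $T$ is degenerate then $T[V_\R(n)]$ consists of real--rooted polynomials, and I would apply Lemma~\ref{conv_lemma} to $\overline{J}$, obtaining top--degree monic $p_1\ll p_2$ spanning the image whose top--degree monic $\overline{J}$--rooted members are exactly the convex hull of $\{p_1,p_2\}$; writing $T(p)=\psi_1(p)p_1+\psi_2(p)p_2$ yields $\psi_1(p)\psi_2(p)\ge 0$ (not both zero) for $I$--rooted $p$, exactly as in the closed case. Now sharpen: evaluating $T$ at an $I$--rooted $p$ with $\psi_2(p)=0$ shows $p_1$ must be $J$--rooted, so if $p_1$ is not $J$--rooted then $\psi_2$ cannot vanish on $I$--rooted polynomials (symmetrically for $p_2$, $\psi_1$); and since by Lemma~\ref{total_order_lemma} the $k$th root of a positive combination $\psi_1(p)p_1+\psi_2(p)p_2$ lies strictly between the $k$th roots of $p_1$ and $p_2$ unless the latter coincide, the requirement that the image contain some $J$--rooted polynomial forces the largest root of $p_1$ and the smallest root of $p_2$ into $J$ (otherwise every $\overline{J}$--rooted member of the image would carry an excluded endpoint of $\overline{J}$ as a root, so the image would contain no $J$--rooted polynomial). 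This is the modified $(ii)(c)$. For the converse of this branch, the same interlacing computation run forwards shows that, under these constraints, every positive combination of $p_1,p_2$ arising as $T$ of an $I$--rooted polynomial has all its roots in $J$ and is nonzero. Finally, the half--open and general connected cases follow by applying a suitable $\phi\in SL_2(\R)$, as for Theorem~\ref{Jops_graces}.

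The step I expect to be the main obstacle is getting the degenerate branch exactly right: the interlacing bookkeeping for $p_1,p_2$ has several sub--cases --- their extreme roots may coincide, a pure term $\psi_i(p)p_i$ may or may not occur as an output, and for half--open $J$ only one endpoint is excluded --- and one must verify in each that the output either lands in $J$ or is ruled out by the hypotheses on $\psi_1,\psi_2$. By contrast, the $(ii)(a),(b)$ portions are essentially automatic once one notes that closedness of $J$ was never used there.
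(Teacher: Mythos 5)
Your proposal is correct and takes essentially the same approach as the paper: observe that closedness of $J$ enters only through the degenerate branch, invoke Lemma~\ref{conv_lemma} for $\overline{J}$, and use the interlacing structure of Lemma~\ref{total_order_lemma} to show that positive combinations of $p_1$ and $p_2$ land in $J$ once the extreme roots are constrained. The paper's own proof is a two-sentence sketch naming exactly these two ingredients; your write-up supplies the endpoint and $\psi_i$ bookkeeping that the paper leaves implicit.
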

\begin{proof}
    The condition that the largest root of $p_1$ and the smallest root of $p_2$ are in $J$ (and the fact that $p_1 \ll p_2$) implies that $\alpha p_1 + \beta p_2$ is $J$-rooted for all $\alpha,\beta > 0$. Applying Lemma \ref{conv_lemma} to $\overline{J}$ completes the proof.
\end{proof}

We now give a few examples. The first demonstrates the necessity of the premise that $T$ preserves weak real-rootedness.

\begin{example}
    Consider the operator $T_n: V(n) \to V(n)$ defined via:
    \[
        T_n: x^ky^{n-k} \mapsto \Hmg_n[x(x-1)(x-2)\cdots(x-k+1)]
    \]
    By Proposition 7.31 in \cite{fisk2006polys}, $T_n$ preserves positive-rootedness for all $n$. However, $T_2$ does not preserve real-rootedness, for example. In particular:
    \[
        T_2(x^2+2xy+y^2) = x(x-y) + 2xy + y^2 = x^2 + xy + y^2
    \]
    We now compute the symbol of $T_2$:
    \[
        \Symb(T_2) = T_2[(xw-zy)^2] = x(x-y)w^2 - 2xzyw + z^2y^2 = \Hmg_{(2,2)}[(x-z)^2 - x]
    \]
    Notice that for $x = -1$, we have that $(-1-z)^2 + 1$ is not real rooted. Therefore, $\Symb(T_2)$ is neither $(\LHP \cup (0,\infty)) \times (\overline{\UHP} \setminus (0,\infty))$-stable nor $(\LHP \cup (0,\infty)) \times (\overline{\LHP} \setminus (0,\infty))$-stable when the variables are ordered $(z,w),(x,y)$. That is, the operator $T_2$ does not contradict the previous theorem.
\end{example}

In the second example, we demonstrate root preservation properties of $f(\partial_x)$ for real-rooted $f$. These are standard results of the classical theory: see, e.g., Corollary 5.4.1 in \cite{rahman2002anthpoly}.

\begin{example}
    For any real-rooted $f \in \C[x]$, consider the operator $D_f \in \Hom(V(n),V(n))$ defined via $D_f: g \mapsto f(y\partial_x)g$ (i.e., the homogenized version of $f(\partial_x)$). To determine properties of this operator, we first write:
    \[
        f(y\partial_x) = c_0 \prod_{j=1}^m (y\partial_x - \alpha_j)
    \]
    Here, the $\alpha_j \in \R$ are the roots of $f$. Next, we compute the symbol of $(y\partial_x - \alpha_j) \in \Hom(V(n),V(n))$ for $j \in [m]$:
    \[
        \begin{split}
            \Symb(y\partial_x - \alpha_j) &= (y\partial_x - \alpha_j)(zy-xw)^n \\
                &= -(\alpha_j (zy-xw) + nwy)(zy-xw)^{n-1} \\
                &= \Hmg_{(n,n)}\left[-(\alpha_j(z-x)+n)(z-x)^{n-1}\right]
        \end{split}
    \]
    We now have three cases, depending on the sign of $\alpha_j$. If $\alpha_j > 0$, we have that $\Symb(y\partial_x - \alpha_j)$ is both $(\LHP \cup [a,\infty]) \times (\overline{\UHP} \setminus [a,\infty])$-stable and $(\LHP \cup [-\infty,a]) \times (\overline{\UHP} \setminus [-\infty,a+\frac{n}{\alpha_j}])$-stable for any $a \in \R$. (As usual, we order the variables $(z,w),(x,y)$.) Using Theorem \ref{Jops_graces} and the discussion following the proof, this implies $(y\partial_x - \alpha_j)$ preserves $[a,\infty]$-rootedness and maps $[-\infty,a]$-rooted polynomials to $[-\infty,a+\frac{n}{\alpha_j}]$-rooted polynomials. So, if the (non-infinite) roots of $g$ are contained in the interval $[b,c]$, then the (non-infinite) roots of $(y\partial_x - \alpha_j)g$ are contained in the interval $[b,c+\frac{n}{\alpha_j}]$.
    
    If $\alpha_j < 0$, we have that $\Symb(y\partial_x - \alpha_j)$ is both $(\LHP \cup [a,\infty]) \times (\overline{\UHP} \setminus [a+\frac{n}{\alpha_j},\infty])$-stable and $(\LHP \cup [-\infty,a]) \times (\overline{\UHP} \setminus [-\infty,a])$-stable for any $a \in \R$. As above, this implies $(y\partial_x - \alpha_j)$ preserves $[-\infty,a]$-rootedness and maps $[a,\infty]$-rooted polynomials to $[a+\frac{n}{\alpha_j},\infty]$-rooted polynomials. So, if the (non-infinite) roots of $g$ are contained in the interval $[b,c]$, then the (non-infinite) roots of $(y\partial_x - \alpha_j)g$ are contained in the interval $[b+\frac{n}{\alpha_j},c]$.
    
    Finally for $\alpha_j = 0$, the operator $(y\partial_x - \alpha_j) = y\partial_x$ weakly preserves any interval in which the (non-infinite) roots reside. The main difference for this case is that $y\partial_x$ only preserves weak real-rootedness. Combining these three cases, we are lead to the following root preservation property of $f(\partial_x): \C^n[x] \to \C^n[x]$. Let $\alpha_j^+$ and $\alpha_j^-$ be the positive and negative roots of $f$, respectively. We then have the following, which refers to non-infinite roots:
    \[
        f(\partial_x): [b,c]\text{-rooted} \to \left[b + \sum_j \frac{n}{\alpha_j^-}, c + \sum_j \frac{n}{\alpha_j^+}\right]\text{-rooted}
    \]
    If $f$ has zeros at 0, then $f(\partial_x)$ may map some nonzero $[b,c]$-rooted polynomials to 0. Otherwise, $f(\partial_x)$ is invertible on $\C^n[x]$.
\end{example}

\section*{Acknowledgements}

We would like to thank Nick Ryder for many graduate school conversations on stable polynomials and linear stability preservers. We would also like to thank an anonymous referee for a thorough reading of this paper and many helpful comments.

\bibliographystyle{amsalpha}
\bibliography{apolarity}

\clearpage
\appendix

\section{Tensor Product Decomposition of \texorpdfstring{$\SL_2(\C)$}{SL2(C)} Representations}\label{sl2_app}

In this appendix, we discuss in detail the decomposition of inner tensor products of $\SL_2(\C)$ and $(\SL_2(\C))^m$ representations. The results given here are for the most part standard, and they are typically presented via the theory of Lie groups and algebras (e.g., in \cite{fulton2013reptheory} and \cite{humphreys2012lietheory}). Here though, we discuss these results in terms of the polynomial spaces $V(n)$ and $V(\lambda)$.

That said, the first results we state demonstrate the importance of $V(n)$ and $V(\lambda)$ in the representation theory of $\SL_2(\C)$. In fact, these representations are precisely the irreducible representations of $\SL_2(\C)$ and $(\SL_2(\C))^m$, respectively (see Lecture 11 of \cite{fulton2013reptheory}, and also Proposition 2.3.23 of \cite{kowalski2014introduction}). We will not make full use of this fact but will need the following simpler results.

\begin{proposition}\label{Uirrep_prop}
    For all $n \in \N_0$, we have that $V(n)$ is an irreducible representation of $\SL_2(\C)$ of dimension $n+1$.
\end{proposition}

\begin{proposition}
    For all $\lambda \in \N_0^m$, we have that $V(\lambda) \cong V(\lambda_1) \boxtimes \cdots \boxtimes V(\lambda_m)$ is an irreducible representation of $(\SL_2(\C))^m$ of dimension $\prod_{i}(\lambda_i + 1)$.
\end{proposition}

In particular, outer tensor products of irreducible representations of $\SL_2(\C)$ are irreducible representations of $(\SL_2(\C))^m$. On the other hand, inner tensor products are not irreducible and their decomposition leads to a natural definition of the apolarity form (see \S\ref{inv_maps_subsect}). We now set out to compute these decompositions, which are often given as exercises in the literature (see, e.g., Exercise 11.11 of \cite{fulton2013reptheory}).

\subsection{Decomposition of \texorpdfstring{$V(n) \otimes V(m)$}{V(n) tensor V(m)}}

Fix $n,m \in \N_0$. We now consider the representation of $\SL_2(\C)$ given by the inner tensor product, $V(n) \otimes V(m)$. The importance of the tensor product comes from the fact that it relates to consideration of $\SL_2(\C)$-invariant bilinear forms like the apolarity form. In particular, the decomposition of the tensor product as a sum of irreducible representations (Proposition \ref{Utensor_prop}) will show us exactly how the $D$ map (see Proposition \ref{Dmap_prop}) can be used to define the apolarity form in the representation theoretic context (Definition \ref{Uform_def}).

We begin with an important $\SL_2(\C)$-invariant map.

\begin{proposition}
    Let $x$ and $y$ denote the linear maps defined on $V(k)$ via multiplication by $x$ and $y$, respectively. The linear map $U := (x \otimes y - y \otimes x): V(n) \otimes V(m) \rightarrow V(n+1) \otimes V(m+1)$ is $\SL_2(\C)$-invariant.
\end{proposition}
\begin{proof}
    Straightforward computation, e.g., on simple tensors.
\end{proof}

We then use this $U$ map to show that $D^k$ is not the zero map when $k \leq m,n$.

\begin{lemma}
    For $k \leq m \leq n$, consider the map $DU^k: V(n-k) \otimes V(m-k) \rightarrow V(n-1) \otimes V(m-1)$. We have:
    \[
        DU^k = U^kD + k(n+m-k+1)U^{k-1}
    \]
\end{lemma}
\begin{proof}
    Follows from the fact that $(\partial_x x - x\partial_x)p = p$ and $(\partial_y y - y\partial_y)p = p$.
\end{proof}
% \begin{proof}
%     We first consider the case when $k = 1$. Using $\partial_x x = x \partial_x + I$ and $\partial_y y = y \partial_y + I$ (where $I$ is the identity map), we have:
%     \[
%         \begin{split}
%             DU &= (\partial_x \otimes \partial_y - \partial_y \otimes \partial_x)(x \otimes y - y \otimes x) \\
%                 &= UD + (x\partial_x + y\partial_y) \otimes I + I \otimes (x\partial_x \otimes y\partial_y) + 2 I \otimes I \\
%                 &= UD + (n-1) I \otimes I + (m-1) I \otimes I + 2 I \otimes I \\
%                 &= UD + (n+m)U^0
%         \end{split}
%     \]
%     Inducting on $k$, we obtain:
%     \[
%         DU^k = U^kD + \left(\sum_{j=0}^{k-1} (n-j)+(m-j)\right)U^{k-1}
%     \]
%     Simplification of the sum gives the result.
% \end{proof}

\begin{corollary}\label{nonzero_cor}
    For $k \leq m \leq n$, consider the map $D^kU^k: V(n-k) \otimes V(m-k) \rightarrow V(n-k) \otimes V(m-k)$. We have:
    \[
        D^kU^k(x^{n-k} \otimes x^{m-k}) = \frac{k!(n+m-k+1)!}{(n+m-2k+1)!} (x^{n-k} \otimes x^{m-k}) \neq 0
    \]
    In particular, $x^{n-k} \otimes x^{m-k}$ is in the image of $D^k: V(n) \otimes V(m) \rightarrow V(n-k) \otimes V(m-k)$.
\end{corollary}
\begin{proof}
    Apply the previous lemma $k$ times, and use the fact that $D(x^{n-k} \otimes x^{m-k}) = 0$.
\end{proof}

We will use this fact about the image of $D^k$ to determine the decomposition of $V(n) \otimes V(m)$ into irreducible components. We will also need the following fundamental representation theory result.

\begin{lemma}[Schur's Lemma]
    Let $V,V',W$ be representations of a group $G$, and suppose $V,V'$ are irreducible. Then:
    \begin{enumerate}[label=(\roman*)]
        \item Any $G$-invariant map $\pi: W \rightarrow V$ is either surjective or the zero map.
        \item Any $G$-invariant map $\iota: V \rightarrow W$ is either injective or the zero map.
        \item Any $G$-invariant map $\psi: V \rightarrow V'$ is either an isomorphism or the zero map.
    \end{enumerate}
    Further, if $V,V'$ are vector spaces over an algebraically closed field, then $\psi$ is unique up to scalar.
\end{lemma}

Applying Schur's lemma to each of the $r^\text{th}$ transvectants (discussed at the end of \S\ref{inv_maps_subsect}) yields the desired representation decomposition.

\begin{proposition}\label{Utensor_prop}
    Let $m \leq n$. We have the following decomposition of $V(n) \otimes V(m)$, as a representation of $\SL_2(\C)$, into irreducible components.
    \[
        V(n) \otimes V(m) \cong \bigoplus_{r \leq m} V(n + m - 2r)
    \]
    In particular, $V(n) \otimes V(n) \cong V(2n) \oplus V(2n-2) \oplus \cdots \oplus V(2) \oplus V(0)$.
\end{proposition}
\begin{proof}
    For each $r \in \N_0$, $ \leq m$, consider the $\SL_2(\C)$-invariant $r^\text{th}$ transvectant map:
    \[
        V(n) \otimes V(m) \xrightarrow{D^r} V(n-r) \otimes V(m-r) \xrightarrow{\times} V(n+m-2r)
    \]
    By Corollary \ref{nonzero_cor}, this map is not the zero map, as $x^{n-r} \cdot x^{m-r} = x^{n+m-2r}$ is in its image. Since $V(n+m-2r)$ is irreducible by Proposition \ref{Uirrep_prop}, Schur's lemma implies this map is surjective. This in turn implies $V(n) \otimes V(m) \cong V(n+m-2r) + W_r$ (as a representation) for some subspace $W_r$. Since this holds for all $r \leq m$, we actually have
    \[
        V(n) \otimes V(m) \cong W + \bigoplus_{r \leq m} V(n + m - 2r)
    \]
    for some subspace $W$. The sum of irreducible components here is direct, as any two distinct irreducible components must intersect trivially. To show that we can set $W = 0$, we use the following dimension argument:
    \[
        \sum_{r=0}^m \dim(V(n+m-2r)) = \sum_{r=0}^m (n+m-2r+1) = (n+1)(m+1) = \dim(V(n) \otimes V(m))
    \]
    This completes the proof.
\end{proof}

Along with the stated decomposition, we also obtain something else: the $r^\text{th}$ transvectant is a projection from $V(n) \otimes V(m)$ onto the irreducible component $V(n+m-2r)$. Schur's lemma and the tensor product decomposition then imply this projection is actually unique up to scalar. In a similar way, Schur's lemma also implies $D$ and $U$ must restrict to either a unique isomorphism or the zero map on each irreducible component of $V(n) \otimes V(m)$. In the following, we determine exactly what happens on each component.

\begin{theorem}\label{Umaps_thm}
    Consider the decomposition $V(n) \otimes V(m) \cong V(n+m) \oplus V(n+m-2) \oplus \cdots \oplus V(n-m+2) \oplus V(n-m)$. The maps
    \[
        U: V(n) \otimes V(m) \rightarrow V(n+1) \otimes V(m+1)
    \]
    \[
        D: V(n+1) \otimes V(m+1) \rightarrow V(n) \otimes V(m)
    \]
    restrict to $\SL_2(\C)$-invariant isomorphisms from $V(n+m-2r)$ to $V(n+m-2r)$ for all $0 \leq r \leq m \leq n$. Additionally, $D$ restricts to the zero map on $V(n+m+2)$.
\end{theorem}
\begin{proof}
    By Schur's lemma, the claim immediately follows if $U$ is injective and $D$ is surjective. That $D$ is surjective follows from the fact that the transvectant maps $\times \circ D^r$ are projections onto each of the irreducible components of $V(n) \otimes V(m)$ for $1 \leq r \leq m+1$. That $U$ is injective follows from the fact that $U(v) = 0$ implies $v=0$. One can see this by lexicographically ordering the basis $\{x^jy^{n-j} \otimes x^ky^{m-k} : 0 \leq j \leq n, 0 \leq k \leq m\}$ and considering the highest component of a given $v \in V(n) \otimes V(m)$.
\end{proof}
%     To prove that $U$ is injective, consider the basis $\{x^jy^{n-j} \otimes x^ky^{m-k} : 0 \leq j \leq n, 0 \leq k \leq m\}$ of $V(n) \otimes V(m)$. For any $0 \neq v \equiv \sum_{j,k} a_{j,k} x^jy^{n-j} \otimes x^ky^{m-k} \in V(n) \otimes V(m)$, let $(j_0,k_0)$ be the lexicographically largest pair such that $a_{j_0,k_0} \neq 0$. We then compute:
%     \[
%         \begin{split}
%             Uv &= \sum_{j,k} a_{j,k} x^{j+1}y^{n-j} \otimes x^ky^{m-k+1} - \sum_{j,k} a_{j,k} x^jy^{n-j+1} \otimes x^{k+1}y^{m-k} \\
%                 &= \sum_{j,k} (a_{j,k} - a_{j+1,k-1}) x^{j+1}y^{n-j} \otimes x^ky^{m-k+1} \\
%                 &= (a_{j_0,k_0}-0) x^{j_0+1}y^{n-j_0} \otimes x^{k_0}y^{m-k_0+1} + \text{other terms}
%         \end{split}
%     \]
%     In particular $Uv \neq 0$, and so $U$ is injective.
% \end{proof}

Our main application of this theory is given as follows. Consider the $n^\text{th}$ transvectant map $\times \circ D^n: V(n) \otimes V(n) \rightarrow V(0) \cong \C$, which is nonzero by the previous theorem. This map can be interpreted as an $\SL_2(\C)$-invariant bilinear form on $V(n)$. It turns out that the apolarity bilinear form used in Grace's theorem also has this property, and this justifies the following definition.

\newtheorem*{Uform_def}{Definition \ref{Uform_def}}
\begin{Uform_def}
    We call the $n^\text{th}$ transvectant
    \[
        V(n) \otimes V(n) \xrightarrow{D^n} V(0) \otimes V(0) \xrightarrow{\times} V(0) \cong \C
    \]
    the \emph{apolarity form} of $V(n)$.
\end{Uform_def}

\begin{corollary}\label{Uunique_cor}
    The apolarity form is the unique (up to scalar) nondegenerate $\SL_2(\C)$-invariant bilinear form on $V(n)$.
\end{corollary}

\subsection{Decomposition of \texorpdfstring{$V(\lambda) \otimes V(\mu)$}{V(lambda) tensor V(mu)}}

Fix $\lambda,\mu \in \N_0^m$, and let $V(\lambda)$ and $V(\mu)$ denote the irreducible representations of $(\SL_2(\C))^m$ given by the outer tensor products:
\[
    V(\lambda) \cong V(\lambda_1) \boxtimes \cdots \boxtimes V(\lambda_m)
    \qquad\qquad\qquad
    V(\mu) \cong V(\mu_1) \boxtimes \cdots \boxtimes V(\mu_m)
\]
We next generalize the above results to the inner tensor product of these two representations, $V(\lambda) \otimes V(\mu)$. In particular, we determine the decomposition of this tensor product and define a multivariate apolarity form. Note that these statements strictly generalize the previous analogous statements.

\begin{proposition}[c.f. Proposition \ref{Utensor_prop}]\label{Mtensor_prop}
    Let $\mu \leq \lambda$. We have the following decomposition of $V(\lambda) \otimes V(\mu)$, as a representation of $(\SL_2(\C))^m$, into irreducible components.
    \[
        V(\lambda) \otimes V(\mu) \cong \bigoplus_{\alpha \leq \mu} V(\lambda + \mu - 2\alpha)
    \]
\end{proposition}
\begin{proof}
    We compute:
    \[
        \begin{split}
            V(\lambda) \otimes V(\mu) &\cong \big(V(\lambda_1) \boxtimes \cdots \boxtimes V(\lambda_m)\big) \otimes \big(V(\mu_1) \boxtimes \cdots \boxtimes V(\mu_m)\big) \\
                &\cong \big(V(\lambda_1) \otimes V(\mu_1)\big) \boxtimes \cdots \boxtimes \big(V(\lambda_m) \otimes V(\mu_m)\big) \\
                &\cong \left(\bigoplus_{\alpha_1 \leq \mu_1} V(\lambda_1 + \mu_1 - 2\alpha_1)\right) \boxtimes \cdots \boxtimes \left(\bigoplus_{\alpha_m \leq \mu_m} V(\lambda_m + \mu_m - 2\alpha_m)\right) \\
                &\cong \bigoplus_{\alpha \leq \mu} V(\lambda + \mu - 2\alpha)
        \end{split}
    \]
    The last step uses the distributive law for sums and tensor products of representations.
\end{proof}

\begin{theorem}[c.f. Theorem \ref{Umaps_thm}]
    For any $\beta \in \N_0^m$, define $U^\beta := U^{\beta_1} \boxtimes \cdots \boxtimes U^{\beta_m}$. Define $D^\beta$ similarly. For any $\mu \leq \lambda \in \N_0^m$, the maps
    \[
        U^\beta: V(\lambda) \otimes V(\mu) \rightarrow V(\lambda + \beta) \otimes V(\mu + \beta)
    \]
    \[
        D^\beta: V(\lambda + \beta) \otimes V(\mu + \beta) \rightarrow V(\lambda) \otimes V(\mu)
    \]
    restrict to $\SL_2(\C)$-invariant isomorphisms on the components of $V(\lambda) \otimes V(\mu) \cong \bigoplus_{\alpha \leq \mu} V(\lambda + \mu - 2\alpha)$. Finally, $D^\beta$ restricts to the zero map on the other irreducible components of $V(\lambda + \beta) \otimes V(\mu + \beta)$.
\end{theorem}
\begin{proof}
    Follows by induction on $\beta$, using Theorem \ref{Umaps_thm}.
\end{proof}

\newtheorem*{Mform_def}{Definition \ref{Mform_def}}
\begin{Mform_def}[c.f. Definition \ref{Uform_def}]
    We call the map
    \[
        V(\lambda) \otimes V(\lambda) \xrightarrow{D^\lambda} V(0^m) \otimes V(0^m) \xrightarrow{\times} V(0^m) \cong \C
    \]
    the \emph{apolarity form} of $V(\lambda)$.
\end{Mform_def}

\begin{corollary}[c.f. Corollary \ref{Uunique_cor}]\label{Munique_cor}
    The apolarity form is the unique (up to scalar) nondegenerate $(\SL_2(\C))^m$-invariant bilinear form on $V(\lambda)$.
\end{corollary}

\section{The Grace-Walsh-Szeg\texorpdfstring{\H{o}}{o} Coincidence Theorem}\label{gws_app}

A classical result in the representation theory of $\SL_2(\C)$ is the fact that $V(n) \cong \operatorname{Sym}^n(V(1))$. Here, $\operatorname{Sym}^n(V(1))$ denotes the set of symmetric tensors in $V(1)^{\otimes n}$, or alternatively, the set of symmetric elements in $V(1^n)$. That is, there is some $\SL_2(\C)$-invariant injection from $V(n)$ to $V(1)^{\otimes n}$, and by our conceptual thesis this map should transfer stability information. In fact, this idea is formalized in the Grace-Walsh-Szeg{\H{o}} coincidence theorem, and the injective map is known as the polarization map.

\subsection{Polarization and Projection}

For polynomials of degree $m \leq n$, the degree-$n$ polarization map is defined on monomials as follows and is extended linearly.
\[
    \begin{split}
        \Pi_n^\uparrow : \C^n[x] &\rightarrow \C^{(1^n)}[x_1,\ldots,x_n] \\
        x^k &\mapsto \frac{1}{n!} \sum_{\sigma \in S_n} \prod_{j=1}^k x_{\sigma(j)}
    \end{split}
\]
This definition can be extended to homogeneous polynomials in $V(n)$ by composing with $\Hmg_n^{-1}$ and $\Hmg_{(1^n)}$. The map $\Pi_n^\uparrow$ has a left inverse $\Pi_n^\downarrow$, called the projection map, which we define as follows.
\[
    \begin{split}
        \Pi_n^\downarrow : \C^{(1^n)}[x_1,\ldots,x_n] &\rightarrow \C^n[x] \\
        f(x_1,x_2,\ldots,x_n) &\mapsto f(x,x,\ldots,x)
    \end{split}
\]
That is, $\Pi_n^\downarrow \circ \Pi_n^\uparrow$ is the identity map. Similarly, this definition can be extended to homogeneous polynomials by composing with $\Hmg_{(1^n)}^{-1}$ and $\Hmg_n$.

It is well-known that $\Pi_n^\uparrow$ is an injective linear map onto the subspace of symmetric multi-affine polynomials. This fact then extends to homogeneous polynomials, where the terms \emph{symmetric} and \emph{multi-affine} each refer to pairs of homogeneous variables. Further, one can define multivariate polarization and projection maps via composition: $\Pi_\lambda^\uparrow := \Pi_{\lambda_m}^\uparrow \circ \cdots \circ \Pi_{\lambda_1}^\uparrow$ and $\Pi_\lambda^\downarrow := \Pi_{\lambda_m}^\downarrow \circ \cdots \circ \Pi_{\lambda_1}^\downarrow$. Injectivity then automatically extends to $\Pi_\lambda^\uparrow$, and $\Pi_\lambda^\downarrow \circ \Pi_\lambda^\uparrow$ is the identity map.

These two maps arise naturally in the theory of polynomials in general, and play an important role in the theory of stability, via the Grace-Walsh-Szeg{\H{o}} coincidence theorem as well as in the proof of the Borcea-Br{\"a}nd{\'e}n characterization of linear operators. The next result shows they also have represention theoretic importance.

\begin{proposition}\label{pol_proj_inv_prop}
    Fix $\lambda \in \N_0^m$, and view $V(\lambda) \cong V(\lambda_1) \boxtimes \cdots \boxtimes V(\lambda_m)$ and $V(1^\lambda) \cong V(1)^{\otimes \lambda_1} \boxtimes \cdots \boxtimes V(1)^{\otimes \lambda_m}$ as representations of $(\SL_2(\C))^m$. The maps $\Pi_\lambda^\uparrow: V(\lambda) \rightarrow V(1^\lambda)$ and $\Pi_\lambda^\downarrow: V(1^\lambda) \rightarrow V(\lambda)$ are $(\SL_2(\C))^m$-invariant.
\end{proposition}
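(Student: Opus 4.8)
The plan is to reduce the statement to the univariate case and then deduce the equivariance of $\Pi_n^\uparrow$ for free from that of the diagonal-restriction map $\Pi_n^\downarrow$. Since $\Pi_\lambda^\uparrow = \Pi_{\lambda_m}^\uparrow \circ \cdots \circ \Pi_{\lambda_1}^\uparrow$ and $\Pi_\lambda^\downarrow = \Pi_{\lambda_m}^\downarrow \circ \cdots \circ \Pi_{\lambda_1}^\downarrow$, where the factor labelled $k$ operates only on the $\lambda_k$ pairs of variables carved out of the $k$-th block (leaving the variables, and hence the action of the $SL_2(\C)$-factors, of the remaining blocks untouched), it suffices to prove that each univariate map $\Pi_n^\uparrow \colon V(n) \to V(1^n) \cong V(1)^{\otimes n}$ and $\Pi_n^\downarrow \colon V(1^n) \to V(n)$ is $SL_2(\C)$-equivariant for the corresponding single copy of $SL_2(\C)$, where $V(1^n) \cong V(1)^{\otimes n}$ carries the inner tensor action. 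The only subtlety in this reduction is that the later factors in $\Pi_\lambda^\uparrow$ are applied to spaces already polarized in the earlier blocks; but polarizing in one block disturbs neither the variables nor the $SL_2(\C)$-action of the other blocks, so once each univariate step is equivariant, the composite is as well.

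For $\Pi_n^\downarrow$ equivariance is immediate from the defining formula: under the inner tensor action $(\phi \cdot f)(x_1,y_1,\dots,x_n,y_n) = f\!\left(\phi^{-1}\binom{x_1}{y_1},\dots,\phi^{-1}\binom{x_n}{y_n}\right)$, so restricting to the diagonal gives $(\Pi_n^\downarrow(\phi\cdot f))(x:y) = f\!\left(\phi^{-1}\binom xy,\dots,\phi^{-1}\binom xy\right) = (\Pi_n^\downarrow f)\!\left(\phi^{-1}\binom xy\right) = (\phi\cdot \Pi_n^\downarrow f)(x:y)$. Next I would observe that the symmetric subspace $\operatorname{Sym}^n(V(1)) = \operatorname{im}\Pi_n^\uparrow \subseteq V(1^n)$ is $SL_2(\C)$-stable: the $SL_2(\C)$-action applies $\phi$ simultaneously to every pair of variables and therefore commutes with the $S_n$-action permuting the pairs, hence preserves the $S_n$-fixed subspace. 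Now the two facts already recorded in the excerpt — that $\Pi_n^\uparrow$ is injective with image exactly $\operatorname{Sym}^n(V(1))$, and that $\Pi_n^\downarrow \circ \Pi_n^\uparrow = \operatorname{id}_{V(n)}$ — force $\Pi_n^\uparrow \circ \bigl(\Pi_n^\downarrow|_{\operatorname{Sym}^n(V(1))}\bigr) = \operatorname{id}$ as well; that is, $\Pi_n^\downarrow$ restricts to an $SL_2(\C)$-equivariant linear isomorphism $\operatorname{Sym}^n(V(1)) \xrightarrow{\ \sim\ } V(n)$ whose inverse is $\Pi_n^\uparrow$. The inverse of an equivariant isomorphism is equivariant, so $\Pi_n^\uparrow$ is equivariant, and the proposition follows by the reduction above.

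The essentially only nontrivial ingredient here is the classical fact, already invoked in the excerpt, that a binary form of degree $n$ has a unique symmetric multilinear lift — equivalently, that $\Pi_n^\uparrow$ maps onto and inverts $\Pi_n^\downarrow$ on the symmetric subspace; granted that, the argument is pure formalism. I do not expect any real obstacle: the only points that require care are keeping the two tensor structures separate (inner tensor among the $\lambda_k$ pairs within a block, outer tensor across the $m$ blocks) and checking the composition step in the reduction, neither of which is hard. A completely routine alternative, if one prefers an explicit computation, is to check the intertwining property on the basis $\{x^k y^{n-k}\}$ against the generators of $\mathfrak{sl}_2(\C)$ acting infinitesimally (by first-order differential operators on $V(n)$, and by the corresponding sums of such operators on $V(1)^{\otimes n}$), but the inverse-map argument above is shorter and cleaner.
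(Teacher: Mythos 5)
Your proof is correct and fills out the argument more explicitly than the paper does. Both you and the paper reduce to the univariate case by writing $\Pi_\lambda^\uparrow$ and $\Pi_\lambda^\downarrow$ as block-wise compositions, and both exploit the $SL_2(\C)$-stability of the symmetric subspace $\operatorname{Sym}^n(V(1)) \subset V(1)^{\otimes n}$. Where you diverge is the final step: the paper invokes Schur's lemma together with the Clebsch--Gordan fact that $V(1)^{\otimes n}$ has a unique irreducible component of dimension $n+1$, whereas you verify equivariance of $\Pi_n^\downarrow$ directly from its defining formula and then transfer this to $\Pi_n^\uparrow$ by observing that it is the inverse of the equivariant isomorphism $\Pi_n^\downarrow|_{\operatorname{Sym}^n(V(1))}\colon \operatorname{Sym}^n(V(1)) \to V(n)$. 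Your route is more elementary --- it needs no decomposition of $V(1)^{\otimes n}$ into irreducibles --- and is arguably more self-contained: the paper's one-sentence Schur appeal implicitly requires knowing that the $(n+1)$-dimensional invariant subspace $\operatorname{Sym}^n(V(1))$ is actually irreducible and isomorphic to $V(n)$, a fact that a dimension count plus multiplicity-one does not by itself give (an $(n+1)$-dimensional invariant subspace could a priori decompose into smaller pieces), whereas your inverse-map argument sidesteps that question entirely. What the paper's approach buys is brevity and continuity with the representation-theoretic framing of the surrounding section, where Schur's lemma is the common tool; what yours buys is a complete, elementary proof from the two stated facts about polarization.
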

\begin{proof}
    Note that by proving the result for $\Pi_n^\uparrow$ and $\Pi_n^\downarrow$ with $m=1$, the general result follows since $\Pi_\lambda^\uparrow$ and $\Pi_\lambda^\downarrow$ are compositions of such maps. To prove it for $m=1$, note that the set of symmetric elements in $V(1)^{\otimes n} \cong V(1^n)$ is invariant under the diagonal action of $\SL_2(\C)$. Further, since $V(n)$ is irreducible of dimension $n+1$ and $V(1)^{\otimes n}$ has a single irreducible component of dimension $n+1$, Schur's lemma implies the result.
\end{proof}

This result then has a few corollaries which will help to shed light on results related to polarization and the apolarity form. The first will be useful in elucidating the representation theoretic ties to the Grace-Walsh-Szeg{\H{o}} coincidence theorem below.

\begin{lemma}\label{form_pol_commute_lemma}
    Fix $\lambda \in \N_0^m$. Then the apolarity form commutes with polarization up to scalar. That is:
    \[
        D^\lambda = D^{(1^\lambda)} \circ (\Pi_\lambda^\uparrow \otimes \Pi_\lambda^\uparrow)
    \]
\end{lemma}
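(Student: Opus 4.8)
The plan is to identify both sides of the claimed identity as $(SL_2(\C))^m$-invariant bilinear forms on $V(\lambda)$ and then to use that such a form is unique up to scalar. The left-hand side $\times \circ D^\lambda$ is, by Definition~\ref{Mform_def}, exactly the apolarity form of $V(\lambda)$, and that definition records that $V(\lambda)$ carries a \emph{unique} (up to scalar) nondegenerate $(SL_2(\C))^m$-invariant bilinear form. Since $V(\lambda) \cong V(\lambda_1) \boxtimes \cdots \boxtimes V(\lambda_m)$ is irreducible as a representation of $(SL_2(\C))^m$, in fact \emph{every} $(SL_2(\C))^m$-invariant bilinear form on $V(\lambda)$ is a scalar multiple of the apolarity form (Schur's lemma, using the self-duality $V(\lambda) \cong V(\lambda)^*$ that the apolarity form itself provides). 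So the whole content reduces to two things: showing the right-hand side is an $(SL_2(\C))^m$-invariant bilinear form, and showing the resulting scalar is nonzero.

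For the first point, $\Pi_\lambda^\uparrow : V(\lambda) \to V(1^\lambda)$ is $(SL_2(\C))^m$-invariant by Proposition~\ref{pol_proj_inv_prop}, the group acting on $V(1^\lambda) \cong V(1)^{\otimes \lambda_1} \boxtimes \cdots \boxtimes V(1)^{\otimes \lambda_m}$ through the block-diagonal embedding $(SL_2(\C))^m \hookrightarrow (SL_2(\C))^{|\lambda|}$. And $\times \circ D^{(1^\lambda)}$ is the apolarity form of $V(1^\lambda)$, which is $(SL_2(\C))^{|\lambda|}$-invariant (apply Proposition~\ref{Dmap_prop} in each of the $|\lambda|$ pairs of variables), hence invariant under the block-diagonal copy of $(SL_2(\C))^m$. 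Therefore the composite $\times \circ D^{(1^\lambda)} \circ (\Pi_\lambda^\uparrow \otimes \Pi_\lambda^\uparrow)$ is an $(SL_2(\C))^m$-invariant bilinear form on $V(\lambda)$, and by the previous paragraph it equals $c \cdot (\times \circ D^\lambda)$ for some $c \in \C$.

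To see $c \neq 0$ (and, incidentally, to compute it), evaluate both forms on the top-degree monic monomials $p = x^\lambda$ and $q = y^\lambda$. A one-line computation gives $D^\lambda(x^\lambda \otimes y^\lambda) = (\lambda!)^2$. On the other side, $\Pi_\lambda^\uparrow(x^\lambda)$ is the product of all $|\lambda|$ distinguished ``$x$'' variables and $\Pi_\lambda^\uparrow(y^\lambda)$ is the product of all $|\lambda|$ distinguished ``$y$'' variables, so applying $D^1 = \partial_x \otimes \partial_y - \partial_y \otimes \partial_x$ in each of the $|\lambda|$ pairs yields $D^{(1^\lambda)}(\Pi_\lambda^\uparrow(x^\lambda) \otimes \Pi_\lambda^\uparrow(y^\lambda)) = 1$. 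Hence $c = (\lambda!)^2 \neq 0$, which establishes the lemma (the displayed equality holding up to the nonzero scalar $(\lambda!)^2$).

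The only delicate point — the ``hard part'', modest as it is — is the bookkeeping of which group acts where: one must check that the irreducibility of $V(\lambda)$ and the uniqueness of its invariant bilinear form are invoked for the same block-diagonal $(SL_2(\C))^m$ under which both $\Pi_\lambda^\uparrow$ and $D^{(1^\lambda)}$ are equivariant. Everything else is the two short computations above. Alternatively, one can prove the identity by a direct expansion on the monomial basis, using $\Pi_n^\uparrow(x^k y^{n-k}) = \binom{n}{k}^{-1} \sum_{|S| = k} \prod_{i \in S} x_i \prod_{i \notin S} y_i$ together with the fact that $D^\lambda(x^\mu y^{\lambda - \mu} \otimes x^\nu y^{\lambda - \nu})$ vanishes unless $\nu = \lambda - \mu$; but the representation-theoretic argument is cleaner and matches the paper's approach.
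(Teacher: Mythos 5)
Your proof is correct and takes essentially the same approach as the paper: recognize the right-hand side as an $(SL_2(\C))^m$-invariant bilinear form on $V(\lambda)$ and invoke the uniqueness of such a form. The paper's proof is a one-liner citing Definition~\ref{Mform_def}; you add a bit of welcome care by making the Schur's-lemma reasoning explicit (identifying the right block-diagonal copy of $(SL_2(\C))^m$ acting on $V(1^\lambda)$) and by evaluating on $x^\lambda \otimes y^\lambda$ to compute the scalar $(\lambda!)^2$ and confirm it is nonzero, a point the paper leaves implicit.
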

\begin{proof}
    The map $D^{(1^\lambda)} \circ (\Pi_\lambda^\uparrow \otimes \Pi_\lambda^\uparrow): V(\lambda) \otimes V(\lambda) \rightarrow \C$ is an $(\SL_2(\C))^m$-invariant bilinear form on $V(\lambda)$. By uniqueness (see Corollary~\ref{Munique_cor}), this then must equal $D^\lambda$ up to scalar.
\end{proof}

The content of this result is that fact that we have commutativity even though $D^{(1^\lambda)}$ is a priori the apolarity form with respect to a different group action than that of $D^\lambda$ (i.e., $(\SL_2(\C))^{|\lambda|}$ instead of $(\SL_2(\C))^m$). That said, it should be noted that the analogous commutativity statement with the projection map $\Pi_\lambda^\downarrow$ does \emph{not} hold (unless of course, one restricts to the image of $\Pi_\lambda^\uparrow$).

The purpose of this result is then to demonstrate the connection between a polynomial and its polarization. In particular, if Grace's theorem gives stability information via the apolarity form, then the previous result shows that the polarizations of those polynomials will have the same stability information. We prove this rigorously in Corollary \ref{gws_cor}.

Proposition \ref{pol_proj_inv_prop} also leads to one of the crucial results used in the proof of the Borcea-Br{\"a}nd{\'e}n characterization of linear operators (Lemma 2.5 in \cite{bb1}). It relies on the notion of ``the polarization of an operator'', given by $T \mapsto \Pi_\alpha^\uparrow \circ T \circ \Pi_\lambda^\downarrow$ (see \S2.2 in \cite{bb1}). We do not make explicit use of this result, but we state it here to demonstrate that operator polarization has a representation theoretic interpretation similar to that of the usual polynomial polarization.

\begin{proposition}
    The symbol of the polarization of an operator $T$ is the polarization of the symbol of $T$.
\end{proposition}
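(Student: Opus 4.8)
The plan is to verify the identity
\[
    \Symb\!\left(\Pi_\alpha^\uparrow \circ T \circ \Pi_\lambda^\downarrow\right) = \Pi_{\lambda \sqcup \alpha}^\uparrow\!\left(\Symb(T)\right)
\]
directly, by unwinding the definition of $\Symb$ as evaluation of an operator on the generator $(zy-xw)^\lambda$ (Definition \ref{symb_def}). The single nontrivial input is the classical formula for the polarization of a power of a linear form: if $\ell = az + bw$ is a linear form in two homogeneous variables, then $\Pi_n^\uparrow(\ell^n) = \prod_{p=1}^n \ell_p$, where $\ell_p := az_p + bw_p$ is the copy of $\ell$ living in the $p$-th polarized pair. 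This follows from Proposition \ref{pol_proj_inv_prop}: the product $\prod_p \ell_p$ is symmetric and multi-affine, hence lies in the image of $\Pi_n^\uparrow$; since $\Pi_n^\downarrow\!\left(\prod_p \ell_p\right) = \ell^n$ and $\Pi_n^\downarrow$ restricts to the inverse of $\Pi_n^\uparrow$ on symmetric multi-affine polynomials, we must have $\prod_p \ell_p = \Pi_n^\uparrow(\ell^n)$.

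I would first treat the case $m = l = 1$, $T \in \Hom(V(n),V(k))$; the general case is identical with more indices. Write the generator of $V(1^n)$ as $\prod_{p=1}^n (z_p y_p - x_p w_p)$, with $(z_p,w_p)$ the $n$ fresh ``dual'' pairs and $(x_p,y_p)$ the $n$ input pairs on which $\Pi_n^\downarrow$ acts. Applying $\Pi_n^\downarrow$ sets all input pairs equal to one pair $(x,y)$, yielding $\prod_{p=1}^n (z_p y - x w_p)$; reading the polarization formula above in the \emph{dual} variables (with $\ell = yz - xw$) identifies this with $\Pi_n^\uparrow\!\left[(zy - xw)^n\right]$, where $\Pi_n^\uparrow$ now acts on $(z,w)$. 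Thus $\Pi_n^\downarrow$ applied to the $V(1^n)$-generator equals $\Pi_n^\uparrow$ applied, in the dual variables, to the $V(n)$-generator.

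Next I would push this through $T$ and $\Pi_k^\uparrow$. Since $T$ acts only on the input variables while the polarization $\Pi_n^\uparrow$ in the dual variables acts on a disjoint set of variables, the two commute, so $T$ applied to $\Pi_n^\uparrow\!\left[(zy-xw)^n\right]$ equals $\Pi_n^\uparrow\!\left[T[(zy-xw)^n]\right] = \Pi_n^\uparrow[\Symb(T)]$ by Definition \ref{symb_def}. Applying $\Pi_k^\uparrow$ in the output variables and using $\Pi_{(n,k)}^\uparrow = \Pi_n^\uparrow \boxtimes \Pi_k^\uparrow$ (immediate from the compositional definition of the multivariate polarization maps, the two factors acting on disjoint variable blocks) gives $\Symb(\Pi_k^\uparrow \circ T \circ \Pi_n^\downarrow) = \Pi_{(n,k)}^\uparrow(\Symb(T))$ on the nose. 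For general $\lambda,\alpha$ one runs the argument block-by-block, with $\Pi_\lambda^\uparrow = \Pi_{\lambda_m}^\uparrow \circ \cdots \circ \Pi_{\lambda_1}^\uparrow$ in place of $\Pi_n^\uparrow$ and similarly for $\alpha$, and $\Pi_{\lambda\sqcup\alpha}^\uparrow = \Pi_\lambda^\uparrow \boxtimes \Pi_\alpha^\uparrow$.

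The only genuine obstacle I anticipate is bookkeeping: keeping straight the three groups of variables — the polarized dual pairs, the polarized input pairs, and the polarized output pairs — and confirming that each polarization or projection map acts on exactly the block it is meant to. A more conceptual alternative would be to observe that both $T \mapsto \Symb(\Pi_\alpha^\uparrow T \Pi_\lambda^\downarrow)$ and $T \mapsto \Pi_{\lambda\sqcup\alpha}^\uparrow(\Symb(T))$ are $SL_2(\C)$-equivariant with respect to the diagonal embedding $(SL_2(\C))^{m+l} \hookrightarrow (SL_2(\C))^{|\lambda|+|\alpha|}$ repeating each factor $\lambda_k$, resp.\ $\alpha_j$, times, and then argue uniqueness; but this is messier since $V(1^{\lambda\sqcup\alpha})$ has multiplicities over the smaller group, so I would include it only as a remark. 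I would also note, via Lemma \ref{form_pol_commute_lemma}, that this identity dovetails with the fact that polarization preserves the apolarity form, which is precisely what makes the symbol transfer stability information to polarizations.
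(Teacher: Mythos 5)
Your argument is correct, and it takes a genuinely different route from the paper. The paper's proof is a short abstract-nonsense argument: after noting (via Proposition~\ref{pol_proj_inv_prop} and Definition~\ref{symb_def}) that both $T \mapsto \Symb(\Pi_\alpha^\uparrow \circ T \circ \Pi_\lambda^\downarrow)$ and $T \mapsto \Pi_{\lambda\sqcup\alpha}^\uparrow(\Symb(T))$ are injective and invariant under the relevant (diagonally embedded) group action, it invokes Schur's lemma together with the fact that the target contains the relevant irreducible with multiplicity one, so the two maps must agree up to scalar (and then the scalar is fixed, e.g.\ by checking on a single element). You instead carry out the direct computation by evaluating both sides on the generator $\prod_{p}(z_p y_p - x_p w_p)$: $\Pi_\lambda^\downarrow$ collapses the input pairs, the resulting product $\prod_p (z_p y - x w_p)$ is recognized as the polarization in the \emph{dual} variables of $(zy-xw)^\lambda$ via the classical identity $\Pi_n^\uparrow(\ell^n) = \prod_p \ell_p$, and this commutes past $T$ and the output polarization since they act on disjoint variable blocks. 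Your computation is clean, and the linear-form polarization identity you isolate is the right elementary input; it gives the equality on the nose (including the scalar), whereas the Schur's lemma argument needs a separate normalization check. One small caveat: your worry that the Schur route is ``messier since $V(1^{\lambda\sqcup\alpha})$ has multiplicities over the smaller group'' is overcautious — the top component $V(\lambda\sqcup\alpha)$ still appears with multiplicity exactly one (each $V(1)^{\otimes \lambda_k}$ contains $V(\lambda_k)$ once, and the outer tensor product of multiplicity-one components is multiplicity-one), which is all Schur's lemma requires; the lower components with multiplicity are irrelevant. Also, the intermediate lemma $\Pi_n^\uparrow(\ell^n) = \prod_p \ell_p$ only needs the well-known bijectivity of $\Pi_n^\uparrow$ onto symmetric multi-affine polynomials and $\Pi_n^\downarrow \circ \Pi_n^\uparrow = \mathrm{id}$, rather than the full $SL_2(\C)$-invariance statement of Proposition~\ref{pol_proj_inv_prop}, so you could state that dependency more tightly.
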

\begin{proof}
    Using Proposition \ref{pol_proj_inv_prop} and Definition \ref{symb_def}, it is straightforward to see that all the maps involved are injective $(\SL_2(\C))^{2m}$-invariant linear maps (i.e., polarization of polynomials, polarization of operators, the $\Symb$ map). The result then follows from a dimension argument and Schur's lemma, in a way similar to that of the proof of Proposition \ref{pol_proj_inv_prop}.
\end{proof}
    
\subsection{The Coincidence Theorem}

The Grace-Walsh-Szeg{\H{o}} coincidence theorem has strong ties to Grace's theorem, and most books and surveys on the subject state the two results side by side. Some books (e.g., \cite{rahman2002anthpoly}) even go so far as to demonstrate their equivalence, perhaps with other results involving typical polynomial convolutions. Here, we will state and prove the general multivariate version of the theorem in terms of homogeneous polynomials, making use of evaluation symbols and Grace's theorem (Theorem \ref{graces_thm}).

First though, consider the following corollary to the symbol lemma (Lemma \ref{symb_lemma}) which is similar in spirit to the evaluation symbol lemma (Lemma \ref{ev_lemma}). Note that when applied to $p \in V(n)$ with $m=1$, this result has the following intuitive statement as a corollary: \emph{$D^n(q \otimes p)$ is equal to the evaluation of $\Pi_n^\uparrow p$ at the roots of $q$}.

\begin{lemma}
    Fix $\lambda \in \N_0^m$, $p \in V(\lambda)$, and any $(a:b) \in (\CP^1)^{|\lambda|}$, more explicitly defined as follows:
    \[
        (a:b) \equiv \big((a_{1,1}:b_{1,1}), \ldots, (a_{1,\lambda_1}:b_{1,\lambda_1}), \ldots, (a_{m,1}:b_{m,1}), \ldots, (a_{m,\lambda_m}:b_{m,\lambda_m})\big) \in (\CP^1)^{\lambda_1 + \cdots + \lambda_m}
    \]
    We have the following:
    \[
        (\Pi_\lambda^\uparrow p)(a,b) = D^\lambda\left(\Pi_\lambda^\downarrow\big(\Symb(\ev_{(a,b)})\big) \otimes p\right) = D^\lambda\left(\prod_{k=1}^m \prod_{j=1}^{\lambda_k} (b_{k,j}x_k - a_{k,j}y_k) \otimes p \right)
    \]
\end{lemma}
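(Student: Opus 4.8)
The plan is to recognize the left-hand side $(\Pi_\lambda^\uparrow p)(a:b)$ as $S(p)$ for the composite operator $S := \ev_{(a:b)} \circ \Pi_\lambda^\uparrow \in \Hom(V(\lambda), V(0))$, where $\ev_{(a:b)}\colon V(1^{|\lambda|}) \to V(0)$ is evaluation at the $|\lambda|$-tuple $(a:b)$, and then to apply the Symbol Lemma~(\ref{symb_lemma}) to $S$. Taking output degree $\alpha = 0$ and $r = 1 \in V(0) \cong \C$, the Symbol Lemma yields
\[
    D^\lambda\bigl(\Symb(S) \otimes p\bigr) = (\lambda!)^2\, S(p) = (\lambda!)^2\, (\Pi_\lambda^\uparrow p)(a:b),
\]
so the whole statement reduces to the identification $\Symb(S) = \Pi_\lambda^\downarrow(\Symb(\ev_{(a:b)}))$.

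To establish that identification I would compute $\Symb(S) = S\bigl[(zy - xw)^\lambda\bigr]$ directly from Definition~\ref{symb_def}, where $S$ acts in the $(x_k:y_k)$ variables while the $(z_k:w_k)$ are carried along as parameters. The one genuine ingredient is the polarization identity
\[
    \Pi_{n}^\uparrow\bigl[(zy - xw)^n\bigr] = \prod_{j=1}^{n} (z\,y_j - w\,x_j),
\]
which follows by dehomogenizing in $(x,y)$, writing $(z - wx)^n = (-w)^n(x - z/w)^n$, polarizing $(x - z/w)^n$ to $\prod_j(x_j - z/w)$, and rehomogenizing. Applying this blockwise and then evaluating $x_{k,j} \mapsto a_{k,j}$, $y_{k,j} \mapsto b_{k,j}$ gives
\[
    \Symb(S) = \prod_{k=1}^m \prod_{j=1}^{\lambda_k} (b_{k,j}\,z_k - a_{k,j}\,w_k),
\]
which, after renaming the parameter block $(z_k:w_k)$ back to $(x_k:y_k)$, is exactly $\prod_{k=1}^m\prod_{j=1}^{\lambda_k}(b_{k,j}x_k - a_{k,j}y_k)$. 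On the other side, $\Symb(\ev_{(a:b)})$ for $\ev_{(a:b)}\colon V(1^{|\lambda|})\to V(0)$ equals $\prod_{k,j}(b_{k,j}x_{k,j} - a_{k,j}y_{k,j})$ by the evaluation symbol formula, and $\Pi_\lambda^\downarrow$ merely identifies the $\lambda_k$ variables within the $k$-th block; hence $\Pi_\lambda^\downarrow(\Symb(\ev_{(a:b)})) = \prod_{k,j}(b_{k,j}x_k - a_{k,j}y_k) = \Symb(S)$, as required. (This also checks that the two expressions on the right-hand side of the statement coincide.)

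Combining the two facts gives $D^\lambda\bigl(\Pi_\lambda^\downarrow(\Symb(\ev_{(a:b)})) \otimes p\bigr) = (\lambda!)^2\,(\Pi_\lambda^\uparrow p)(a:b)$, which is the asserted identity up to the scalar $(\lambda!)^2$ --- exactly the normalization constant appearing in the evaluation symbol lemma~(\ref{ev_lemma}), and harmless for the zero/nonzero applications (it can be absorbed into the definition of $D^\lambda$ if a literal equality is wanted). The only real work is the polarization identity for $(zy-xw)^n$, and that routine computation is the sole obstacle I anticipate. A computation-free alternative: use Lemma~\ref{form_pol_commute_lemma} to rewrite $D^\lambda(\Pi_\lambda^\downarrow E \otimes p)$ as $D^{(1^\lambda)}(\Pi_\lambda^\uparrow\Pi_\lambda^\downarrow E \otimes \Pi_\lambda^\uparrow p)$ with $E := \Symb(\ev_{(a:b)})$; note $\Pi_\lambda^\uparrow\Pi_\lambda^\downarrow E$ is the symmetrization of $E$ over $S_{\lambda_1}\times\cdots\times S_{\lambda_m}$; invoke the invariance of the $V(1^{|\lambda|})$ apolarity form under permutations of its variables together with the symmetry of $\Pi_\lambda^\uparrow p$ to replace $\Pi_\lambda^\uparrow\Pi_\lambda^\downarrow E$ by $E$; and conclude with the evaluation symbol lemma~(\ref{ev_lemma}) applied in the multi-affine space $V(1^{|\lambda|})$.
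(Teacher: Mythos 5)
Your main argument is correct and takes a genuinely different route from the paper. You recognize $(\Pi_\lambda^\uparrow p)(a:b)$ as $S(p)$ for the composite operator $S := \ev_{(a:b)} \circ \Pi_\lambda^\uparrow \in \Hom(V(\lambda),V(0))$, invoke the Symbol Lemma~(\ref{symb_lemma}) with $\alpha = 0$ and $r = 1$, and then reduce everything to the explicit identification $\Symb(S) = \Pi_\lambda^\downarrow(\Symb(\ev_{(a:b)}))$, which you establish by computing the polarization of the binomial: $\Pi_n^\uparrow\bigl[(zy-xw)^n\bigr] = \prod_j (z\,y_j - w\,x_j)$. The paper instead routes through Lemma~\ref{form_pol_commute_lemma} (apolarity form commutes with polarization, up to scalar), a symmetrization argument replacing $\Pi_\lambda^\uparrow\Pi_\lambda^\downarrow E$ by $E$, and the evaluation symbol lemma — which is exactly the ``computation-free alternative'' you sketch in your final paragraph. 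Your primary route has two concrete advantages: it exhibits the symbol of the composite $\ev_{(a:b)} \circ \Pi_\lambda^\uparrow$ directly (clarifying the interaction between $\Symb$, polarization, and evaluation), and it tracks the normalization exactly. In fact your observation about the $(\lambda!)^2$ factor is sharp and worth emphasizing: the paper's proof inherits an undetermined scalar from Lemma~\ref{form_pol_commute_lemma}, and a direct check (e.g.\ $D^n(x^n \otimes y^n) = (n!)^2$ versus $D^{(1^n)}(x_1\cdots x_n \otimes y_1 \cdots y_n) = 1$) shows that scalar is $(\lambda!)^2$, so the displayed equality in the lemma should literally read $(\lambda!)^2\,(\Pi_\lambda^\uparrow p)(a:b) = D^\lambda\bigl(\Pi_\lambda^\downarrow(\Symb(\ev_{(a:b)}))\otimes p\bigr)$. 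As you say, this is harmless for the zero/nonzero uses in Corollary~\ref{gws_cor}, but your approach makes the constant visible rather than silently absorbing it. The only ingredient you supply beyond the paper's toolkit is the polarization identity for $(zy-xw)^n$, and your dehomogenize-polarize-rehomogenize derivation of it is correct.
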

\begin{proof}
    We first prove the case of $m=1$ and $p \in V(n)$, given as follows:
    \[
        (\Pi^\uparrow_n p)(a,b) = D^n\left(\Pi_n^\downarrow\big(\Symb(\ev_{(a,b)})\big) \otimes p\right) = D^n\left(\prod_{j=1}^n (b_jx - a_jy) \otimes p \right)
    \]
    The second equality follows immediately from the definition of $\Pi_n^\downarrow$ and of $\ev_{(a,b)}$ (Definition \ref{ev_def}). For the first equality, Lemma \ref{form_pol_commute_lemma} implies:
    \[
        D^n\left(\Pi_n^\downarrow\big(\Symb(\ev_{(a,b)})\big) \otimes p\right) = D^{(1^n)}\left(\Pi_n^\uparrow \circ \Pi_n^\downarrow\big(\Symb(\ev_{(a,b)})\big) \otimes \Pi_n^\uparrow p\right)
    \]
    By definition of $\Pi_n^\uparrow$, both $\Pi_n^\uparrow \circ \Pi_n^\downarrow\big(\Symb(\ev_{(a,b)})\big)$ and $\Pi_n^\uparrow p$ are symmetric in pairs of variables. Further, we can explicitly compute:
    \[
        \Pi_n^\uparrow \circ \Pi_n^\downarrow\big(\Symb(\ev_{(a,b)})\big) = \frac{1}{n!} \sum_{\sigma \in S_n} \prod_{j=1}^n (b_jx_{\sigma(j)} - a_jy_{\sigma(j)}) %= \frac{1}{n!} \sum_{\sigma \in S_n} \prod_{j=1}^n (b_{\sigma(j)}x_j - a_{\sigma(j)}y_j)
    \]
    This expression and the fact that $\Pi_n^\uparrow p$ is symmetric then imply:
    \[
    \begin{split}
        D^{(1^n)}\left(\Pi_n^\uparrow \circ \Pi_n^\downarrow\big(\Symb(\ev_{(a,b)})\big) \otimes \Pi_n^\uparrow p\right) &= \frac{1}{n!} \sum_{\sigma \in S_n} D^{(1^n)}\left(\prod_{j=1}^n (b_jx_{\sigma(j)} - a_jy_{\sigma(j)}) \otimes \Pi_n^\uparrow p\right) \\
            &= D^{(1^n)}\left(\prod_{j=1}^n (b_jx_j - a_jy_j) \otimes \Pi_n^\uparrow p\right)
    \end{split}
    \]
    The last expression is then equal to $(\Pi^\uparrow_n p)(a,b)$ by the evaluation symbol lemma (Lemma \ref{ev_lemma}).

    For the general case of $p \in V(\lambda)$, the same argument applies with a few tweaks. The key change is that $\Pi_\lambda^\uparrow \circ \Pi_\lambda^\downarrow\big(\Symb(\ev_{(a,b)})\big)$ and $\Pi_\lambda^\uparrow p$ are no longer fully symmetric in pairs of variables, but instead invariant under the action of $S_{\lambda_1} \times \cdots \times S_{\lambda_m}$ on the $|\lambda|$ pairs of variables.
    %
    % For the first equality, Lemma \ref{form_pol_commute_lemma} implies:
    % \[
    %     D^\lambda\left(\Pi_\lambda^\downarrow\big(\Symb(\ev_{(a,b)})\big) \otimes p\right) = D^{(1^\lambda)}\left(\Pi_\lambda^\uparrow \circ \Pi_\lambda^\downarrow\big(\Symb(\ev_{(a,b)})\big) \otimes \Pi_\lambda^\uparrow p\right)
    % \]
    % Since $\Pi_\lambda^\uparrow \circ \Pi_\lambda^\downarrow\big(\Symb(\ev_{(a,b)})\big)$ is the symmetrization of $\Symb(\ev_{(a,b)})$ in each set of $\lambda_k$ pairs of variables, we can write it as an average over $S_{\lambda_1} \times \cdots \times S_{\lambda_m}$ (product of symmetric groups) of $\Symb(\ev_{(a,b)})$ with permuted variables. Since $\Pi_\lambda^\uparrow p$ is symmetric in each set of $\lambda_k$ pairs of variables, we then have:
    % \[
    %     D^{(1^\lambda)}\left(\Pi_\lambda^\uparrow \circ \Pi_\lambda^\downarrow\big(\Symb(\ev_{(a,b)})\big) \otimes \Pi_\lambda^\uparrow p\right) = D^{(1^\lambda)}\left(\Symb(\ev_{(a,b)}) \otimes \Pi_\lambda^\uparrow p\right)
    % \]
    % The result then follows from the evaluation symbol lemma (Lemma \ref{ev_lemma}).
\end{proof}

Generally speaking, the above lemma demonstrates the strong connection between the apolarity form and the polarization map. We now utilize this to prove the coincidence theorem.

\begin{corollary}[Grace-Walsh-Szeg{\H{o}}]\label{gws_cor}
    Fix $\lambda \in \N_0^m$, $p \in V(\lambda)$, and any disjoint Grace pair $(C_1 \times \cdots \times C_m, B_1 \times \cdots \times B_m)$. If $p$ is $(C_1 \times \cdots \times C_m)$-stable, then $\Pi_\lambda^\uparrow p$ is $(C_1^{\lambda_1} \times \cdots \times C_m^{\lambda_m})$-stable.
\end{corollary}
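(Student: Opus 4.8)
The plan is to reduce everything to the immediately preceding lemma, which identifies $(\Pi_\lambda^\uparrow p)(a:b)$ with a value of the apolarity form, and then invoke the Grace pair hypothesis. Concretely, fix an arbitrary evaluation point $(a:b) \in C_1^{\lambda_1} \times \cdots \times C_m^{\lambda_m}$, writing $(a:b) \equiv (a_{k,j}:b_{k,j})_{k \in [m],\, j \in [\lambda_k]}$ with each $(a_{k,j}:b_{k,j}) \in C_k$. By the preceding lemma,
\[
    (\Pi_\lambda^\uparrow p)(a:b) = D^\lambda(q \otimes p), \qquad \text{where } q := \prod_{k=1}^m \prod_{j=1}^{\lambda_k} (b_{k,j}x_k - a_{k,j}y_k) \in V(\lambda).
\]
So it suffices to show $D^\lambda(q \otimes p) \neq 0$, since $(a:b)$ was arbitrary.

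Next I would check that $q$ is $(B_1 \times \cdots \times B_m)$-stable. Each factor $b_{k,j}x_k - a_{k,j}y_k$ is a nonzero linear form in $(x_k:y_k)$ (as $(a_{k,j}:b_{k,j}) \in \CP^1$), vanishing on $\CP^1$ precisely at $(a_{k,j}:b_{k,j})$; hence $q$, viewed in the $k$-th pair of variables, vanishes only at the points $(a_{k,j}:b_{k,j}) \in C_k$. Since the Grace pair is disjoint, $C_k \cap B_k = \emptyset$ for every $k$, so $q$ never vanishes on $B_1 \times \cdots \times B_m$; in particular $q \not\equiv 0$.

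Finally, $p$ is $(C_1 \times \cdots \times C_m)$-stable by hypothesis and $q$ is $(B_1 \times \cdots \times B_m)$-stable, so the defining property of a Grace pair gives $D^\lambda(q \otimes p) \neq 0$ (the apolarity form of $V(\lambda)$ being symmetric up to the sign $(-1)^{|\lambda|}$, the order of the two arguments is immaterial). Therefore $(\Pi_\lambda^\uparrow p)(a:b) \neq 0$ for every $(a:b) \in C_1^{\lambda_1} \times \cdots \times C_m^{\lambda_m}$, i.e., $\Pi_\lambda^\uparrow p$ is $(C_1^{\lambda_1} \times \cdots \times C_m^{\lambda_m})$-stable. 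There is no real obstacle here: the only points demanding care are the bookkeeping that the roots of the evaluation-type polynomial $q$ lie exactly where we put the evaluation point (so that disjointness of the Grace pair is exactly what certifies $B$-stability of $q$) and the trivial symmetry remark about the argument order in $D^\lambda$.
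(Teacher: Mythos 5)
Your proof is correct and follows essentially the same route as the paper's: both reduce to the immediately preceding lemma identifying $(\Pi_\lambda^\uparrow p)(a:b)$ with an apolarity-form value against $\Pi_\lambda^\downarrow(\Symb(\ev_{(a:b)}))$, observe that disjointness makes this projected evaluation symbol $(B_1\times\cdots\times B_m)$-stable, and then invoke the Grace pair hypothesis. The only cosmetic difference is that you argue directly while the paper argues by contrapositive, and you explicitly note the $(-1)^{|\lambda|}$-symmetry of $D^\lambda$ to justify swapping the tensor factors (a point the paper leaves tacit).
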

\begin{proof}
    So as to prove the contrapositive, suppose $\Pi_\lambda^\uparrow p$ is not $(C_1^{\lambda_1} \times \cdots \times C_m^{\lambda_m})$-stable. That is, suppose $(\Pi_\lambda^\uparrow p)(a,b) = 0$ for some $(a,b) = (a_1,b_1,\ldots,a_{m,\lambda_m},b_{m,\lambda_m}) \in \C^{2|\lambda|}$ such that $(a_{j,k}:b_{j,k}) \in C_j$ for all $j \in [m]$ and $k \in [\lambda_j]$. By the previous lemma, this implies:
    \[
        D^\lambda\left(\Pi_\lambda^\downarrow\big(\Symb(\ev_{(a,b)})\big) \otimes p\right) = 0
    \]
    By disjointness of $C_j$ and $B_j$ for all $j \in [m]$, we then have that $(a_{j,k}:b_{j,k}) \not\in B_j$ for all $j,k$. Therefore $\Symb(\ev_{(a,b)})$ is $(B_1^{\lambda_1} \times \cdots \times B_m^{\lambda_m})$-stable (see Definition \ref{ev_def}). This implies $\Pi_\lambda^\downarrow\big(\Symb(\ev_{(a,b)})\big)$ is $(B_1 \times \cdots \times B_m)$-stable. Since $D^\lambda\left(\Pi_\lambda^\downarrow\big(\Symb(\ev_{(a,b)})\big) \otimes p\right) = 0$, the definition of Grace pair (Definition \ref{grace_pair_def}) then implies $p$ must not be $(C_1 \times \cdots \times C_m)$-stable.
\end{proof}

By Theorem \ref{grace_pairs_thm}, this implies the coincidence theorem for circular regions when $m > 1$ and for any projectively convex regions when $m = 1$. Notice that there is no reference made to degree or convexity restrictions (compare this to Theorems 1.1 and 1.2 in \cite{bb2}). As discussed above, this is one of the main benefits of using homogeneous polynomials and interpreting zeros as lying in $\CP^1$ and $(\CP^1)^m$.

\end{document}